\newcommand*{\dif}{\mathop{}\!\mathrm{d}}
\DeclareMathOperator*{\res}{Res}
\DeclareMathOperator*{\im}{Im}
\DeclareMathOperator*{\re}{Re}
\newtheorem{theorem}{Theorem}[section]
\newtheorem{lemma}[theorem]{Lemma}
\newtheorem{Proposition}[theorem]{Proposition}
\newtheorem{definition}[theorem]{Definition}
\newtheorem{RHP}[theorem]{RH problem}
\newtheorem{Dbarproblem}[theorem]{$\bar{\partial}$-Problem}
\newtheorem{dbar-RHP}[theorem]{$\bar{\partial}$-RH problem}
\newtheorem{assumption}[theorem]{Assumption}
\theoremstyle{remark}
\newtheorem{remark}[theorem]{Remark}
\theoremstyle{definition}
\numberwithin{equation}{section}
\renewcommand{\thefootnote}{\fnsymbol{footnote}}
\begin{document}

\title{Transient asymptotics of the modified Camassa-Holm equation}

\author{Taiyang Xu\footnotemark[1],\quad Yiling Yang\footnotemark[3], \quad Lun Zhang\footnotemark[1]~\footnotemark[2]}

\renewcommand{\thefootnote}{\fnsymbol{footnote}}
\footnotetext[1]{School of Mathematical Sciences, Fudan University, Shanghai 200433, China.
E-mail: \texttt{\{tyxu19,lunzhang\}@fudan.edu.cn.}}
\footnotetext[2]{Shanghai Key Laboratory for Contemporary Applied Mathematics, Fudan University, Shanghai 200433, China.}
\footnotetext[3]{College of Mathematics and Statistics, Chongqing University, Chongqing 401331, China. E-mail: \texttt{ylyang19@fudan.edu.cn.}}

\date{\today}

\maketitle
%
%
	
\begin{abstract}
We investigate long time asymptotics of the modified Camassa-Holm equation in three transition 
zones under a nonzero background. The first transition zone lies between the soliton region and 
the first oscillatory region, the second one lies between  the second oscillatory region and the fast 
decay region, and possibly, the third one, namely, the collisionless shock region, that bridges the 
first transition region and the first oscillatory region. Under a low regularity condition on the 
initial data, we obtain Painlev\'e-type asymptotic formulas in the first two transition regions, 
while the transient asymptotics in the third region involves the Jacobi theta function. We establish 
our results by performing a $\bar{\partial}$ nonlinear steepest descent analysis to the associated 
Riemann-Hilbert problem.
		\\
		\\
		{\bf Keywords:} modified Camassa-Holm equation, Cauchy problem, Riemann-Hilbert problem, long time asymptotics,
		Painlev\'e II transcendents, collisionless shock region, Jacobi theta functions, $\bar{\partial}$ nonlinear steepest descent analysis
		\\
		\\{\bf AMS subject classifications:} 35Q53, 37K15, 34M50, 35Q15, 35B40, 37K40, 33E17, 34M55.
	\end{abstract}
	
\setcounter{tocdepth}{2} \tableofcontents

\section{Introduction}
In this paper, we are concerned with the Cauchy problem for the modified Camassa-Holm (mCH) equation
\begin{subequations}\label{mcho2}
\begin{align}
	&m_{t}+\left(m\left(u^{2}-u_{x}^{2}\right)\right)_{x}=0, \quad m=u-u_{xx}, &x\in\mathbb{R}, \ t>0, \label{def:mCHeq}\\
	&u(x,0)=u_0(x), &x\in\mathbb{R},
\end{align}
\end{subequations}
with nonzero boundary condition
\begin{equation}\label{eq:bdry}
u_0(x)\to 1, \qquad |x|\rightarrow\infty.
\end{equation}

In an equivalent form, the mCH equation \eqref{def:mCHeq} might be traced back to the work of Fokas \cite{Fok95}. It is nowadays usually recognized as an
integrable modification of the celebrated Camassa-Holm (CH) equation \cites{Camassa19931661,CHH94}
\begin{equation}\label{def:CH}
m_t+(um)_{x}+u_xm=0, \qquad m=u-u_{xx},
\end{equation}
which was proposed by Fuchsstiener and Fokas in \cite{Fuchssteiner198147} as an integrable equation. This equation was firstly introduced by Camassa and Holm as
a model for shallow water waves \cite{Camassa19931661}; see also \cite{Constantin2009TheHR} for a rigorous justification. Since the CH equation turns out to be a different-factorization equation of the Korteweg-de Vries (KdV) equation, the mCH equation was introduced in \cites{FuchPhyD, POlverPhyRE} by applying the general method of tri-Hamiltonian duality to the bi-Hamiltonian representation of the modified KdV equation; see also \cite{Qiao2006ANI}. It is among the list of Novikov in the classification of generalized Camassa-Holm-type equations which possess infinite hierarchies of higher symmetries \cite{Nov09} as well, whereas a Miura-type map from \eqref{def:CH} to \eqref{def:mCHeq} was established in \cite{KLOQ16}. There exist relations between the mCH equation and other well-known integrable PDEs. For example,
by applying the scaling transformation and taking suitable limits, the mCH equation degenerates to the short pulse equation \cite{Schfer2004PropagationOU}
\begin{align}
	&u_{xt}=u+\frac{1}{6}(u^3)_{xx}.\nonumber
\end{align}	
Moreover, it has recently been shown in \cite{Chen2022TheSM} that the mCH equation \eqref{def:mCHeq} provides a model for the unidirectional propagation of shallow water waves of mildly large amplitude over a flat bottom, where the function $u(x,t)$ is interpreted as  the horizontal velocity in certain level of fluid.


Due to the aforementioned connections and applications, the mCH equation has attracted great 
interests and significant progresses have been achieved over the past few years.  
Similar to the CH equation, the mCH equation possesses a bi-Hamiltonian structure \cite{POlverPhyRE} 
and the wave-breaking, peakon solutions \cite{Gui2013WaveBreakingAP}; see also \cite{Hou2017TheAS} 
for the quasi-periodic algebro-geometric solutions. The B\"acklund transformation and the related 
nonlinear superposition formulae were presented in \cite{Wang2020TheMC} and the existence of 
global solutions were analyzed in \cite{YFLGlobalmCH}.

As an integrable equation, the mCH equation particularly admits a 
Lax pair \cites{Qiao2006ANI,Schiff1996}. Based on this feature, an Riemann-Hilbert (RH) formalism of 
the Cauchy problem \eqref{mcho2} has been developed in \cite{Monvel2020ARA}. 
It comes out that, analogous to the case of the CH equation \cite{Monvel2009LongtimeAF} or the Korteweg-de Vries (KdV) equation \cite{Deift1994TheCS}, large time asymptotics of $u(x,t)$ shows qualitatively different behaviors in different regions of the $(x,t)$-half plane. More precisely, these regions are
\begin{itemize}
	\item[$\bullet$]a soliton region: $\{(x,t):\xi>2\}$,
	\item[$\bullet$]a fast decay region: $\{(x,t):\xi<-1/4\}$,
	\item[$\bullet$]two oscillatory regions: $\{(x,t):0<\xi<2\}\cup\{(x,t):-1/4<\xi<0\}$,
\end{itemize}
where $\xi:=x/t$; see Figure \ref{fig: asymptotic sectors} for an illustration.
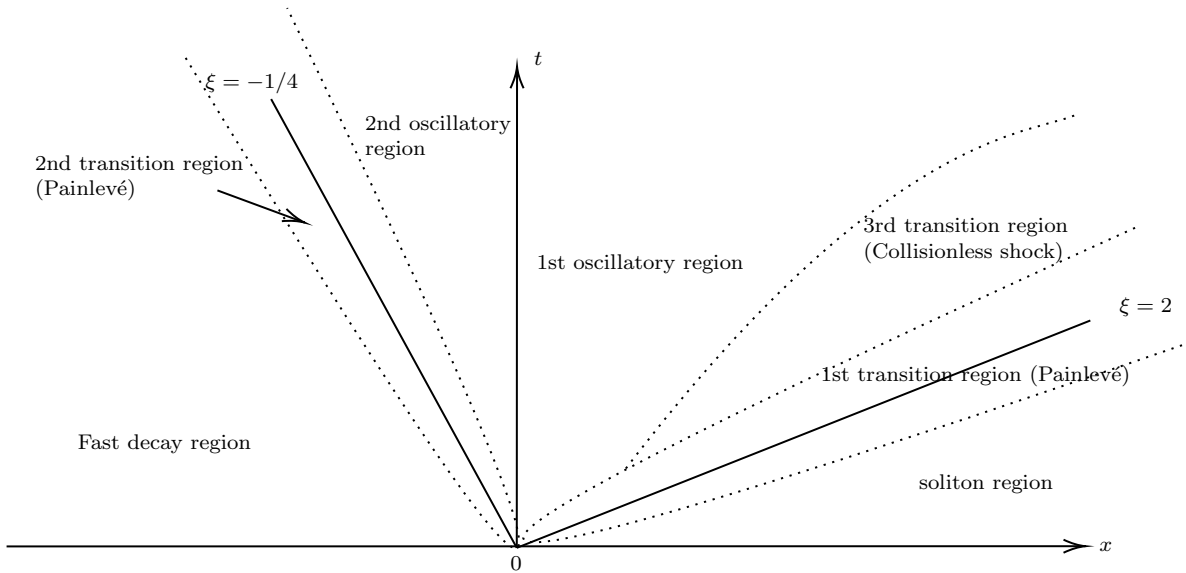
\begin{figure}[htbp]
\begin{center}
	\tikzset{every picture/.style={line width=0.75pt}} 
	\begin{tikzpicture}[x=0.75pt,y=0.75pt,yscale=-1,xscale=1]
	\draw    (49.14,274.71) -- (585.42,274.6) ;
	\draw [shift={(587.42,274.6)}, rotate = 179.99] [color={rgb, 255:red, 0; green, 0; blue, 0 }  ][line width=0.75]    (10.93,-3.29) .. controls (6.95,-1.4) and (3.31,-0.3) .. (0,0) .. controls (3.31,0.3) and (6.95,1.4) .. (10.93,3.29)   ;
	\draw    (303.83,35.28) -- (303.7,275.43) ;
	\draw [shift={(303.83,33.28)}, rotate = 90.03] [color={rgb, 255:red, 0; green, 0; blue, 0 }  ][line width=0.75]    (10.93,-3.29) .. controls (6.95,-1.4) and (3.31,-0.3) .. (0,0) .. controls (3.31,0.3) and (6.95,1.4) .. (10.93,3.29)   ;
	\draw    (589.87,161.06) -- (303.7,275.43) ;
	\draw    (181.14,49.67) -- (303.7,275.43) ;
	\draw  [dash pattern={on 0.84pt off 2.51pt}] (138.49,28.95) .. controls (341.88,365.95) and (358.77,357.59) .. (189.14,3.86) ;
	\draw  [dash pattern={on 0.84pt off 2.51pt}] (612.18,114.43) .. controls (196.85,303.66) and (205.34,323.1) .. (637.67,172.78) ;
	\draw  [dash pattern={on 0.84pt off 2.51pt}]  (358.14,235.5) .. controls (493.43,57.57) and (567.43,67.57) .. (584.43,56.57) ;
	\draw    (154.34,95.61) -- (195.99,111.12) ;
	\draw [shift={(197.87,111.82)}, rotate = 200.43] [color={rgb, 255:red, 0; green, 0; blue, 0 }  ][line width=0.75]    (10.93,-3.29) .. controls (6.95,-1.4) and (3.31,-0.3) .. (0,0) .. controls (3.31,0.3) and (6.95,1.4) .. (10.93,3.29)   ;
	\draw (592.99,270.78) node [anchor=north west][inner sep=0.75pt]  [font=\scriptsize,rotate=-359.99]  {$x$};
	\draw (310.93,24.71) node [anchor=north west][inner sep=0.75pt]  [font=\scriptsize,rotate=-359.99]  {$t$};
	\draw (602.9,147.88) node [anchor=north west][inner sep=0.75pt]  [font=\scriptsize,rotate=-359.99]  {$\xi =2$};
	\draw (146.83,34.37) node [anchor=north west][inner sep=0.75pt]  [font=\scriptsize,rotate=-359.99]  {$\xi =-1/4$};
	\draw (83.26,216.68) node [anchor=north west][inner sep=0.75pt]  [font=\scriptsize,rotate=-359.99] [align=left] {Fast decay region};
	\draw (502.89,236.79) node [anchor=north west][inner sep=0.75pt]  [font=\scriptsize,rotate=-359.99] [align=left] {soliton region};
	\draw (312.24,126.04) node [anchor=north west][inner sep=0.75pt]  [font=\scriptsize,rotate=-359.99] [align=left] {1st oscillatory region};
	\draw (226.54,56.28) node [anchor=north west][inner sep=0.75pt]  [font=\scriptsize,rotate=-359.99] [align=left] {2nd oscillatory \\region};
	\draw (61.91,76.54) node [anchor=north west][inner sep=0.75pt]  [font=\scriptsize,rotate=-359.99] [align=left] {2nd transition region \\(Painlev\'e)};
	\draw (453.88,181.8) node [anchor=north west][inner sep=0.75pt]  [font=\scriptsize,rotate=-359.99] [align=left] {1st transition region (Painlev\'e)};
	\draw (475.16,107.89) node [anchor=north west][inner sep=0.75pt]  [font=\scriptsize,rotate=-359.99] [align=left] {3rd transition region\\(Collisionless shock)};
	\draw (299,278.4) node [anchor=north west][inner sep=0.75pt]  [font=\scriptsize]  {$0$};
	\end{tikzpicture}	
\caption{ The different asymptotic regions of the $(x,t)$-half plane, where $\xi=x/t$.}
\label{fig: asymptotic sectors}
\end{center}
\end{figure}
Under the condition that the initial value belongs to the Schwartz space, i.e., $m(x,0)$ is sufficiently
smooth and is rapidly decaying as $|x|\rightarrow\infty$, large time asymptotics of the mCH equation in the regular asymptotic zones are presented in \cite{Monvel2020TheMC}; we also refer to
\cites{Karpenko2022ARA,Yang2022OnTL} for the studies with step-like boundary conditions.

It is the aim of the present work to fulfill asymptotic picture of the mCH equation by focusing on the asymptotics in three transition zones as illustrated in Figure \ref{fig: asymptotic sectors}. The first one lies between the soliton region and the first oscillatory region, the second one lies between  the second oscillatory region and the fast decay region, and possibly, the third one, namely, the collisionless shock region -- a terminology due to Gurevich and Pitaevski \cite{Gurevich1973NonstationarySO}, that bridges the first transition region and the first oscillatory region. Under a low regularity condition on the initial value, we obtain Painlev\'e-type asymptotic formulas of the mCH equation in the first two transition regions, while the transient asymptotics in the third region involves the Jacobi theta function. This particularly reveals rich mathematical structures of the mCH equation. Our results are stated in the next section.

\section{Main results}\label{subsec:results}
Besides the boundary condition \eqref{eq:bdry}, we assume that the initial data satisfies the following conditions.
\begin{assumption}\label{Assumption}
\hfill
\begin{itemize}
	\item[$\bullet$]$m_0(x):=m(x,0)>0$ for $x\in\mathbb{R}$.
	\item[$\bullet$]$m_0(x)-1\in H^{2,1}(\mathbb{R})\cap H^{1,2}(\mathbb{R})$, where
    \begin{equation}\label{def:SobSpace}
		H^{k,s}(\mathbb{R})=\left\{f\in L^{2}(\mathbb{R}): \langle \cdot\rangle^s\partial_x^{j}f\in L^2(\mathbb{R}), j=0,1,\dots,k\right\}, \quad s \geqslant 0,
	\end{equation}
	with $\langle x\rangle:=(1+x^2)^{1/2}$ is the weighted Sobolev space.
\end{itemize}
\end{assumption}
The global well-posedness of the Cauchy problem for the mCH equation under the above assumption has recently been established in \cite{YFLGlobalmCH}. Indeed, it was also shown therein that
$m -1\in C([0, +\infty),  H^{2,1}(\mathbb{R}))\cap H^{1, 2} (\mathbb{R})$.

%

We next give precise definitions of the three transition zones.
\begin{definition}\label{def:transzone}
For any positive constant $C$, we define
\begin{itemize}
	\item[\rm (a)]the first transition region (Painlev\'e) $\mathcal{R}_{I}:=\{(x,t):0\leqslant|\xi-2|t^{2/3}\leqslant C\}$,
	\item[\rm (b)]the second transition region (Painlev\'e) $\mathcal{R}_{II}:=\{(x,t):0\leqslant|\xi+1/4|t^{2/3}\leqslant C\}$,
	\item[\rm (c)]the third transition region (collisionless shock) $\mathcal{R}_{III}:=\{(x,t):2\cdot 3^{1/3}(\log t)^{2/3}<(2-\xi)t^{2/3}< C(\log t)^{2/3},~C>2\cdot 3^{1/3}\}$,
\end{itemize}
where recall that $\xi=x/t$.
\end{definition}
Large time asymptotics of $u(x,t)$ in these regions are main results of the present work.

%

\begin{theorem}\label{mainthm}
Let $u(x,t)$ be the global solution of the Cauchy problem \eqref{mcho2} for the mCH equation over the real line under Assumption \ref{Assumption}, and denote by $r(z)$ and $\{z_n\}_{n=1}^{2\mathcal{N}}$, $|z_n|=1$, the reflection coefficient and the discrete spectrum associated to the initial data $u_0(x)$ in the lower half plane. As $t \to +\infty$, we have the following asymptotics of $u(x,t)$ in the regions $\mathcal{R}_{I}-\mathcal{R}_{III}$ given in Definition \ref{def:transzone}.
	\begin{itemize}
		\item[\rm (a)]
       For $\xi\in\mathcal{R}_{I}$,
		\begin{subequations}\label{result:1stTran}
		\begin{align}
			u(x,t)=1-(2/81)^{-1/3}t^{-2/3}v'(s) +\mathcal{O}(t^{-\min\{1-4\delta_1,1/3+9\delta_1\}}),
		\end{align}
		where $\delta_1$ is any real number belonging to $(1/24,1/18)$,
		\begin{align}\label{equ:s1stran}
		s=6^{-2/3}\left(\frac{x}{t}-2\right)t^{2/3},
		\end{align}
		\end{subequations}
		and $v(s)$ is the unique solution of the Painlev\'e II equation
        \begin{equation}\label{equ:standard PII equ}
	     v''(s)=sv(s)+2v^3(s)
        \end{equation}
        characterized by
		\begin{align*}
		v(s)\sim r(1)\textnormal{Ai}(s), \qquad s \to +\infty,
		\end{align*}
		with $\textnormal{Ai}$ being the classical Airy function and $r(1)\in [-1,1]$.
		\item[\rm (b)]For $\xi\in\mathcal{R}_{II}$,
		\begin{subequations}\label{result:2ndTran}
		\begin{align}
			u(x,t)=1+3^{-2/3}t^{-1/3}f_{II}(s)v_{II}(s)+\mathcal{O}\left(t^{\textnormal{max}\{{-2/3+4\delta_2, -1/3-5\delta_2}\}}\right),
		\end{align}
		where $\delta_2$ is any real number belonging to $(0,1/15)$,
		\begin{align}
			s&=-\left(\frac{8}{9}\right)^{1/3}\left(\frac{x}{t}+\frac{1}{4}\right)t^{2/3}, \label{equ:s2ndtran}\\ f_{II}(s)&=2\sqrt{2+\sqrt{3}}\left(\sin\psi_a\left(s,t\right)\cos\gamma_a-\frac{iT_1}{T(i)}\cos\psi_a\left(s,t\right)\sin\gamma_a\right)\nonumber\\
			&~~~+2\sqrt{2-\sqrt{3}}\left(\sin\psi_b\left(s,t\right)\cos\gamma_b-\frac{iT_1}{T(i)}\cos\psi_b(s,t)\sin\gamma_b\right)
\nonumber\\ &~~~+\sqrt{3}\cos\left(\frac{\Lambda_a+\Lambda_b}{2}\right)\sin\left(\frac{\Lambda_a+\Lambda_b}{2}\right)
		\end{align}
	\end{subequations}
		with
		\begin{subequations}
		\begin{align}
			\psi_a(s,t)&=\frac{3^{7/6}}{2}st^{1/3}+\frac{3\sqrt{3}}{4}t+\Lambda_a,\qquad \psi_b(s,t)=-\frac{3^{7/6}}{2}st^{1/3}-\frac{3\sqrt{3}}{4}t+\Lambda_b,\label{equ:psi_a}\\
			\Lambda_a & =\arg r(2+\sqrt{3})+4\sum_{n=1}^{\mathcal{N}}\arg(2+\sqrt{3}-z_n) \nonumber \\
&~~~ -\frac{1}{\pi}\int_{-\infty}^{+\infty}\frac{\log(1-|r(\zeta)|^2)}{\zeta-(2+\sqrt{3})}\dif\zeta-2\sqrt{3}\log\left(T\left(i\right)\right),\label{equ:Lambda_a}\\
			\Lambda_b & =\arg r(2-\sqrt{3})+4\sum_{n=1}^{\mathcal{N}}\arg(2-\sqrt{3}-z_n)
\nonumber \\
&~~~-\frac{1}{\pi}\int_{-\infty}^{+\infty}\frac{\log(1-|r(\zeta)|^2)}{\zeta-(2-\sqrt{3})}\dif\zeta+2\sqrt{3}\log\left(T\left(i\right)\right),\label{equ:Lambda_b}\\
			\gamma_a&=\arctan{(2+\sqrt{3})}, \qquad  \gamma_b=\arctan{(2-\sqrt{3})},\\
			T(i)&=\prod_{n=1}^{2\mathcal{N}}\left(\frac{\bar{z}_n-i}{z_n-i}\right)\exp\left\{-\frac{1}{2\pi i}\int_{-\infty}^{+\infty}\frac{\log(1-|r(\zeta)|^2)}{\zeta-i}\dif\zeta  \right\},\\
			T_1&=-\prod_{n=1}^{2\mathcal{N}}\left(\frac{\bar{z}_n-i}{z_n-i}\right)\cdot\frac{1}{2\pi i}\int_{-\infty}^{+\infty}\frac{\log(1-|r(\zeta)|^2)}{(\zeta-i)^2}\dif\zeta  \cdot\exp\left\{-\frac{1}{2\pi i}\int_{-\infty}^{+\infty}\frac{\log(1-|r(\zeta)|^2)}{\zeta-i}\dif\zeta  \right\}\nonumber\\
			&~~~ +\sum_{n=1}^{2\mathcal{N}}\left(\frac{\bar{z}_n-i}{(z_n-i)^2}\prod_{l\neq n, l=1}^{2\mathcal{N}}\frac{\bar{z}_l-i}{z_l-i}\right)\exp\left\{-\frac{1}{2\pi i}\int_{-\infty}^{+\infty}\frac{\log(1-|r(\zeta)|^2)}{\zeta-i}\dif\zeta  \right\},
		\end{align}
		\end{subequations}
		and $v_{II}(s)$ is the unique solution of the Painlev\'e II equation \eqref{equ:standard PII equ} characterized by
		\begin{align}
			v_{II}(s)\sim-\vert r(2+\sqrt{3})\vert \textnormal{Ai}(s), \qquad s \to +\infty,
		\end{align}
		with $\vert r(2+\sqrt{3})\vert<1$.
		\item[\rm (c)]For $\xi\in\mathcal{R}_{III}$, if $|r(\pm 1)|=1$, $r\in H^{s}$ with $s>5/2$,  we have
		\begin{align}\label{result: u(x,t); part (c)}
      u(x,t)&=1-\frac{(2-\xi)(a-b)q}{12p}\left(-i\cdot\frac{\Theta\left(A(\infty)-\frac{\varkappa}{4}\right)}{\Theta\left(A(\infty)-\frac{\varkappa}{4}+\frac{\phi}{\pi}\right)}\cdot\left( \frac{\Theta\left(-A(\hat{k})-\frac{\varkappa}{4}+\frac{\phi}{\pi}\right)}{\Theta\left(-A(\hat{k})-\frac{\varkappa}{4}\right)}\right)'\Big|_{\hat{k}=\infty} e^{i\phi}\right.\nonumber\\	&~~~\left.+\frac{\Theta\left(A(\infty)-\frac{\varkappa}{4}\right)\Theta\left(-A(\infty)-\frac{\varkappa}{4}+\frac{\phi}{\pi}\right)}{\Theta\left(-A(\infty)-\frac{\varkappa}{4}\right)\Theta\left(A(\infty)-\frac{\varkappa}{4}+\frac{\phi}{\pi}\right)}e^{i\phi}\right)\left(1+o(1)\right),
		\end{align}
		where $p$ and $q$ are any two fixed positive constants, $\Theta(z)$ is the Jacobi theta function defined by \eqref{equ:Jacobi Theta func} and
		\begin{align}\label{def:Ahatk}
			A(\hat{k})=\left(2 \int_b^a\frac{1}{w_+(\zeta)}\dif\zeta\right) ^{-1}\int_b^{\hat{k}}\frac{1}{w(\zeta)}\dif\zeta,\qquad \hat{k}\in\mathbb{C}\setminus [-b,b],
		\end{align}
		with $w^2(\hat{k})=(\hat{k}^2-a^2)(\hat{k}^2-b^2)$ is the Abel integral. The quantities $\phi$ and $\varkappa$ are defined in  \eqref{equ: varkappa expression}  and \eqref{equ: phi expression} below, and
		the parameters $a$ and $b$ are determined by the equations \eqref{equ: a,b con1} and \eqref{equ:a,b con2}.
	\end{itemize}
\end{theorem}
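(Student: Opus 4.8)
The plan is to carry out a $\bar\partial$ nonlinear steepest descent analysis of the Riemann--Hilbert (RH) problem associated with \eqref{mcho2}. The starting point is the RH representation of the solution from \cite{Monvel2020ARA}: after the change of spatial variable $x\mapsto y$ dictated by the Lax pair, $u(x,t)$ is reconstructed from the expansion at $z=i$ (together with the normalization at $z=\infty$) of the $2\times2$ solution $M(z;x,t)$ of an RH problem posed on $\realR$, whose jump is built from the reflection coefficient $r(z)$ and a phase function $\theta(z)=\theta(z;\xi)$ and whose residue conditions encode the discrete spectrum $\{z_n\}$ --- the appearance of $T(i)$ and $T_1$ in the statement already reflects that the reconstruction proceeds through $z=i$. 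The stationary points of $\theta$ (the roots of $\theta'=0$) collapse pairwise precisely at the region boundaries: as $\xi\to2$ the collapse relevant to $\tR_I$ and $\tR_{III}$ occurs near $z=1$ (and, by the symmetry of the spectral problem, near $z=-1$), while as $\xi\to-1/4$ the collapse relevant to $\tR_{II}$ occurs near the reciprocal pair $z=2\pm\sqrt3$. The three regimes in the statement correspond to the three ways this collapse is resolved: an algebraic merge governed by Painlev\'e II in $\tR_I$ and $\tR_{II}$, and a logarithmically slow merge, next to the unit-circle points $z=\pm1$ where $|r|=1$, in $\tR_{III}$.

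For parts (a) and (b) I would first conjugate $M$ by the scalar function $T(z)$ --- whose value $T(i)$ and first Taylor coefficient $T_1$ at $z=i$ appear in \eqref{result:2ndTran} --- to remove the poles and factorize the jump on $\realR$ into triangular pieces away from the relevant saddles. Since $m_0-1$ only lies in $H^{2,1}\cap H^{1,2}$, I would open the lenses not through analytic continuation but through a smooth $\bar\partial$-extension of $r$, which converts the problem into a mixed RH--$\bar\partial$ problem. Zooming in on the coalescing stationary points with the rescaled variable $s$ of \eqref{equ:s1stran}, resp. \eqref{equ:s2ndtran}, the frozen local model becomes precisely the RH problem for the Painlev\'e II transcendent $v$, resp. $v_{II}$, of \eqref{equ:standard PII equ} with the prescribed Airy tail, the Ablowitz--Segur-type parameters $r(1)$ and $-|r(2+\sqrt3)|$ being determined by the value of the conjugated reflection coefficient at the merged saddle. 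I would then glue the outer parametrix, the Painlev\'e local parametrix and the $\bar\partial$-correction, prove that the residual RH problem is of small norm, and finally trace the expansions at $z=i$ and $z=\infty$ back through all the transformations to reach \eqref{result:1stTran} and \eqref{result:2ndTran}. The error exponents $\bigO(t^{-\min\{1-4\delta_1,\,1/3+9\delta_1\}})$, resp. $\bigO(t^{\max\{-2/3+4\delta_2,\,-1/3-5\delta_2\}})$, emerge from optimizing, over the radius $\sim t^{-\delta_j}$ of the local disc, the trade-off between the $\bar\partial$-contribution (which worsens as the disc shrinks) and the matching error on its boundary (which improves).

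Part (c) will require a heavier construction. Here $|r(\pm1)|=1$ forces $\log(1-|r(\zeta)|^2)$ to be singular at $\zeta=\pm1$, so the scalar $T$ above no longer exists, and in $\tR_{III}$ one lives at $(2-\xi)t^{2/3}\asymp(\log t)^{2/3}$, where the stationary points separate only on a logarithmic scale. The plan is a genus-one $g$-function mechanism: open bands on $\realR$ near $\pm1$ with branch points $\pm a,\pm b$ (so that $0<b<1<a$), and build a $g$-function branching at those points, determined by requiring the modified phase to be purely imaginary on the bands and to carry the correct singular behaviour at $\pm1$; this fixes $a=a(\xi,t)$ and $b=b(\xi,t)$ through the normalization/moment conditions \eqref{equ: a,b con1}--\eqref{equ:a,b con2}, which must be shown solvable with the endpoints bounded away from $\pm1$ throughout $\tR_{III}$. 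After a $\bar\partial$-opening of lenses, the outer parametrix is the finite-gap (Its--Matveev) construction on the elliptic curve $w^2(\hat k)=(\hat k^2-a^2)(\hat k^2-b^2)$, expressed via the Abel integral \eqref{def:Ahatk} and the Jacobi theta function $\Theta$, with Airy parametrices at the four band endpoints; the quantities $\phi$ and $\varkappa$ of \eqref{equ: varkappa expression}--\eqref{equ: phi expression} collect the contributions of $\arg r$, the theta divisor and the $g$-function jump constants. Feeding the patched solution into the reconstruction formula then yields \eqref{result: u(x,t); part (c)}, with error only $o(1)$ rather than a power of $t$ owing to the logarithmic width of the bands; the extra hypothesis $r\in H^{s}$, $s>5/2$, is what makes the $\bar\partial$-estimates near $\pm1$ go through.

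I expect the main obstacle to be exactly this $g$-function analysis in part (c): proving that \eqref{equ: a,b con1}--\eqref{equ:a,b con2} have a unique solution keeping $a,b$ separated from $\pm1$ uniformly over $\tR_{III}$, and that the global and local parametrices together with the $\bar\partial$-estimates --- which here have to be carried out across a logarithmically thin band and right next to the points $\pm1$ where $|r|=1$ --- still combine into a genuinely small-norm residual problem uniformly in the region. By comparison, in parts (a) and (b) the only real delicacy is the quantitative $\bar\partial$-bookkeeping that produces the stated error exponents under the low-regularity Assumption~\ref{Assumption}.
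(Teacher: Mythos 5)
Your approach to parts (a) and (b) tracks the paper closely: conjugation by the scalar $T$ to remove poles and factorize the jump, opening $\bar\partial$ lenses because $r$ is not analytic, local Painlev\'e II parametrices at the collapsing stationary points, gluing via a small-norm RH problem and a pure $\bar\partial$-problem, and reading off $u$ from the local data of $M$ at $z=0$ and $z=i$. One small caveat: the trade-off fixing $\delta_j$ is not really ``the $\bar\partial$-contribution versus boundary matching.'' In the paper the pure $\bar\partial$-contribution is $\mathcal O(t^{-5/6})$ in $\mathcal R_I$ (Proposition \ref{prop:estdbar}) and $\mathcal O(t^{-2/3})$ in $\mathcal R_{II}$ (Proposition \ref{prop:estdbar2ndtran}), independent of $\delta_j$; what actually trades off are the local-model replacement error $N^{(\ell)}_k-\tilde N^{(\ell)}_k=\mathcal O(t^{-1/3+4\delta_j})$ from Proposition \ref{prop:Xi}, which worsens as the disk grows, and the small-norm expansion error $\mathcal O(t^{-(1/3+9\delta_j)})$ in $E$ from Proposition \ref{asyE}, which improves. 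That is where $\min\{1-4\delta_1,1/3+9\delta_1\}$ and $\max\{-2/3+4\delta_2,-1/3-5\delta_2\}$ come from.

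For part (c) there is a genuine structural gap in your proposal. You describe a single genus-one outer parametrix in the $z$-plane with branch points $\pm a,\pm b$ satisfying $0<b<1<a$, i.e., bands $[b,a]$ and $[-a,-b]$ containing the degeneracies $z=\pm 1$. The paper does not do this, and for good reason: since $|r(\pm1)|=1$, the factor $1-|r|^2$ has a double zero at $z=\pm1$, so the diagonal entries of the jump degenerate there. The $g$-function must therefore be arranged so that $z=\pm1$ lie in the \emph{gap}, where the transformed jump is diagonal and equals $(1-|R|^2)^{\sigma_3}e^{-i\tau(g_+-g_-)\sigma_3}$; this is precisely what makes the coefficient $C_R(2-\hat\xi)\hat k^2 e^{-i\tau A_1}\equiv 1$ in \eqref{equ: int} possible, and it is why the regularity $r\in H^s$, $s>5/2$, is needed (to get the quadratic vanishing $1-|R|^2\sim C_R(2-\hat\xi)\hat k^2$ in \eqref{equ:R1}, not for $\bar\partial$-estimates as you suggest). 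Concretely, the paper first reduces $M^{(4)}$ to two independent local problems $N^{(r)},N^{(l)}$ on shrinking intervals around $z=1$ and $z=-1$, then rescales each by $\hat k=\sqrt{12p/q}\,(2-\hat\xi)^{-1/2}(z\mp1)$, and constructs a genus-one $g$-function with branch points $\pm a,\pm b$ (with $0\leq a\leq b$ of $\mathcal O(1)$ in $\hat k$) so that $\hat k=0$, i.e.\ $z=\pm1$, sits in the gap $(-a,a)$. There are thus eight band endpoints in the original $z$-plane, four near each of $\pm1$, rather than your four. Your global picture with $\pm1$ inside the bands would not produce the required diagonal form of the jump through the degenerate points and would not match the normalization \eqref{equ: a,b con1}--\eqref{equ:a,b con2}, whose parameters $a,b$ are $\mathcal O(1)$ quantities in the blown-up $\hat k$-variable, not numbers straddling $1$.
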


The Painelv\'{e} II transcendents in \eqref{result:1stTran} and \eqref{result:2ndTran} are real-valued, non-singular on the real line, and behave like $k\textnormal{Ai}(s)$, $-1 \leqslant k \leqslant 1$, for large positive $s$. This one-parameter family of solutions are known as the Hastings-McLeod solutions \cite{Hastings1980ABV} for $k=\pm 1$ and the Ablowitz-Segur solutions \cite{Segur1981AsymptoticSO} for $-1< k < 1$. It is worthwhile to see that the Painlev\'e transcendents and their higher order analogues play important roles in asymptotic studies of integrable PDEs. Among others, we mention their appearances in the KdV, mKdV equations and its hierarchy \cites{Segur1981AsymptoticSO,Deift1992ASD,CharlierLenellsJLMS,Huang20225291}, in the CH equation \cite{Monvel2010PainlevTypeAF}, in the Sasa-Satsuma equation \cite{Huang20207480}, in the defocusing nonliear Schr\"{o}dinger equation under nonzero background \cite{wang2023defocusing} and in the small dispersion limit of nonlinear integrable PDEs \cites{Bertola2009UniversalityIT,Bertola13,Claeys2008UniversalityOT,Claeys2008PainlevIA,Lu22}

The collisionless shock region $\mathcal{R}_{III}$ only occurs in the generic case $|r(\pm 1)|=1$. The disappearance of the shock region when $|r(\pm 1)|< 1$ is closely related to the significantly different asymptotics between the Hastings-McLeod solution and the Ablowitz-Segur solution of Painelv\'{e} II equation as $s\to -\infty$; cf. \cite{FIKN}. Similar phenomenon has also been observed in the KdV equation \cites{MSasycollision, Deift1994TheCS} and the CH equation on the half-line \cite{Monvel2009LongTA}. By Remarks \ref{remark a,b} and \ref{RK:indeppq} below, the asymptotic formula \eqref{result: u(x,t); part (c)} is independent of the choice of $p$ and $q$.

Finally, by adding a linear dispersion term $\kappa u_x$  to  \eqref{def:mCHeq} will lead to the following form of the
mCH equation
\begin{align}\label{mch}
	&m_{t}+\left(m\left(u^{2}-u_{x}^{2}\right)\right)_{x}+\kappa u_{x}=0,
\end{align}	
where $\kappa >0$ characterizes the effect of the linear dispersion. The strategy used in this paper also works to the equation \eqref{mch} after proper modifications. We leave the details of the calculation to the interested readers.

\paragraph{Outline of the paper}\label{subsec:outline}
The rest of this paper is organized as follows.  In Section \ref{sec:prework}, we recall an RH characterization of the mCH equation. 
After a preliminary transformation, this RH problem is asymptotically equivalent to a regular (holomorphic) RH problem, which serves as the starting point of performing $\bar{\partial}$ 
nonlinear steepest descent analysis \cites{McLaughlin2006,Cuccagna2016OnTA,Borghese2018LongTA,Dieng2019}. 
In Sections \ref{sec: 1st transition zone} and \ref{sec:2nd transition zone}, we prove parts (a) and (b) of Theorem \ref{mainthm}, respectively. 
In both cases, we need to open $\bar{\partial}$ lenses to construct a mixed $\bar{\partial}$-RH problem, which can be decomposed into a pure RH problem and a pure $\bar{\partial}$-problem. The Painelv\'{e} II parametrix introduced in Appendix \ref{appendix: RHP for PII} plays an important role in the analysis of the pure RH problem. In Section \ref{sec: collisionless shock region}, we carry out the analysis in the third transition zone. A key step in this case is to apply the $g$-function mechanism \cite{Deift1994TheCS} to arrive at a model RH problem solvable in terms of the Jacobi theta function. The outcome of our analysis is the proof of part (c) of Theorem \ref{mainthm}, which is presented at the end of the section.

\paragraph{Notation} Throughout this paper, the following notation will be used.
\begin{itemize}
	\item[$\bullet$]The norm in the classical space $ L^p (\mathbb{R})$, $ 1 \leqslant p \leqslant \infty $,  is written as $ \|\cdot \|_{p}$.
	\item[$\bullet$]The weighted Sobolev space $H^{k,s}$ defined in \eqref{def:SobSpace} is a Banach space
	equipped with the norm
	\begin{equation*}
		\Vert f \Vert_{H^{k,s}}=\sum_{j=0}^{k}\Vert \langle \cdot \rangle^s\partial_x^{j}f \Vert_{2}.
	\end{equation*}
	\item[$\bullet$]
    For a complex-valued function $f(z)$, we use
	\begin{equation*}
		f^{*}(z):=\overline{f(\bar{z})}, \qquad z\in\mathbb{C},
	\end{equation*}
	to denote its Schwartz conjugation.
	\item[$\bullet$]For a region $U\subseteq \mathbb{C}$, we use $U^*$ to denote the conjugated region of $U$.
	\item[$\bullet$]As usual, the classical Pauli matrices $\{\sigma_j\}_{j=1,2,3}$ are defined by
	\begin{equation}\label{def:PauliM}
		\sigma_1:=\begin{pmatrix}0 & 1 \\ 1 & 0\end{pmatrix}, \quad
		\sigma_2:=\begin{pmatrix}0 & -i \\ i & 0\end{pmatrix}, \quad
		\sigma_3:=\begin{pmatrix}1 & 0 \\ 0 & -1\end{pmatrix}.
	\end{equation}
   For a $2\times 2$ matrix $A$, we also define
	$$e^{\hat{\sigma}_j}A:=e^{\sigma_j}Ae^{-\sigma_j}, \quad j=1,2,3.$$
	\item[$\bullet$]For any smooth oriented curve $\Sigma$, the Cauchy operator $\mathcal{C}$ on $\Sigma$ is defined  by
	\begin{align*}
		\mathcal{C}f(z)=\frac{1}{2\pi i}\int_{\Sigma}\frac{f(\zeta)}{\zeta-z}\dif\zeta, \qquad  z\in\mathbb{C}\setminus \Sigma.
	\end{align*}
	Given a function $f \in L^p(\Sigma)$, $1\leqslant p<\infty$,
	\begin{align}\label{def:Cpm}
		\mathcal{C}_\pm f(z):=\lim_{z'\to z\in\Sigma}\frac{1}{2\pi i}\int_{\Sigma}\frac{f(\zeta)}{\zeta-z'}\dif\zeta
	\end{align}
   stands for the positive/negative (according to the orientation of $\Sigma$) non-tangential boundary value of $\mathcal{C}f$.
	\item[$\bullet$]We write $a\lesssim b$ to denote the inequality $a\leqslant Cb$ for some constant $C>0$.
    \item[$\bullet$] If $A$ is a matrix, then $(A)_{ij}$ stands for its $(i,j)$-th entry.
\end{itemize}

\section{Preliminaries}\label{sec:prework}

\subsection {An RH characterization of the mCH equation}\label{subsec:RH characterization}
The RH problem associated to the Cauchy problem \eqref{mcho2} for the mCH equation is based on the delicate analysis for the properties of Jost functions, reflection coefficient as well as the eigenvalues,
which form the direct theory for mCH equation.

Recall the following Lax pair (see \cites{Qiao2006ANI,Schiff1996}) of the mCH equation:
\begin{equation}\label{lax0}
	\Phi_x = X \Phi,\hspace{0.5cm}\Phi_t =T \Phi,
\end{equation}
where
\begin{align}
	&X=\frac{1}{2}\left(\begin{array}{cc}
		-1 &  {\lambda} m \\
		-\lambda  m & 1
	\end{array}\right), \qquad \qquad   \lambda=-\frac{1}{2}(z+z^{-1}), \nonumber\\
	&T=\left(\begin{array}{cc}
		\lambda ^{-2}+\frac{1}{2}(u^2-u_x^2) & -\lambda ^{-1} (u-u_x+1)-\frac{ 1}{2}\lambda (u^2-u_x^2)m \\[5pt]
		\lambda ^{-1} (u+u_x+1)+\frac{ 1}{2}\lambda (u^2-u_x^2)m  & -\lambda ^{-2}-\frac{1}{2}(u^2-u_x^2)
	\end{array}\right).\nonumber
\end{align}
The eigenvalues of spectral functions are distributed on the unit circle except on the real and imaginary axes, which are simple and finite. From \eqref{lax0}, an RH problem was proposed in \cites{Monvel2020ARA}, which is meromorphic in $\mathbb{C}\setminus \mathbb{R}$ and admits simple poles at $\pm 1$ and at the discrete spectrum. This RH problem, however, is not related to the solution of the mCH equation in a direct way. Due to this reason, the following RH problem was firstly introduced in \cite{Monvel2020TheMC}, which removes the singularities at $z=\pm 1$ on $\mathbb{R}$. To proceed,
let $\zeta_1,\cdots,\zeta_{\mathcal{N}}$ and $r(k)$ be the eigenvalues in the fourth quadrant and the reflection coefficient associated to
the Cauchy problem \eqref{mcho2}. For convenience, we use the notation $z_j$ instead of $\zeta_j$ to represent the discrete spectrum in such way that $z_l=\zeta_j$, $j\in\{1,\cdots,\mathcal{N}\}\leadsto l\in\{1,\cdots, \mathcal{N}\}$ and $z_l=-\bar{\zeta}_j$, $j\in\{1,\cdots, \mathcal{N}\}\leadsto l\in\{\mathcal{N}+1,\cdots,2\mathcal{N}\}$. Thus, $\mathcal{Z}=\{z_j\}_{j=1}^{2\mathcal{N}}$$(|z_j|=1)$ stands for the collection of those discrete spectrums located in the lower half plane. The RH problem related to the Cauchy problem of the mCH equation reads as follows.

\begin{RHP}\label{RHP:M^{(1)}}
	\hfill
	\begin{itemize}
		\item[$\bullet$] $M^{(1)}(z)$ is meromorphic for $z\in\mathbb{C}\setminus \mathbb{R}$ with simple poles in the set $\mathcal{Z}\cup {\mathcal{Z}}^*$, and satisfies the symmetry relations
		 $$M^{(1)}(z)=\sigma_1\overline{M^{(1)}(\bar{z})}\sigma_1=\sigma_2M^{(1)}(-z)\sigma_2,$$
    where $\sigma_j$, $j=1,2$, is the Pauli matrix defined in \eqref{def:PauliM}.
		\item[$\bullet$] For $z\in \mathbb{R}$, we have
		\begin{equation}
			M^{(1)}_+(z)=M^{(1)}_-(z)V^{(1)}(z)
		\end{equation}
       where
		\begin{equation}
			V^{(1)}(z)=\left(\begin{array}{cc}
				1-|r(z)|^2 & r(z)e^{2i\theta(z)}\\
				-\bar{r}(z)e^{-2i\theta(z)} & 1
			\end{array}\right)  \label{V}
		\end{equation}
		with
		\begin{align}
			&\theta(z):=\theta(z;y,t)=-\frac{1}{4}\left(z-\frac{1}{z} \right) \left( y-\frac{8t}{(z+1/z)^2}\right) \label{theta},
		\end{align}
		and where
		\begin{equation}
			y(x)=x- \int_{x}^{+\infty}\left(m(\zeta)-1\right)\dif\zeta.
		\end{equation}
        is a space scaling variable
		\item[$\bullet$] For each $z_j \in \mathcal{Z}$, $j=1,\ldots,2\mathcal{N}$, we have the following residue conditions:
                          \begin{align}
			&\res_{z=z_j}M(z)=\lim_{z\to z_j}M(z)\left(\begin{array}{cc}
				0 & c_je^{2i\theta(z_j)}\\
				0 & 0
			\end{array}\right),\\
			&\res_{z=\bar{z}_j}M(z)=\lim_{z\to \bar{z}_j}M(z)\left(\begin{array}{cc}
				0 & 0\\
				\bar{c}_je^{-2i\theta(\bar{z}_j)} & 0
			\end{array}\right),
		\end{align}
       where $c_j$ is the norming constant associated with $z_j$.
\item[$\bullet$] As $z\rightarrow\infty$ in $\mathbb{C}\setminus \mathbb{R}$, we have $M^{(1)}(z)=I+\mathcal{O}(z^{-1})$.

	\end{itemize}
\end{RHP}
Suppose the initial data satisfies Assumption \ref{Assumption}, it has been shown in \cite{YFLGlobalmCH} that the scattering data  $\left(r,\{z_j,c_j\}_{j=1}^{2\mathcal{N}}\right)$
belongs to $\left(H^{1,2}(\mathbb{R})\cap H^{2,1}(\mathbb{R})\right) \otimes\mathbb{C}^{2\mathcal{N}}\otimes\mathbb{C}^{2\mathcal{N}}$, and the above RH problem admits a unique solution. Moreover, we can use the local behaviors of RH problem \ref{RHP:M^{(1)}} at $z=0$ and $z=i$ to characterize the solution for the Cauchy problem of the mCH equation in the following way.
\begin{Proposition}\label{prop: M^{(1)}}
	Let $M^{(1)}(z;y,t)$ is the unique solution of RH problem \ref{RHP:M^{(1)}}, we have
	\begin{align}
		M^{(1)}(0 ;y,t)=\left(\begin{array}{cc}
			\alpha(y,t) & i\beta(y,t) \\
			-i\beta(y,t) & \alpha(y,t)
		\end{array}\right),
	\end{align}
	where $\alpha(y,t)$, $\beta(y,t)$ are real functions satisfying $\alpha^2-\beta^2=1$.
	If $\beta\neq0$, we have
	\begin{align}
		M^{(1)}(z;y,t)=&\left(\begin{array}{cc}
			f_1(y,t) &\frac{ i\beta}{\alpha+1}f_2(y,t) \\
			-\frac{ i\beta}{\alpha+1}f_1(y,t) & f_2(y,t)
		\end{array}\right)\nonumber\\
		&+\left(\begin{array}{cc}
			\frac{ i\beta}{\alpha+1}g_1(y,t) & g_2(y,t) \\
			g_1(y,t) & -\frac{ i\beta}{\alpha+1}g_2(y,t)
		\end{array}\right)(z-i)+\mathcal{O}\left(\left(z-i\right)^2\right), \quad z\to i,
	\end{align}
	where $g_1(y,t)$, $g_2(y,t)$, $f_1(y,t)$, $f_2(y,t)$ are real functions.

	The solution $u(x,t)=u(x(y,t),t)$ of the Cauchy problem \eqref{mcho2} can then be expressed in the following parametric form:
	\begin{subequations}\label{recovering u}
		\begin{align}
			x(y,t)&= y+2\log \left( \alpha_1(y,t)\right) ,\label{res2} \\
           u(y,t)&= 1-\alpha_2(y,t)\alpha_1(y,t)-\alpha_3(y,t)\alpha_1(y,t) ^{-1},\label{res1}
		\end{align}
	\end{subequations}
	where
	\begin{align}
		&\alpha_1(y,t)=\left( 1-\frac{\beta}{\alpha+1}\right) f_1,\ \ 	\alpha_2(y,t)=\frac{\beta}{\alpha+1}f_2+\left( 1-\frac{\beta}{\alpha+1}\right) g_2,\label{al1}\\
		&\alpha_3(y,t)=\frac{-\beta}{\alpha+1}f_1+\left( 1-\frac{\beta}{\alpha+1}\right) g_1.\label{al2}
	\end{align}
\end{Proposition}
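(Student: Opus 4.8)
The plan is to deduce the three claims from three structural facts about $M^{(1)}$: its two symmetries, the normalization $\det M^{(1)}\equiv1$, and its realization through the Jost solutions of the Lax pair \eqref{lax0}. First I would establish $\det M^{(1)}\equiv1$: the jump \eqref{V} has unit determinant on $\mathbb{R}$, and at each $z_j$ (resp.\ $\bar z_j$) the residue condition makes the singular column of $M^{(1)}$ proportional, at that very point, to its regular column, so $\det M^{(1)}$ has only removable singularities at the discrete spectrum; being analytic across $\mathbb{R}$ and equal to $1+\mathcal{O}(z^{-1})$ at infinity, it is $\equiv1$ by Liouville. I would also record two analyticity facts that make the evaluations meaningful: $z=i$ is an interior point of $\mathbb{C}\setminus(\mathbb{R}\cup\mathcal{Z}\cup\mathcal{Z}^{*})$, so $M^{(1)}$ is holomorphic there; and since the reflection coefficient vanishes at $z=0$, the jump $V^{(1)}(z)$ tends to $I$ as $z\to0$ along $\mathbb{R}$, so $M^{(1)}$ extends continuously to $z=0$ with $M^{(1)}_{+}(0)=M^{(1)}_{-}(0)$.

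The statement about $M^{(1)}(0)$ is then pure symmetry: because $\bar0=-0=0$, the relations $M^{(1)}(z)=\sigma_1\overline{M^{(1)}(\bar z)}\sigma_1=\sigma_2M^{(1)}(-z)\sigma_2$ degenerate at $z=0$ into algebraic identities for the single matrix $M^{(1)}(0)$; comparing entries forces equal real diagonal entries and opposite purely imaginary off-diagonal entries, i.e.\ $M^{(1)}(0)=\bigl(\begin{smallmatrix}\alpha & i\beta\\ -i\beta & \alpha\end{smallmatrix}\bigr)$ with $\alpha,\beta$ real, while $\det M^{(1)}(0)=\alpha^{2}-\beta^{2}=1$. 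The same two symmetries, applied around $z=i$ (where $\bar i=-i$), interchange the Taylor coefficients of $M^{(1)}$ at $z=i$ with those at $z=-i$; running this through order by order forces the $(z-i)^{0}$-coefficient to have real diagonal and purely imaginary off-diagonal entries and the $(z-i)^{1}$-coefficient to have purely imaginary diagonal and real off-diagonal entries --- exactly the two reality patterns displayed in the statement, with $f_{1},f_{2},g_{1},g_{2}$ real.

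What these arguments do not yet fix is the precise proportionality $i\beta/(\alpha+1)$ tying the off-diagonal/diagonal ratios at $z=i$ to the data at $z=0$, nor the parametric reconstruction \eqref{recovering u}; this is the hard part. For it I would return to the Jost solutions of \eqref{lax0} out of which $M^{(1)}$ is assembled (via the renormalization of \cite{Monvel2020TheMC} that removes the singularities at $z=\pm1$ from the original RH problem of \cite{Monvel2020ARA}). Two features of the Lax pair \eqref{lax0} are decisive: the point $z=i$ has $\lambda=-\tfrac12(z+1/z)=0$, at which the $x$-equation collapses to $\Phi_x=-\tfrac12\sigma_3\Phi$ and the off-diagonal $\lambda^{-1}$-part of $T$ carries $u\pm u_x$; and the change of variables $x\leftrightarrow y$ is governed by $x-y=\int_x^{+\infty}(m(\zeta)-1)\dif\zeta$, whose right-hand side is again read off from the Jost asymptotics. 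Tracking the Jost normalization through the dressing transformation shows that its value at $z=i$ is the rotation $\bigl(\begin{smallmatrix}1 & i\beta/(\alpha+1)\\ -i\beta/(\alpha+1) & 1\end{smallmatrix}\bigr)$ built from $M^{(1)}(0)$ --- this is where the common factor $i\beta/(\alpha+1)$ enters both the $(z-i)^{0}$ and the $(z-i)^{1}$ coefficients --- and consistency with $\det M^{(1)}\equiv1$ then forces $f_1f_2\bigl(1-(\beta/(\alpha+1))^{2}\bigr)=1$. Matching the resulting $z\to0$ and $z\to i$ expansions of $M^{(1)}$ against those of the Jost solutions produces \eqref{res2} and \eqref{res1} with $\alpha_1,\alpha_2,\alpha_3$ as in \eqref{al1}--\eqref{al2}. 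I expect the main obstacle to be exactly this last matching: carrying the Jost normalizations and the explicit dressing through with enough care that the combination $(1-\beta/(\alpha+1))f_1$, and the split of the $\mathcal{O}((z-i)^{1})$ data into $\alpha_2$ and $\alpha_3$, come out correctly; everything else reduces to the two symmetries and $\det M^{(1)}\equiv1$.
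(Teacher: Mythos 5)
The paper itself gives no proof of this proposition: it is recalled directly from the RH formalism of \cite{Monvel2020ARA} and \cite{Monvel2020TheMC}, so there is no internal argument to measure yours against. Of the parts you actually work out, the symmetry-and-determinant reasoning is correct. Composing the two symmetries yields $M^{(1)}(z)=\sigma_3\,\overline{M^{(1)}(-\bar z)}\,\sigma_3$; at $z=0$ (together with $M^{(1)}(0)=\sigma_2 M^{(1)}(0)\sigma_2$) this forces the stated form with $\alpha,\beta$ real, and at $z=i$ (where $-\bar{i}=i$) it forces real-diagonal/imaginary-off-diagonal entries at order $(z-i)^0$ and the opposite reality pattern at order $(z-i)^1$, exactly the structure carried by $f_1,f_2,g_1,g_2$. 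Your determinant argument is also sound: at $z_j$ the residue matrix equals $c_je^{2i\theta(z_j)}$ times the regular column, so the putative simple pole of $\det M^{(1)}$ cancels, and Liouville then gives $\det M^{(1)}\equiv 1$, hence $\alpha^2-\beta^2=1$. You should nonetheless justify evaluating $M^{(1)}$ at $z=0\in\mathbb{R}$: you assert $r(0)=0$ without support, and the claim deserves a line.

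The genuine gap is that the substantive content of the proposition --- the common ratio $i\beta/(\alpha+1)$ tying the $(z-i)^0$ and $(z-i)^1$ coefficients to the data $(\alpha,\beta)$ at $z=0$, and the reconstruction \eqref{res2}--\eqref{res1} with $\alpha_1,\alpha_2,\alpha_3$ as in \eqref{al1}--\eqref{al2} --- is never actually derived. You correctly observe that none of this follows from RH problem \ref{RHP:M^{(1)}} alone and must come from the dressing of \cite{Monvel2020TheMC} that manufactures $M^{(1)}$ out of the singular RH problem of \cite{Monvel2020ARA} and, behind it, the Jost solutions of \eqref{lax0} at $\lambda=0$. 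But the writeup only announces what that dressing ``shows'' and what a matching of $z\to 0$ and $z\to i$ expansions ``produces''; it never writes the dressing factor, never computes its value and $z$-derivative at $z=i$ to obtain the ratio $i\beta/(\alpha+1)$ and the identity $f_1f_2(1-(\beta/(\alpha+1))^2)=1$, and never derives $x-y=\int_x^{+\infty}(m(\zeta)-1)\,\dif\zeta$ from the Jost asymptotics to obtain \eqref{res2}. Those computations carry essentially all the weight of the proposition, and they are missing from the proposal.
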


\subsection{From $M^{(1)}$ to a regular RH problem }\label{subsec:deform RH problem}
It is the aim of this section to construct a regular (holomorphic) RH problem as our basis for further asymptotic analysis. The regular RH problem should satisfy the following conditions.
\begin{itemize}
	\item[(i)] It admits a similar jump structure as in the RH problem \ref{RHP:M^{(1)}} for $M^{(1)}$ on the real line.
	\item[(ii)] Different factorizations of the jump matrix \eqref{V} should be taken for different transition zones.
	\item[(iii)] The residue conditions for $M^{(1)}$ should be converted into the jump conditions on the auxiliary contours such that the jump matrices
	vanish rapidly to the identity matrix as the parameter $t\rightarrow+\infty$.
\end{itemize}
In particular, the three items above are closely related to the growth or decay of the exponential function $e^{\pm2i\theta}$ (more specifically, the sign of $\im\theta(z)$) encountered in  both the jump condition and the residue conditions of $M^{(1)}$, where $\theta(z)$ defined in \eqref{theta} can be rewritten as
\begin{align}\label{def:phasefunc}
	\theta(z)=-\frac{t}{4}(z-z^{-1})\left[\hat{\xi}-\frac{8}{(z+z^{-1})^2} \right], \qquad \hat{\xi}:=\frac{y}{t}.
\end{align}
The signature tables for $\hat{\xi}=2$ and $\hat{\xi}=-1/4$ are depicted in Figure \ref{fig:critfigSigntable}.
\begin{figure}[htbp]
	\begin{center}
\tikzset{every picture/.style={line width=0.75pt}} 
\begin{tikzpicture}[x=0.75pt,y=0.75pt,yscale=-1,xscale=1]
\draw    (3,146.28) -- (287.7,146.28) ;
\draw [shift={(289.7,146.28)}, rotate = 180] [color={rgb, 255:red, 0; green, 0; blue, 0 }  ][line width=0.75]    (10.93,-3.29) .. controls (6.95,-1.4) and (3.31,-0.3) .. (0,0) .. controls (3.31,0.3) and (6.95,1.4) .. (10.93,3.29)   ;
\draw    (137.48,25.35) -- (137.48,260) ;
	\draw (128.48,265) node [anchor=north west][inner sep=0.75pt]  [font=\scriptsize]  {$(a)$};
\draw [shift={(137.48,23.35)}, rotate = 90] [color={rgb, 255:red, 0; green, 0; blue, 0 }  ][line width=0.75]    (10.93,-3.29) .. controls (6.95,-1.4) and (3.31,-0.3) .. (0,0) .. controls (3.31,0.3) and (6.95,1.4) .. (10.93,3.29)   ;
\draw  [draw opacity=0] (136.99,90.34) .. controls (137.42,84.69) and (139.2,79.55) .. (142.47,75.44) .. controls (153.41,61.67) and (176.8,64.63) .. (194.71,82.05) .. controls (212.62,99.48) and (218.27,124.76) .. (207.33,138.53) .. controls (204.06,142.64) and (199.68,145.26) .. (194.68,146.45) -- (174.9,106.98) -- cycle ; \draw   (136.99,90.34) .. controls (137.42,84.69) and (139.2,79.55) .. (142.47,75.44) .. controls (153.41,61.67) and (176.8,64.63) .. (194.71,82.05) .. controls (212.62,99.48) and (218.27,124.76) .. (207.33,138.53) .. controls (204.06,142.64) and (199.68,145.26) .. (194.68,146.45) ;
\draw  [draw opacity=0] (194.09,146.27) .. controls (199.31,147.16) and (203.94,149.59) .. (207.45,153.64) .. controls (219.03,166.97) and (214.19,192.88) .. (196.66,211.51) .. controls (179.12,230.14) and (155.53,234.42) .. (143.95,221.09) .. controls (140.44,217.04) and (138.44,211.83) .. (137.85,206.01) -- (175.7,187.36) -- cycle ; \draw   (194.09,146.27) .. controls (199.31,147.16) and (203.94,149.59) .. (207.45,153.64) .. controls (219.03,166.97) and (214.19,192.88) .. (196.66,211.51) .. controls (179.12,230.14) and (155.53,234.42) .. (143.95,221.09) .. controls (140.44,217.04) and (138.44,211.83) .. (137.85,206.01) ;
\draw  [draw opacity=0] (82.51,146.99) .. controls (77.63,145.92) and (73.31,143.46) .. (69.97,139.56) .. controls (58.62,126.27) and (63.07,101.21) .. (79.92,83.58) .. controls (96.77,65.96) and (119.64,62.46) .. (130.99,75.75) .. controls (134.33,79.65) and (136.3,84.57) .. (136.99,90.02) -- (100.48,107.65) -- cycle ; \draw   (82.51,146.99) .. controls (77.63,145.92) and (73.31,143.46) .. (69.97,139.56) .. controls (58.62,126.27) and (63.07,101.21) .. (79.92,83.58) .. controls (96.77,65.96) and (119.64,62.46) .. (130.99,75.75) .. controls (134.33,79.65) and (136.3,84.57) .. (136.99,90.02) ;
\draw  [draw opacity=0] (137.8,205.74) .. controls (137.05,211.3) and (134.99,216.28) .. (131.53,220.16) .. controls (119.93,233.18) and (96.97,228.96) .. (80.26,210.72) .. controls (63.54,192.49) and (59.39,167.15) .. (70.99,154.13) .. controls (74.45,150.25) and (78.92,147.9) .. (83.93,147) -- (101.26,187.14) -- cycle ; \draw   (137.8,205.74) .. controls (137.05,211.3) and (134.99,216.28) .. (131.53,220.16) .. controls (119.93,233.18) and (96.97,228.96) .. (80.26,210.72) .. controls (63.54,192.49) and (59.39,167.15) .. (70.99,154.13) .. controls (74.45,150.25) and (78.92,147.9) .. (83.93,147) ;
\draw    (137.01,90.18) .. controls (139,107.31) and (167.53,120.34) .. (193.96,146.25) ;
\draw    (82.65,147.02) .. controls (83,154.89) and (147,88.5) .. (137.01,90.18) ;
\draw    (137.83,205.87) .. controls (142,211.32) and (114,169.27) .. (84.56,147.77) ;
\draw    (138.28,205.9) .. controls (137,202.47) and (189,141.61) .. (194.44,146.5) ;
\draw    (318,147.15) -- (632.53,147.15) ;
\draw [shift={(634.53,147.15)}, rotate = 180] [color={rgb, 255:red, 0; green, 0; blue, 0 }  ][line width=0.75]    (10.93,-3.29) .. controls (6.95,-1.4) and (3.31,-0.3) .. (0,0) .. controls (3.31,0.3) and (6.95,1.4) .. (10.93,3.29)   ;
\draw    (466.47,24) -- (466.47,261) ;
	\draw (458.47,266)node [anchor=north west][inner sep=0.75pt]  [font=\scriptsize]  {$(b)$};
\draw [shift={(466.47,22)}, rotate = 90] [color={rgb, 255:red, 0; green, 0; blue, 0 }  ][line width=0.75]    (10.93,-3.29) .. controls (6.95,-1.4) and (3.31,-0.3) .. (0,0) .. controls (3.31,0.3) and (6.95,1.4) .. (10.93,3.29)   ;
\draw  [draw opacity=0] (467.19,47.12) .. controls (475.35,43.48) and (484.27,41.44) .. (493.62,41.37) .. controls (533.09,41.07) and (565.35,75.87) .. (565.68,119.1) .. controls (566.02,162.34) and (534.29,197.63) .. (494.82,197.94) .. controls (485.47,198.01) and (476.52,196.11) .. (468.3,192.59) -- (494.22,119.65) -- cycle ; \draw   (467.19,47.12) .. controls (475.35,43.48) and (484.27,41.44) .. (493.62,41.37) .. controls (533.09,41.07) and (565.35,75.87) .. (565.68,119.1) .. controls (566.02,162.34) and (534.29,197.63) .. (494.82,197.94) .. controls (485.47,198.01) and (476.52,196.11) .. (468.3,192.59) ;
\draw  [dash pattern={on 0.84pt off 2.51pt}] (82.29,150.86) .. controls (82.29,117.43) and (106.78,90.34) .. (136.99,90.34) .. controls (167.2,90.34) and (191.69,117.43) .. (191.69,150.86) .. controls (191.69,184.28) and (167.2,211.38) .. (136.99,211.38) .. controls (106.78,211.38) and (82.29,184.28) .. (82.29,150.86) -- cycle ;
\draw  [draw opacity=0] (467.19,102.99) .. controls (475.36,99.39) and (484.28,97.37) .. (493.64,97.3) .. controls (533.38,97) and (565.86,131.8) .. (566.19,175.03) .. controls (566.52,218.27) and (534.58,253.56) .. (494.84,253.87) .. controls (485.49,253.94) and (476.53,252.07) .. (468.3,248.59) -- (494.24,175.59) -- cycle ; \draw   (467.19,102.99) .. controls (475.36,99.39) and (484.28,97.37) .. (493.64,97.3) .. controls (533.38,97) and (565.86,131.8) .. (566.19,175.03) .. controls (566.52,218.27) and (534.58,253.56) .. (494.84,253.87) .. controls (485.49,253.94) and (476.53,252.07) .. (468.3,248.59) ;
\draw  [draw opacity=0] (465.08,192.58) .. controls (456.84,196.04) and (447.88,197.88) .. (438.53,197.75) .. controls (399.06,197.17) and (367.58,161.66) .. (368.2,118.43) .. controls (368.83,75.2) and (401.33,40.62) .. (440.8,41.19) .. controls (450.15,41.33) and (459.05,43.42) .. (467.19,47.12) -- (439.66,119.47) -- cycle ; \draw   (465.08,192.58) .. controls (456.84,196.04) and (447.88,197.88) .. (438.53,197.75) .. controls (399.06,197.17) and (367.58,161.66) .. (368.2,118.43) .. controls (368.83,75.2) and (401.33,40.62) .. (440.8,41.19) .. controls (450.15,41.33) and (459.05,43.42) .. (467.19,47.12) ;
\draw  [draw opacity=0] (465.08,248.45) .. controls (456.84,251.91) and (447.88,253.75) .. (438.53,253.61) .. controls (399.06,253.04) and (367.57,217.53) .. (368.2,174.3) .. controls (368.83,131.07) and (401.33,96.48) .. (440.79,97.06) .. controls (450.15,97.19) and (459.05,99.29) .. (467.19,102.99) -- (439.66,175.33) -- cycle ; \draw   (465.08,248.45) .. controls (456.84,251.91) and (447.88,253.75) .. (438.53,253.61) .. controls (399.06,253.04) and (367.57,217.53) .. (368.2,174.3) .. controls (368.83,131.07) and (401.33,96.48) .. (440.79,97.06) .. controls (450.15,97.19) and (459.05,99.29) .. (467.19,102.99) ;
\draw  [dash pattern={on 0.84pt off 2.51pt}] (421.69,147.78) .. controls (421.69,123.04) and (442.06,102.99) .. (467.19,102.99) .. controls (492.31,102.99) and (512.69,123.04) .. (512.69,147.78) .. controls (512.69,172.53) and (492.31,192.58) .. (467.19,192.58) .. controls (442.06,192.58) and (421.69,172.53) .. (421.69,147.78) -- cycle ;
\draw    (467.19,102.99) .. controls (498,107.29) and (494,166.29) .. (465,163.29) ;
\draw    (466.84,192.58) .. controls (488,187.29) and (501,138.29) .. (467.54,127.27) ;
\draw    (467.19,102.99) .. controls (440,100.29) and (437,158.29) .. (465,163.29) ;
\draw    (466.84,192.58) .. controls (433,175.29) and (446.53,125.07) .. (469,127.29) ;
\draw  [dash pattern={on 0.84pt off 2.51pt}]  (574,34.29) -- (493.22,139.7) ;
\draw [shift={(492,141.29)}, rotate = 307.46] [color={rgb, 255:red, 0; green, 0; blue, 0 }  ][line width=0.75]    (10.93,-3.29) .. controls (6.95,-1.4) and (3.31,-0.3) .. (0,0) .. controls (3.31,0.3) and (6.95,1.4) .. (10.93,3.29)   ;
\draw  [dash pattern={on 0.84pt off 2.51pt}]  (356,29.29) -- (441.8,143.69) ;
\draw [shift={(443,145.29)}, rotate = 233.13] [color={rgb, 255:red, 0; green, 0; blue, 0 }  ][line width=0.75]    (10.93,-3.29) .. controls (6.95,-1.4) and (3.31,-0.3) .. (0,0) .. controls (3.31,0.3) and (6.95,1.4) .. (10.93,3.29)   ;
\draw (281.55,151.93) node [anchor=north west][inner sep=0.75pt]  [font=\scriptsize]  {$\re z$};
\draw (288.35,138.3) node [anchor=north west][inner sep=0.75pt]   [align=left] { };
\draw (138.21,13.63) node [anchor=north west][inner sep=0.75pt]  [font=\scriptsize]  {$\im z$};
\draw (129.1,134.51) node [anchor=north west][inner sep=0.75pt]  [font=\scriptsize]  {$0$};
\draw (71,154.72) node [anchor=north west][inner sep=0.75pt]  [font=\scriptsize]  {$-1$};
\draw (195.96,152.92) node [anchor=north west][inner sep=0.75pt]  [font=\scriptsize]  {$1$};
\draw (248,109.46) node [anchor=north west][inner sep=0.75pt]  [font=\footnotesize,color={rgb, 255:red, 208; green, 2; blue, 27 }  ,opacity=1 ]  {$-$};
\draw (250,182.48) node [anchor=north west][inner sep=0.75pt]  [font=\footnotesize,color={rgb, 255:red, 208; green, 2; blue, 27 }  ,opacity=1 ]  {$+$};
\draw (174,96.18) node [anchor=north west][inner sep=0.75pt]  [font=\footnotesize,color={rgb, 255:red, 208; green, 2; blue, 27 }  ,opacity=1 ]  {$+$};
\draw (177,184.7) node [anchor=north west][inner sep=0.75pt]  [font=\footnotesize,color={rgb, 255:red, 208; green, 2; blue, 27 }  ,opacity=1 ]  {$-$};
\draw (138,116.1) node [anchor=north west][inner sep=0.75pt]  [font=\footnotesize,color={rgb, 255:red, 208; green, 2; blue, 27 }  ,opacity=1 ]  {$-$};
\draw (139,162.57) node [anchor=north west][inner sep=0.75pt]  [font=\footnotesize,color={rgb, 255:red, 208; green, 2; blue, 27 }  ,opacity=1 ]  {$+$};
\draw (81,91.76) node [anchor=north west][inner sep=0.75pt]  [font=\footnotesize,color={rgb, 255:red, 208; green, 2; blue, 27 }  ,opacity=1 ]  {$+$};
\draw (86,189.12) node [anchor=north west][inner sep=0.75pt]  [font=\footnotesize,color={rgb, 255:red, 208; green, 2; blue, 27 }  ,opacity=1 ]  {$-$};
\draw (27,101.71) node [anchor=north west][inner sep=0.75pt]  [font=\footnotesize,color={rgb, 255:red, 208; green, 2; blue, 27 }  ,opacity=1 ]  {$-$};
\draw (33,201.29) node [anchor=north west][inner sep=0.75pt]  [font=\footnotesize,color={rgb, 255:red, 208; green, 2; blue, 27 }  ,opacity=1 ]  {$+$};
\draw (626.73,151.9) node [anchor=north west][inner sep=0.75pt]  [font=\scriptsize]  {$\re z$};
\draw (633.61,138.17) node [anchor=north west][inner sep=0.75pt]   [align=left] { };
\draw (468.53,12.22) node [anchor=north west][inner sep=0.75pt]  [font=\scriptsize]  {$\im z$};
\draw (599,118.48) node [anchor=north west][inner sep=0.75pt]  [font=\footnotesize,color={rgb, 255:red, 208; green, 2; blue, 27 }  ,opacity=1 ]  {$+$};
\draw (600,177.46) node [anchor=north west][inner sep=0.75pt]  [font=\footnotesize,color={rgb, 255:red, 208; green, 2; blue, 27 }  ,opacity=1 ]  {$-$};
\draw (492,68.46) node [anchor=north west][inner sep=0.75pt]  [font=\footnotesize,color={rgb, 255:red, 208; green, 2; blue, 27 }  ,opacity=1 ]  {$-$};
\draw (505,211.48) node [anchor=north west][inner sep=0.75pt]  [font=\footnotesize,color={rgb, 255:red, 208; green, 2; blue, 27 }  ,opacity=1 ]  {$+$};
\draw (518,122.48) node [anchor=north west][inner sep=0.75pt]  [font=\footnotesize,color={rgb, 255:red, 208; green, 2; blue, 27 }  ,opacity=1 ]  {$+$};
\draw (522,163.46) node [anchor=north west][inner sep=0.75pt]  [font=\footnotesize,color={rgb, 255:red, 208; green, 2; blue, 27 }  ,opacity=1 ]  {$-$};
\draw (464,111.46) node [anchor=north west][inner sep=0.75pt]  [font=\footnotesize,color={rgb, 255:red, 208; green, 2; blue, 27 }  ,opacity=1 ]  {$-$};
\draw (467,166.69) node [anchor=north west][inner sep=0.75pt]  [font=\footnotesize,color={rgb, 255:red, 208; green, 2; blue, 27 }  ,opacity=1 ]  {$+$};
\draw (469.54,130.67) node [anchor=north west][inner sep=0.75pt]  [font=\footnotesize,color={rgb, 255:red, 208; green, 2; blue, 27 }  ,opacity=1 ]  {$+$};
\draw (468.84,151.18) node [anchor=north west][inner sep=0.75pt]  [font=\footnotesize,color={rgb, 255:red, 208; green, 2; blue, 27 }  ,opacity=1 ]  {$-$};
\draw (453.54,129.67) node [anchor=north west][inner sep=0.75pt]  [font=\footnotesize,color={rgb, 255:red, 208; green, 2; blue, 27 }  ,opacity=1 ]  {$+$};
\draw (454.84,149.18) node [anchor=north west][inner sep=0.75pt]  [font=\footnotesize,color={rgb, 255:red, 208; green, 2; blue, 27 }  ,opacity=1 ]  {$-$};
\draw (406,121.48) node [anchor=north west][inner sep=0.75pt]  [font=\footnotesize,color={rgb, 255:red, 208; green, 2; blue, 27 }  ,opacity=1 ]  {$+$};
\draw (410,168.46) node [anchor=north west][inner sep=0.75pt]  [font=\footnotesize,color={rgb, 255:red, 208; green, 2; blue, 27 }  ,opacity=1 ]  {$-$};
\draw (414,69.46) node [anchor=north west][inner sep=0.75pt]  [font=\footnotesize,color={rgb, 255:red, 208; green, 2; blue, 27 }  ,opacity=1 ]  {$-$};
\draw (422,210.48) node [anchor=north west][inner sep=0.75pt]  [font=\footnotesize,color={rgb, 255:red, 208; green, 2; blue, 27 }  ,opacity=1 ]  {$+$};
\draw (347,197.46) node [anchor=north west][inner sep=0.75pt]  [font=\footnotesize,color={rgb, 255:red, 208; green, 2; blue, 27 }  ,opacity=1 ]  {$-$};
\draw (341,107.48) node [anchor=north west][inner sep=0.75pt]  [font=\footnotesize,color={rgb, 255:red, 208; green, 2; blue, 27 }  ,opacity=1 ]  {$+$};
\draw (514.69,151.18) node [anchor=north west][inner sep=0.75pt]  [font=\scriptsize]  {$1$};
\draw (405.69,149.18) node [anchor=north west][inner sep=0.75pt]  [font=\scriptsize]  {$-1$};
\draw (568.69,150.18) node [anchor=north west][inner sep=0.75pt]  [font=\scriptsize]  {$2+\sqrt{3}$};
\draw (320,150.55) node [anchor=north west][inner sep=0.75pt]  [font=\scriptsize]  {$-2-\sqrt{3}$};
\draw (571.69,18.18) node [anchor=north west][inner sep=0.75pt]  [font=\scriptsize]  {$2-\sqrt{3}$};
\draw (334.69,14.4) node [anchor=north west][inner sep=0.75pt]  [font=\scriptsize]  {$-2+\sqrt{3}$};
\end{tikzpicture}
		\caption{ Signature table for $\im\theta(z)$: (a) $\hat{\xi}=2$, (b) $\hat{\xi}=-1/4$. The ``$+$'' represents where $\im \theta(z)>0$ and ``$-$'' represents where $\im \theta(z)<0$. The dashed line is the
			unit circle. }\label{fig:critfigSigntable}
	\end{center}
\end{figure}
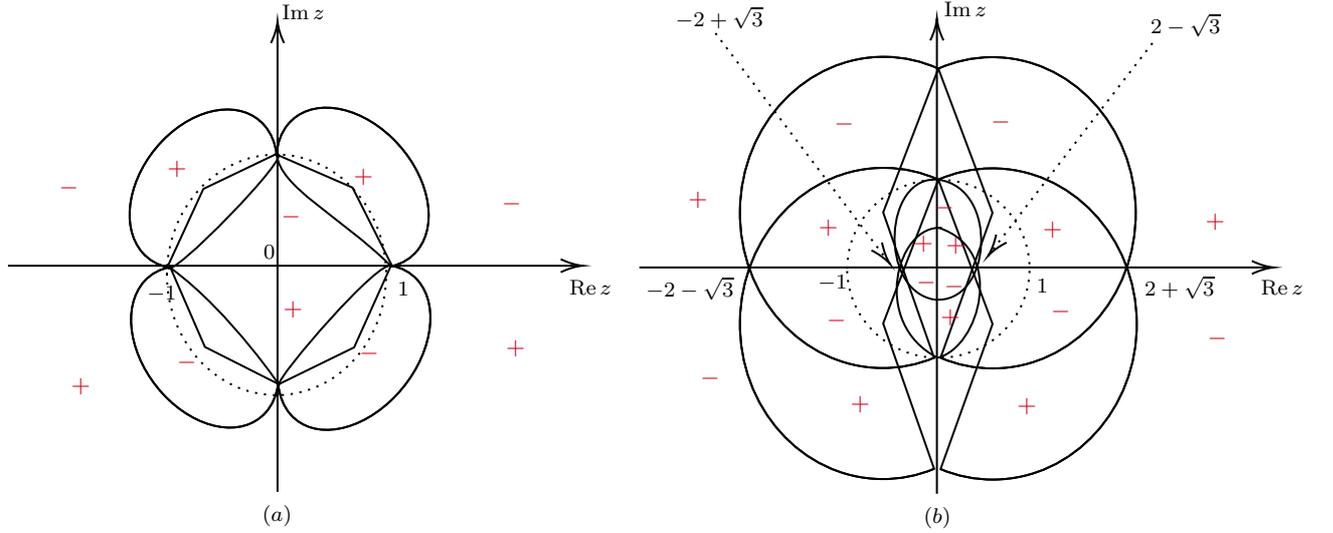

To proceed, we introduce the function
\begin{align}\label{Tfunc}
	&T(z,\hat{\xi}):=\prod_{n=1}^{2\mathcal{N}}\left(\dfrac{z-\bar{z}_n}{z-z_n}\right)\exp\left\{-\frac{1}{2\pi i}\int_{\mathrm{I}(\hat{\xi})}\frac{\log\left(1-\vert r(s)\vert^2\right)}{s-z}\dif s\right\},
\end{align}
where
\begin{align}\label{indexfunc}
	\mathrm{I}(\hat{\xi}):=\left\{ \begin{array}{ll}
		\emptyset,   &\text{if} \ \ |\hat{\xi}-2|t^{2/3}\leqslant C \ \ \text{or} \ \ 2\cdot 3^{\frac{1}{3}}<(2-\hat{\xi})t^{\frac{2}{3}}< C(\log t)^{2/3}, \\[4pt]
		\mathbb{R},   &\text{if} \ |\hat{\xi}+\frac{1}{4}|t^{2/3}\leqslant C,\\
	\end{array}\right.
\end{align}
is an indicator function. For
\begin{align}
	\varrho:=\frac{1}{4}\min_{1\leqslant k,j\leqslant\mathcal{N}}\left\{ |z_k-z_j|,\ |\im z_k|,\ |z_k-i| \right\}, \label{varrho}	
\end{align}
we also define
$$
\mathbb{D}_n:=\{z:\ |z-z_n|\leqslant\varrho\},\qquad n=1,\ldots,2\mathcal{N},
$$
to be a small disk centered at the pole $z_n$ with radius $\varrho$.  As we shall see later, the integral term in \eqref{Tfunc} is of the great aid to factorize the jump matrix \eqref{V} for different transition zones, and the product term is useful to deal with the residue conditions. Moreover,
the auxiliary contours mentioned in item (iii) consists of the union of small circles $\partial\mathbb{D}_n$ and $\partial\mathbb{D}^*_n$ surrounding
the poles in $\mathcal{Z}\cup{\mathcal{Z}}^*$.

We now introduce the following interpolation transformation to construct a regular RH problem.
\begin{align}\label{transform:M1toM2}
	M^{(2)}(z)=M^{(2)}(z;y,t):=M^{(1)}(z)G(z)T^{\sigma_3}(z),
\end{align}
where $T$ is defined in \eqref{Tfunc} and
\begin{align}\label{funcG}
	G(z)=\left\{ \begin{array}{ll}
		\left(\begin{array}{cc}
			1 & 0\\
			-\frac{z-z_n}{c_n}e^{-2i\theta(z_n)} & 1
		\end{array}\right),   & z\in\mathbb{D}_n,\\
		\left(\begin{array}{cc}
			1 & -\frac{z-\bar{z}_n}{\bar{c}_n}e^{2i\theta(\bar{z}_n)}	\\
			0 & 1
		\end{array}\right),   & 	z\in\mathbb{D}_n^*,\\
		I, &  \text{ elsewhere}.
	\end{array}\right.
\end{align}

In view of RH problem \ref{RHP:M^{(1)}} for $M^{(1)}$, it is readily seen that $M^{(2)}$ satisfies the following RH problem.
\begin{RHP}\label{RHP:regular RH}
\hfill
	\begin{itemize}
		\item[$\bullet$] $M^{(2)}(z)$ is holomorphic for $z\in\mathbb{C}\setminus \Sigma^{(2)}$, where
		\begin{equation}
			\Sigma^{(2)}:=\mathbb{R}\cup\left[\underset{1\leqslant n\leqslant2\mathcal{N}}{\cup}\left( \partial \mathbb{D}_n\cup \partial \mathbb{D}_n^*\right)  \right]
		\end{equation}
		is shown in Figure \ref{fig:zero}.
		\item[$\bullet$] $M^{(2)}(z)=\sigma_1\overline{M^{(2)}(\bar{z})}\sigma_1=\sigma_2M^{(2)}(-z)\sigma_2$.
		\item[$\bullet$] For $z\in \Sigma^{(2)}$, we have
		\begin{equation}
			M^{(2)}_+(z)=M^{(2)}_-(z)V^{(2)}(z),
		\end{equation}
		where
		\begin{equation}\label{def:V2}
			V^{(2)}(z)=\left\{\begin{array}{ll}\left(\begin{array}{cc}
					1 & e^{2i\theta(z)}r(z)T^{-2}(z) \\
					0 & 1
				\end{array}\right)
				\left(\begin{array}{cc}
					1 & 0\\
					-e^{-2i\theta(z)}\bar{r}(z)T^2(z) & 1
				\end{array}\right),   & z\in 	\mathbb{R}\setminus \mathrm{I}(\hat{\xi}),\\[12pt]
				\left(\begin{array}{cc}
					1 & 0\\
					-\frac{e^{-2i\theta(z)}\bar{r}(z)T_-^{2}(z)}{1-|r(z)|^2} & 1
				\end{array}\right)\left(\begin{array}{cc}
					1 & \frac{e^{2i\theta(z)}r(z)T_+^{-2}(z)}{1-|r(z)|^2}\\
					0 & 1
				\end{array}\right),   & z\in \mathrm{I}(\hat{\xi}),\\[12pt]
				\left(\begin{array}{cc}
					1 & 0\\
					-c_n^{-1}(z-z_n)e^{-2i\theta(z_n)}T^2(z) & 1
				\end{array}\right),   & z\in\partial\mathbb{D}_n,\\
				\left(\begin{array}{cc}
					1 & -\bar{c}_n^{-1}(z-\bar{z}_n)e^{2i\theta(\bar{z}_n)}T^{-2}(z)	\\
					0 & 1
				\end{array}\right),   &	z\in\partial\mathbb{D}_n^*,\\
			\end{array}\right.
            \end{equation}
            for $n=1,\ldots,2\mathcal{N}$, and where $\mathrm{I}(\hat{\xi})$ is defined in \eqref{indexfunc}.
		\item[$\bullet$] As $z \to \infty$ in $\mathbb{C} \setminus \mathbb{R}$, we have $M^{(2)}(z)=I+\mathcal{O}(z^{-1})$.
	\end{itemize}
\end{RHP}
\begin{figure}[h]
	\centering	
	\begin{tikzpicture}[node distance=2cm]
		\draw[->,blue](-4,0)--(4,0)node[right]{ $\re z$};
		\draw[->](0,-2.6)--(0,2.6)node[above]{ $\im z$};
		\coordinate (I) at (0.2,0);
		\fill (I) circle (0pt) node[below] {$0$};
		\draw[red] (2,0) arc (0:360:2);
		\coordinate (I) at (2,0);
		\draw[blue] (1.7320508075688774,1) circle (0.15);
		\draw[blue] (1.7320508075688774,-1) circle (0.15);
		\draw[blue] (-1.7320508075688774,1) circle (0.15);
		\draw[blue] (-1.7320508075688774,-1) circle (0.15);
		\coordinate (J) at (1.7320508075688774,1);
		\coordinate (K) at (1.7320508075688774,-1);
		\coordinate (L) at (-1.7320508075688774,1);
		\coordinate (M) at (-1.7320508075688774,-1);
		\fill (I) circle (1pt) node[above] {$1$};
		\fill (J) circle (1pt) node[right] {$\bar{z}_m$};
		\fill (K) circle (1pt) node[right] {$z_m$};
		\fill (L) circle (1pt) node[left] {$-z_m$};
		\fill (M) circle (1pt) node[left] {$-\bar{z}_m$};
	\end{tikzpicture}
	\caption{ The red circle represents the unit circle. The blue circles around the poles together with real axis are the boundaries of $\Sigma^{(2)}$.}	\label{fig:zero}
\end{figure}
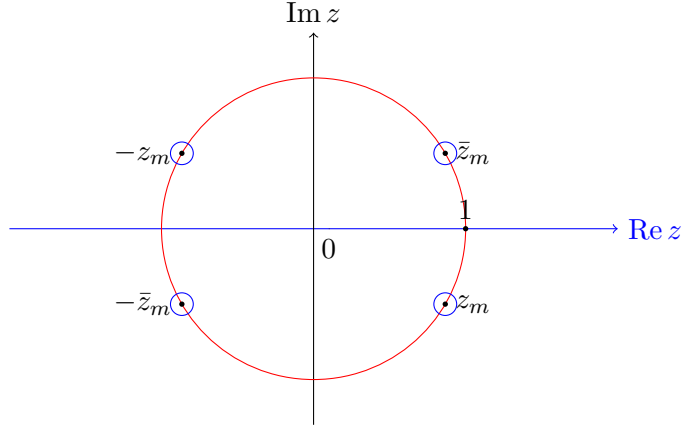

By \eqref{def:V2} and Fiugre \ref{fig:critfigSigntable}, it is readily seen that $V^{(2)}(z)\to I$ as $t \to +\infty$ for $z \in \cup_{n=1}^{2\mathcal{N}}(\partial \mathbb{D}_n\cup\partial \mathbb{D}_n^*)$ exponentially fast. Thus, RH problem \ref{RHP:regular RH} is asymptotically equivalent to the following RH problem with an error bound $\mathcal{O}(e^{-ct})$ for some constant $c>0$.
\begin{RHP}\label{RHP:regular RH asy}
	\hfill
	\begin{itemize}
		\item[$\bullet$]  $M^{(3)}(z)$ is holomorphic for $z\in\mathbb{C}\setminus \mathbb{R}$.
		\item[$\bullet$]  $M^{(3)}(z)=\sigma_1\overline{M^{(3)}(\bar{z})}\sigma_1=\sigma_2M^{(3)}(-z)\sigma_2$.
		\item[$\bullet$]  For $z\in \mathbb{R}$, we have
		\begin{equation}
			M^{(3)}_+(z)=M^{(3)}_-(z)V^{(3)}(z),
		\end{equation}
		where
		\begin{equation}\label{jump: V^{(3)}}
			V^{(3)}(z)=\left\{\begin{array}{ll}\left(\begin{array}{cc}
					1 & e^{2i\theta(z)}r(z)T^{-2}(z) \\
					0 & 1
				\end{array}\right)
				\left(\begin{array}{cc}
					1 & 0\\
					-e^{-2i\theta(z)}\bar{r}(z)T^2(z) & 1
				\end{array}\right),   & z\in 	\mathbb{R}\setminus \mathrm{I}(\hat{\xi}),\\[12pt]
				\left(\begin{array}{cc}
					1 & 0\\
					-\frac{e^{-2i\theta(z)}\bar{r}(z)T_-^{2}(z)}{1-|r(z)|^2} & 1
				\end{array}\right)\left(\begin{array}{cc}
					1 & \frac{e^{2i\theta(z)}r(z)T_+^{-2}(z)}{1-|r(z)|^2}\\
					0 & 1
				\end{array}\right),   & z\in \mathrm{I}(\hat{\xi}).\\
			\end{array}\right.
		\end{equation}
		\item[$\bullet$]  As $z\rightarrow\infty$ in $\mathbb{C}\setminus \mathbb{R}$, we have $M^{(3)}(z)=I+\mathcal{O}(z^{-1})$.
	\end{itemize}
\end{RHP}
In what follows, we will perform asymptotic analysis of RH problem \ref{RHP:regular RH asy} for different transition regions, which finally leads to the proof of Theorem \ref{mainthm}.


\section{Asymptotic analysis of the RH problem for $M^{(3)}$ in $\mathcal{R}_I$}\label{sec: 1st transition zone}

Due to the parametric form of $u$ given in \eqref{recovering u}, the analysis is actually carried out for $0\leqslant|\hat\xi-2|t^{2/3}\leqslant C$.   At the end, we will show that $\hat\xi$ is close to $\xi$ for large positive $t$. Moreover, it is also assumed that $-C\leqslant (\hat{\xi}-2)t^{2/3}\leqslant 0$ throughout this section, since the discussions for the other half region is similar.

In this case, the phase function $\theta(z)$ has four saddle points $k_i$, $i=1,\ldots,4$, such that $\theta'(k_i)=0$. They are given by
\begin{align}
	k_1=2\sqrt{s_+}+\sqrt{4s_{+}+1}, \qquad k_2=-2\sqrt{s_+}+\sqrt{4s_{+}+1}, \label{equ:1sttransaddlepoints-a}\\
	k_3=2\sqrt{s_+}-\sqrt{4s_{+}+1}, \qquad k_4=-2\sqrt{s_+}-\sqrt{4s_{+}+1}, \label{equ:1sttransaddlepoints-b}
\end{align}
where
\begin{equation}\label{def:s+}
s_{+}:=\frac{1}{4\hat{\xi}}\left(-\hat{\xi}-1+\sqrt{1+4\hat{\xi}}\right).
\end{equation}
It follows from straightforward calculations that
$$k_1=1/k_2=-1/k_3=-k_4,$$
and as $\hat{\xi}\rightarrow 2^{-}$,
$$ k_{1,2} \to 1, \qquad k_{3,4}\to -1.$$
\begin{figure}[htbp]
	\begin{center}
		\tikzset{every picture/.style={line width=0.75pt}} 
		\begin{tikzpicture}[x=0.75pt,y=0.75pt,yscale=-1,xscale=1]
			\draw    (141,151) -- (499.29,151) ;
			\draw [shift={(501.29,151)}, rotate = 180] [color={rgb, 255:red, 0; green, 0; blue, 0 }  ][line width=0.75]    (10.93,-3.29) .. controls (6.95,-1.4) and (3.31,-0.3) .. (0,0) .. controls (3.31,0.3) and (6.95,1.4) .. (10.93,3.29)   ;
			\draw    (310,33) -- (310,262) ;
			\draw [shift={(310,31)}, rotate = 90] [color={rgb, 255:red, 0; green, 0; blue, 0 }  ][line width=0.75]    (10.93,-3.29) .. controls (6.95,-1.4) and (3.31,-0.3) .. (0,0) .. controls (3.31,0.3) and (6.95,1.4) .. (10.93,3.29)   ;
			\draw  [draw opacity=0] (310.05,86.2) .. controls (318.36,78.73) and (328.27,74.43) .. (338.88,74.56) .. controls (367.58,74.88) and (390.47,107.38) .. (390.02,147.14) .. controls (389.57,186.9) and (365.94,218.87) .. (337.25,218.55) .. controls (327.68,218.44) and (318.76,214.75) .. (311.13,208.42) -- (338.06,146.55) -- cycle ; \draw   (310.05,86.2) .. controls (318.36,78.73) and (328.27,74.43) .. (338.88,74.56) .. controls (367.58,74.88) and (390.47,107.38) .. (390.02,147.14) .. controls (389.57,186.9) and (365.94,218.87) .. (337.25,218.55) .. controls (327.68,218.44) and (318.76,214.75) .. (311.13,208.42) ;
			\draw  [draw opacity=0] (309.48,208.86) .. controls (301.9,215.24) and (293.02,218.94) .. (283.51,219.03) .. controls (255.11,219.31) and (231.77,187.29) .. (231.39,147.53) .. controls (231.01,107.77) and (253.72,75.31) .. (282.13,75.04) .. controls (292.37,74.94) and (301.96,79.04) .. (310.05,86.2) -- (282.82,147.04) -- cycle ; \draw   (309.48,208.86) .. controls (301.9,215.24) and (293.02,218.94) .. (283.51,219.03) .. controls (255.11,219.31) and (231.77,187.29) .. (231.39,147.53) .. controls (231.01,107.77) and (253.72,75.31) .. (282.13,75.04) .. controls (292.37,74.94) and (301.96,79.04) .. (310.05,86.2) ;
			\draw   (310.05,86.2) .. controls (261.61,127.55) and (262.02,168.4) .. (311.29,208.76) ;
			\draw   (312.36,208.75) .. controls (360.43,166.98) and (359.66,126.12) .. (310.05,86.2) ;
			\draw  [dash pattern={on 0.84pt off 2.51pt}] (248.71,147.53) .. controls (248.71,113.66) and (276.17,86.2) .. (310.05,86.2) .. controls (343.92,86.2) and (371.38,113.66) .. (371.38,147.53) .. controls (371.38,181.4) and (343.92,208.86) .. (310.05,208.86) .. controls (276.17,208.86) and (248.71,181.4) .. (248.71,147.53) -- cycle ;
			\draw (494,156.4) node [anchor=north west][inner sep=0.75pt]  [font=\scriptsize]  {$\re z$};
			\draw (501,143) node [anchor=north west][inner sep=0.75pt]   [align=left] { };
			\draw (314,21.4) node [anchor=north west][inner sep=0.75pt]  [font=\scriptsize]  {$\im z$};
			\draw (395,154.4) node [anchor=north west][inner sep=0.75pt]  [font=\scriptsize]  {$k_{1}$};
			\draw (331,155.4) node [anchor=north west][inner sep=0.75pt]  [font=\scriptsize]  {$k_{2}$};
			\draw (279.03,156.25) node [anchor=north west][inner sep=0.75pt]  [font=\scriptsize]  {$k_{3}$};
			\draw (217,153.4) node [anchor=north west][inner sep=0.75pt]  [font=\scriptsize]  {$k_{4}$};
			\draw (301,139.4) node [anchor=north west][inner sep=0.75pt]  [font=\scriptsize]  {$0$};
			\draw (431,120.4) node [anchor=north west][inner sep=0.75pt]  [font=\footnotesize,color={rgb, 255:red, 208; green, 2; blue, 27 }  ,opacity=1 ]  {$-$};
			\draw (434,175.4) node [anchor=north west][inner sep=0.75pt]  [font=\footnotesize,color={rgb, 255:red, 208; green, 2; blue, 27 }  ,opacity=1 ]  {$+$};
			\draw (359,120.4) node [anchor=north west][inner sep=0.75pt]  [font=\footnotesize,color={rgb, 255:red, 208; green, 2; blue, 27 }  ,opacity=1 ]  {$+$};
			\draw (359,176.4) node [anchor=north west][inner sep=0.75pt]  [font=\footnotesize,color={rgb, 255:red, 208; green, 2; blue, 27 }  ,opacity=1 ]  {$-$};
			\draw (287,122.4) node [anchor=north west][inner sep=0.75pt]  [font=\footnotesize,color={rgb, 255:red, 208; green, 2; blue, 27 }  ,opacity=1 ]  {$-$};
			\draw (323,122.4) node [anchor=north west][inner sep=0.75pt]  [font=\footnotesize,color={rgb, 255:red, 208; green, 2; blue, 27 }  ,opacity=1 ]  {$-$};
			\draw (290,171.4) node [anchor=north west][inner sep=0.75pt]  [font=\footnotesize,color={rgb, 255:red, 208; green, 2; blue, 27 }  ,opacity=1 ]  {$+$};
			\draw (322,171.4) node [anchor=north west][inner sep=0.75pt]  [font=\footnotesize,color={rgb, 255:red, 208; green, 2; blue, 27 }  ,opacity=1 ]  {$+$};
			\draw (245,121.4) node [anchor=north west][inner sep=0.75pt]  [font=\footnotesize,color={rgb, 255:red, 208; green, 2; blue, 27 }  ,opacity=1 ]  {$+$};
			\draw (249,172.4) node [anchor=north west][inner sep=0.75pt]  [font=\footnotesize,color={rgb, 255:red, 208; green, 2; blue, 27 }  ,opacity=1 ]  {$-$};
			\draw (179,117.4) node [anchor=north west][inner sep=0.75pt]  [font=\footnotesize,color={rgb, 255:red, 208; green, 2; blue, 27 }  ,opacity=1 ]  {$-$};
			\draw (180,187.4) node [anchor=north west][inner sep=0.75pt]  [font=\footnotesize,color={rgb, 255:red, 208; green, 2; blue, 27 }  ,opacity=1 ]  {$+$};
		\end{tikzpicture}
		\caption{ Signature table of $\im \theta$ for $-C\leqslant (\hat{\xi}-2)t^{2/3}\leqslant 0$. The ``$+$'' represents where $\im \theta(z)>0$ and ``$-$'' represents where $\im \theta(z)<0$. The dashed line is the unit circle. }\label{1stTranRegionSign}
	\end{center}
\end{figure}

By \eqref{jump: V^{(3)}}, the jump matrix for $M^{(3)}$ reads
\begin{align}\label{eq:V3inR}
	V^{(3)}(z)
    &= \left(\begin{array}{cc}
					1 & e^{2i\theta(z)}R(z) \\
					0 & 1
				\end{array}\right)\left(\begin{array}{cc}
					1 & 0\\
					-e^{-2i\theta(z)}\bar{R}(z) & 1
				\end{array}\right), \qquad z\in\mathbb{R},
\end{align}
where
\begin{equation}\label{def:R=rT}
	R(z):=R(z;\hat{\xi})=r(z)T^{-2}(z)\overset{\eqref{Tfunc}}{=}r(z)\prod_{j=1}^{2\mathcal{N}}\left(\frac{z-z_j}{z-\bar{z}_j}\right)^2.
\end{equation}
This, together with the signature table of $\im \theta$ illustrated in Figure \ref{1stTranRegionSign}, implies opening lenses around the intervals $(-\infty,k_4)\cup (k_3,k_2)\cup (k_1,+\infty)$ in what follows.

\subsection{Opening $\bar{\partial}$ lenses }\label{subsec:openlensTran1}

For $j=1,\ldots,4$, we define
\begin{align}
		&\Omega_{1}:=\left\lbrace z\in\mathbb{C}:\ 0 \leqslant (\arg z-k_1) \leqslant \varphi_0 \right\rbrace, \label{def:Omega1}\\
		&\Omega_{2}:=\left\lbrace z\in\mathbb{C}:\ \pi-\varphi_0  \leqslant (\arg z-k_2) \leqslant \pi ,\ |\re(z-k_2)|\leqslant (k_2-k_3)/2\right\rbrace,\\
		&\Omega_{3}:=\left\lbrace z\in\mathbb{C}:\ 0 \leqslant (\arg z-k_3) \leqslant \varphi_0,\ |\re(z-k_2)|\leqslant (k_2-k_3)/2\right\rbrace,\\
		&\Omega_{4}:=\left\lbrace z\in\mathbb{C}:\ 0 \leqslant (\arg z-k_4) \leqslant \varphi_0 \right\rbrace, \label{def:Omega4}
\end{align}
where $k_j$ are four saddle points given in \eqref{equ:1sttransaddlepoints-a} and \eqref{equ:1sttransaddlepoints-b}, and
$0<\varphi_0<\frac{\pi}{8}$ is a fixed angle such that the following conditions hold:
	\begin{itemize}
		\item[$\bullet$] $2\tan\varphi_0<k_2-k_3$,
		\item[$\bullet$] each $\Omega_{j}$  doesn't intersect the set $\left\lbrace z\in\mathbb{C}: \ \im\theta(z)=0\right\rbrace$,
		\item[$\bullet$] each $\Omega_{j}$ doesn't intersect any small disks $\mathbb{D}_n$ and $\mathbb{D}_n^*$, $n=1,\ldots, 2\mathcal{N}$.
	\end{itemize}
Moreover, we use $\Sigma_{j}$ to denote the boundary of $\Omega_{j}$ in the upper half plane and  set
\begin{equation}
\Sigma_{2,3}:=\left\lbrace z\in i\mathbb{R}^+: \ |z|\leqslant \frac{(k_2-k_3)\tan\varphi_0}{2} \right\rbrace.
\end{equation}
We refer to Figure \ref{fig:R^{(3)}} for an illustration of $\Omega_j$ and its boundary.
\begin{figure}[h]
	\centering
	\begin{tikzpicture}[node distance=2cm]
		\draw[yellow!30, fill=yellow!20] (0,0.75)--(1.5,0)--(3,0)--(4,0.5)--(4,0)--(-4,0)--(-4,0.5)--(-3,0)--(-1.5,0)--(0,0.75);
		\draw[blue!30, fill=blue!20] (0,-0.75)--(1.5,0)--(3,0)--(4,-0.5)--(4,0)--(-4,0)--(-4,-0.5)--(-3,0)--(-1.5,0)--(0,-0.75);
		\draw[dash pattern={on 0.84pt off 2.51pt}][->](-3.6,0)--(4,0)node[right]{ $\re z$};
		\draw[dash pattern={on 0.84pt off 2.51pt}][->](0,-2.5)--(0,2.5)node[above]{ $\im z$};
		\draw(3,0)--(4,0.5)node[above]{\footnotesize$\Sigma_{1}$};
		\draw[-latex](3,0)--(3.5,0.25);
		\draw[-latex](3,0)--(3.5,-0.25);
		\draw[-latex](-4,-0.5)--(-3.5,-0.25);
		\draw[-latex](-4,0.5)--(-3.5,0.25);
		\draw(3,0)--(4,-0.5)node[below]{\footnotesize$\Sigma^*_{1}$};
		\draw(-3,0)--(-4,0.5)node[above]{\footnotesize$\Sigma_{4}$};
		\draw(-3,0)--(-4,-0.5)node[below]{\footnotesize$\Sigma^*_{4}$};
		\draw[-latex](-1.5,0)--(-0.75,0.375)node[above]{\footnotesize$\Sigma_{3}$};
		\draw[-latex](-1.5,0)--(-0.75,-0.375)node[below]{\footnotesize$\Sigma^*_{3}$};
		\draw[-latex](0,0.75)--(0.75,0.375)node[above]{\footnotesize$\Sigma_{2}$};
		\draw[-latex](0,-0.75)--(0.75,-0.375)node[below]{\footnotesize$\Sigma^*_{2}$};
		\draw(0,0.75)--(1.5,0);
		\draw(0,0.75)--(0,-0.75);
		\draw(0,-0.75)--(1.5,0);
		\draw(0,0.75)--(-1.5,0);
		\draw(0,-0.75)--(-1.5,0);
		\draw[-latex](-2.1,0)--(-2.09,0);
		\draw[-latex](2.1,0)--(2.2,0);
		\draw[-latex](0,0.75)--(0,0.3);
		\draw[-latex](0,-0.75)--(0,-0.3);
		\draw(-3,0)--(-1.5,0);
		\draw(3,0)--(1.5,0);
		\coordinate (C) at (-0.2,2.2);
		\coordinate (D) at (3.45,0.15);
		\fill (D) circle (0pt) node[right] {\tiny $\Omega_{1}$};
		\coordinate (D2) at (3.45,-0.15);
		\fill (D2) circle (0pt) node[right] {\tiny $\Omega^*_{1}$};
		\coordinate (k) at (1,0.2);
		\fill (k) circle (0pt) node[left] {\tiny $\Omega_{2}$};
		\coordinate (k2) at (1,-0.2);
		\fill (k2) circle (0pt) node[left] {\tiny $\Omega^*_{2}$};
		\coordinate (k) at (0,0.2);
		\fill (k) circle (0pt) node[left] {\tiny $\Omega_{3}$};
		\coordinate (k2) at (0,-0.2);
		\fill (k2) circle (0pt) node[left] {\tiny $\Omega^*_{3}$};
		\coordinate (D3) at (-3.45,0.15);
		\fill (D3) circle (0pt) node[left] {\tiny $\Omega_{4}$};
		\coordinate (D4) at (-3.45,-0.15);
		\fill (D4) circle (0pt) node[left] {\tiny $\Omega^*_{4}$};
		\coordinate (I) at (0.2,0);
		\fill (I) circle (0pt) node[below] {$0$};
		\draw[red] (2,0) arc (0:360:2);	
		\node at (0,0.9) {\footnotesize $\Sigma_{2,3}$};
		\node at (0,-0.92) {\footnotesize $\Sigma^*_{2,3}$};
	\end{tikzpicture}
	\caption{  The jump contours of the RH problem for $M^{(4)}$. }
	\label{fig:R^{(3)}}
\end{figure}

Since the function $R$ in \eqref{eq:V3inR} is not an analytic function, the idea now is to introduce the functions $d_{ j}(z):=d_{ j}(z;\hat{\xi})$, $j=1,\ldots,4$, with boundary conditions:
\begin{align}\label{equ: 1sttran defofd_j}
	&d_j(z)=\Bigg\{\begin{array}{ll}
		\bar{R}(z), & z\in \mathbb{R},\\
		\bar{R}(k_j)+\bar{R}'(k_j)^2(z-k_j),  &z\in \Sigma_{j}.\\
		\end{array}
\end{align}	
One can give an explicit construction of each $d_j$. Indeed, let $\mathcal{X}\in C^\infty_0(\mathbb{R})$ be such that
\begin{align}\label{equ:defX}
	\mathcal{X}(x):=\Bigg\{\begin{array}{ll}
		0, & x\leqslant \frac{\varphi_0}{3},\\
		1,  &x\geqslant \frac{2\varphi_0}{3}.\\
	\end{array}
\end{align}
Then,
\begin{align}\label{def:d1}
	d_1(z):=&\left[ \bar{R}(\re z)-\bar{R}(k_1)-\bar{R}'(k_1)\re(z-k_1)\right] \cos\left(\frac{\pi\arg\left(z-k_1\right)	\mathcal{X}\left(\arg\left(z-k_1\right)\right)}{2\varphi_0} \right)
  \nonumber \\ &+\bar{R}(k_1)+\bar{R}'(k_1)(z-k_1)
\end{align}
satisfies the conditions \eqref{equ: 1sttran defofd_j} for $j=1$. Some properties of $d_j$ are collected in the following proposition.
\begin{Proposition}\label{est:RandDbarR}
	For each $j=1,\ldots,4$ and $z\in \Omega_{ j}$, we have
\begin{subequations}
	\begin{align}
		&|d_j(z)|\lesssim \sin^2\left(\frac{\pi}{2\varphi_0}\arg\left(z-k_j\right)\right)+ \left(1+ \re (z)^2\right) ^{-1/2},\label{Rest}\\
		&|\bar{\partial}d_j(z)|\lesssim|\re z-k_j|^{1/2}, \label{dbarRjest}\\
		&|\bar{\partial}d_j(z)|\lesssim |\re z-k_j|^{-1/2}+\sin\left(\frac{\pi}{2\varphi_0}\arg\left(z-k_j\right)	\mathcal{X}(\arg\left(z-k_j\right)) \right),  \label{dbarRjest1}\\
		&|\bar{\partial}d_j(z)|\lesssim 1.  \label{dbarRjest2}
	\end{align}
\end{subequations}
\end{Proposition}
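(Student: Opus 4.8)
\emph{Strategy.} These are the by-now-standard $\bar\partial$-extension estimates, and I would prove them directly from the explicit construction \eqref{def:d1} of $d_1$ (and the analogous constructions of $d_2,d_3,d_4$), computing $\bar\partial d_j$ by the product rule and controlling each factor separately. The first step is to transfer the regularity of $r$ to $R$. Writing $R=r\cdot B$ with $B(z):=\prod_{n=1}^{2\mathcal{N}}(z-z_n)^2(z-\bar z_n)^{-2}$, the factor $B$ is real-analytic on $\mathbb{R}$, bounded away from $0$ and $\infty$, with all derivatives bounded (every $z_n$ lies off $\mathbb{R}$), so the hypothesis $r\in H^{2,1}(\mathbb{R})$ (which follows from Assumption~\ref{Assumption}) gives $R\in H^{2,1}(\mathbb{R})$; hence $R\in C^1(\mathbb{R})$ with $|R(x)|+|R'(x)|\lesssim\langle x\rangle^{-1}$, $R''\in L^2(\mathbb{R})$, and $R'\in H^1(\mathbb{R})\hookrightarrow C^{1/2}(\mathbb{R})$. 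The only analytic input needed about $r$ is the resulting pair of bounds
\begin{align*}
&\bigl|\bar R'(\re z)-\bar R'(k_j)\bigr|\lesssim\min\{1,\,|\re z-k_j|^{1/2}\},\\
&\bigl|\bar R(\re z)-\bar R(k_j)-\bar R'(k_j)(\re z-k_j)\bigr|\lesssim|\re z-k_j|\,\min\{1,\,|\re z-k_j|^{1/2}\}.
\end{align*}

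\emph{Geometry and the $\bar\partial$-derivative.} Next I record the features of $\Omega_j$ used throughout. Since $\arg(z-k_j)$ runs over $[0,\varphi_0]$ with $\varphi_0<\pi/8$, we have $\re z-k_j=|z-k_j|\cos\arg(z-k_j)\asymp|z-k_j|$, hence $|\im z|\lesssim|\re z-k_j|$, uniformly on $\Omega_j$; and by \eqref{equ:defX} the cutoff $\mathcal{X}(\arg(z-k_j))$ vanishes for $\arg(z-k_j)\le\varphi_0/3$ while $\mathcal{X}'$ is supported in $[\varphi_0/3,2\varphi_0/3]$, so the angular factor of $d_j$ equals $1$ in a full neighborhood of the ray $\arg(z-k_j)=0$. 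Write $d_j(z)=B_j(z)+A_j(z)\chi_j(z)$, where $B_j(z):=\bar R(k_j)+\bar R'(k_j)(z-k_j)$ is holomorphic, $A_j(z):=\bar R(\re z)-B_j(\re z)$ depends on $\re z$ only, $\chi_j(z):=\cos h_j(z)$, and $h_j(z):=\tfrac{\pi}{2\varphi_0}\arg(z-k_j)\,\mathcal{X}(\arg(z-k_j))\in[0,\pi/2]$. Then $\bar\partial B_j=0$, so $\bar\partial d_j=(\bar\partial A_j)\chi_j+A_j\,\bar\partial\chi_j$, with $\bar\partial A_j=\tfrac12\bigl(\overline{R'(\re z)}-\overline{R'(k_j)}\bigr)$ and, using $\bar\partial\arg(z-k_j)=\tfrac{i}{2\,\overline{z-k_j}}$, $|\bar\partial\chi_j|\lesssim|z-k_j|^{-1}|\sin h_j|$, where $|\sin h_j|$ is supported in $\{\arg(z-k_j)\ge\varphi_0/3\}$.

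\emph{Reading off the four bounds.} For \eqref{Rest} I rewrite $d_j=\bar R(\re z)\chi_j+B_j(\re z)(1-\chi_j)+\bar R'(k_j)\,i\,\im z$: the first term is $\lesssim\langle\re z\rangle^{-1}\lesssim(1+\re z^2)^{-1/2}$; $1-\chi_j=2\sin^2(h_j/2)$ is supported where $\arg(z-k_j)\ge\varphi_0/3$ and there, since $\tfrac{\pi}{2\varphi_0}\arg(z-k_j)\le\pi/2$, is $\lesssim\sin^2\bigl(\tfrac{\pi}{2\varphi_0}\arg(z-k_j)\bigr)$ (by $\sin t\ge\tfrac2\pi t$ on $[0,\pi/2]$); together with $|\im z|\lesssim|\re z-k_j|$ and the boundedness of $B_j$ and $\re z-k_j$ near $k_j$, this absorbs the remaining two terms. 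For \eqref{dbarRjest} and \eqref{dbarRjest2}: where $\arg(z-k_j)\le\varphi_0/3$ one has $\bar\partial\chi_j=0$ and $|\bar\partial d_j|=\tfrac12|\overline{R'(\re z)}-\overline{R'(k_j)}|$, bounded by $|\re z-k_j|^{1/2}$ and by $1$ respectively; where $\arg(z-k_j)\ge\varphi_0/3$ one uses $|z-k_j|\asymp|\re z-k_j|$ together with the remainder bound for $A_j$ to get $|A_j\bar\partial\chi_j|\lesssim|\re z-k_j|^{1/2}$ (resp.\ $\lesssim1$), and the same for $(\bar\partial A_j)\chi_j$. Finally \eqref{dbarRjest1} uses the same split but bounds $A_j$ only by the weaker Sobolev estimate $|A_j|\lesssim|\re z-k_j|^{1/2}$, whence $|A_j\bar\partial\chi_j|\lesssim|\re z-k_j|^{-1/2}|\sin h_j|$, while $(\bar\partial A_j)\chi_j\lesssim|\re z-k_j|^{1/2}$.

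\emph{Expected main obstacle.} The point that requires genuine care is the geometry of the two unbounded lenses $\Omega_1,\Omega_4$, on which the linear approximant $B_j(\re z)$ grows linearly in $\re z$: one must verify that this growth is completely cancelled wherever $\chi_j\equiv1$, and that after $\bar\partial$ it disappears altogether (being holomorphic), so that the local-type bounds above really do govern $d_j$ and $\bar\partial d_j$ on the relevant parts of $\Omega_j$; on the far portion of these wedges the only robust statement is \eqref{dbarRjest2}, which is exactly the estimate that is later paired with the exponentially small weight $e^{2i\theta}$ in the $\bar\partial$-analysis. The rest — the minor variant of \eqref{def:d1} producing $d_2,d_3$ on the truncated lenses $\Omega_2,\Omega_3$, and the trigonometric identities $1-\cos t=2\sin^2(t/2)$ and $\sin t\ge\tfrac2\pi t$ used to move between the different forms of the $\sin$-terms — is routine.
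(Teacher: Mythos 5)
Your overall strategy is the same as the paper's: differentiate the explicit interpolant $d_j$, split the result into the piece $\tfrac12\bigl(\bar R'(\re z)-\bar R'(k_j)\bigr)$ and the piece coming from $\bar\partial$ hitting the angular cutoff, and control each by fundamental-theorem-of-calculus/H\"older estimates on $\bar R'$, $\bar R''$; the decomposition $d_j=B_j+A_j\chi_j$ is just a repackaging of what the paper computes in its expression \eqref{afterdbarderR+1}, and your treatment of \eqref{Rest}, \eqref{dbarRjest}, \eqref{dbarRjest2} is essentially the paper's proof. (For \eqref{Rest} you give a direct argument, whereas the paper cites an external source; that is a modest gain in self-containedness.)

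However your argument for \eqref{dbarRjest1} as written is wrong, and you have mirrored the roles of the two bounds relative to the paper. You assert ``$|A_j|\lesssim|\re z-k_j|^{1/2}$'', but $A_j(\re z)=\bar R(\re z)-\bar R(k_j)-\bar R'(k_j)(\re z-k_j)$ is the first-order Taylor remainder of a function with $R'\in L^\infty$ and $R''\in L^2$; the bounds it actually satisfies are $|A_j|\lesssim\|\bar R'\|_\infty|\re z-k_j|$ and $|A_j|\lesssim\|\bar R''\|_2|\re z-k_j|^{3/2}$ (the paper's \eqref{equ: rhsastool} and \eqref{dbarderR+12}), not an $|\re z-k_j|^{1/2}$ bound. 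Consequently the combination you produce, $|\bar\partial d_j|\lesssim|\re z-k_j|^{1/2}+|\re z-k_j|^{-1/2}|\sin h_j|$, is not the claimed \eqref{dbarRjest1}: the first summand is unbounded on the half-infinite wedges $\Omega_1,\Omega_4$, and the second carries a spurious $|\re z-k_j|^{-1/2}$ prefactor on the sine term. The correct assignment, as in the paper, is to place the negative power on the $(\bar\partial A_j)\chi_j$ piece via the weighted norm estimate
\begin{equation*}
\bigl|\bar R'(u)-\bar R'(k_j)\bigr|=\Bigl|\int_{k_j}^{u}\zeta^{-1}\,\zeta\bar R''(\zeta)\,\mathrm{d}\zeta\Bigr|\lesssim\|\bar R''\|_{2,1}\,|u-k_j|^{-1/2},
\end{equation*}
and to use $|A_j|\lesssim\|\bar R'\|_\infty|u-k_j|$ together with $|\bar\partial\chi_j|\lesssim|z-k_j|^{-1}|\sin h_j|\asymp|u-k_j|^{-1}|\sin h_j|$ to get $|A_j\bar\partial\chi_j|\lesssim|\sin h_j|$ with no inverse power. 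That gives \eqref{dbarRjest1}. Swapping these two roles, as your draft does, does not produce the stated estimate.
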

\begin{proof}
Without loss of generality, we assume that $j=1$, since the claims for $j=2,3,4,$ can be proved in a similar manner.

The bound in \eqref{Rest} can be obtained from the proof similar to \cite[Proposition 5]{Yang2022adv}, we omit the details here.
To show \eqref{dbarRjest}--\eqref{dbarRjest2}, note that for $z=u+vi=le^{\varphi i}+k_1\in\Omega_{1}$, we have $\bar{\partial}=\frac{e^{i\varphi}}{2}(\partial_l+il^{-1}\partial_\varphi)=\frac{1}{2}(\partial_u+\partial_v)$. Applying $\bar{\partial}$ operator to $d_1$ in \eqref{def:d1}, it is readily seen that
	\begin{align}\label{afterdbarderR+1}
		\bar{\partial}d_{1}(z)&=\frac{1}{2}\left( \bar{R}'(u)-\bar{R}'(k_1)\right)\cos\left(\frac{\pi\varphi	\mathcal{X}\left(\varphi\right)}{2\varphi_0} \right)\nonumber\\
		&~~-\frac{ie^{i\varphi}}{2l}\left[ \bar{R}(u)-\bar{R}(k_1)-\bar{R}'(k_1)(u-k_1)\right]\frac{\pi	\mathcal{X}'(\varphi)}{2\varphi_0} \sin\left(\frac{\pi}{2\varphi_0}\varphi	\mathcal{X}(\varphi) \right) .
	\end{align}
On one hand, it follows from H\"older's inequality that
	\begin{align}
		|\bar{R}'(u)-\bar{R}'(k_1)|=\Big|\int_{k_1}^u\bar{R}''(\zeta)\dif \zeta\Big|\leqslant \|\bar{R}''\|_2|u-k_1|^{1/2}, \label{afterdbarderR+1:est1}
	\end{align}
or
\begin{align}
	|\bar{R}'(u)-\bar{R}'(k_1)|=\Big|\int_{k_1}^u\bar{R}''(\zeta)\dif \zeta\Big|=\Big|\int_{k_1}^u\zeta^{-1}\zeta\bar{R}''(\zeta)\dif \zeta\Big|\leqslant \|\bar{R}''\|_{2,1}|u-k_1|^{-1/2}.\label{equ:afterdbarderR+1:est1}
\end{align}
On the other hand, since
	\begin{align}
		|\bar{R}'(u)-\bar{R}'(k_1)|\leqslant 2\|\bar{R}'\|_\infty, \label{dbarderR+1:est1}
	\end{align}
we have
	\begin{align}\label{equ: rhsastool}
		&|\bar{R}(u)-\bar{R}(k_1)-\bar{R}'(k_1)(u-k_1)|=\Big|\int_{k_1}^u\bar{R}'(\zeta)-\bar{R}'(k_1)\dif	 \zeta\Big| \leqslant 2\|\bar{R}'\|_\infty|u-k_1|,
	\end{align}
or again by H\"older's inequality,
	\begin{align}
		&\Big|\int_{k_1}^u\bar{R}'(\zeta)-\bar{R}'(k_1)\dif	 \zeta\Big| =\Big|\int_{k_1}^u\int_{k_1}^\zeta\bar{R}''(\eta)\dif\eta\dif\zeta\Big|\leqslant \|\bar{R}''\|_2|u-k_1|^{3/2}.\label{dbarderR+12}
	\end{align}
As a consequence, on account of \eqref{afterdbarderR+1}, we obtain \eqref{dbarRjest} from  \eqref{afterdbarderR+1:est1} and \eqref{dbarderR+12}, \eqref{dbarRjest1} from \eqref{equ:afterdbarderR+1:est1} and  \eqref{equ: rhsastool}, and \eqref{dbarRjest2} from \eqref{dbarderR+1:est1} and \eqref{equ: rhsastool}.
\end{proof}

We are now ready to define the transformation
\begin{equation}\label{transtoM4}
	M^{(4)}(z)=M^{(3)}(z)R^{(3)}(z),
\end{equation}
where
\begin{equation}\label{defofR^{(3)}}
	R^{(3)}(z):=R^{(3)}(z;\hat{\xi})=\left\{\begin{array}{lll}
		\left(\begin{array}{cc}
			1 & 0\\
			d_j(z)e^{-2i\theta(z)} & 1
		\end{array}\right), & z\in \Omega_{j}, \ j=1,...,4,\\
		[12pt]
		\left(\begin{array}{cc}
			1 & d_j^*(z)e^{2i\theta(z)}\\
			0 & 1
		\end{array}\right),  &z\in \Omega^*_{j}, \ j=1,...,4,\\
		[12pt]
		I,  &{\rm elsewhere}.\\
	\end{array}\right.
\end{equation}
Then, $M^{(4)}$ satisfies the following mixed $\bar{\partial}$-RH problem.
\begin{dbar-RHP}\label{RHP:mixed RH problem}
	\hfill
	\begin{itemize}
		\item[$\bullet$] $M^{(4)}(z)$ is continuous for $z\in\mathbb{C}\setminus \Sigma^{(4)}$,
        where
        \begin{equation}\label{def:sigma4}
        \Sigma^{(4)}:=\Sigma_{2,3}\cup\Sigma^*_{2,3}\cup\left( \underset{ k=1,...,4}{\cup}\left( \Sigma_{k}\cup\Sigma^*_{k}\right)\right)
        \cup(k_4,k_3)\cup(k_2,k_1);
        \end{equation}
        see Figure \ref{fig:R^{(3)}} for an illustration.
		\item[$\bullet$] $M^{(4)}(z)=\sigma_1\overline{M^{(4)}(\bar{z})}\sigma_1=\sigma_2M^{(4)}(-z)\sigma_2$.
		\item[$\bullet$]  For $z \in \Sigma^{(4)}$,  we have
		\begin{equation}
			M^{(4)}_+(z)=M^{(4)}_-(z)V^{(4)}(z),
		\end{equation}
		where
		\begin{equation}\label{jump:V^{(4)}}
			V^{(4)}(z)=\left\{\begin{array}{ll}\left(\begin{array}{cc}
					1 & e^{2i\theta(z)}R(z) \\
					0 & 1
				\end{array}\right)
				\left(\begin{array}{cc}
					1 & 0\\
					-e^{-2i\theta(z)}\bar{R}(z) & 1
				\end{array}\right),   & z\in 	(k_4,k_3)\cup(k_2,k_1),\\[12pt]
				R^{(3)}(z)^{-1},  & z\in \Sigma_{j},\ j=1,2,3,4,\\[8pt]
				R^{(3)}(z),  & z\in \Sigma^*_{j},\ j=1,2,3,4,\\
				[8pt]
				\left(\begin{array}{cc}
					1 & 0\\
					(d_{2}(z)-d_{3}(z))e^{-2i\theta(z)} & 1
				\end{array}\right),  & z\in \Sigma_{2,3},\\
				[12pt]
				\left(\begin{array}{cc}
					1 & (d_{3}^*(z)-d_{2}^*(z))e^{2i\theta(z)}\\
					0 & 1
				\end{array}\right),  & z\in \Sigma^*_{2,3}.\\
			\end{array}\right.
		\end{equation}
		\item[$\bullet$]   As $z\rightarrow\infty$ in $\mathbb{C} \setminus \Sigma^{(4)}$, we have $M^{(4)}(z)=I+\mathcal{O}(z^{-1})$.
		\item[$\bullet$]   For $z\in\mathbb{C}$, we have the $\bar{\partial}$-derivative relation
		\begin{align}
			\bar{\partial}M^{(4)}(z)=M^{(4)}(z)\bar{\partial}R^{(3)}(z),
		\end{align}
		where
		\begin{equation}
			\bar{\partial}R^{(3)}(z)=\left\{\begin{array}{lll}
				\left(\begin{array}{cc}
					0 & 0\\
					\bar{\partial}d_j(z)e^{-2i\theta(z)} & 0
				\end{array}\right), & z\in \Omega_{j}, \ j=1,...,4,\\
				[12pt]
				\left(\begin{array}{cc}
					0 & \bar{\partial}d_j^*(z)e^{2i\theta(z)}\\
					0 & 0
				\end{array}\right),  &z\in \Omega^*_{j}, \ j=1,...,4,\\
				[12pt]
				0,  & {\rm elsewhere}.\\
			\end{array}\right.\label{DBARR1}
		\end{equation}
	\end{itemize}
\end{dbar-RHP}
The above RH problem can be decomposed into a pure RH problem under the condition $\bar\partial R^{(3)}\equiv0$ and a pure $\bar{\partial}$-problem with $\bar\partial R^{(3)}\not=0$. The next two sections are then devoted to the asymptotic analysis of these two RH problems, respectively.



\subsection{Analysis of the pure RH problem }\label{subsec:pure RH N(z)}
The pure RH problem is obtained from RH problem \ref{RHP:mixed RH problem} for $M^{(4)}$ by omitting the $\bar{\partial}$-derivative part, which reads as follows.
\begin{RHP}\label{RHP: N(z) tran1}
	\hfill
	\begin{itemize}
		\item[$\bullet$]  $N(z)=N(z;\hat{\xi})$ is holomorphic for $z\in\mathbb{C}\setminus \Sigma^{(4)}$, where $\Sigma^{(4)}$ is defined in \eqref{def:sigma4}.
		\item[$\bullet$]  $N(z)=\sigma_1\overline{N(\bar{z})}\sigma_1=\sigma_2N(-z)\sigma_2$.
		\item[$\bullet$]  For $z \in \Sigma^{(4)}$, we have
		\begin{equation}
			N_+(z)=N_-(z)V^{(4)}(z),
		\end{equation}
		where $V^{(4)}(z)$ is defined in \eqref{jump:V^{(4)}}.
		\item[$\bullet$]
		As $z\rightarrow\infty$ in $\mathbb{C}\setminus \Sigma^{(4)}$, we have $N(z)=I+\mathcal{O}(z^{-1})$.
	\end{itemize}
\end{RHP}
Note that the definitions of $d_2$ and $d_3$ imply that $V^{(4)}(z)=I$ near $z=0$ on $\Sigma_{2,3}$ and $\Sigma_{2,3}^*$, and $V^{(4)}(z) \to I$  as $t \to +\infty$ on the other parts of $\Sigma^{(4)}$ (see \eqref{jump:V^{(4)}} and Figure \ref{1stTranRegionSign}), it follows that
$N$ is approximated, to the leading order, by the global parametrix $N^{(\infty)}(z)=I$. The sub-leading contribution stems from the local behaviors near the saddle points $k_j$, $j=1,\ldots,4$, which is well approximated by the Painlev\'{e} II parametrix.


\subsubsection*{Local parametrices near $z=\pm 1$}
Let
$$U^{(r)}=\{z:|z-1|\leqslant c_0\}, \qquad U^{(l)}=\{z:|z+1|\leqslant c_0\},$$
be two small disks around $z=1$ and $z=-1$, respectively, where
\begin{align}
	c_0:=\min\{1/2,2(k_1-1)t^{\delta_1}\},\hspace*{0.5cm}\delta_1\in\left( 1/9, 1/6\right),
\end{align}
is a constant depending on $t$. For $t$ large enough, we have $k_{1,2} \in U^{(r)}$ and $k_{3,4} \in U^{(l)}$. Indeed,
if $\hat{\xi}>0$ and $-C\leqslant (\hat{\xi}-2)t^{2/3}\leqslant 0$, it follows from \eqref{def:s+} that
$0\leqslant s_+\leqslant\frac{C}{2}t^{-2/3}$, thus, by \eqref{equ:1sttransaddlepoints-a} and \eqref{equ:1sttransaddlepoints-b},
\begin{align*}
	&|k_j-1|\leqslant\sqrt{2C}t^{-1/3}, \ j=1,2, \quad \textnormal{and} \quad  |k_j+1|\leqslant\sqrt{2C}t^{-1/3}, \ j=3,4.
\end{align*}
This particularly implies that $c_0 \lesssim t^{\delta_1-1/3} \to 0$ as $t\to +\infty$, hence, $U^{(r)}$ and $U^{(l)}$ are two shrinking disks with respect to $t$.


For $\ell \in\{r,l\}$, we intend to solve the following local RH problem for $N^{(\ell)}$.
\begin{RHP}\label{RHP:Nrl}
    \hfill	
	\begin{itemize}
	\item[$\bullet$]  $N^{(\ell )}(z)$ is holomorphic for $z\in\mathbb{C}\setminus \Sigma^{(\ell)}$,
    where
      $$
      \Sigma^{(\ell)}:=U^{(\ell)}\cap \Sigma^{(4)}.
      $$

	\item[$\bullet$]  For $z \in \Sigma^{(\ell)}$, we have
		\begin{equation}
			N^{(\ell)}_+(z)=N^{(\ell)}_-(z)V^{(4)}(z),
		\end{equation}
        where $V^{(4)}(z)$ is defined in \eqref{jump:V^{(4)}}.

        \item[$\bullet$]
		As $z\rightarrow\infty$ in $\mathbb{C}\setminus \Sigma^{(\ell)}$, we have $N^{(\ell)}(z)=I+\mathcal{O}(z^{-1})$.
	\end{itemize}
\end{RHP}

To solve the RH problem for $N^{(r)}$, we observe that for $z\in U^{(r)}$ and $t$ large enough,
	\begin{align}\label{eq:thetaexp1}
		\theta(z)=-\tilde{s}\tilde{k}-\frac{4}{3}\tilde{k}^3+\mathcal{O}(\tilde{k}^4t^{-1/3}),
	\end{align}
	where
	\begin{align}\label{equ:tildes1stran}
		\tilde{s}=6^{-2/3}\left(\frac{y}{t}-2\right)t^{2/3}
	\end{align}
    parametrizes the space-time region, and
	\begin{align}
		\tilde{k}=\left( \frac{9t}{2}\right) ^{1/3}(z-1)\label{equ:scaling tik}
	\end{align}
is a scaled spectral parameter. The expansion of $\theta$ in \eqref{eq:thetaexp1} invokes us to work in the $\tilde{k}$-plane, and we split the analysis into three steps.
	
\paragraph{Step 1: From the $z$-plane to the $\tilde{k}$-plane}
Under the change of variable \eqref{equ:scaling tik}, it is easily seen that $N^{(r)}(\tilde k)=N^{(r)}(z(\tilde k))$ is
analytic in $\mathbb{C}\setminus \tilde{\Sigma}^{(r)}$, where
\begin{equation}\label{def:tildesigmar}
	\tilde{\Sigma}^{(r)}:=\underset{j=1,2}{\cup}\left(\tilde{\Sigma}^{(r)}_j\cup\tilde{\Sigma}^{(r)*}_j\right)\cup(\tilde{k}_2, \tilde{k}_1).
\end{equation}
Here,
$\tilde{k}_j=\left( \frac{9t}{2}\right) ^{1/3}(k_j-1)$
and
\begin{align*}
\tilde{\Sigma}^{(r)}_1&=\left\lbrace \tilde{k}:\ \tilde{k}-\tilde{k}_1=le^{(\varphi_0)i},\ 0\leqslant l\leqslant c_0\left( \frac{9t}{2}\right) ^{1/3}\right\rbrace,
\\
\tilde{\Sigma}^{(r)}_2&=\left\lbrace \tilde{k}:\ \tilde{k}-\tilde{k}_2=le^{(\pi-\varphi_0)i},\ 0\leqslant l\leqslant c_0\left( \frac{9t}{2}\right) ^{1/3}\right\rbrace.
\end{align*}
Moreover, the jump of $N^{(r)}(\tilde k)$ now reads
\begin{small}
\begin{align*}
	V^{(r)}(\tilde{k})=\left\{\begin{array}{ll}\left(\begin{array}{cc}
			1 & e^{2i\theta(\left( \frac{9t}{2}\right) ^{-1/3}\tilde{k}+1)}R(\left( \frac{9t}{2}\right) ^{-1/3}\tilde{k}+1) \\
			0 & 1
		\end{array}\right)
		\left(\begin{array}{cc}
			1 & 0\\
			-e^{-2i\theta(\left( \frac{9t}{2}\right) ^{-1/3}\tilde{k}+1)}\overline{R(\left( \frac{9t}{2}\right) ^{-1/3}\tilde{k}+1)} & 1
		\end{array}\right),   & \tilde{k}\in 	(\tilde{k}_2,\tilde{k}_1),\\[14pt]
		\left(\begin{array}{cc}
			1 & 0\\
			\left(-\bar{R}(k_j)-\bar{R}'(k_j)(\left( \frac{9t}{2}\right) ^{-1/3}\tilde{k}+1-k_j)\right)e^{-2i\theta(\left( \frac{9t}{2}\right) ^{-1/3}\tilde{k}+1)} & 1
		\end{array}\right), &\tilde{k}\in \tilde{\Sigma}^{(r)}_j, \\[14pt]
		\left(\begin{array}{cc}
			1 & \left(R(k_{j})+R'(k_j)(\left( \frac{9t}{2}\right) ^{-1/3}\tilde{k}+1-k_j)\right)e^{2i\theta(\left( \frac{9t}{2}\right) ^{-1/3}\tilde{k}+1)} \\
			0 & 1
		\end{array}\right),  & \tilde{k}\in \tilde{\Sigma}^{(r)*}_j,
	\end{array}\right.
\end{align*}
\end{small}
for $j=1,2$.

\paragraph{Step 2: A model RH problem}
In view of \eqref{eq:thetaexp1} and the fact that $R(1)=r(1)\in\mathbb{R}$ (see \eqref{def:R=rT}), it is natural to expect that $N^{(r)}$ is well-approximated by the following model RH problem for $\tilde{N}^{(r)}$.
\begin{RHP}\label{RHP: Model RH problem ass N^{(r)}(z)}
\hfill
	\begin{itemize}
\item[$\bullet$]  $\tilde{N}^{(r)}(\tilde k)$ is holomorphic for $\tilde k \in\mathbb{C}\setminus \tilde{\Sigma}^{(r)}$,
    where $\tilde{\Sigma}^{(r)}$ is defined in \eqref{def:tildesigmar}.

		\item[$\bullet$]   For $\tilde{k} \in \tilde{\Sigma}^{(r)}$, we have
		\begin{equation}
			\tilde{N}^{(r)}_+(\tilde{k})=\tilde{N}^{(r)}_-(\tilde{k})\tilde{V}^{(r)}(\tilde{k}),
		\end{equation}
		where
		\begin{align}
			\tilde{V}^{(r)}(\tilde{k})=\left\{\begin{array}{ll}\left(\begin{array}{cc}
					1 & r(1)e^{-2i(\tilde{s}\tilde{k}+\frac{4}{3}\tilde{k}^3)} \\
					0 & 1
				\end{array}\right)
				\left(\begin{array}{cc}
					1 & 0\\
					-r(1)e^{2i(\tilde{s}\tilde{k}+\frac{4}{3}\tilde{k}^3)} & 1
				\end{array}\right),   & \tilde{k}\in(\tilde{k}_2,\tilde{k}_1),\\[12pt]
				\left(\begin{array}{cc}
					1 & 0\\
					-r(1)e^{2i(\tilde{s}\tilde{k}+\frac{4}{3}\tilde{k}^3)} & 1
				\end{array}\right), & \tilde{k}\in \tilde{\Sigma}^{(r)}_j, \\[12pt]
				\left(\begin{array}{cc}
					1 & r(1)e^{-2i(\tilde{s}\tilde{k}+\frac{4}{3}\tilde{k}^3)} \\
					0 & 1
				\end{array}\right),  &\tilde{k}\in \tilde{\Sigma}^{(r)*}_j,
			\end{array}\right.
        \end{align}
        for $j=1,2$.
		\item[$\bullet$] As $\tilde{k} \to \infty$ in $\mathbb{C}\setminus \tilde{\Sigma}^{(r)}$,
			we have $\tilde{N}^{(r)}(\tilde{k})=I+\mathcal{O}(\tilde{k}^{-1})$.
	\end{itemize}
\end{RHP}
In Step 3, we will solve the above RH problem explicitly with the aid of the Painelv\'{e} II parametrix. Consideration in the rest part of this step is to establish the error between the RH problems for $N^{(r)}$ and $\tilde{N}^{(r)}$ for large $t$. More precisely, note that $\tilde{N}^{(r)}$ is invertible, we define
\begin{equation}\label{def:Xi}
\Xi(\tilde{k}):=N^{(r)}(\tilde{k})\tilde{N}^{(r)}(\tilde{k})^{-1},
\end{equation}
and prove the following result.
\begin{Proposition}\label{prop:Xi}
	As $t\rightarrow +\infty$, $\Xi(\tilde{k})$ exists uniquely. Furthermore, we have
	\begin{align}\label{equ:Xiasy}
		\Xi(\tilde{k})=I+\mathcal{O}(t^{-1/3+4\delta_1}), \qquad t \to +\infty.
	\end{align}	
\end{Proposition}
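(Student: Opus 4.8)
The plan is to recognize $\Xi$, defined in \eqref{def:Xi}, as the solution of a small-norm RH problem on $\tilde{\Sigma}^{(r)}$ and then to invoke the standard theory of such problems. Since $N^{(r)}$ and $\tilde{N}^{(r)}$ are holomorphic in $\mathbb{C}\setminus\tilde{\Sigma}^{(r)}$, both normalized to $I+\mathcal{O}(\tilde{k}^{-1})$ at infinity, and since the Painlev\'e~II parametrix used to build $\tilde{N}^{(r)}$ is designed to share the local behaviour of $N^{(r)}$ at the self-intersection points $\tilde{k}_1,\tilde{k}_2$, the ratio $\Xi=N^{(r)}(\tilde{N}^{(r)})^{-1}$ is holomorphic off $\tilde{\Sigma}^{(r)}$, satisfies $\Xi(\tilde{k})=I+\mathcal{O}(\tilde{k}^{-1})$ as $\tilde{k}\to\infty$, and has jump $\Xi_+=\Xi_-V^{\Xi}$ on $\tilde{\Sigma}^{(r)}$ with
\begin{equation}
V^{\Xi}(\tilde{k})=\tilde{N}^{(r)}_-(\tilde{k})\,V^{(r)}(\tilde{k})\,\tilde{V}^{(r)}(\tilde{k})^{-1}\,\tilde{N}^{(r)}_-(\tilde{k})^{-1}. \nonumber
\end{equation}
Because $\tilde{N}^{(r)}$ (an explicit Painlev\'e~II parametrix, nonsingular by the Ablowitz--Segur/Hastings--McLeod solvability) and its inverse are uniformly bounded on $\tilde{\Sigma}^{(r)}$, it suffices to bound $V^{(r)}\tilde{V}^{(r)-1}-I$.

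The core of the argument is the estimate of this difference, obtained by comparing the explicit jump matrices entry by entry. Using the Taylor expansion \eqref{eq:thetaexp1}, the identity $R(1)=r(1)$ coming from \eqref{def:R=rT}, and the smoothness of $r$ (so that $R$ is Lipschitz near $z=1$), one finds that on the central segment $(\tilde{k}_2,\tilde{k}_1)$ the difference $V^{(r)}\tilde{V}^{(r)-1}-I$ is controlled by $(9t/2)^{-1/3}|\tilde{k}|+|\tilde{k}|^{4}t^{-1/3}$, which on this bounded segment is $\mathcal{O}(t^{-1/3})$. On the lens rays $\tilde{\Sigma}^{(r)}_j$ and their conjugates one additionally replaces the linearizations $R(k_j)+R'(k_j)(z-k_j)$ by $r(1)$, giving the same pointwise order $\mathcal{O}((1+|\tilde{k}|)t^{-1/3})$, but now multiplied by $e^{\pm2i\theta(z)}$; using \eqref{eq:thetaexp1} once more, and the fact that the range fixed for $\delta_1$ in the definition of $c_0$ keeps the remainder $|\tilde{k}|^{4}t^{-1/3}$ under control on the shrinking disk $U^{(r)}$ (whose image in the $\tilde{k}$-plane has radius of order $t^{\delta_1}$), the modulus of $e^{\pm2i\theta}$ is dominated by a Gaussian/cubic exponential $e^{-c(|\tilde{k}_j|l^{2}+l^{3})}$ along the ray $\tilde{k}=\tilde{k}_j+l e^{\pm i\varphi_0}$. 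Collecting these pointwise bounds together with the fact that $\tilde{\Sigma}^{(r)}$ has length $\mathcal{O}(t^{\delta_1})$, and taking the worst of the $L^1$, $L^2$ and $L^\infty$ contributions, yields
\begin{equation}
\bigl\|V^{\Xi}-I\bigr\|_{L^1(\tilde{\Sigma}^{(r)})\cap L^2(\tilde{\Sigma}^{(r)})\cap L^\infty(\tilde{\Sigma}^{(r)})}=\mathcal{O}\bigl(t^{-1/3+4\delta_1}\bigr), \nonumber
\end{equation}
the factor $t^{4\delta_1}$ recording the maximal size of the polynomial prefactors over the disk $U^{(r)}$ together with the (uniform in $\hat{\xi}$) bookkeeping in the regime where $\tilde{k}_1,\tilde{k}_2$ nearly coalesce.

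With the jump bound in hand I would conclude by the standard small-norm argument. The associated Beals--Coifman singular integral operator $\mathcal{C}_{V^{\Xi}}$ on $L^2(\tilde{\Sigma}^{(r)})$ has norm $\mathcal{O}(t^{-1/3+4\delta_1})\to0$, so $I-\mathcal{C}_{V^{\Xi}}$ is invertible for $t$ large; this gives existence and uniqueness of $\Xi$, represented by a Cauchy integral $\Xi(\tilde{k})=I+\frac{1}{2\pi i}\int_{\tilde{\Sigma}^{(r)}}\frac{\mu(\zeta)\,(V^{\Xi}(\zeta)-I)}{\zeta-\tilde{k}}\dif\zeta$ with $\mu=\Xi_-\in I+L^2(\tilde{\Sigma}^{(r)})$. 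Standard estimates on the Cauchy operator then give $\|\Xi-I\|_\infty\lesssim\|V^{\Xi}-I\|_{L^1\cap L^2}\lesssim t^{-1/3+4\delta_1}$, which is precisely \eqref{equ:Xiasy}.

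I expect the main obstacle to be the core estimate of the second paragraph: one must control, uniformly over the whole strip $0\leqslant(2-\hat{\xi})t^{2/3}\leqslant C$ — including the coalescence limit $\hat{\xi}\to2^-$, where $\tilde{k}_1,\tilde{k}_2\to0$ and the Gaussian decay rate degenerates — the competition between the polynomial prefactors, which can reach size $|\tilde{k}|^{4}\sim t^{4\delta_1}$ near the boundary of the shrinking disk $U^{(r)}$, and the off-axis exponential decay of $e^{\pm2i\theta}$, making sure that the Taylor remainder in \eqref{eq:thetaexp1} never spoils that decay inside $U^{(r)}$; everything after that is routine.
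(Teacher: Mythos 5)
Your proposal follows essentially the same path as the paper: treat $\Xi$ as the solution of a small-norm RH problem on $\tilde{\Sigma}^{(r)}$, use the uniform boundedness of the Painlev\'e~II parametrix $\tilde{N}^{(r)}$, establish the $\mathcal{O}(t^{-1/3+4\delta_1})$ bound on the jump difference in $L^1\cap L^2\cap L^\infty$, and conclude with the Beals--Coifman small-norm argument. The only cosmetic difference is that the paper estimates the triangular factors $w_\pm-\tilde w_\pm$ of the two jumps rather than the full ratio $V^{(r)}\tilde V^{(r)\,-1}$, which is equivalent bookkeeping, and your identification of the exponent $4\delta_1$ as arising from the phase remainder $\tilde k^4 t^{-1/3}$ at scale $|\tilde k|\sim t^{\delta_1}$ matches the paper's reasoning exactly.
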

\begin{proof} 	
By \eqref{def:Xi}, it is easily seen that $\Xi(\tilde{k})$ is holomorphic for $\tilde{k}\in \mathbb{C}\setminus \tilde{\Sigma}^{(r)}$ and satisfies the jump condition $$\Xi_{+}(\tilde{k})=\Xi_{-}(\tilde{k})J_{\Xi}(\tilde{k}), \qquad \tilde{k} \in \tilde{\Sigma}^{(r)},$$
where $$J_{\Xi}(\tilde{k}):=\tilde{N}^{(r)}(\tilde{k})V^{(r)}(\tilde{k}){\tilde{V}^{(r)}(\tilde{k})}^{-1}{\tilde{N}^{(r)}(\tilde{k})}^{-1}.$$
On account of the boundedness of $\tilde{N}^{(r)}(\tilde{k})$ (see \eqref{equ:mathcing local 1stcase} below),
it is sufficient to estimate the error (in $t$) between the jump matrices $V^{(r)}(\tilde{k})$ and $\tilde{V}^{(r)}(\tilde{k})$ on $(\tilde{k}_2, \tilde{k}_1)$ and $\tilde{\Sigma}^{(r)}_1$, respectively.

To proceed, we recall the following standard upper/lower triangular matrix decomposition of $V^{(r)}$ and $\tilde{V}^{(r)}$:
\begin{equation}
V^{(r)}(\tilde k)=(I-w_-(\tilde k))^{-1}(I+w_+(\tilde k)), \qquad \tilde V^{(r)}(\tilde k)=(I-\tilde w_-(\tilde k))^{-1}(I+ \tilde w_+(\tilde k)),
\end{equation}
where
\begin{align}
	w^{(r)}_+(\tilde{k})&=\left\{\begin{array}{ll}
		\left(\begin{array}{cc}
			0 & 0\\
			-e^{-2i\theta(\left( \frac{9t}{2}\right) ^{-1/3}\tilde{k}+1)}\overline{R(\left( \frac{9t}{2}\right) ^{-1/3}\tilde{k}+1)} & 0
		\end{array}\right),   & \tilde{k}\in (\tilde{k}_2,\tilde{k}_1),\\[14pt]
		\left(\begin{array}{cc}
			0 & 0\\
			\left(-\bar{R}(k_j)-\bar{R}'(k_j)(\left( \frac{9t}{2}\right) ^{-1/3}\tilde{k}+1-k_j)\right)e^{-2i\theta(\left( \frac{9t}{2}\right) ^{-1/3}\tilde{k}+1)} & 0
		\end{array}\right), &\tilde{k}\in \tilde{\Sigma}^{(r)}_j,\\[14pt]
	0,  & \text{elsewhere},
	\end{array}\right.
\label{def:wr+}
\\
w^{(r)}_-(\tilde{k})&=\left\{\begin{array}{ll}\left(\begin{array}{cc}
		0 & e^{2i\theta(\left( \frac{9t}{2}\right) ^{-1/3}\tilde{k}+1)}R(\left( \frac{9t}{2}\right) ^{-1/3}\tilde{k}+1) \\
		0 & 0
	\end{array}\right),   & \tilde{k}\in (\tilde{k}_2,\tilde{k}_1),\\[14pt]
	\left(\begin{array}{cc}
		0 & \left(R(k_{j})+R'(k_j)(\left( \frac{9t}{2}\right) ^{-1/3}\tilde{k}+1-k_j)\right)e^{\theta(\left( \frac{9t}{2}\right) ^{-1/3}\tilde{k}+1)} \\
		0 & 0
	\end{array}\right),  & \tilde{k}\in \tilde{\Sigma}^{(r)*}_j,\\[14pt]
	0,  &  \text{elsewhere},
	\end{array}\right.\\
\tilde{w}^{(r)}_+(\tilde{k})&=\left\{\begin{array}{ll}
	\left(\begin{array}{cc}
		0 & 0 \\
		-r(1)e^{2i(\tilde{s}\tilde{k}+\frac{4}{3}\tilde{k}^3)} & 0
	\end{array}\right),   &  \tilde{k}\in 	(\tilde{k}_2,\tilde{k}_1)\cup\tilde{\Sigma}^{(r)}_j, \\[12pt]
	0,  &  \text{elsewhere},
	\end{array}\right.\\
\tilde{w}^{(r)}_-(\tilde{k})&=\left\{\begin{array}{ll}
	\left(\begin{array}{cc}
		0 & r(1)e^{-2i(\tilde{s}\tilde{k}+\frac{4}{3}\tilde{k}^3)}\\
		0 & 0
	\end{array}\right),   & \tilde{k}\in 	(\tilde{k}_2,\tilde{k}_1)\cup\tilde{\Sigma}^{(r)*}_j, \\[12pt]
	0,   & \text{elsewhere},\label{def:wr-tilde}
\end{array}\right.
\end{align}
with $j=1,2$. For $\tilde{k}\in(\tilde{k}_2, \tilde{k}_1)$, we have $|e^{-2i(\tilde{s}\tilde{k}+\frac{4}{3}\tilde{k}^3)}|=|e^{2i\theta(\tilde{k})} |=1$ and
	\begin{align}
 &\left|R\left(\left( \frac{9t}{2}\right) ^{-1/3}\tilde{k}+1\right)-r(1)\right|=\left|\int_1^{\left( \frac{9t}{2}\right) ^{-1/3}\tilde{k}+1}R'(\zeta)\dif\zeta\right| \nonumber
   \\
     &\leqslant \|R'\|_\infty
		\left|\left( \frac{9t}{2}\right) ^{-1/3}\tilde{k}\right| \lesssim t^{-1/3},
  \\
		&\left|e^{2i\theta(\tilde{k})}-e^{-2i(\tilde{s}\tilde{k}+\frac{4}{3}\tilde{k}^3)}\right|=\left|\exp\left\lbrace \mathcal{O}(\tilde{k}^4t^{-1/3})\right\rbrace -1\right|\lesssim t^{-1/3}.
	\end{align}
For $\tilde{k}\in\tilde{\Sigma}^{(r)}_{1}$, since $\re\left[ 2i(\tilde{s}\tilde{k}+\frac{4}{3}\tilde{k}^3)\right] <0$, it follows that  $|e^{2i(\tilde{s}\tilde{k}+\frac{4}{3}\tilde{k}^3)}|\in L^\infty(\tilde{\Sigma}^{(r)}_1)\cap L^1(\tilde{\Sigma}^{(r)}_1)\cap L^2(\tilde{\Sigma}^{(r)}_1)$ with $\left|e^{2i\theta(\tilde{k})}-e^{-2i(\tilde{s}\tilde{k}+\frac{4}{3}\tilde{k}^3)}\right|\lesssim t^{-1/3+4\delta_1}$. Moreover,
	\begin{align*}
		&\left\vert R(k_{1})-r(1)+R'(k_1)(\left( \frac{9t}{2}\right) ^{-1/3}\tilde{k}+1-k_1)\right\vert=\left\vert\int_{k_1}^{1}(R'(\zeta)-R'(k_1))\dif\zeta+R'(k_1)\left( \frac{9t}{2}\right) ^{-1/3}\tilde{k}\right\vert\\
		&=\left\vert\int_{k_1}^{1}\int_{\zeta}^{k_1}R''(\eta)\dif\eta\dif\zeta+R'(k_1)\left( \frac{9t}{2}\right) ^{-1/3}\tilde{k}\right\vert\lesssim\left( \tilde{k}_1t^{-1/3}\right) ^{3/2}+t^{-1/2+\delta_1},
	\end{align*}
where we have made use of the fact that $R'(k_1)\sim t^{-1/6}$ in the last step. Combining the above estimates with \eqref{def:wr+}--\eqref{def:wr-tilde}, it follows that
	\begin{align}
		\|(\cdot)^i (w-\tilde{w})\|_{L^1\cap L^2\cap L^{\infty}}\lesssim t^{-1/3+4\delta_1}, \qquad w=w^{(r)}_{\pm}, \qquad i=0,1, \label{error w}
	\end{align}
which implies that $\|J_{\Xi}-I\|_{L^1\cap L^2\cap L^{\infty}}\lesssim t^{-1/3+4\delta_1}$ uniformly. Thus, the existence and uniqueness of $\Xi(\tilde{k})$ are valid by using a small norm RH problem arguments \cite{Deift2002LongtimeAF}, which also yields the estimate \eqref{equ:Xiasy}.
\end{proof}

Proposition \ref{prop:Xi} also implies that RH problem for $N^{(r)}$ is uniquely solvable for large positive $t$. Moreover, from the general theory developed by Beals and Coifman \cite{Beals1984ScatteringAI}, it follows that
\begin{align}
	N^{(r)}(\tilde{k})&=I+\frac{1}{2\pi i}\int_{\tilde{\Sigma}^{(r)}}\frac{\mu^{(r)}(\zeta)(w^{(r)}_+(\zeta)+w^{(r)}_-(\zeta))}{\zeta-\tilde{k}}\dif\zeta, \label{eq:Nrinteg} \\
	\tilde{N}^{(r)}(\tilde{k})&=I+\frac{1}{2\pi i}\int_{\tilde{\Sigma}^{(r)}}\frac{\tilde{\mu}^{(r)}(\zeta)(\tilde{w}^{(r)}_+(\zeta)+\tilde{w}^{(r)}_-(\zeta))}{\zeta-\tilde{k}}\dif\zeta, \label{eq:tildeNrinteg}
\end{align}
where $w^{(r)}_{\pm}$ and $\tilde w^{(r)}_{\pm}$ are defined in \eqref{def:wr+}--\eqref{def:wr-tilde},
\begin{align*}
	\mu^{(r)}=I+\mathcal{C}_+[\mu^{(r)}w^{(r)}_-]+\mathcal{C}_-[\mu^{(r)}w^{(r)}_+],\qquad \tilde{\mu}^{(r)}=I+\mathcal{C}_+[\tilde{\mu}^{(r)}\tilde{w}^{(r)}_-]+\mathcal{C}_-[\tilde{\mu}^{(r)}\tilde{w}^{(r)}_+],
\end{align*}
and where $\mathcal{C}_\pm$ are the Cauchy projection operators on $\tilde{\Sigma}^{(r)}$ as defined in \eqref{def:Cpm}. As $\tilde{k}\to\infty$, it is readily seen from \eqref{eq:Nrinteg} and \eqref{eq:tildeNrinteg} that
the large-$\tilde{k}$ expansion of $N^{(r)}(\tilde{k})$ and $\tilde{N}^{(r)}(\tilde{k})$ are
\begin{align}\label{eq:Nrexpansion}
	N^{(r)}(\tilde{k})=I+\frac{N^{(r)}_1}{\tilde{k}}+\frac{N^{(r)}_2}{\tilde{k}^2}+\mathcal{O}(\tilde{k}^{-3}),\qquad \tilde{N}^{(r)}(\tilde{k})=I+\frac{\tilde{N}^{(r)}_1}{\tilde{k}}+\frac{\tilde{N}^{(r)}_2}{\tilde{k}^2}+\mathcal{O}(\tilde{k}^{-3}),
\end{align}
where
\begin{align}
	&N^{(r)}_j=-\frac{1}{2\pi i}\int_{\tilde{\Sigma}^{(r)}}\zeta^{j-1}\mu^{(r)}(\zeta)(w^{(r)}_+(\zeta)+w^{(r)}_-(\zeta))\dif\zeta, \label{def:Nrj}\\
	&\tilde{N}^{(r)}_j=-\frac{1}{2\pi i}\int_{\tilde{\Sigma}^{(r)}}\zeta^{j-1}\tilde{\mu}^{(r)}(\zeta)(\tilde{w}^{(r)}_+(\zeta)+\tilde{w}^{(r)}_-(\zeta))\dif\zeta, \label{def:tildeNrj}
\end{align}
with $j=1,2$. The following proposition then follows directly from Proposition \ref{prop:Xi}.
\begin{Proposition}\label{asyN}
With $N^{(r)}_j$ and $\tilde{N}^{(r)}_j$, $j=1,2$, defined in \eqref{def:Nrj} and \eqref{def:tildeNrj}, we have, as $t\to\infty$,
	\begin{align}
		N^{(r)}_j=\tilde{N}^{(r)}_j+\mathcal{O}(t^{-1/3+4\delta_1}).
	\end{align}
\end{Proposition}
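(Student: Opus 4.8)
The plan is to promote the uniform estimate \eqref{equ:Xiasy} to the level of the first two coefficients in the large-$\tilde{k}$ expansion of $\Xi$, and then read off the claim from the factorization $N^{(r)}(\tilde{k})=\Xi(\tilde{k})\tilde{N}^{(r)}(\tilde{k})$ recorded in \eqref{def:Xi}. First I would recall, from the small-norm Riemann--Hilbert analysis already used in the proof of Proposition \ref{prop:Xi}, that $\Xi$ is represented by a Beals--Coifman integral on $\tilde{\Sigma}^{(r)}$,
\[
\Xi(\tilde{k})=I+\frac{1}{2\pi i}\int_{\tilde{\Sigma}^{(r)}}\frac{\mu_{\Xi}(\zeta)\bigl(J_{\Xi}(\zeta)-I\bigr)}{\zeta-\tilde{k}}\dif\zeta ,
\]
whose density obeys $\|\mu_{\Xi}-I\|_{2}\lesssim\|J_{\Xi}-I\|_{2}\lesssim t^{-1/3+4\delta_1}$. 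Since $J_{\Xi}-I=\tilde{N}^{(r)}\bigl(V^{(r)}(\tilde{V}^{(r)})^{-1}-I\bigr)(\tilde{N}^{(r)})^{-1}$ and the middle bracket is a finite linear combination of the nonzero entries of $w^{(r)}_{\pm}-\tilde{w}^{(r)}_{\pm}$, the weighted estimate \eqref{error w} together with the uniform (in $t$) boundedness of $\tilde{N}^{(r)}$ on $\tilde{\Sigma}^{(r)}$ yields $\bigl\|(\cdot)^{i}(J_{\Xi}-I)\bigr\|_{L^1\cap L^2}\lesssim t^{-1/3+4\delta_1}$ for $i=0,1$.

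Next I would expand the Cauchy kernel as $\tilde{k}\to\infty$, which is legitimate because the densities above lie in the relevant weighted $L^1$ spaces on $\tilde{\Sigma}^{(r)}$; this produces
\[
\Xi(\tilde{k})=I+\frac{\Xi_1}{\tilde{k}}+\frac{\Xi_2}{\tilde{k}^2}+\mathcal{O}(\tilde{k}^{-3}),\qquad \Xi_j=-\frac{1}{2\pi i}\int_{\tilde{\Sigma}^{(r)}}\zeta^{\,j-1}\mu_{\Xi}(\zeta)\bigl(J_{\Xi}(\zeta)-I\bigr)\dif\zeta .
\]
Writing $\mu_{\Xi}=I+(\mu_{\Xi}-I)$ and using the two bounds above gives $|\Xi_j|\lesssim\|\zeta^{\,j-1}(J_{\Xi}-I)\|_{L^1}+\|\mu_{\Xi}-I\|_{2}\,\|\zeta^{\,j-1}(J_{\Xi}-I)\|_{2}\lesssim t^{-1/3+4\delta_1}$ for $j=1,2$. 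Finally, substituting $N^{(r)}=\Xi\,\tilde{N}^{(r)}$ into \eqref{eq:Nrexpansion} and comparing coefficients of $\tilde{k}^{-1}$ and $\tilde{k}^{-2}$ gives $N^{(r)}_1=\tilde{N}^{(r)}_1+\Xi_1$ and $N^{(r)}_2=\tilde{N}^{(r)}_2+\Xi_2+\Xi_1\tilde{N}^{(r)}_1$; since $\tilde{N}^{(r)}_1=\mathcal{O}(1)$ uniformly in $t$ (from the explicit solution of the model problem for $\tilde{N}^{(r)}$ in terms of the Painlev\'e II parametrix, cf.\ \eqref{equ:mathcing local 1stcase}), both differences $N^{(r)}_j-\tilde{N}^{(r)}_j$ are $\mathcal{O}(t^{-1/3+4\delta_1})$, which is the assertion.

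The step I expect to need the most care is the large-$\tilde{k}$ expansion with its weighted bounds: $\tilde{\Sigma}^{(r)}$ is not a fixed contour but one of length of order $t^{\delta_1}$, so the extra factor $\zeta^{\,j-1}$ cannot be absorbed for free, and it is precisely for this reason that \eqref{error w} was recorded with the weight $(\cdot)^{i}$, $i=0,1$, and that the exponent $4\delta_1$ (rather than $3\delta_1$) is the natural loss. A subsidiary ingredient, used only to control the cross term $\Xi_1\tilde{N}^{(r)}_1$, is the boundedness in $t$ of $\tilde{N}^{(r)}$ and its first moment on $\tilde{\Sigma}^{(r)}$; this is not contained in Proposition \ref{prop:Xi} itself but follows from the explicit Painlev\'e II representation constructed in the remaining part of Step 3. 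Alternatively, one could run the argument directly on the difference of the Beals--Coifman representations \eqref{eq:Nrinteg}--\eqref{eq:tildeNrinteg}, splitting $\mu^{(r)}(w^{(r)}_++w^{(r)}_-)-\tilde{\mu}^{(r)}(\tilde{w}^{(r)}_++\tilde{w}^{(r)}_-)=\mu^{(r)}\bigl((w^{(r)}_++w^{(r)}_-)-(\tilde{w}^{(r)}_++\tilde{w}^{(r)}_-)\bigr)+(\mu^{(r)}-\tilde{\mu}^{(r)})(\tilde{w}^{(r)}_++\tilde{w}^{(r)}_-)$ and estimating $\|\mu^{(r)}-\tilde{\mu}^{(r)}\|_{2}\lesssim t^{-1/3+4\delta_1}$ by the resolvent identity, but the route through $\Xi$ is shorter and is the one the phrasing ``follows directly from Proposition \ref{prop:Xi}'' points to.
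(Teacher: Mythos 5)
Your proof is correct and corresponds to what the paper means by ``follows directly from Proposition~\ref{prop:Xi}'': the paper gives no explicit argument, but the mechanism it is invoking is exactly the one you spell out, namely that the weighted estimate \eqref{error w} established in the proof of Proposition~\ref{prop:Xi} (together with the $L^2$-smallness of the Beals--Coifman density) controls the first two moments of $J_\Xi-I$ on $\tilde\Sigma^{(r)}$, and the factorization $N^{(r)}=\Xi\,\tilde N^{(r)}$ then transfers this to the coefficients. Your remark that the uniform bound $\Xi=I+\mathcal O(t^{-1/3+4\delta_1})$ alone would not suffice and that one genuinely needs the weighted $i=0,1$ estimates on a contour of length $\sim t^{\delta_1}$ is the right sensitivity to flag; the paper records \eqref{error w} with the weight $(\cdot)^i$ precisely for this reason. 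The alternative you sketch, working directly from the Beals--Coifman representations \eqref{eq:Nrinteg}--\eqref{eq:tildeNrinteg} and the resolvent estimate for $\mu^{(r)}-\tilde\mu^{(r)}$, is also valid and is arguably what the paper's ordering of the displays suggests; the two routes are computationally equivalent here.
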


%
%
%


\paragraph{Step 3: Construction of $\tilde{N}^{(r)}$}
We could solve the RH problem for $\tilde{N}^{(r)}$ explicitly by using the Painlev\'{e} II parametrix $M^{P}(z;s,\kappa)$ introduced in Appendix \ref{appendix: RHP for PII}. More precisely, define
\begin{align}\label{equ:mathcing local 1stcase}
	\tilde{N}^{(r)}(\tilde{k})=e^{\frac{\pi i}{4}\sigma_3} M^{P}(\tilde{k};\tilde s, r(1))e^{-\frac{\pi i}{4}\sigma_3}\tilde{H}(\tilde{k}),
\end{align}
where
\begin{align}
 \tilde{H}(\tilde{k})=\left\{\begin{array}{ll}
 		\left(\begin{array}{cc}
 			1 & 0\\
 			-r(1)e^{2i(\tilde{s}\tilde{k}+\frac{4}{3}\tilde{k}^3)} & 1
 		\end{array}\right), & \tilde{k}\in \tilde{\Omega},\\[12pt]
 		\left(\begin{array}{cc}
 			1 & -r(1)e^{-2i(\tilde{s}\tilde{k}+\frac{4}{3}\tilde{k}^3)} \\
 			0 & 1
 		\end{array}\right),  & \tilde{k}\in \tilde{\Omega}^*,\\[10pt]
 	I, & \text{ elsewhere,}\\
 	\end{array}\right.
\end{align}
and where the region $\tilde \Omega$ is illustrated in the left picture of Figure \ref{fig:jump of hat{N}_{a}}. We use the function
$\tilde{H}$ to transform the jump contours $\tilde{\Sigma}^{(r)}$ defined in \eqref{def:tildesigmar} to those of the Painlev\'{e} II parametrix for $|\tilde k|\leqslant c_0(9t/2)^{1/3}$; see Figure \ref{fig:jump of hat{N}_{a}} for an illustration.
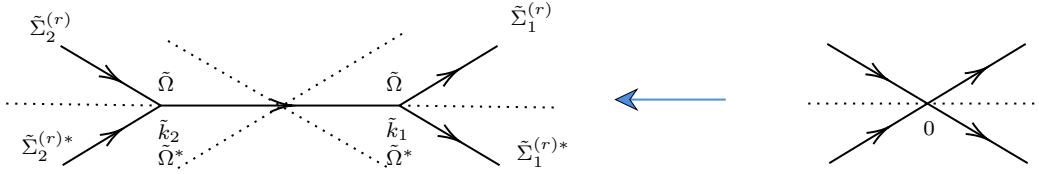
\begin{figure}[htbp]
	\begin{center}
		\tikzset{every picture/.style={line width=0.75pt}} 
		\begin{tikzpicture}[x=0.75pt,y=0.75pt,yscale=-1,xscale=1]
			\draw    (109,144) -- (228,144) ;
			\draw [shift={(174.5,144)}, rotate = 180] [color={rgb, 255:red, 0; green, 0; blue, 0 }  ][line width=0.75]    (10.93,-3.29) .. controls (6.95,-1.4) and (3.31,-0.3) .. (0,0) .. controls (3.31,0.3) and (6.95,1.4) .. (10.93,3.29)   ;
			\draw    (228,144) -- (277,114) ;
			\draw [shift={(257.62,125.87)}, rotate = 148.52] [color={rgb, 255:red, 0; green, 0; blue, 0 }  ][line width=0.75]    (10.93,-3.29) .. controls (6.95,-1.4) and (3.31,-0.3) .. (0,0) .. controls (3.31,0.3) and (6.95,1.4) .. (10.93,3.29)   ;
			\draw    (228,144) -- (278,174) ;
			\draw [shift={(258.14,162.09)}, rotate = 210.96] [color={rgb, 255:red, 0; green, 0; blue, 0 }  ][line width=0.75]    (10.93,-3.29) .. controls (6.95,-1.4) and (3.31,-0.3) .. (0,0) .. controls (3.31,0.3) and (6.95,1.4) .. (10.93,3.29)   ;
			\draw    (59,114) -- (109,144) ;
			\draw [shift={(89.14,132.09)}, rotate = 210.96] [color={rgb, 255:red, 0; green, 0; blue, 0 }  ][line width=0.75]    (10.93,-3.29) .. controls (6.95,-1.4) and (3.31,-0.3) .. (0,0) .. controls (3.31,0.3) and (6.95,1.4) .. (10.93,3.29)   ;
			\draw    (60,174) -- (109,144) ;
			\draw [shift={(89.62,155.87)}, rotate = 148.52] [color={rgb, 255:red, 0; green, 0; blue, 0 }  ][line width=0.75]    (10.93,-3.29) .. controls (6.95,-1.4) and (3.31,-0.3) .. (0,0) .. controls (3.31,0.3) and (6.95,1.4) .. (10.93,3.29)   ;
			\draw  [dash pattern={on 0.84pt off 2.51pt}]  (432,143) -- (551,143) ;
			\draw    (491.5,143) -- (540.5,113) ;
			\draw [shift={(521.12,124.87)}, rotate = 148.52] [color={rgb, 255:red, 0; green, 0; blue, 0 }  ][line width=0.75]    (10.93,-3.29) .. controls (6.95,-1.4) and (3.31,-0.3) .. (0,0) .. controls (3.31,0.3) and (6.95,1.4) .. (10.93,3.29)   ;
			\draw    (491.5,143) -- (541.5,173) ;
			\draw [shift={(521.64,161.09)}, rotate = 210.96] [color={rgb, 255:red, 0; green, 0; blue, 0 }  ][line width=0.75]    (10.93,-3.29) .. controls (6.95,-1.4) and (3.31,-0.3) .. (0,0) .. controls (3.31,0.3) and (6.95,1.4) .. (10.93,3.29)   ;
			\draw    (441.5,113) -- (491.5,143) ;
			\draw [shift={(471.64,131.09)}, rotate = 210.96] [color={rgb, 255:red, 0; green, 0; blue, 0 }  ][line width=0.75]    (10.93,-3.29) .. controls (6.95,-1.4) and (3.31,-0.3) .. (0,0) .. controls (3.31,0.3) and (6.95,1.4) .. (10.93,3.29)   ;
			\draw    (442.5,173) -- (491.5,143) ;
			\draw [shift={(472.12,154.87)}, rotate = 148.52] [color={rgb, 255:red, 0; green, 0; blue, 0 }  ][line width=0.75]    (10.93,-3.29) .. controls (6.95,-1.4) and (3.31,-0.3) .. (0,0) .. controls (3.31,0.3) and (6.95,1.4) .. (10.93,3.29)   ;
			\draw  [dash pattern={on 0.84pt off 2.51pt}]  (31.86,143) -- (109,144) ;
			\draw  [dash pattern={on 0.84pt off 2.51pt}]  (228,144) -- (305.14,145) ;
			\draw  [dash pattern={on 0.84pt off 2.51pt}]  (112.57,111.5) -- (224.43,176.5) ;
			\draw  [dash pattern={on 0.84pt off 2.51pt}]  (117.57,175.5) -- (230.86,107) ;
			\draw [color={rgb, 255:red, 74; green, 144; blue, 226 }  ,draw opacity=1 ]   (342,141) -- (390.86,141) ;
			\draw [shift={(335.86,141)}, rotate = 0] [fill={rgb, 255:red, 74; green, 144; blue, 226 }  ,fill opacity=1 ][line width=0.08]  [draw opacity=0] (10.72,-5.15) -- (0,0) -- (10.72,5.15) -- (7.12,0) -- cycle    ;
             \draw (220,125.4) node [anchor=north west][inner sep=0.75pt]  [font=\scriptsize]  {$ \tilde{\Omega}$};
            \draw (220,162.4) node [anchor=north west][inner sep=0.75pt]  [font=\scriptsize]  {$ \tilde{\Omega}^*$};
			\draw (220,148.4) node [anchor=north west][inner sep=0.75pt]  [font=\scriptsize]  {$\tilde{k}_{1}$};
   		\draw (106,125.4) node [anchor=north west][inner sep=0.75pt]  [font=\scriptsize]                     {$\tilde{\Omega}$};
        	\draw (106,162.4) node [anchor=north west][inner sep=0.75pt]  [font=\scriptsize]           {$\tilde{\Omega}^*$};
			\draw (106,149.4) node [anchor=north west][inner sep=0.75pt]  [font=\scriptsize]  {$\tilde{k}_{2}$};
			\draw (282,90.4) node [anchor=north west][inner sep=0.75pt]  [font=\scriptsize]  {$\tilde{\Sigma }_{1}^{(r)}$};
			\draw (285,158.4) node [anchor=north west][inner sep=0.75pt]  [font=\scriptsize]  {$\tilde{\Sigma }_{1}^{(r)*}$};
			\draw (42,95.4) node [anchor=north west][inner sep=0.75pt]  [font=\scriptsize]  {$\tilde{\Sigma }_{2}^{(r)}$};
			\draw (38,155.4) node [anchor=north west][inner sep=0.75pt]  [font=\scriptsize]  {$\tilde{\Sigma }_{2}^{(r)*}$};
			\draw (488,150.4) node [anchor=north west][inner sep=0.75pt]  [font=\scriptsize]  {$0$};
		\end{tikzpicture}
		\caption{ The jump contours of the RH problems for $\tilde{N}^{(r)}$ (left) and $M^{P}$ (right).} \label{fig:jump of hat{N}_{a}}
	\end{center}
\end{figure}

In view of RH problem \ref{appendix: reduced PII RH}, it's readily seen that $\tilde{N}^{(r)}$ in \eqref{equ:mathcing local 1stcase} indeed solves RH problem \ref{RHP: Model RH problem ass N^{(r)}(z)}. Moreover, let $t\to +\infty$, we observe from \eqref{MPfirstexpansion}--\eqref{vMPasy} and \eqref{eq:Nrexpansion} that
\begin{align}
	&\tilde{N}^{(r)}_1=	\frac{i}{2}\left(\begin{array}{cc}
		-\int_{\tilde{s}}^{+\infty}v^2(\zeta)\dif\zeta   & v(\tilde{s})\\
		-v(\tilde{s}) & \int_{\tilde{s}}^{+\infty}v^2(\zeta)\dif\zeta
	\end{array}\right)+\mathcal{O}(e^{-ct^{3\delta_1}}),\label{equ:Nr1specific}\\
	&\tilde{N}^{(r)}_2=\frac{1}{8}\left(\begin{array}{cc}
		-\left(\int_{\tilde{s}}^{+\infty}v^2(\zeta)\dif\zeta\right)^2+v^2(\tilde{s})  & 2\left(v(\tilde{s})\int_{\tilde{s}}^{+\infty}v^2(\zeta)\dif\zeta  +v'(\tilde{s})\right) \\
		-2\left(v(\tilde{s})\int_{\tilde{s}}^{+\infty}v^2(\zeta)\dif\zeta  +v'(\tilde{s})\right)  & -\left(\int_{\tilde{s}}^{+\infty}v^2(\zeta)\dif\zeta   \right)^2+v^2(\tilde{s})
	\end{array}\right)+\mathcal{O}(e^{-ct^{3\delta_1}}),\label{equ:Nr2specific}
\end{align}
for some $c>0$, where $(')=\frac{\dif }{\dif \tilde{s}}$ and $v(\tilde{s})$ is the unique solution of Painlev\'e II equation \eqref{equ:standard PII equ},
fixed by the boundary condition
\begin{align}\label{equ:v(tildes)asy1sttran}
	v(\tilde{s})\sim r(1)\textnormal{Ai}(\tilde{s}), \qquad \tilde{s}\to +\infty.
\end{align}


Finally, the RH problem for $N^{(l)}$ can be solved in a similar manner. Indeed, for $z\in U^{(l)}$ and $t$ large enough, we have
	\begin{align}
		\theta(z)=-\tilde{s}\breve{k}-\frac{4}{3}\breve{k}^3+\mathcal{O}(\breve{k}^4t^{-1/3})
	\end{align}
where $\tilde s$ is defined in \eqref{equ:tildes1stran} and
	\begin{align}
		\breve{k}=\left( \frac{9t}{2}\right) ^{1/3}(z+1)
	\end{align}
is the scaled spectral parameter in this case. Following three steps we have just performed, we could approximate $N^{(l)}$ by the RH problem for $\breve{N}^{(l)}$ on the $\breve{k}$-plane, such that, as $\breve k \to \infty$,
\begin{align}\label{eq:Nlkexp}
N^{(l)}(\breve{k})=I+\frac{N^{(l)}_1}{\breve k}+\frac{N^{(l)}_2}{\breve{k}^2}+\mathcal{O}(\breve{k}^{-3}), \qquad
	\breve{N}^{(l)}(\breve{k})=I+\frac{\breve N^{(l)}_1}{\breve{k}}+\frac{\breve N^{(l)}_2}{\breve{k}^2}+\mathcal{O}(\breve{k}^{-3})
\end{align}
where, as $t \to +\infty$,
\begin{align}
&N^{(l)}_j=\breve{N}^{(l)}_j+\mathcal{O}(t^{-1/3+4\delta_1}), \qquad j=1,2, \label{eq:Nljest}
\\
&\breve{N}^{(l)}_1=	\frac{i}{2}\left(\begin{array}{cc}
		-\int_{\tilde{s}}^{+\infty}v^2(\zeta)\dif\zeta   & -v(\tilde{s})\\
		v(\tilde{s}) & \int_{\tilde{s}}^{+\infty}v^2(\zeta)\dif\zeta
	\end{array}\right)+\mathcal{O}(e^{-ct^{3\delta_1}}),\label{equ:Nl1specific}\\
	&\breve{N}^{(l)}_2=\frac{1}{8}\left(\begin{array}{cc}
		-\left(\int_{\tilde{s}}^{+\infty}v^2(\zeta)\dif\zeta   \right)^2+v^2(\tilde{s})  & -2\left(v(\tilde{s})\int_{\tilde{s}}^{+\infty}v^2(\zeta)\dif\zeta  +v'(\tilde{s})\right) \\
		2\left(v(\tilde{s})\int_{\tilde{s}}^{+\infty}v^2(\zeta)\dif\zeta  +v'(\tilde{s})\right)  & -\left(\int_{\tilde{s}}^{+\infty}v^2(\zeta)\dif\zeta\right)^2+v^2(\tilde{s})
	\end{array}\right)+\mathcal{O}(e^{-ct^{3\delta_1}}),\label{equ:Nl2specific}
\end{align}
with the same $v$ in \eqref{equ:Nr1specific} and \eqref{equ:Nr2specific}. Here, we also need to use the fact that $r(z)=-\overline{r(-z)}=\overline{r(z^{-1})}$, which particularly implies that  $r(1)=-r(-1)\in \mathbb{R}$.

\subsubsection*{The small norm RH problem}
Let $N(z)$ and $N^{(\ell)}(z)$, $\ell \in\{r,l\}$ be the solutions of RH problems \ref{RHP: N(z) tran1} and \ref{RHP:Nrl}, we define
 \begin{equation}\label{def:E(z)1sttranregion}
 	E(z)=\left\{\begin{array}{ll}
 		N(z), & z\in \mathbb{C} \setminus\left(U^{(r)}\cup U^{(l)}\right),\\
 		N(z){N^{(r)}(z)}^{-1},  &z\in U^{(r)},\\
 		N(z){N^{(l)}(z)}^{-1},  &z\in U^{(l)}.\\
 	\end{array}\right.
 \end{equation}
It is then readily seen that $E(z)$ satisfies the following RH problem.
\begin{RHP}\label{RHP:E(z)}
	\hfill
	\begin{itemize}
	\item[$\bullet$]   $E(z)$ is holomorphic for $z\in\mathbb{C}\setminus  \Sigma^{(E)} $, where
                   $$\Sigma^{(E)}:= \partial U^{(r)}\cup\partial U^{(l)}\cup\left( \Sigma^{(4)}\setminus (U^{(r)}\cup U^{(l)})\right);$$
                   see Figure \ref{figE} for an illustration.
	\item[$\bullet$]   For $z\in \Sigma^{(E)}$, we have
	$$E_+(z)=E_-(z)V^{(E)}(z),$$
	where
	\begin{equation}
		V^{(E)}(z)=\left\{\begin{array}{llll}
			V^{(4)}(z), & z\in \Sigma^{(E)}\setminus (U^{(r)}\cup U^{(l)}),\\[4pt]
			N^{(r)}(z),  & z\in \partial U^{(r)},\\[4pt]
			N^{(l)}(z),  & z\in \partial U^{(l)},
		\end{array}\right. \label{deVE}
	\end{equation}
  and where $V^{(4)}(z)$ is defined in \eqref{jump:V^{(4)}}.
	\item[$\bullet$] As $z\to \infty$ in $\mathbb{C}\setminus \Sigma^{(E)}$, we have
	$E(z) =I+\mathcal{O}(z^{-1})$.
\end{itemize}
\end{RHP}

\begin{figure}[htbp]
	\centering
	\tikzset{every picture/.style={line width=0.75pt}} 
	\begin{tikzpicture}[x=0.75pt,y=0.75pt,yscale=-1,xscale=1]
	\draw    (179,98) -- (238,136) ;
	\draw [shift={(213.54,120.25)}, rotate = 212.78] [color={rgb, 255:red, 0; green, 0; blue, 0 }  ][line width=0.75]    (10.93,-3.29) .. controls (6.95,-1.4) and (3.31,-0.3) .. (0,0) .. controls (3.31,0.3) and (6.95,1.4) .. (10.93,3.29)   ;
	\draw    (179,203) -- (237,165) ;
	\draw [shift={(213.02,180.71)}, rotate = 146.77] [color={rgb, 255:red, 0; green, 0; blue, 0 }  ][line width=0.75]    (10.93,-3.29) .. controls (6.95,-1.4) and (3.31,-0.3) .. (0,0) .. controls (3.31,0.3) and (6.95,1.4) .. (10.93,3.29)   ;
	\draw  [color={rgb, 255:red, 208; green, 2; blue, 27 }  ,draw opacity=1 ] (233,152) .. controls (233,138.19) and (244.19,127) .. (258,127) .. controls (271.81,127) and (283,138.19) .. (283,152) .. controls (283,165.81) and (271.81,177) .. (258,177) .. controls (244.19,177) and (233,165.81) .. (233,152) -- cycle ;
	\draw    (276,133) -- (334,95) ;
	\draw [shift={(310.02,110.71)}, rotate = 146.77] [color={rgb, 255:red, 0; green, 0; blue, 0 }  ][line width=0.75]    (10.93,-3.29) .. controls (6.95,-1.4) and (3.31,-0.3) .. (0,0) .. controls (3.31,0.3) and (6.95,1.4) .. (10.93,3.29)   ;
	\draw    (277,170) -- (335,201) ;
	\draw [shift={(311.29,188.33)}, rotate = 208.12] [color={rgb, 255:red, 0; green, 0; blue, 0 }  ][line width=0.75]    (10.93,-3.29) .. controls (6.95,-1.4) and (3.31,-0.3) .. (0,0) .. controls (3.31,0.3) and (6.95,1.4) .. (10.93,3.29)   ;
	\draw    (334,95) -- (394,134) ;
	\draw [shift={(369.03,117.77)}, rotate = 213.02] [color={rgb, 255:red, 0; green, 0; blue, 0 }  ][line width=0.75]    (10.93,-3.29) .. controls (6.95,-1.4) and (3.31,-0.3) .. (0,0) .. controls (3.31,0.3) and (6.95,1.4) .. (10.93,3.29)   ;
	\draw    (335,201) -- (393,163) ;
	\draw [shift={(369.02,178.71)}, rotate = 146.77] [color={rgb, 255:red, 0; green, 0; blue, 0 }  ][line width=0.75]    (10.93,-3.29) .. controls (6.95,-1.4) and (3.31,-0.3) .. (0,0) .. controls (3.31,0.3) and (6.95,1.4) .. (10.93,3.29)   ;
	\draw  [color={rgb, 255:red, 208; green, 2; blue, 27 }  ,draw opacity=1 ] (389,150) .. controls (389,136.19) and (400.19,125) .. (414,125) .. controls (427.81,125) and (439,136.19) .. (439,150) .. controls (439,163.81) and (427.81,175) .. (414,175) .. controls (400.19,175) and (389,163.81) .. (389,150) -- cycle ;
	\draw    (432,132) -- (490,94) ;
	\draw [shift={(466.02,109.71)}, rotate = 146.77] [color={rgb, 255:red, 0; green, 0; blue, 0 }  ][line width=0.75]    (10.93,-3.29) .. controls (6.95,-1.4) and (3.31,-0.3) .. (0,0) .. controls (3.31,0.3) and (6.95,1.4) .. (10.93,3.29)   ;
	\draw    (435,164) -- (490,199) ;
	\draw [shift={(467.56,184.72)}, rotate = 212.47] [color={rgb, 255:red, 0; green, 0; blue, 0 }  ][line width=0.75]    (10.93,-3.29) .. controls (6.95,-1.4) and (3.31,-0.3) .. (0,0) .. controls (3.31,0.3) and (6.95,1.4) .. (10.93,3.29)   ;
	\draw  [dash pattern={on 0.84pt off 2.51pt}]  (113,150) -- (548,149) ;
	\draw [shift={(550,149)}, rotate = 179.87] [color={rgb, 255:red, 0; green, 0; blue, 0 }  ][line width=0.75]    (10.93,-3.29) .. controls (6.95,-1.4) and (3.31,-0.3) .. (0,0) .. controls (3.31,0.3) and (6.95,1.4) .. (10.93,3.29)   ;
	\draw    (334,97) -- (334,198) ;
	\draw [shift={(334,200)}, rotate = 270] [color={rgb, 255:red, 0; green, 0; blue, 0 }  ][line width=0.75]    (10.93,-3.29) .. controls (6.95,-1.4) and (3.31,-0.3) .. (0,0) .. controls (3.31,0.3) and (6.95,1.4) .. (10.93,3.29)   ;
	\draw [shift={(334,95)}, rotate = 90] [color={rgb, 255:red, 0; green, 0; blue, 0 }  ][line width=0.75]    (10.93,-3.29) .. controls (6.95,-1.4) and (3.31,-0.3) .. (0,0) .. controls (3.31,0.3) and (6.95,1.4) .. (10.93,3.29)   ;
	\draw  [color={rgb, 255:red, 208; green, 2; blue, 27 }  ,draw opacity=1 ] (408.76,120.06) -- (420.03,126.35) -- (407.37,128.87) ;
	\draw  [color={rgb, 255:red, 208; green, 2; blue, 27 }  ,draw opacity=1 ] (249.96,122.99) -- (262.1,127.37) -- (250.02,131.9) ;
	\draw (323,151.4) node [anchor=north west][inner sep=0.75pt]  [font=\scriptsize]  {$0$};
	\draw (421,149.4) node [anchor=north west][inner sep=0.75pt]  [font=\scriptsize]  {$k_{1}$};
	\draw (397,149.4) node [anchor=north west][inner sep=0.75pt]  [font=\scriptsize]  {$k_{2}$};
	\draw (265,152.4) node [anchor=north west][inner sep=0.75pt]  [font=\scriptsize]  {$k_{3}$};
	\draw (244,152.4) node [anchor=north west][inner sep=0.75pt]  [font=\scriptsize]  {$k_{4}$};
	\draw (540,158) node [anchor=north west][inner sep=0.75pt]  [font=\scriptsize] [align=left] {$\re z$};
	\end{tikzpicture}
	\caption{ The jump contour $\Sigma^{(E)}$ of the RH problem for $E$, where the two red circles are $\partial U^{(l)}$ and $\partial U^{(r)}$, repectively. }
	\label{figE}
\end{figure}
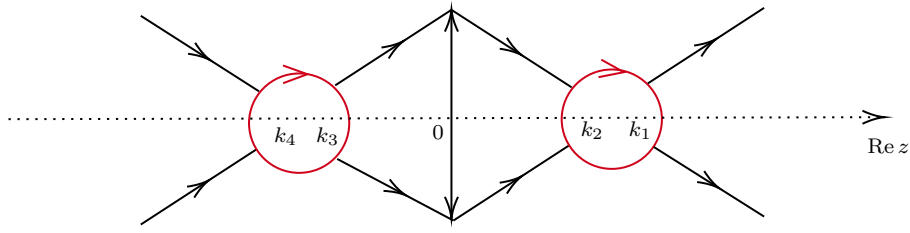

A simple calculation  shows that
\begin{equation}\label{E(z):BCrep}
	\parallel V^{(E)}(z)-I\parallel_{p}=\left\{\begin{array}{llll}
		\mathcal{O}(\exp\left\{-ct^{3\delta_1}\right\}),  & z\in  \Sigma^{(E)}\setminus  (U^{(r)}\cup U^{(l)}),\\[6pt]
		\mathcal{O}(t^{-\kappa_p}),   & z\in \partial U^{(r)}\cup\partial U^{(l)},
	\end{array}\right.
\end{equation}
for some positive $c$ with $\kappa_\infty=\delta_1$ and $\kappa_2=1/6+\delta_1/2$.
It then follows from the small norm RH problem theory \cite{Deift2002LongtimeAF} that there exists a unique solution to RH problem \ref{RHP:E(z)} for large positive $t$. Moreover, according to \cite{Beals1984ScatteringAI}, we have
\begin{equation}\label{expression:E(z)}
	E(z)=I+\frac{1}{2\pi i}\int_{\Sigma^{(E)}}\dfrac{\varpi(\zeta  ) (V^{(E)}(\zeta  )-I)}{\zeta  -z}\dif\zeta  ,
\end{equation}
where $\varpi\in I+ L^2(\Sigma^{(E)})$ is the unique solution of the Fredholm-type equation
\begin{align}
	\varpi=I+\mathcal{C}_E\varpi.\label{equ: varpi}
\end{align}
Here, $\mathcal{C}_E$: $L^2(\Sigma^{(E)})\to L^2(\Sigma^{(E)})$ is an integral operator defined by $\mathcal{C}_E(f)(z)=\mathcal{C}_-\left( f(V^{(E)}(z) -I)\right) $
with $\mathcal{C}_-$ being the Cauchy projection operator on $\Sigma^{(E)}$. Thus,
\begin{align}\label{eq:estCE}
	\parallel \mathcal{C}_E\parallel\leqslant\parallel \mathcal{C}_-\parallel \parallel V^{(E)}(z)-I\parallel_\infty \lesssim t^{-\delta_1},
\end{align}
which implies that  $1-\mathcal{C}_E$ is invertible  for   sufficiently large $t$.    So  $\varpi$  exists  uniquely with
\begin{align*}
	\varpi=I+(1-\mathcal{C}_E)^{-1}(\mathcal{C}_EI).
\end{align*}
On the other hand,
\eqref{equ: varpi} can be rewritten as
\begin{align}
	\varpi=I+\sum_{j=1}^8\mathcal{C}_E^jI+(1-\mathcal{C}_E)^{-1}(\mathcal{C}_E^9I),
\end{align}
where for $j=1,\cdots,8$, we have the estimates
\begin{align}
	&\|\mathcal{C}_E^jI\|_2\lesssim t^{-(1/6+j\delta_1-\delta_1/2)},\\
	&\| \varpi-I-\sum_{j=1}^8\mathcal{C}_E^jI\parallel_{2}\lesssim\dfrac{\parallel \mathcal{C}_E^9I\parallel_2}{1-\parallel \mathcal{C}_E\parallel}\lesssim t^{-(1/6+17\delta_1/2)}.\label{normrho}
\end{align}

For later use, we conclude this section with local behaviors of $E(z)$ at $z=i$ and $z=0$ as $t \to +\infty$. By \eqref{expression:E(z)}, it is readily seen that
\begin{align}\label{eq:Eneari}
	E(z)=E(i)+E_1(z-i)+\mathcal{O}((z-i)^2), \qquad z\to i,
\end{align}
where
\begin{align}
E(i)&=I+\frac{1}{2\pi i}\int_{\Sigma^{(E)}}\dfrac{\varpi(\zeta  ) (V^{(E)}(\zeta  )-I)}{\zeta  -i}\dif\zeta, \label{def:Ei}\\
E_1&=\frac{1}{2\pi i}\int_{\Sigma^{(E)}}\dfrac{\varpi(\zeta  ) (V^{(E)}(\zeta  )-I)}{(\zeta  -i)^2}\dif\zeta, \label{def:E1}
\end{align}
and
\begin{align}\label{def:E0}
	E(0)=I+\frac{1}{2\pi i}\int_{\Sigma^{(E)}}\frac{\varpi(\zeta  ) (V^{(E)}(\zeta  )-I)}{\zeta  }\dif\zeta.
\end{align}

\begin{Proposition}\label{asyE}
With $E(i)$, $E_1$ and $E(0)$ defined in \eqref{def:Ei}--\eqref{def:E0}, we have, as $t\to +\infty$,
\begin{subequations}
	\begin{align}
		E(i)&=I-t^{-1/3}\left( \frac{2}{9}\right)^{1/3} \left( \dfrac{N^{(r)}_1}{1-i}-\dfrac{N^{(l)}_1}{1+i}\right)
		+t^{-2/3}\frac{i}{2}\left( \frac{2}{9}\right)^{2/3}\left(N^{(r)}_2-N^{(l)}_2 \right)+\mathcal{O}(t^{-(1/3+9\delta_1)}),\label{E0t}\\
		E_1&=-t^{-1/3}\frac{i}{2}\left( \frac{2}{9}\right)^{1/3}\left(N^{(r)}_1-N^{(l)}_1 \right){+}it^{-2/3}\left( \frac{2}{9}\right)^{2/3}\left( \dfrac{N^{(r)}_2}{1-i}+\dfrac{N^{(l)}_2}{1+i}\right)+ \mathcal{O}(t^{-(1/3+9\delta_1)}).\label{equ:E1t}
        \\
	\label{equ:E(0)estastinfty}
	E(0)&=I-t^{-1/3}\left( \frac{2}{9}\right)^{1/3}\left( N^{(r)}_1-N^{(l)}_1\right)+t^{-2/3}\left( \frac{2}{9}\right)^{2/3}\left( N^{(r)}_2+N^{(l)}_2\right) +\mathcal{O}(t^{-(1/3+9\delta_1)}),
\end{align}
\end{subequations}
where $N_{j}^{(\ell)}$, $j=1,2$, $\ell \in \{l,r\}$, are defined in \eqref{eq:Nrexpansion} and \eqref{eq:Nlkexp}.
\end{Proposition}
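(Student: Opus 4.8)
The plan is to read $E(i)$, $E_1$ and $E(0)$ straight off the integral representation \eqref{expression:E(z)} together with \eqref{def:Ei}--\eqref{def:E0}. By \eqref{E(z):BCrep}, $V^{(E)}-I$ is $\mathcal{O}(e^{-ct^{3\delta_1}})$ on all of $\Sigma^{(E)}$ except the two shrinking circles $\partial U^{(r)}$ and $\partial U^{(l)}$, so those arcs contribute only an exponentially small error and the analysis localizes to the circles. On $\partial U^{(r)}$ one has $V^{(E)}=N^{(r)}$, and since $|z-1|=c_0$ there dominates $|k_1-1|\sim t^{-1/3}$, the scaled variable $\tilde k=(9t/2)^{1/3}(z-1)$ stays large on $\partial U^{(r)}$ and throughout its exterior; hence \eqref{eq:Nrexpansion} gives
$$N^{(r)}(z)-I=\frac{N^{(r)}_1}{(9t/2)^{1/3}(z-1)}+\frac{N^{(r)}_2}{(9t/2)^{2/3}(z-1)^2}+\mathcal{O}(t^{-1})$$
uniformly on and outside $\partial U^{(r)}$, and analogously on $\partial U^{(l)}$ with $\breve k=(9t/2)^{1/3}(z+1)$ and $N^{(l)}_1,N^{(l)}_2$ from \eqref{eq:Nlkexp}.

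The main computation is then a residue evaluation. Since $N^{(r)}(z)-I$ is holomorphic in the exterior of a slightly enlarged $U^{(r)}$ and vanishes at infinity, $\frac{1}{2\pi i}\int_{\partial U^{(r)}}\frac{N^{(r)}(\zeta)-I}{\zeta-z}\dif\zeta=N^{(r)}(z)-I$ for $z$ outside $U^{(r)}$ (with the orientation inherited from RH problem \ref{RHP:E(z)}), and likewise for $\partial U^{(l)}$; thus, to the two relevant orders, $E(z)-I=(N^{(r)}(z)-I)+(N^{(l)}(z)-I)+(\text{error})$. Substituting the expansions above, evaluating and differentiating at $z=i$ and evaluating at $z=0$, and using $(i-1)^2=-2i$, $(i+1)^2=2i$, $(i-1)^3=2+2i$, $(i+1)^3=-2+2i$ and $(0\mp1)^j=(\mp1)^j$, reproduces the three displayed formulas; the $\mathcal{O}(t^{-1})$ tails of \eqref{eq:Nrexpansion}--\eqref{eq:Nlkexp} are absorbed into $\mathcal{O}(t^{-(1/3+9\delta_1)})$ because $\delta_1<1/18$ makes $1/3+9\delta_1<1$.

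It remains to replace $\varpi$ by $I$ in the two surviving integrals with error $\mathcal{O}(t^{-(1/3+9\delta_1)})$. For this I would use the Neumann representation $\varpi=I+\sum_{j=1}^{8}\mathcal{C}_E^jI+R_9$ and the bounds \eqref{eq:estCE}--\eqref{normrho}: since $(\zeta-z)^{-1}$ is bounded for $z\in\{i,0\}$ and $\zeta$ on the circles, Cauchy--Schwarz against $V^{(E)}-I$ sends the $R_9$-contribution to $t^{-(1/6+17\delta_1/2)}\cdot t^{-(1/6+\delta_1/2)}=t^{-(1/3+9\delta_1)}$, while the finitely many lower iterates $\mathcal{C}_E^jI$ inherit the analytic structure of the local parametrices and are dispatched by the same contour-deformation mechanism used above. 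Making this last step precise — choosing how many Neumann iterates to retain so that their $L^2$-sizes together with the analyticity-driven cancellations land exactly on the exponent $1/3+9\delta_1$ — is the main obstacle; once the orientations of $\partial U^{(r)}$ and $\partial U^{(l)}$ are fixed, the residue evaluations themselves are routine.
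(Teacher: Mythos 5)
Your overall plan correctly mirrors the paper's: localize to the shrinking circles $\partial U^{(r)}$ and $\partial U^{(l)}$ (the rest of $\Sigma^{(E)}$ being exponentially small via \eqref{E(z):BCrep}), expand $\varpi$ in a Neumann series, and read off the two leading coefficients by contour integration. Your reformulation of the $\varpi=I$ contribution — noting that $\tfrac{1}{2\pi i}\oint_{\partial U^{(r)}}\tfrac{N^{(r)}(\zeta)-I}{\zeta-z}\dif\zeta=N^{(r)}(z)-I$ for $z$ outside, since $N^{(r)}-I$ is analytic in the exterior of $U^{(r)}$ and vanishes at infinity — is a clean and entirely equivalent way to get the first-order residues that the paper computes term by term, and the bookkeeping at $z=i$, $z=0$ with the explicit powers of $(\pm1-i)$ is correct.

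However, there is a genuine gap in the treatment of the Neumann iterates $\mathcal{C}_E^jI$ for $j=1,\dots,8$. You dispatch the remainder $R_9$ correctly via Cauchy--Schwarz using \eqref{normrho} and $\|V^{(E)}-I\|_2\lesssim t^{-(1/6+\delta_1/2)}$, but for the intermediate iterates you only say they are ``dispatched by the same contour-deformation mechanism,'' and that phrase does not survive scrutiny. A crude $L^2$ bound gives $\|\mathcal{C}_EI\|_2\lesssim t^{-(1/6+\delta_1/2)}$, so the $\mathcal{C}_EI$-contribution to $E(i)$ is only $\mathcal{O}(t^{-(1/3+\delta_1)})$ — which is \emph{larger} than the $t^{-2/3}$ term you want to retain, let alone the claimed error $\mathcal{O}(t^{-(1/3+9\delta_1)})$. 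The actual mechanism, which the paper uses explicitly, is an algebraic cancellation: when you substitute the Laurent expansions of $N^{(r)}-I$ and $N^{(l)}-I$ into $V^{(E)}-I$, the surviving building blocks are $(\zeta-1)^{-\ell}$ and $(\zeta+1)^{-\ell}$, and these satisfy $\mathcal{C}_-\bigl((\cdot\mp1)^{-\ell}\bigr)(\zeta)=0$ on the corresponding circle $\partial U^{(r)}$ (resp. $\partial U^{(l)}$) because they extend analytically to the exterior of that circle and decay at infinity. This identity kills the polynomial-order contributions of $\mathcal{C}_EI$, $\mathcal{C}_E^2I$, etc.\ (the paper writes out the vanishing $\mathcal{C}_-\bigl(\tfrac{1}{(\cdot)\pm1}\bigr)$ and $\mathcal{C}_-\bigl(\tfrac{\mathcal{C}_-(\frac{1}{(\cdot)\pm1})}{(\cdot)\pm1}\bigr)$ terms before invoking the residue theorem) and is what actually lands the error at $\mathcal{O}(t^{-(1/3+9\delta_1)})$. ``Contour deformation'' points in the right direction, but the precise statement you need is this Cauchy-projection identity applied to the meromorphic leading parts of the local parametrices; without naming it, the estimate for the lower iterates does not close.
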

\begin{proof}
From  \eqref{E(z):BCrep}, \eqref{normrho} and \eqref{def:Ei}, it follows that
		\begin{align*}
			E(i)&=I+\frac{1}{2\pi i}\oint_{\partial U^{(r)}}\dfrac{N^{(r)}(\zeta )-I}{\zeta-i}\dif\zeta  +\frac{1}{2\pi i}\oint_{\partial U^{(l)}}\dfrac{N^{(l)}(\zeta  )-I}{\zeta  -i}\dif\zeta  \\
			&~~~+\frac{1}{2\pi i}\oint_{\partial U^{(r)}}\dfrac{\mathcal{C}_-(\mathcal{C}_-(N^{(r)}-I)(N^{(r)}-I))(\zeta  )(N^{(r)}(\zeta  )-I)}{\zeta  -i}\dif\zeta \\
			&~~~ +\frac{1}{2\pi i}\oint_{\partial U^{(l)}}\dfrac{\mathcal{C}_-(\mathcal{C}_-(N^{(r)}-I)(N^{(r)}-I))(\zeta  )(N^{(l)}(\zeta  )-I)}{\zeta  -i}\dif\zeta
			 +\mathcal{O}(t^{-(1/3+9\delta_1)})\\
			&= I+t^{-1/3}\left( \frac{2}{9}\right)^{1/3}\frac{1}{2\pi i}\oint_{\partial U^{(r)}}\dfrac{N^{(r)}_1}{(\zeta  -i)(\zeta  -1)}\dif\zeta  +t^{-1/3}\left( \frac{2}{9}\right)^{1/3}\frac{1}{2\pi i}\oint_{\partial U^{(l)}}\dfrac{N^{(l)}_1}{(\zeta  -i)(\zeta  +1)}\dif\zeta  \\
			&~~~+t^{-2/3}\left( \frac{2}{9}\right)^{2/3}\frac{1}{2\pi i}\oint_{\partial U^{(r)}}\dfrac{N^{(r)}_2}{(\zeta  -i)(\zeta  -1)^2}\dif\zeta  +t^{-2/3}\left( \frac{2}{9}\right)^{2/3}\frac{1}{2\pi i}\oint_{\partial U^{(l)}}\dfrac{N^{(l)}_2}{(\zeta  -i)(\zeta  +1)^2}\dif\zeta  \\
			&~~~+t^{-2/3}\left( \frac{2}{9}\right)^{2/3}\frac{1}{2\pi i}\oint_{\partial U^{(r)}}\mathcal{C}_-\left(\frac{1}{(\cdot)-1}\right)(\zeta  )\dfrac{(N^{(r)}_1)^2}{(\zeta  -i)(\zeta  -1)}\dif\zeta  \\
			&~~~+t^{-2/3}\left( \frac{2}{9}\right)^{2/3}\frac{1}{2\pi i}\oint_{\partial U^{(l)}}\mathcal{C}_-\left(\frac{1}{(\cdot)+1}\right)(\zeta )\dfrac{(N^{(l)}_1)^2}{(\zeta  -i)(\zeta  +1)}\dif\zeta   \\
			&~~~+\frac{2t^{-1}}{9}\frac{1}{2\pi i}\oint_{\partial U^{(r)}}\mathcal{C}_-\left(\frac{\mathcal{C}_-\left(\frac{1}{(\cdot)+1}\right)}{(\cdot)+1}\right)(\zeta)\dfrac{(N^{(r)}_1)^3}{(\zeta  -i)(\zeta  -1)}\dif\zeta  \\
			&~~~+\frac{2t^{-1}}{9}\frac{1}{2\pi i}\oint_{\partial U^{(l)}}\mathcal{C}_-\left(\frac{\mathcal{C}_-(\frac{1}{(\cdot)+1})}{(\cdot)+1}\right)(\zeta  )\dfrac{(N^{(l)}_1)^3}{(\zeta  -i)(\zeta  +1)}\dif\zeta  +\mathcal{O}(t^{-(1/3+9\delta_1)}).
		\end{align*}	
Since $\mathcal{C}_-(\frac{1}{(\cdot)\pm1})(\zeta)=0$, an appeal to the residue theorem gives us \eqref{E0t}.

The estimates \eqref{equ:E1t} and \eqref{equ:E(0)estastinfty} can be obtained in similar manners, we omit the details here.
\end{proof}

\subsection{Analysis of the pure $\bar{\partial}$-problem }\label{subsec:dbar problem}
Besides the pure RH problem \ref{RHP: N(z) tran1} for $N$, the contribution to the RH problem \ref{RHP:mixed RH problem} for $M^{(4)}$ partially comes from the pure $\bar{\partial}$-problem $M^{(5)}$ defined as follows:
\begin{align}\label{transform:shengchengdbarM5}
	M^{(5)}(z)=M^{(4)}(z){N(z)}^{-1}.
\end{align}
Then $M^{(5)}$ satisfies the following $\bar{\partial}$ problem.
\begin{Dbarproblem}\label{DbarM5}
\hfill
\begin{itemize}
	\item[$\bullet$]    $M^{(5)}(z)$ is continuous   and has sectionally continuous first partial derivatives in $\mathbb{C}$.
	\item[$\bullet$] As $z\rightarrow\infty$ in $\mathbb{C}$, we have
\begin{align}
	M^{(5)}(z) = I+\mathcal{O}(z^{-1}). \label{asymbehv7}
\end{align}
\item[$\bullet$] The $\bar{\partial}$-derivative of $M^{(5)}$ satisfies $\bar{\partial}M^{(5)}(z)=M^{(5)}(z)W^{(3)}(z)$, $z\in \mathbb{C}$ with
\begin{equation}\label{equ: W^(3)expression}
	W^{(3)}(z)=N(z)\bar{\partial}R^{(3)}(z)N(z)^{-1},
\end{equation}
where $\bar{\partial}R^{(3)}(z)$ is defined in \eqref{DBARR1}.
\end{itemize}
\end{Dbarproblem}
The solution of this pure $\bar{\partial}$-problem can be expressed in terms of the integral equation
\begin{equation}
	M^{(5)}(z)=I+\frac{1}{\pi}\iint_\mathbb{C}\dfrac{M^{(5)}(\zeta )W^{(3)} (\zeta )}{\zeta-z}\dif\mu(\zeta),\label{equ:M^{(5)}}
\end{equation}
where $\mu(\zeta)$ stands for the Lebesgue measure on $\mathbb{C}$. By introducing the left Cauchy-Green integral operator
\begin{equation}\label{def:leftCGop}
	f\mathcal{C}_z(z)=\frac{1}{\pi}\iint_{\mathbb{C}}\dfrac{f(\zeta )W^{(3)} (\zeta )}{\zeta-z}\dif \mu(\zeta),
\end{equation}
we could rewrite \eqref{equ:M^{(5)}} in an operator form
\begin{equation}
	M^{(5)}(z)=I\cdot\left(I-\mathcal{C}_z \right) ^{-1}.\label{equ:operator type of M^(5)}
\end{equation}
Aiming at estimating  $M^{(5)}(z)$, it suffices to evaluate the norm of the integral operator $\left(I-\mathcal{C}_z \right) ^{-1}$.
The first step toward this goal is the following lemma.
\begin{lemma}\label{lem:estofimtheta}
Let $\Omega_j$, $j=1,\ldots,4$, be the regions defined in \eqref{def:Omega1}--\eqref{def:Omega4}, we have the following estimates for
the imaginary part  of the phase function $\theta(z)$ given in \eqref{def:phasefunc} with $z\in\Omega_{j}\cup\Omega^*_{j}$:
	\begin{itemize}
		\item[\rm (i)]If $|\re z(1-|z|^{-2})|\leqslant 2$, one has
		\begin{subequations}
		\begin{align}
			&\im\theta(z)\leqslant - \frac{t}{2}\im z(\re z-k_j)^2,&& z\in\Omega_{j},\label{equ:estofimtheta<}\\
			&\im\theta(z)\geqslant\frac{t}{2}\im z(\re z-k_j)^2,&& z\in\Omega_{j}^*.
		\end{align}
		\end{subequations}
		\item[\rm (ii)]If $|\re z(1-|z|^{-2})|\geqslant 2$, one has
	\begin{subequations}
	 \begin{align}
	 	&\im\theta(z)\leqslant - 2t\hat{\xi}\im z,& z\in\Omega_{j}, \label{equ:estofimtheta>}\\
	 	&\im\theta(z)\geqslant 2t\hat{\xi}\im z,& z\in\Omega_{j}^*.
	 \end{align}
	\end{subequations}
	\end{itemize}
\end{lemma}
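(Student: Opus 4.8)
The plan is to reduce the lemma to elementary estimates based on an exact real‑analytic formula for $\im\theta$. Writing $z=re^{i\phi}$ and using $z^{-1}=\bar z/|z|^{2}$, one computes $\im(z-z^{-1})=(r+r^{-1})\sin\phi=\im z\,(1+|z|^{-2})$ together with $\im\big[(z-z^{-1})/(z+z^{-1})^{2}\big]=\im z\,(1+|z|^{-2})\,\big(4\cos^{2}\phi-(r-r^{-1})^{2}\big)\big/\big((r-r^{-1})^{2}+4\cos^{2}\phi\big)^{2}$, so that \eqref{def:phasefunc} becomes
\[
\im\theta(z)=-\frac{t}{4}\,\im z\,(1+|z|^{-2})\,B(z),\qquad
B(z):=\hat\xi-\frac{8\big(4\cos^{2}\phi-(r-r^{-1})^{2}\big)}{\big((r-r^{-1})^{2}+4\cos^{2}\phi\big)^{2}} .
\]
Since $(r-r^{-1})^{2}+4\cos^{2}\phi=|z+z^{-1}|^{2}$, $(r-r^{-1})\cos\phi=\re z\,(1-|z|^{-2})$ and $4\cos^{2}\phi-(r-r^{-1})^{2}=|z|^{-2}\big(4(\re z)^{2}-(|z|^{2}-1)^{2}\big)$, the factor $B$ may be written in terms of $\re z$ and $|z|$ alone, and the condition distinguishing cases (i)/(ii) is exactly $\big(\re z\,(1-|z|^{-2})\big)^{2}\lessgtr 4$. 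The prefactor $\im z\,(1+|z|^{-2})$ is strictly positive on $\Omega_{j}$ and strictly negative on $\Omega_{j}^{*}$, so each asserted bound on $\im\theta$ is equivalent to a suitable lower bound on $B(z)$; moreover $\theta(\bar z)=\overline{\theta(z)}$ gives $\im\theta(\bar z)=-\im\theta(z)$, so it suffices to treat $z\in\Omega_{j}$, $j=1,\dots,4$.

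The second ingredient is the behaviour of $B$ on the real axis. A short calculation gives, for real $u$, $B(u)=\hat\xi-8\big(4-(u-u^{-1})^{2}\big)/(u+u^{-1})^{4}=\big(\hat\xi(u^{2}+1)^{4}+8u^{6}-48u^{4}+8u^{2}\big)/(u^{2}+1)^{4}$, and the numerator is precisely the polynomial whose roots are the eight critical points of $\theta$; hence
\[
B(u)=\hat\xi\,\frac{(u-k_{1})(u-k_{2})(u-k_{3})(u-k_{4})\,(u^{2}-w_{3})(u^{2}-w_{4})}{(u^{2}+1)^{4}},\qquad \theta'(u)=-\frac{t}{4}(1+u^{-2})B(u),
\]
where $w_{3},w_{4}$ are the two remaining roots of $\hat\xi(w+1)^{4}+8w^{3}-48w^{2}+8w=0$ in $w=z^{2}$; for $\hat\xi$ in a fixed neighbourhood of $2$ these stay bounded and negative, so $(u^{2}-w_{3})(u^{2}-w_{4})$ is pinched between positive constants on the bounded set where case (i) can occur, while $k_{2}=k_{1}^{-1}$, $k_{3}=-k_{2}$, $k_{4}=-k_{1}$ and $k_{1}\ge 1\ge k_{2}>0$.

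For case (i) the joint constraint $z\in\Omega_{j}$, $\big(\re z\,(1-|z|^{-2})\big)^{2}\le 4$ confines $z$ to a bounded neighbourhood of $z=1$ (for $j=1,2$) or of $z=-1$ (for $j=3,4$), so only $k_{1},k_{2}$ (resp.\ $k_{3},k_{4}$) appear. For $z$ inside the shrinking disks $U^{(r)}$, $U^{(l)}$ I would use the cubic model $\theta(z)=-\tilde s\tilde k-\tfrac{4}{3}\tilde k^{3}+\mathcal{O}(\tilde k^{4}t^{-1/3})$ with $\tilde k=(9t/2)^{1/3}(z-1)$; taking imaginary parts and using $\im(\tilde k^{3})=\im\tilde k\,(3(\re\tilde k)^{2}-(\im\tilde k)^{2})$ gives
\[
\im\theta(z)=-(9t/2)^{1/3}\,\im z\Big(\tilde s+4(9t/2)^{2/3}(\re z-1)^{2}-\tfrac{4}{3}(9t/2)^{2/3}(\im z)^{2}\Big)+\mathcal{O}(\cdot),
\]
and since $\tilde s=-4(9t/2)^{2/3}(k_{1}-1)^{2}+o(1)$ the inner bracket equals $4(9t/2)^{2/3}(\re z-k_{1})(\re z+k_{1}-2)-\tfrac{4}{3}(9t/2)^{2/3}(\im z)^{2}+\mathcal{O}(\cdot)\ge 4(9t/2)^{2/3}(\re z-k_{1})^{2}-\tfrac{4}{3}(9t/2)^{2/3}(\im z)^{2}+\mathcal{O}(\cdot)$ on $\Omega_{1}$ (where $\re z\ge k_{1}\ge1$). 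The thin‑sector bound $|\im z|\le(\tan\varphi_{0})|\re z-k_{1}|$ with $\tan\varphi_{0}<\tan\tfrac{\pi}{8}$ then absorbs both the $(\im z)^{2}$ term and the error, yielding \eqref{equ:estofimtheta<}. For $z$ in the complementary "moderate‑distance" part of case (i), $\re z$ is a fixed distance from $k_{j}$, $B(z)$ is bounded below by a positive constant (it cannot vanish because $\Omega_{j}$ avoids $\{\im\theta=0\}$ and the prefactor is nonzero), and continuing $B$ off the axis via $\im\theta(u+iv)=v\,\theta'(u)+\mathcal{O}(tv^{3})$ and again invoking the thinness of the sector one checks $\tfrac14(1+|z|^{-2})B(z)\ge\tfrac12(\re z-k_{j})^{2}$; the cases $\Omega_{2},\Omega_{3},\Omega_{4}$ follow from $k_{2}=k_{1}^{-1}$, $k_{3}=-k_{2}$, $k_{4}=-k_{1}$ and the symmetries $z\mapsto -z$, $z\mapsto z^{-1}$.

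For case (ii) the inequality $\big(\re z\,(1-|z|^{-2})\big)^{2}\ge 4$ forces $(r-r^{-1})^{2}\cos^{2}\phi\ge 4$, hence $(r-r^{-1})^{2}\ge 4\ge 4\cos^{2}\phi$, so $4\cos^{2}\phi-(r-r^{-1})^{2}\le 0$ and therefore $B(z)\ge\hat\xi>0$; combined with $1+|z|^{-2}\ge1$ on $\Omega_{j}$ this immediately gives an estimate of the form $\im\theta(z)\le -c\,t\hat\xi\,\im z$, which is \eqref{equ:estofimtheta>} (the $\Omega_{j}^{*}$ statement following from $\im\theta(\bar z)=-\im\theta(z)$). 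The main obstacle is entirely in case (i): one must control, uniformly for large $t$, the matching between the near‑collision regime at $z=\pm1$, where $k_{1},k_{2}$ (resp.\ $k_{3},k_{4}$) coalesce and the correct scale is set by the cubic model, and the moderate‑distance regime, and in both ranges trade the off‑axis (cubic‑in‑$\im z$) corrections against the quadratic main term $t\,\im z\,(\re z-k_{j})^{2}$ using only the smallness of the opening angle $\varphi_{0}$; the sign bookkeeping on $\Omega_{1},\dots,\Omega_{4}$ through the factorization of $B$ is the routine part.
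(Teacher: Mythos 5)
The real-analytic setup is the same as the paper's (your $B(z)$ is $-8\big[F(u,v)-\hat\xi/8\big]$ in the notation of the proof, after identifying $u=\re z\,(1-|z|^{-2})$, $v=\im z\,(1+|z|^{-2})$), and your case (ii) argument coincides with the paper's: when $u^2\geqslant4$ the numerator of $F$ is $\leqslant0$, which immediately sends the off-axis piece the right way. (Both you and the paper actually recover this bound only up to a constant factor, which is harmless for the way the lemma is used downstream.)

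Case (i), however, is where you diverge, and your argument has a genuine gap. You split $\Omega_j$ into a ``near'' piece inside the shrinking disks $U^{(r)}, U^{(l)}$ (handled via the cubic Taylor model of $\theta$) and a ``moderate-distance'' piece where you claim ``$\re z$ is a fixed distance from $k_j$'' and ``$B(z)$ is bounded below by a positive constant.'' Neither claim is correct. The disks $U^{(r)}$ have radius $c_0\lesssim t^{\delta_1-1/3}\to 0$, while $k_1-1\sim t^{-1/3}$, so just outside $U^{(r)}$ (but still with $|u|\leqslant2$, i.e.\ in case (i)) one has $\re z-k_1$ as small as $\sim t^{\delta_1-1/3}\to0$ — not a fixed distance. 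Correspondingly $B$ is not bounded below by a positive constant there: $B$ vanishes at the saddle points on the real axis, and in the whole region where the cubic model is no longer accurate but $z$ has not yet left the coalescing neighbourhood of $z=1$, $B$ degenerates. Your own final paragraph acknowledges this unresolved ``matching'' between regimes; but that matching is exactly the content of the lemma, so the proposal does not close.

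The paper avoids the regime split altogether. After writing $\im\theta=2tv\bigl[F(u,v)-\hat\xi/8\bigr]$ with $F(u,v)=\tfrac{4-u^2-v^2}{\,|z+z^{-1}|^{4}\,}$, it replaces $F(u,v)$ by $F(u,0)$ (monotonicity in the $v$-variable for $u^2\leqslant4$), and then factors {\it exactly} in the $u=z-z^{-1}$ variable:
\[
F(u,0)-\frac{\hat\xi}{8}=-\,(u^{2}-\hat\xi_1^{2})\,
\frac{(4-\hat\xi_1^{2})(8+u^{2}+\hat\xi_1^{2})+(4+\hat\xi_1^{2})^{2}}
     {(4+u^{2})^{2}(4+\hat\xi_1^{2})^{2}},
\qquad \hat\xi_1:=k_1-k_1^{-1},
\]
whose numerator and denominator are uniformly bounded between absolute positive constants for $u^2\leqslant4$ and $\hat\xi_1\in[0,2)$, and whose zero factor $(u^{2}-\hat\xi_1^{2})$ satisfies $u^{2}-\hat\xi_1^{2}\geqslant(\re z-k_j)^{2}$ on $\Omega_j$. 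This one-shot factorization replaces your separate Taylor model and moderate-range argument and, crucially, is uniform even in the coalescing regime $k_1,k_2\to1$. You do write down a factorization of $B$ on the real axis, but then do not use it to land the quadratic bound globally; switching to that route would make your regime-split unnecessary and fix the gap.

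Two smaller points worth noting. First, when you pass from the factorization of $B$ on the real axis to off-axis points, you blur two different objects: the real-valued $B(z)=\hat\xi-8(4-|w|^2)/|w^2+4|^2$ that enters the exact identity for $\im\theta$, and the analytic continuation $\hat\xi-8(4-w^2)/(w^2+4)^2$ that you factor; they agree on the real axis but not off it, so one must continue the {\it real} $B$ (equivalently work directly with $F(u,v)$ as the paper does). Second, the inequality $u^{2}-\hat\xi_1^{2}\geqslant(\re z-k_j)^{2}$ is the precise (and short) input replacing your expansion, needing only that $x\mapsto x-1/x$ has derivative $\geqslant1$ on $[1,\infty)$.
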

\begin{proof}
We only prove the estimates related to $z\in \Omega_1$, since the results in other regions can be obtained similarly.

For $z\in\Omega_{1}$,  we set
	\begin{align*}
		z-1/z:= u+vi,\ u,v\in\mathbb{R},\quad {\rm and } \quad  \hat{\xi}_1:=k_1-1/k_1\in[0,2).
	\end{align*}
Thus, $u=\re z(1-1/|z|^2)$, $v=\im z(1+1/|z|^2)$, and by \eqref{def:phasefunc},
\begin{equation}\label{eq:imtheta}
	\im \theta (z) = 2tv\left[ F(u,v)-\frac{\hat{\xi}}{8}\right], \qquad  F(u,v):=\frac{4-v^2-u^2}{v^4-2(4-u^2)v^2+(4+u^2)^2}.
\end{equation}
From the definition of $k_1$ in \eqref{equ:1sttransaddlepoints-a},  it follows that $\hat{\xi}=\frac{8(4-\hat{\xi}_1^2)}{(4+\hat{\xi}_1^2)^2}$.
It is easily seen that
\begin{equation*}
F(u,v)\leqslant\left\{
        \begin{array}{ll}
          F(u,0), & \hbox{$u^2\leqslant4$,} \\
          0, & \hbox{$u^2\geqslant4$.}
        \end{array}
      \right.
\end{equation*}
We further have
\begin{align*} F(u,0)-\frac{\hat{\xi}}{8}=\frac{4-u^2}{(4+u^2)^2}-\frac{4-\hat{\xi}_1^2}{(4+\hat{\xi}_1^2)^2}=-(u^2-\hat{\xi}_1^2)\frac{(4-\hat{\xi}_1^2)(8+u^2+\hat{\xi}_1^2)+(4+\hat{\xi}_1^2)^2}{(4+u^2)^2(4+\hat{\xi}_1^2)^2}.
\end{align*}
Inserting the above two formulas into \eqref{eq:imtheta}, we then obtain \eqref{equ:estofimtheta<} and \eqref{equ:estofimtheta>} from the fact that $u^2-\hat{\xi}_1^2\geqslant (\re z-k_1)^2$ and the ranges of $u$ and $\hat{\xi}$.
\end{proof}
With the help of Lemma \ref{lem:estofimtheta}, we are able to prove uniform boundedness of the norm of $\left(I-\mathcal{C}_z \right) ^{-1}$ for large $t$, as stated in the next proposition.
\begin{Proposition}\label{prop:Cz}
Let $\mathcal{C}_z$ be the operator defined in \eqref{def:leftCGop}, we have
	\begin{equation}\label{eq:Czest}
		\parallel \mathcal{C}_z\parallel_{L^\infty\to L^\infty}\lesssim  t^{-1/3}, \qquad t\to +\infty,
	\end{equation}
which implies that $\|\left(I-\mathcal{C}_z \right) ^{-1}\|$ is uniformly bounded for large positive $t$.
\end{Proposition}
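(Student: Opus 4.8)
The plan is to bound the operator norm directly from the integral kernel in \eqref{def:leftCGop}. For $f\in L^\infty(\mathbb{C})$ one has
\begin{equation*}
\|f\mathcal{C}_z\|_\infty\leqslant \|f\|_\infty\,\sup_{z\in\mathbb{C}}\frac{1}{\pi}\iint_{\mathbb{C}}\frac{|W^{(3)}(\zeta)|}{|\zeta-z|}\dif\mu(\zeta),
\end{equation*}
so it is enough to show that the displayed supremum is $\mathcal{O}(t^{-1/3})$. Since $N$ solves RH problem~\ref{RHP: N(z) tran1} and, by the construction in Section~\ref{subsec:pure RH N(z)} ($N=EN^{(\ell)}$ on the shrinking disks $U^{(\ell)}$, $\ell\in\{r,l\}$, and $N=E$ off them, with $E$ controlled by the small-norm analysis and $N^{(\ell)}$ built from the Painlev\'e~II parametrix, which has no real poles when $|r(\pm1)|\leqslant1$), both $N(\zeta)$ and $N(\zeta)^{-1}$ are uniformly bounded on $\mathbb{C}$; hence, since $W^{(3)}=N\bar{\partial}R^{(3)}N^{-1}$, we get $|W^{(3)}(\zeta)|\lesssim|\bar{\partial}R^{(3)}(\zeta)|$. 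By \eqref{DBARR1} the right-hand side is supported on $\cup_{j=1}^{4}(\Omega_j\cup\Omega_j^*)$, and, using the Schwartz symmetry of $W^{(3)}$ together with the fact that the supremum runs over all $z\in\mathbb{C}$, it suffices to bound $\iint_{\Omega_j}\frac{|\bar{\partial}d_j(\zeta)|\,e^{2\im\theta(\zeta)}}{|\zeta-z|}\dif\mu(\zeta)$ for $j=1,2,3,4$.

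Next I would fix $j$, write $\zeta=k_j+u+iv$ with $u=\re\zeta-k_j$, $v=\im\zeta$, recall that $\Omega_j$ is a sector of fixed opening $\varphi_0$ (so $v>0$ and $|u|\gtrsim v$ there), and split $\Omega_j$ according to Lemma~\ref{lem:estofimtheta}. On the part with $|\re\zeta(1-|\zeta|^{-2})|\geqslant2$ --- which for $\Omega_1,\Omega_4$ is the unbounded tail of the lens --- Lemma~\ref{lem:estofimtheta}(ii) gives $e^{2\im\theta(\zeta)}\leqslant e^{-4t\hat{\xi}\im\zeta}$, and this contribution is controlled (and is $\mathcal{O}(t^{-1/3})$ or smaller) by combining this exponential weight, the bound $|\bar{\partial}d_j(\zeta)|\lesssim1$ from \eqref{dbarRjest2}, and the decay of $R$ at infinity coming from $r\in H^{2,1}(\mathbb{R})$. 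On the remaining part, with $|\re\zeta(1-|\zeta|^{-2})|\leqslant2$, Lemma~\ref{lem:estofimtheta}(i) gives $e^{2\im\theta(\zeta)}\leqslant e^{-ctv(\re\zeta-k_j)^2}=e^{-ctvu^2}$, and there one splits $|\bar{\partial}d_j(\zeta)|$ using whichever of \eqref{dbarRjest}, \eqref{dbarRjest1}, \eqref{dbarRjest2} is most favorable --- roughly an algebraic part $\lesssim|u|^{\mp1/2}$ near the saddle and a bounded part supported on the fixed angular wedge where the cutoff $\mathcal{X}'$ lives, at positive distance from $\mathbb{R}$.

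For each of these pieces the inner ($u$-)integral at fixed $v$ is estimated by H\"older's inequality against the Cauchy kernel, using $\|(\,\cdot\,-z)^{-1}\|_{L^q(\mathbb{R},\dif u)}\lesssim|v-\im z|^{1/q-1}$ for $q>2$ together with an explicit Laplace-type bound for $\|\,|u|^{\mp1/2}e^{-ctvu^2}\|_{L^p(\dif u)}$; then one carries out the $v$-integral, exploiting that $|u|\gtrsim v$ upgrades $e^{-ctvu^2}$ to $e^{-ctv^3}$ and hence effectively restricts $v\lesssim t^{-1/3}$, so that a rescaling $v\mapsto t^{1/3}v$ extracts the advertised factor $t^{-1/3}$, exactly along the lines of the $\bar{\partial}$-estimates in \cite{Dieng2019}. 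Summing the finitely many contributions over $j$ gives $\sup_{z\in\mathbb{C}}\frac1\pi\iint_{\mathbb{C}}\frac{|W^{(3)}(\zeta)|}{|\zeta-z|}\dif\mu(\zeta)\lesssim t^{-1/3}$, which is precisely \eqref{eq:Czest}. Consequently $\|\mathcal{C}_z\|_{L^\infty\to L^\infty}<1$ for all sufficiently large $t$, and $I-\mathcal{C}_z$ is invertible on $L^\infty(\mathbb{C})$ by the Neumann series with $\|(I-\mathcal{C}_z)^{-1}\|\leqslant(1-\|\mathcal{C}_z\|)^{-1}$ uniformly bounded in $t$.

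The step I expect to be the main obstacle is making all these bounds uniform in $z\in\mathbb{C}$, in particular when $z$ lies on or very near the jump contour $\Sigma^{(4)}$: the sensitive zone is the cluster of the four saddle points $k_1,k_2$ (near $1$) and $k_3,k_4$ (near $-1$), which are only $O(t^{-1/3})$ apart and lie inside the vanishing disks $U^{(r)},U^{(l)}$, so one must verify that the local lens geometry --- including the vertical segments $\Sigma_{2,3},\Sigma_{2,3}^*$ inserted between $k_2$ and $k_3$ --- does not spoil the relation $|u|\gtrsim v$ used above, and, above all, that the competition between the $|u|^{-1/2}$ growth of $\bar{\partial}d_j$ near $k_j$ and the Gaussian decay is resolved with precisely the power $t^{-1/3}$ and not a weaker one; choosing the right bound of Proposition~\ref{est:RandDbarR} in each sub-region is where the bookkeeping is most delicate. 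The unbounded tails of $\Omega_1$ and $\Omega_4$, by contrast, are harmless thanks to the exponential smallness of $e^{2\im\theta(\zeta)}$ there.
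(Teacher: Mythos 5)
Your proposal is correct and follows essentially the same route as the paper's proof: the same pointwise kernel bound, the same uniform boundedness of $N,N^{-1}$, the same split of each $\Omega_j$ into the regions $|\re\zeta(1-|\zeta|^{-2})|\lessgtr 2$ governed by Lemma~\ref{lem:estofimtheta}, and the same use of Proposition~\ref{est:RandDbarR} combined with H\"older's inequality against the Cauchy kernel to extract the $t^{-1/3}$ factor. The uniformity in $z$ that you flag as the principal worry is in fact automatic from the $L^q_u$ bound on the Cauchy kernel, $\|(\cdot-z)^{-1}\|_{L^q_u}\lesssim|v-\im z|^{1/q-1}$, which involves only $\im z$, and from the elementary fact that the resulting $v$-integral of $|v-\im z|^{-1/2}v^{-1/4}e^{-tv^3}$ is bounded uniformly in $\im z$ by a quantity $\lesssim t^{-1/12}$.
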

\begin{proof}
		For any $f\in L^\infty$,
	\begin{align}\label{inequ:fC_z}
		\parallel f\mathcal{C}_z \parallel_{\infty}&\leqslant \parallel f \parallel_{\infty}\frac{1}{\pi}\iint_{\mathbb{C}}\dfrac{|W^{(3)} (\zeta )|}{|z-\zeta|}\dif \mu(\zeta).
	\end{align}
Aiming at reaching our goal, it is then sufficient to evaluate the integral on the right-hand side of \eqref{inequ:fC_z}.
Note that $N(\zeta)$, $N(\zeta)^{-1}$ are bounded for $\zeta \in \Omega$ as well as $W^{(3)}(\zeta)\equiv 0$ for $\zeta\in\mathbb{C}\setminus\overline{\Omega}$, where
\begin{equation}
\Omega:=\underset{ k=1,...,4}{\cup}\left(\Omega_k \cup \Omega_k^* \right).
\end{equation}
In what follows, we give a detailed estimate of the integral over $\Omega_1$

By \eqref{equ: W^(3)expression} and \eqref{DBARR1}, it follows that
	\begin{equation}\label{eq:boundOmega1}
	\frac{1}{\pi}\iint_{\Omega_1}\dfrac{|W^{(3)} (\zeta )|}{|z-\zeta|}\dif \mu(\zeta)\lesssim \frac{1}{\pi}\iint_{\Omega_{1}}\dfrac{|\bar{\partial}d_1(\zeta ) e^{2i\theta}|}{|z-\zeta|}\dif \mu(\zeta).
	\end{equation}
We next divide $\Omega_{1}$ into two regions: $ \{z\in\Omega_{1}: \re z(1-|z|^{-2})\leqslant 2 \}$ and $ \{z\in\Omega_{1}: \re z(1-|z|^{-2})\geqslant 2 \}$.
These two regions belong to
\begin{equation}\label{def:OmegaAB}
\Omega_A=\{z\in\Omega_{1}:\re z\leqslant3 \},\qquad \Omega_B=\{z\in\Omega_{1}:\re z\geqslant 2 \},
\end{equation}
respectively. By setting
$$\zeta=u+k_1+vi, \quad z=x+yi, \qquad u, v, x, y \in\mathbb{R},$$
it is readily seen from \eqref{dbarRjest1} and \eqref{dbarRjest2} that
 \begin{align}\label{eq:sumIi}
 	\iint_{\Omega_1}\dfrac{|\bar{\partial}d_1 (\zeta )|e^{2\im \theta}}{|z-\zeta|}\dif \mu(\zeta)\lesssim I_1+I_2+I_3,	
 \end{align}
where
\begin{align*}
	&I_1:=\iint_{\Omega_A}\dfrac{e^{-tu^2v}}{|z-\zeta|}\dif u\dif v,\hspace{0.5cm}
	I_2:=\iint_{\Omega_B}\dfrac{e^{-tv}\sin\left(\frac{\pi}{2\varphi_0}\arg\left(\zeta-k_1\right)\mathcal{X}(\arg\left(\zeta-k_1\right)) \right)}{|z-\zeta|}\dif u\dif v,\\
	&I_3:=\iint_{\Omega_B}\dfrac{|u|^{-1/2}e^{-tv}}{|z-\zeta|}\dif u\dif v.
\end{align*}
Our task now is to estimate the integrals $I_i$, $i=1,2,3$, respectively.

Using H\"older's inequality and the following basic inequalities
\begin{align*}
	\||z-\zeta|^{-1}\|_{L^q_u(v,\infty)}\lesssim |v-y|^{-1+1/q},\qquad
	\|e^{-tvu^2}\|_{L^p_u(v,\infty)}\lesssim (tv)^{-1/2p},
\end{align*}
where $1/p+1/q=1$, it follows that
\begin{align}\label{eq:estI1}
	I_1&\leqslant\int_{0}^{(3-k_1)\tan\varphi_0}\int_v^3\dfrac{e^{-tvu^2}}{|z-\zeta|}\dif u\dif v
\nonumber
\\
&\lesssim t^{-1/4} \int_{0}^{(3-k_1)\tan\varphi_0}|v-y|^{-1/2}v^{-1/4}e^{-tv^3}\dif v\lesssim t^{-1/3}.
\end{align}
To estimate $I_2$, we obtain from the definition of $\mathcal{X}$ in \eqref{equ:defX} that
\begin{align*}
	I_2\leqslant \int_{2}^{+\infty}\int_{u\tan(\varphi_0/3)}^{u\tan\varphi_0}\frac{e^{-tv}}{|z-\zeta|}\dif v\dif u.
\end{align*}
Since
\begin{align*}
	\left( \int_{u\tan(\varphi_0/3)}^{u\tan\varphi_0}e^{-ptv}\dif v\right) ^{1/p}=(pt)^{-1/p}e^{-tu\tan(\varphi_0/3)}\left( 1-e^{-ptu(\tan\varphi_0-\tan(\varphi_0/3))}\right) ^{1/p},
\end{align*}
it follows from H\"older's inequality again that
\begin{align}\label{eq:I2est}
	I_2&\lesssim\int_{2}^{+\infty}t^{-1/p}e^{-tu\tan(\varphi_0/3)}|u-x|^{-1+1/q}\dif u\lesssim t^{-1}.
\end{align}
As for $I_3$, using arguments similar to \cite[Lemma 11]{Yang2022adv}, one has
$$ I_3\lesssim t^{-1/2}.$$
This, together with \eqref{eq:boundOmega1} and \eqref{eq:sumIi}--\eqref{eq:I2est}, implies that
\begin{equation}
	\frac{1}{\pi}\iint_{\Omega_1}\dfrac{|W^{(3)} (\zeta )|}{|z-\zeta|}\dif \mu(\zeta)\lesssim t^{-1/3}.
	\end{equation}
The integral over other regions can be estimated in similar manners, which finally leads to \eqref{eq:Czest}.
\end{proof}

An immediate consequence of Proposition \ref{prop:Cz} is that the pure $\bar{\partial}$-problem \ref{DbarM5} for $M^{(5)}$ admits a unique solution for large positive $t$. For later use, we need the local behaviors of $M^{(5)}(z)$ at $z=i$ and $z=0$ as $t \to +\infty$. By \eqref{equ:M^{(5)}}, it is readily seen that
\begin{equation}
	M^{(5)}(z)=M^{(5)}(i)+M^{(5)}_1(z-i)+\mathcal{O}((z-i)^2), \qquad z\to i,
\end{equation}
where
\begin{align}
	M^{(5)}(i)&=I+\frac{1}{\pi}\iint_\mathbb{C}\dfrac{M^{(5)}(\zeta )W^{(3)} (\zeta )}{\zeta-i}\dif \mu(\zeta),
   \label{def:M5i} \\
	M^{(5)}_1&=\frac{1}{\pi}\iint_{\mathbb{C}}\dfrac{M^{(5)}(\zeta )W^{(3)} (\zeta )}{(\zeta-i)^2}\dif \mu(\zeta),
\end{align}
and
\begin{equation}
	M^{(5)}(0)=I+\frac{1}{\pi}\iint_\mathbb{C}\dfrac{M^{(5)}(\zeta )W^{(3)} (\zeta)}{\zeta}\dif \mu(\zeta). \label{def:M50}
\end{equation}
\begin{Proposition}\label{prop:estdbar}
With $M^{(5)}(i),M^{(5)}_1$ and $M^{(5)}(0)$ defined in \eqref{def:M5i}--\eqref{def:M50}, we have, as $t\to +\infty$,
	\begin{align}
		| M^{(5)}(i)-I|,\ | M^{(5)}(0)-I|,\ |M^{(5)}_1|\lesssim t^{-5/6}.\label{m3i}
	\end{align}
\end{Proposition}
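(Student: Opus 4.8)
The plan is to start from the integral equation \eqref{equ:M^{(5)}}, which is uniquely solvable with $\sup_{z\in\mathbb{C}}|M^{(5)}(z)|\lesssim 1$ as a consequence of Proposition \ref{prop:Cz}. Inserting this bound into \eqref{def:M5i}--\eqref{def:M50} and using that $W^{(3)}$ is supported on $\Omega:=\bigcup_{k=1}^{4}(\Omega_k\cup\Omega_k^*)$ while $N,N^{-1}$ are bounded there --- so that by \eqref{equ: W^(3)expression} and \eqref{DBARR1} one has $|W^{(3)}(\zeta)|\lesssim|\bar\partial d_j(\zeta)|\,e^{\pm2\im\theta(\zeta)}$ for $\zeta\in\Omega_j\cup\Omega_j^*$ --- each of the three quantities in \eqref{m3i} is controlled by an integral of the form $\iint_{\Omega}\tfrac{|\bar\partial d_j(\zeta)|\,e^{\pm2\im\theta(\zeta)}}{|\zeta-p_0|^{m}}\,\dif\mu(\zeta)$ with $p_0\in\{0,i\}$ and $m\in\{1,2\}$. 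Since $\Omega$ lies a fixed positive distance from $z=i$, the weights $|\zeta-i|^{-1}$ and $|\zeta-i|^{-2}$ are bounded on $\Omega$, so the estimates for $M^{(5)}(i)-I$ and $M^{(5)}_1$ reduce to proving $\iint_{\Omega}|\bar\partial d_j(\zeta)|\,e^{\pm2\im\theta(\zeta)}\,\dif\mu(\zeta)\lesssim t^{-5/6}$. For $M^{(5)}(0)-I$ one first splits off the part of $\Omega$ (lying in $\Omega_2\cup\Omega_3\cup\Omega_2^*\cup\Omega_3^*$) contained in a small fixed disk about the origin: there $\theta$ has a simple pole with $\im\theta(\zeta)=-\tfrac{t\hat\xi}{4}\tfrac{\im\zeta}{|\zeta|^{2}}\bigl(1+\mathcal{O}(|\zeta|^{2})\bigr)$, so $e^{\pm2\im\theta}$ decays faster than any power of $t$ and, after passing to polar coordinates, this contribution is $\mathcal{O}(t^{-1})$; off that disk $|\zeta|^{-1}$ is bounded on $\Omega$ and one is again reduced to the same integral.

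Thus the heart of the matter is the bound $\iint_{\Omega_j}|\bar\partial d_j(\zeta)|\,e^{2\im\theta(\zeta)}\,\dif\mu(\zeta)\lesssim t^{-5/6}$; by the $z\mapsto-z$ and Schwarz symmetries together with $k_1=-k_4$, $k_2=-k_3$, it suffices to treat $j=1$ and $j=2$. Following the decomposition from the proof of Proposition \ref{prop:Cz}, I would split $\Omega_j$ into the ``saddle part'' $\{|\re\zeta(1-|\zeta|^{-2})|\leqslant 2\}$, which lies in a bounded neighborhood of $k_j$, and its complementary ``far part''. On the saddle part, Lemma \ref{lem:estofimtheta}\,(i) gives $e^{2\im\theta(\zeta)}\leqslant e^{-t\,\im\zeta\,(\re\zeta-k_j)^{2}}$, and --- this is the crucial gain over Proposition \ref{prop:Cz} --- the sharp bound \eqref{dbarRjest}, $|\bar\partial d_j(\zeta)|\lesssim|\re\zeta-k_j|^{1/2}$, is used instead of the merely bounded estimate \eqref{dbarRjest2}. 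Writing $\zeta=k_j+u+iv$ with $0\leqslant v\leqslant u\tan\varphi_0$ and integrating first in $v$,
\begin{align*}
\iint |\re\zeta-k_j|^{1/2}\,e^{-tvu^{2}}\,\dif v\,\dif u
&\lesssim \int_{0}^{c}u^{1/2}\,\frac{1-e^{-tu^{3}\tan\varphi_0}}{t\,u^{2}}\,\dif u\\
&\lesssim \int_{0}^{t^{-1/3}}u^{3/2}\,\dif u+\frac{1}{t}\int_{t^{-1/3}}^{c}u^{-3/2}\,\dif u\lesssim t^{-5/6},
\end{align*}
which is exactly the claimed rate; using only $|\bar\partial d_j|\lesssim 1$ here would yield the weaker $t^{-2/3}$. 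On the far part one invokes Lemma \ref{lem:estofimtheta}\,(ii), $e^{2\im\theta(\zeta)}\leqslant e^{-4t\hat\xi\,\im\zeta}$, together with \eqref{dbarRjest1}--\eqref{dbarRjest2} and the decay of $R'$ at infinity coming from $r\in H^{2,1}(\mathbb{R})$: the terms carrying a nontrivial $\mathcal{X}$ are confined to angular sectors in which $\im\zeta\gtrsim\re\zeta-k_j$, so $e^{-4t\hat\xi\,\im\zeta}$ is exponentially small there, while the remaining piece is handled using $\bar{R}'(\re\zeta)\to 0$, the smallness $\bar{R}'(k_j)=\mathcal{O}(t^{-1/6})$, and $\int_{0}^{\infty}e^{-4t\hat\xi v}\,\dif v=\tfrac{1}{4t\hat\xi}\lesssim t^{-1}$; this part is $\mathcal{O}(t^{-1})$ and hence negligible. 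Summing over $j=1,2$ and over the conjugated regions yields \eqref{m3i}.

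The step I expect to be the main obstacle is the ``far'' region. There $\bar\partial d_j$ does not tend to $0$ as $\re\zeta\to+\infty$ --- it converges to $-\tfrac12\bar{R}'(k_j)\cos(\cdots)\neq 0$ --- so both the convergence of the two-dimensional integral and the $\mathcal{O}(t^{-1})$ bound rest on delicately combining the smallness $\bar{R}'(k_j)=\mathcal{O}(t^{-1/6})$ (which already surfaces in the proof of Proposition \ref{prop:Xi}), the exponential weight $e^{-4t\hat\xi\,\im\zeta}$, and the $L^{1}$--control of $\bar{R}'$ furnished by the Sobolev regularity of $r$, all uniformly in $t$. By contrast, the saddle--region computation and the reductions at $z=i$ and $z=0$ are routine once \eqref{dbarRjest} and Lemma \ref{lem:estofimtheta} are in hand.
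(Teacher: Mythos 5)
Your saddle-region computation is correct and reproduces the core of the paper's argument: on $\Omega_A=\{z\in\Omega_1:\re z\leqslant 3\}$ you use the sharp bound \eqref{dbarRjest}, $|\bar\partial d_1(\zeta)|\lesssim|\re\zeta-k_1|^{1/2}$, together with the Gaussian decay from Lemma~\ref{lem:estofimtheta}(i), and integrating out $v$ then $u$ gives exactly $t^{-5/6}$; you also correctly note that falling back on \eqref{dbarRjest2} would only give $t^{-2/3}$. (The paper phrases this step with $|\bar\partial d_1(\zeta)|\lesssim(u^2+v^2)^{1/4}$ and polar coordinates, but the scaling is the same.)

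There is, however, a genuine gap in the reduction step. You assert that, since $|\zeta-i|^{-1}$ and $|\zeta-i|^{-2}$ are bounded on $\overline{\Omega}$, the estimates for $M^{(5)}(i)-I$ and $M^{(5)}_1$ reduce to showing $\iint_\Omega|\bar\partial d_j(\zeta)|\,e^{\pm 2\im\theta(\zeta)}\,\dif\mu(\zeta)\lesssim t^{-5/6}$. But that weightless integral is in fact \emph{divergent} on the unbounded sectors $\Omega_1$ and $\Omega_4$. By \eqref{afterdbarderR+1}, on the angular strip $\{0\leqslant\arg(\zeta-k_1)\leqslant\varphi_0/3\}\subset\Omega_1$ one has $\bar\partial d_1(\zeta)=\tfrac12\bigl(\bar R'(\re\zeta)-\bar R'(k_1)\bigr)\to -\tfrac12\bar R'(k_1)\neq 0$ as $\re\zeta\to+\infty$, while Lemma~\ref{lem:estofimtheta}(ii) gives only $e^{2\im\theta(\zeta)}\leqslant e^{-4t\hat\xi\,\im\zeta}$, which decays in $\im\zeta$ but is constant in $\re\zeta$; integrating $\im\zeta$ out yields $\lesssim t^{-1}$, and the remaining $\int_c^\infty|\bar R'(k_1)|\,\dif u$ is infinite. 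Neither the smallness $\bar R'(k_j)=\mathcal O(t^{-1/6})$ nor $L^1$-control of $\bar R'(\re\zeta)$ saves this: the divergence is driven by the \emph{constant} $\bar R'(k_j)$, and rescaling a divergent $u$-integral by a constant prefactor leaves it divergent. A bounded weight lets you pass from a weighted to a weightless integral only when the latter is finite; here the \emph{decay} of the weight at infinity --- $|\zeta-i|^{-1},|\zeta-i|^{-2},|\zeta|^{-1}\lesssim(\re\zeta)^{-1}$ on $\Omega_B=\{z\in\Omega_1:\re z\geqslant 2\}$ --- is precisely what makes the far-region integral converge, and the paper's own proof retains exactly this factor in its treatment of $\Omega_B$. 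Your reduction discards the only mechanism controlling the far region, so the far-region argument you sketch cannot close. (Your separate observation for $M^{(5)}(0)$ near $z=0$, exploiting the pole of $\theta$ so that $e^{\pm2\im\theta}$ decays superpolynomially there, is fine; but once you leave that small disk the same issue at $\re\zeta\to\infty$ must be handled by keeping the weight.)
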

\begin{proof}
As in the proof of Proposition \ref{prop:Cz}, we only present the proof for the integral over $\Omega_1$. With the regions $\Omega_A$ and $\Omega_B$ defined in \eqref{def:OmegaAB}, it follows that
	\begin{align*}
		\iint_{\Omega_{1}}\leqslant \iint_{\Omega_A}+\iint_{\Omega_B}.
	\end{align*}
Noting that $|\zeta|^{-1}$, $|\zeta-i|^{-1}\leqslant1$ for $\zeta \in \Omega_1$ and the support of $W^{(3)}(\zeta)$ is bounded away from $\zeta=0$ and $\zeta=i$, by Proposition \ref{prop:Cz}, it's then sufficient to consider the integral
\begin{equation*}
\iint_{\Omega_{A}}M^{(5)}(\zeta )W^{(3)} (\zeta )\dif \mu(\zeta)\lesssim\iint_{\Omega_{A}}|\bar{\partial}d_1(\zeta ) e^{2i\theta}|\dif \mu(\zeta).
\end{equation*}
Let $\zeta=u+k_1+vi=k_1+le^{\varphi i}$ with $u,v,l,\varphi \in\mathbb{R}$, we observe from Lemma \ref{lem:estofimtheta} and \eqref{dbarRjest} that
\begin{multline*}
\iint_{\Omega_{A}}|\bar{\partial}d_1(\zeta ) e^{2i\theta}|\dif \mu(\zeta)\leqslant\int_{0}^{(3-k_1)\tan\varphi_0}\int_{v}^{3}(u^2+v^2)^{1/4}e^{-tu^2v}\dif u\dif v
\\
=t^{-5/6}\int_{0}^{\infty}\int_{v}^{\infty}(u^2+v^2)^{1/4}e^{-u^2v}\dif u\dif v.
\end{multline*}
Using the polar coordinates $u=l\cos\varphi$, $v=l\sin\varphi$, it is readily seen that
\begin{align*}
	&\int_{0}^{\infty}\int_{v}^{\infty}(u^2+v^2)^{1/4}e^{-u^2v}\dif u\dif v=\int_{0}^{\pi/4}\int_{0}^{+\infty}l^{3/2}e^{l^3\cos^2\varphi\sin\varphi}\dif l\dif\varphi\\
	&=\int_{0}^{\pi/4}\cos^{-5/3}\varphi\sin^{-5/6}\varphi \int_{0}^{+\infty} \frac{1}{3}e^{-w}w^{-2/3}\dif w\dif\varphi\leqslant\Gamma(5/6)B(1,1/12).
\end{align*}
where  $w=l^3\cos^2\varphi\sin\varphi$ is used in the second equality, $\Gamma(\cdot)$ and $B(\cdot,\cdot)$ are the Gamma function and Beta function, respectively. It is thereby inferred that
\begin{align*}
	&\iint_{\Omega_{A}}|\bar{\partial}d_1(\zeta ) e^{2i\theta}|\dif \mu(\zeta)\lesssim t^{-5/6}.
\end{align*}
To estimate the integral over $\zeta \in \Omega_B$, we also use
\eqref{dbarRjest2} and the facts that $|\zeta-i|^{-1},\ |\zeta-i|^{-2}\lesssim |\zeta|^{-1}$. Thus,
\begin{align*}
&\Big|\frac{1}{\pi}\iint_{\Omega_B}\dfrac{M^{(5)}(\zeta )W^{(3)} (\zeta )}{(\zeta-i)^2}\dif \mu(\zeta)\Big|,\  \Big|\frac{1}{\pi}\iint_{\Omega_B}\dfrac{M^{(5)}(\zeta )W^{(3)} (\zeta )}{\zeta-i}\dif \mu(\zeta)\Big|\\
&\lesssim\Big|\frac{1}{\pi}\iint_{\Omega_B}\dfrac{M^{(5)}(\zeta )W^{(3)} (\zeta )}{\zeta}\dif \mu(\zeta)\Big|\lesssim\Big|\frac{1}{\pi}\iint_{\Omega_B}\dfrac{\bar{\partial}d_1(\zeta ) e^{2i\theta}}{\zeta}\dif \mu(\zeta)\Big|.
\end{align*}
Since
\begin{align*}
	&\Big|\frac{1}{\pi}\iint_{\Omega_B}\dfrac{\bar{\partial}d_1(\zeta ) e^{2i\theta}}{\zeta}\dif \mu(\zeta)\Big|\leqslant\int_{2}^{+\infty}\int_{0}^{u}e^{-tv}(u^2+v^2)^{-1/2}\dif v\dif u\\
	&\leqslant t^{-1}\int_{2}^{+\infty} u^{-1}(1-e^{-tu})\dif u\lesssim t^{-1},
\end{align*}
we arrive at the desired result by combining the above three estimates.
\end{proof}

We are now ready to prove part (a) of Theorem \ref{mainthm}.

\subsection{Proof of part (a) of Theorem \ref{mainthm}}\label{subsec:recovering1sttran}
By tracing back the transformations \eqref{transform:M1toM2}, \eqref{transtoM4}, \eqref{def:E(z)1sttranregion} and \eqref{transform:shengchengdbarM5}, we conclude that, as $t\to+\infty$,
\begin{align}\label{ope}
	M^{(1)}(z)=&M^{(5)}(z)E(z)R^{(3)}(z)^{-1}T(z)^{-\sigma_3}G(z)^{-1}+\textrm{exponentially small error in $t$},
\end{align}
where $E$, $R^{(3)}$, $T$ and $G$ are defined in \eqref{def:E(z)1sttranregion}, \eqref{defofR^{(3)}}, \eqref{Tfunc} and
\eqref{funcG}, respectively.

In view of the facts that $T(0)\equiv1$, $G(0)=R^{(3)}(0)=I$, it follows from \eqref{m3i} that
\begin{align*}
	M^{(1)}(0)=E(0)+\mathcal{O}(t^{-5/6}).
\end{align*}
Since $G(z)=R^{(3)}(z)=I$ on a neighborhood of $i$, and $T(z)=T(i)+\mathcal{O}((z-i)^2)$, $z\to i$, it is readily seen from \eqref{eq:Eneari} that
\begin{align}
	M^{(1)}(z)=E(i)T(i)^{-\sigma_3}+E_1T(i)^{-\sigma_3}(z-i)+\mathcal{O}((z-i)^2)+\mathcal{O}(t^{-5/6}),
\end{align}
where $E(i)$ and $E_1$ are given in \eqref{def:Ei} and \eqref{def:E1}, respectively. Here, the error term $\mathcal{O}(t^{-5/6})$ comes from the pure $\bar{\partial}$-problem. From the reconstruction formula stated in Proposition \ref{prop: M^{(1)}},
we then obtain from Proposition \ref{asyE} that
\begin{align}	
  u(y,t)=1-t^{-2/3}2\left( 2/9\right)^{2/3}\left(  \im (N^{(r)}_1)_{11}\im (N^{(r)}_1)_{21}-\re (N_2^{(r)})_{12}\right) +\mathcal{O}(t^{-\min\{1-4\delta_1,1/3+9\delta_1\}}).
\end{align}
Together with \eqref{equ:Nr1specific}, \eqref{equ:Nr2specific}, \eqref{equ:Nl1specific}, \eqref{equ:Nl2specific} as well as Proposition \ref{asyE}, it is accomplished that
\begin{subequations}
	\begin{align}
u(x(y,t),t)&=1-(2/81)^{-1/3}t^{-2/3}v'(s) +\mathcal{O}(t^{-\min\{1-4\delta_1,1/3+9\delta_1\}}),\label{equ:u(y,t)1sttran}
\\ x(y,t)&=y-2\log(T(i))-t^{-1/3}\left(36\right)^{-1/3}\left(v(\tilde{s})+\int_{\tilde{s}}^{+\infty}v^2(\zeta)\dif\zeta\right)+\mathcal{O}(t^{-2/3}), 
\label{equ:1strelation(x-y)}
\end{align}
where
\begin{align}
&\tilde{s}=6^{-2/3}\left(\frac{y}{t}-2\right)t^{2/3},\quad \log(T(i))=\sum_{n=1}^{\mathcal{N}}\log\left( \frac{1+\im z_n}{1-\im z_n}\right).
\end{align}
\end{subequations}

If $\vert r(1)\vert\leqslant 1$, the Painlev\'e II transcendent $v$ in \eqref{equ:u(y,t)1sttran} and \eqref{equ:1strelation(x-y)} is bounded.
Taking into account the boundedness of $\log(T(i))$, it is thereby inferred that $x/t-y/t=\mathcal{O}(t^{-1})$. Therefore, $\tilde{s}-s=\mathcal{O}(t^{-1/3})$ follows from the definitions of $s$ (see \eqref{equ:s1stran}) and
$\tilde{s}$ (see \eqref{equ:tildes1stran}). Replacing $\tilde{s}$ by $s$ in \eqref{equ:u(y,t)1sttran}, taking the term $\int_{\tilde{s}}^{+\infty}v^2(\zeta)\dif\zeta$  as an example, we have
\begin{align*}
	&t^{-2/3}\int_{\tilde{s}}^{+\infty}v^2(\zeta)\dif\zeta-t^{-2/3}\int_{s}^{+\infty}v^2(\zeta)\dif\zeta\\
	&=t^{-2/3}\int_{\tilde{s}}^{s}v^2(\zeta)\dif\zeta\lesssim t^{-2/3}\left(s-\tilde{s}\right)\Vert v^2\Vert_{L^{\infty}}\lesssim t^{-1},
\end{align*}
which implies that the error term is kept. The asymptotic formula then admits the same form after replacing $\tilde{s}$ by $s$, as shown in \eqref{result:1stTran}.
\qed

\section{Asymptotic analysis of the RH problem for $M^{(3)}$ in $\mathcal{R}_{II}$}
\label{sec:2nd transition zone}
Due to similar reasons as in the previous section, the analysis is actually performed for $0\leqslant(\hat{\xi}+\frac{1}{4})t^{\frac{2}{3}}\leqslant C$. Moreover, we adopt the same notations (such as $R,M^{(4)},R^{(3)},M^{(5)},\ldots$) as those used before, they should be understood in different contexts, and we believe this will not lead to any confusion.



In this case, eight saddle points $k_j$, $j=1, \dots ,8$, of the phase function $\theta(z)$  such that $\theta'(k_j)=0$ are given by
\begin{align}
	k_1=-k_8=2\sqrt{s_+}+\sqrt{4s_{+}+1}, \qquad k_4=-k_5=-2\sqrt{s_+}+\sqrt{4s_{+}+1}, \label{equ:2ndtransaddlepoints-a} \\
	k_2=-k_7=2\sqrt{s_-}+\sqrt{4s_{-}+1}, \qquad k_3=-k_6=-2\sqrt{s_-}+\sqrt{4s_{-}+1}, \label{equ:2ndtransaddlepoints-b}
\end{align}
where
\begin{equation}\label{equ:s+s-2nd}
	s_{+}=\frac{1}{4\hat{\xi}}\left(-\hat{\xi}-1+\sqrt{1+4\hat{\xi}}\right),\qquad s_{-}=\frac{1}{4\hat{\xi}}\left(-\hat{\xi}-1-\sqrt{1+4\hat{\xi}}\right).
\end{equation}
It follows from a straightforward calculation that
\begin{equation*}
k_1=1/k_4=-1/k_5=-k_8, \qquad k_2=1/k_3=-1/k_6=-k_7,
\end{equation*}
and as $\hat{\xi}\rightarrow\left(-\frac{1}{4}\right)^{+}$,
\begin{equation*}
	k_{1,2}\rightarrow 2+\sqrt{3}, \quad k_{3,4}\rightarrow 2-\sqrt{3}, \quad k_{5,6}\rightarrow -2+\sqrt{3}, \quad k_{7,8}\rightarrow -2-\sqrt{3}.
\end{equation*}

\begin{figure}[h]
	\begin{center}
		\tikzset{every picture/.style={line width=0.75pt}} 
		\begin{tikzpicture}[x=0.75pt,y=0.75pt,yscale=-1,xscale=1]
		\draw    (0,233.85) -- (626.71,231.12) ;
		\draw [shift={(628.71,231.11)}, rotate = 179.75] [color={rgb, 255:red, 0; green, 0; blue, 0 }  ][line width=0.75]    (10.93,-4.9) .. controls (6.95,-2.3) and (3.31,-0.67) .. (0,0) .. controls (3.31,0.67) and (6.95,2.3) .. (10.93,4.9)   ;
		\draw   (283.22,232.48) .. controls (283.22,220.87) and (297.16,211.45) .. (314.36,211.45) .. controls (331.55,211.45) and (345.49,220.87) .. (345.49,232.48) .. controls (345.49,244.1) and (331.55,253.51) .. (314.36,253.51) .. controls (297.16,253.51) and (283.22,244.1) .. (283.22,232.48) -- cycle ;
		\draw   (338.05,152.98) .. controls (381.04,152.86) and (415.98,188.32) .. (416.08,232.2) .. controls (416.19,276.07) and (381.43,311.74) .. (338.44,311.86) .. controls (295.45,311.98) and (260.52,276.51) .. (260.41,232.63) .. controls (260.3,188.76) and (295.07,153.1) .. (338.05,152.98) -- cycle ;
		\draw   (285.99,153.03) .. controls (330.71,152.91) and (367.05,188.37) .. (367.15,232.25) .. controls (367.26,276.12) and (331.1,311.79) .. (286.38,311.92) .. controls (241.66,312.04) and (205.33,276.57) .. (205.22,232.7) .. controls (205.11,188.83) and (241.27,153.16) .. (285.99,153.03) -- cycle ;
		\draw   (173.36,232.48) .. controls (173.36,149.64) and (236.48,82.49) .. (314.36,82.49) .. controls (392.23,82.49) and (455.36,149.64) .. (455.36,232.48) .. controls (455.36,315.32) and (392.23,382.47) .. (314.36,382.47) .. controls (236.48,382.47) and (173.36,315.32) .. (173.36,232.48) -- cycle ;
		\draw    (312.22,24.05) -- (312.86,432.05) ;
		\draw [shift={(312.19,22.05)}, rotate = 89.17] [color={rgb, 255:red, 0; green, 0; blue, 0 }  ][line width=0.75]    (10.93,-4.9) .. controls (6.95,-2.3) and (3.31,-0.67) .. (0,0) .. controls (3.31,0.67) and (6.95,2.3) .. (10.93,4.9)   ;
		\draw  [dash pattern={on 0.84pt off 2.51pt}] (239.86,232.48) .. controls (239.86,191.34) and (273.21,157.98) .. (314.36,157.98) .. controls (355.5,157.98) and (388.86,191.34) .. (388.86,232.48) .. controls (388.86,273.63) and (355.5,306.98) .. (314.36,306.98) .. controls (273.21,306.98) and (239.86,273.63) .. (239.86,232.48) -- cycle ;
		\draw (457.36,235.88) node [anchor=north west][inner sep=0.75pt]  [font=\tiny]  {$k_{1}$};
		\draw (417.1,234.95) node [anchor=north west][inner sep=0.75pt]  [font=\tiny]  {$k_{2}$};
		\draw (344.41,239.82) node [anchor=north west][inner sep=0.75pt]  [font=\tiny,xslant=0.02]  {$k_{4}$};
		\draw (619.71,240.4) node [anchor=north west][inner sep=0.75pt]  [font=\scriptsize]  {$\re z$};
		\draw (323.71,2.4) node [anchor=north west][inner sep=0.75pt]  [font=\scriptsize]  {$\im z$};
		\draw (495.71,195.4) node [anchor=north west][inner sep=0.75pt]  [color={rgb, 255:red, 208; green, 2; blue, 27 }  ,opacity=1 ]  {$+$};
		\draw (381.71,193.4) node [anchor=north west][inner sep=0.75pt]  [color={rgb, 255:red, 208; green, 2; blue, 27 }  ,opacity=1 ]  {$+$};
		\draw (316.36,214.85) node [anchor=north west][inner sep=0.75pt]  [color={rgb, 255:red, 208; green, 2; blue, 27 }  ,opacity=1 ]  {$+$};
		\draw (294.71,215.4) node [anchor=north west][inner sep=0.75pt]  [color={rgb, 255:red, 208; green, 2; blue, 27 }  ,opacity=1 ]  {$+$};
		\draw (227.71,203.4) node [anchor=north west][inner sep=0.75pt]  [color={rgb, 255:red, 208; green, 2; blue, 27 }  ,opacity=1 ]  {$+$};
		\draw (128.71,204.4) node [anchor=north west][inner sep=0.75pt]  [color={rgb, 255:red, 208; green, 2; blue, 27 }  ,opacity=1 ]  {$+$};
		\draw (433.71,259.4) node [anchor=north west][inner sep=0.75pt]  [color={rgb, 255:red, 208; green, 2; blue, 27 }  ,opacity=1 ]  {$+$};
		\draw (332.49,268.88) node [anchor=north west][inner sep=0.75pt]  [color={rgb, 255:red, 208; green, 2; blue, 27 }  ,opacity=1 ]  {$+$};
		\draw (185.71,264.4) node [anchor=north west][inner sep=0.75pt]  [color={rgb, 255:red, 208; green, 2; blue, 27 }  ,opacity=1 ]  {$+$};
		\draw (500.71,259.4) node [anchor=north west][inner sep=0.75pt]  [color={rgb, 255:red, 208; green, 2; blue, 27 }  ,opacity=1 ]  {$-$};
		\draw (432.71,193.4) node [anchor=north west][inner sep=0.75pt]  [color={rgb, 255:red, 208; green, 2; blue, 27 }  ,opacity=1 ]  {$-$};
		\draw (386.71,261.4) node [anchor=north west][inner sep=0.75pt]  [color={rgb, 255:red, 208; green, 2; blue, 27 }  ,opacity=1 ]  {$-$};
		\draw (334.71,191.4) node [anchor=north west][inner sep=0.75pt]  [color={rgb, 255:red, 208; green, 2; blue, 27 }  ,opacity=1 ]  {$-$};
		\draw (280.71,192.4) node [anchor=north west][inner sep=0.75pt]  [color={rgb, 255:red, 208; green, 2; blue, 27 }  ,opacity=1 ]  {$-$};
		\draw (317.52,235.45) node [anchor=north west][inner sep=0.75pt]  [color={rgb, 255:red, 208; green, 2; blue, 27 }  ,opacity=1 ]  {$-$};
		\draw (295.71,234.4) node [anchor=north west][inner sep=0.75pt]  [color={rgb, 255:red, 208; green, 2; blue, 27 }  ,opacity=1 ]  {$-$};
		\draw (285.41,269.03) node [anchor=north west][inner sep=0.75pt]  [color={rgb, 255:red, 208; green, 2; blue, 27 }  ,opacity=1 ]  {$+$};
		\draw (231.71,258.4) node [anchor=north west][inner sep=0.75pt]  [color={rgb, 255:red, 208; green, 2; blue, 27 }  ,opacity=1 ]  {$-$};
		\draw (182.71,203.4) node [anchor=north west][inner sep=0.75pt]  [color={rgb, 255:red, 208; green, 2; blue, 27 }  ,opacity=1 ]  {$-$};
		\draw (129.71,261.4) node [anchor=north west][inner sep=0.75pt]  [color={rgb, 255:red, 208; green, 2; blue, 27 }  ,opacity=1 ]  {$-$};
		\draw (369.15,235.65) node [anchor=north west][inner sep=0.75pt]  [font=\tiny]  {$k_{3}$};
		\draw (272.36,238.88) node [anchor=north west][inner sep=0.75pt]  [font=\tiny]  {$k_{5}$};
		\draw (246.36,235.88) node [anchor=north west][inner sep=0.75pt]  [font=\tiny]  {$k_{6}$};
		\draw (190.36,234.88) node [anchor=north west][inner sep=0.75pt]  [font=\tiny]  {$k_{7}$};
		\draw (158.36,235.88) node [anchor=north west][inner sep=0.75pt]  [font=\tiny]  {$k_{8}$};
		\end{tikzpicture}		
	\caption{ Signature table of $\im \theta$ for $0\leqslant(\hat{\xi}+\frac{1}{4})t^{\frac{2}{3}}\leqslant C$. The ``$+$'' represents where $\im \theta(z)>0$ and ``$-$'' represents where $\im \theta(z)<0$.
	The dashed line is the unit circle.
	}\label{2ndTranRegionSign}
	\end{center}
	\end{figure}

By \eqref{jump: V^{(3)}}, the jump matrix for $M^{(3)}$ in this case takes the following form
\begin{align}\label{equ:jumpV3-2ndTran}
	V^{(3)}(z)
	&=\left(\begin{array}{cc}
		1 & 0\\
		-\overline{R(\bar{z})}e^{-2i\theta(z)} & 1
	\end{array}\right)\left(\begin{array}{cc}
		1 & R(z)e^{2i\theta(z)}\\
		0 & 1
	\end{array}\right),   \qquad  z\in \mathbb{R},
\end{align}
where
\begin{align}\label{def:Rexpressintermsof}
R(z)&:=R(z;\hat{\xi})=\frac{r(z)}{1-\vert r(z)\vert^2}T^{-2}_{+}(z)
\nonumber
\\
& \overset{\eqref{Tfunc}}{=}r(z)\prod_{j=1}^{2\mathcal{N}}\left(\frac{z-z_j}{z-\bar{z}_j}\right)^2\exp\left\{\frac{1}{\pi i}\int_{-\infty}^{+\infty}\frac{\log\left(1-\vert r(\zeta ) \vert^2\right)}{\zeta-z}\dif\zeta\right\},
\end{align}
and where the Sokhotski-Plemelj formulae is also applied for the third equality. Moreover, it is readily verified that
\begin{equation}\label{symofRonCC}
	R(z)=-\overline{R(-\bar{z})}, \qquad z\in\mathbb{C}.
\end{equation}
This, together with the signature table of $\im \theta$ illustrated in Figure \ref{2ndTranRegionSign}, implies opening lenses around the intervals
$(-\infty, k_8)\cup(k_7, k_6)\cup(k_5, k_4)\cup(k_3, k_2)\cup(k_1, +\infty)$ in what follows.

\subsection{Opening $\bar{\partial}$ lenses}\label{subsec:openlensTran2}
Since the function $R$ in \eqref{equ:jumpV3-2ndTran} is not an analytical function, as Section \ref{subsec:openlensTran1}, the idea now is to introduce the functions $d_j(z):=d_j(z,\hat{\xi})$, $j=1,\cdots, 8$, with boundary conditions:
\begin{equation}\label{equ: 2ndtran defofd_j}
	d_{j}(z,\hat{\xi})=\left\{\begin{array}{ll}
			-R(z), & z\in \mathbb{R},\\
			-R(k_j),  &z\in \Sigma_{j}, \ \ j=1,\dots,8,
		\end{array}\right.
\end{equation}
where each $\Sigma_j$ is the boundary of $\Omega_j$ in the upper half plane. We do not give precise definitions here, but refer to Figure \ref{FigjumpofM4} for an illustration.

One can give an explicit construction of each $d_j$ with the aid of $\mathcal{X}$ given in \eqref{equ:defX}. Indeed, it is readily to check that
\begin{equation}\label{equ: 2ndtran cons d_j}
	d_j(z)=-\left(R(\re z)-R(k_j)\right)\cos\left(\frac{\pi\arg(z-k_j)\mathcal{X}\left(\arg\left(z-k_j\right)\right)}{2\varphi_0}\right)-R(k_j),
\end{equation}
where $\varphi_0$ is the angel between $\Sigma_1$ and the real axis. $d_j(z)$ admits similar estimates in Proposition \ref{est:RandDbarR} as well as the symmetry relation
\begin{equation}\label{symofD_j}
	d_{j}(z)=-\overline{d_j(-\bar{z})},
\end{equation}
which follows from \eqref{symofRonCC}.

\begin{figure}[htbp]
    \begin{center}
		\tikzset{every picture/.style={line width=0.75pt}} 
		\begin{tikzpicture}[x=0.75pt,y=0.75pt,yscale=-1,xscale=1]
		\draw (527,145.72) -- (591,145.63) ;	\fill[yellow!30](591,145.63) --(639.95,116.56) --(639.95,145.63) ;
		\fill[blue!10](591,145.63) --(639.95,172.56) --(639.95,145.63) ;
		\draw   (591,145.63) -- (639.95,116.56) ;
		\draw    (591,145.63) -- (643.03,172.56) ;
		\fill[yellow!30](426,147.86) -- (476,111)-- (527,145.72);
		\fill[blue!10](426,147.8) -- (475,187)--  (526,145.8);
		\draw    (476,111) -- (527,145.72) ;
		\draw    (475,187) -- (526,145.72) ;
		\draw    (375,147.93) -- (426,147.86) ;
		\draw    (426,147.86) -- (476,111) ;
		\draw    (426,147.86) -- (475,187) ;
		\fill[yellow!30](275,150.07) -- (326,111)-- (375,147.93) ;
		\fill[blue!10](275,150.07) -- (326,184) -- (375,147.93) ;
		\draw    (326,111) -- (375,147.93) ;
		\draw    (326,184) -- (375,147.93) ;
		\draw    (211,150.16) -- (275,150.07) ;
		\draw    (275,150.07) -- (326,111) ;
		\draw    (275,150.07) -- (326,184) ;
		\fill[yellow!30](110,152.3) -- (159,112)-- (211,150.16) ;
		\fill[blue!10](112,153.3) -- (161,191)-- (211,150.16) ;
		\draw    (159,112) -- (211,150.16) ;
		\draw    (161,191) -- (211,150.16) ;
		\draw    (59,152.37) -- (110,152.3) ;
		\draw    (110,152.3) -- (159,112) ;
		\draw    (112,153.3) -- (161,191) ;
		\fill[yellow!30] (6.97,125.44) -- (59,152.37)--(6.97,152.37) ;
		\fill[blue!10](10.04,181.44) -- (59,152.37)--(10.04,152.37) ;
		\draw    (6.97,125.44) -- (59,152.37) ;
		\draw    (10.04,181.44) -- (59,152.37) ;
		\draw    (159,112) -- (160.5,151.23) ;
		\draw    (161,191) -- (160.5,151.23) ;
		\draw    (476,111) -- (474.97,145.62) ;
		\draw    (527,145.72) -- (591,145.63) ;
		\draw    (527,145.72) -- (591,145.63) ;
		\draw [shift={(565,145.66)}, rotate = 179.92] [color={rgb, 255:red, 0; green, 0; blue, 0 }  ][line width=0.75]    (10.93,-3.29) .. controls (6.95,-1.4) and (3.31,-0.3) .. (0,0) .. controls (3.31,0.3) and (6.95,1.4) .. (10.93,3.29)   ;
		\draw    (591,145.63) -- (639.95,116.56) ;
		\draw [shift={(620.63,128.03)}, rotate = 149.3] [color={rgb, 255:red, 0; green, 0; blue, 0 }  ][line width=0.75]    (10.93,-3.29) .. controls (6.95,-1.4) and (3.31,-0.3) .. (0,0) .. controls (3.31,0.3) and (6.95,1.4) .. (10.93,3.29)   ;
		\draw    (591,145.63) -- (643.03,172.56) ;
		\draw [shift={(622.34,161.85)}, rotate = 207.36] [color={rgb, 255:red, 0; green, 0; blue, 0 }  ][line width=0.75]    (10.93,-3.29) .. controls (6.95,-1.4) and (3.31,-0.3) .. (0,0) .. controls (3.31,0.3) and (6.95,1.4) .. (10.93,3.29)   ;
		\draw    (476,111) -- (527,145.72) ;
		\draw [shift={(506.46,131.74)}, rotate = 214.25] [color={rgb, 255:red, 0; green, 0; blue, 0 }  ][line width=0.75]    (10.93,-3.29) .. controls (6.95,-1.4) and (3.31,-0.3) .. (0,0) .. controls (3.31,0.3) and (6.95,1.4) .. (10.93,3.29)   ;
		\draw    (475,187) -- (526,145.72) ;
		\draw [shift={(505.16,162.58)}, rotate = 141.01] [color={rgb, 255:red, 0; green, 0; blue, 0 }  ][line width=0.75]    (10.93,-3.29) .. controls (6.95,-1.4) and (3.31,-0.3) .. (0,0) .. controls (3.31,0.3) and (6.95,1.4) .. (10.93,3.29)   ;
		\draw    (375,147.93) -- (426,147.86) ;
		\draw [shift={(406.5,147.89)}, rotate = 179.92] [color={rgb, 255:red, 0; green, 0; blue, 0 }  ][line width=0.75]    (10.93,-3.29) .. controls (6.95,-1.4) and (3.31,-0.3) .. (0,0) .. controls (3.31,0.3) and (6.95,1.4) .. (10.93,3.29)   ;
		\draw    (426,147.86) -- (476,111) ;
		\draw [shift={(455.83,125.87)}, rotate = 143.6] [color={rgb, 255:red, 0; green, 0; blue, 0 }  ][line width=0.75]    (10.93,-3.29) .. controls (6.95,-1.4) and (3.31,-0.3) .. (0,0) .. controls (3.31,0.3) and (6.95,1.4) .. (10.93,3.29)   ;
		\draw    (426,147.86) -- (475,187) ;
		\draw [shift={(455.19,171.17)}, rotate = 218.62] [color={rgb, 255:red, 0; green, 0; blue, 0 }  ][line width=0.75]    (10.93,-3.29) .. controls (6.95,-1.4) and (3.31,-0.3) .. (0,0) .. controls (3.31,0.3) and (6.95,1.4) .. (10.93,3.29)   ;
		\draw    (326,111) -- (375,147.93) ;
		\draw [shift={(355.29,133.08)}, rotate = 217.01] [color={rgb, 255:red, 0; green, 0; blue, 0 }  ][line width=0.75]    (10.93,-3.29) .. controls (6.95,-1.4) and (3.31,-0.3) .. (0,0) .. controls (3.31,0.3) and (6.95,1.4) .. (10.93,3.29)   ;
		\draw    (326,184) -- (375,147.93) ;
		\draw [shift={(355.33,162.41)}, rotate = 143.64] [color={rgb, 255:red, 0; green, 0; blue, 0 }  ][line width=0.75]    (10.93,-3.29) .. controls (6.95,-1.4) and (3.31,-0.3) .. (0,0) .. controls (3.31,0.3) and (6.95,1.4) .. (10.93,3.29)   ;
		\draw    (211,150.16) -- (275,150.07) ;
		\draw [shift={(249,150.11)}, rotate = 179.92] [color={rgb, 255:red, 0; green, 0; blue, 0 }  ][line width=0.75]    (10.93,-3.29) .. controls (6.95,-1.4) and (3.31,-0.3) .. (0,0) .. controls (3.31,0.3) and (6.95,1.4) .. (10.93,3.29)   ;
		\draw    (275,150.07) -- (326,111) ;
		\draw [shift={(305.26,126.89)}, rotate = 142.54] [color={rgb, 255:red, 0; green, 0; blue, 0 }  ][line width=0.75]    (10.93,-3.29) .. controls (6.95,-1.4) and (3.31,-0.3) .. (0,0) .. controls (3.31,0.3) and (6.95,1.4) .. (10.93,3.29)   ;
		\draw    (275,150.07) -- (326,184) ;
		\draw [shift={(305.5,170.36)}, rotate = 213.64] [color={rgb, 255:red, 0; green, 0; blue, 0 }  ][line width=0.75]    (10.93,-3.29) .. controls (6.95,-1.4) and (3.31,-0.3) .. (0,0) .. controls (3.31,0.3) and (6.95,1.4) .. (10.93,3.29)   ;
		\draw    (159,112) -- (211,150.16) ;
		\draw [shift={(189.84,134.63)}, rotate = 216.27] [color={rgb, 255:red, 0; green, 0; blue, 0 }  ][line width=0.75]    (10.93,-3.29) .. controls (6.95,-1.4) and (3.31,-0.3) .. (0,0) .. controls (3.31,0.3) and (6.95,1.4) .. (10.93,3.29)   ;
		\draw    (161,191) -- (211,150.16) ;
		\draw [shift={(190.65,166.78)}, rotate = 140.76] [color={rgb, 255:red, 0; green, 0; blue, 0 }  ][line width=0.75]    (10.93,-3.29) .. controls (6.95,-1.4) and (3.31,-0.3) .. (0,0) .. controls (3.31,0.3) and (6.95,1.4) .. (10.93,3.29)   ;
		\draw    (59,152.37) -- (110,152.3) ;
		\draw [shift={(90.5,152.33)}, rotate = 179.92] [color={rgb, 255:red, 0; green, 0; blue, 0 }  ][line width=0.75]    (10.93,-3.29) .. controls (6.95,-1.4) and (3.31,-0.3) .. (0,0) .. controls (3.31,0.3) and (6.95,1.4) .. (10.93,3.29)   ;
		\draw    (110,152.3) -- (159,112) ;
		\draw [shift={(139.14,128.34)}, rotate = 140.56] [color={rgb, 255:red, 0; green, 0; blue, 0 }  ][line width=0.75]    (10.93,-3.29) .. controls (6.95,-1.4) and (3.31,-0.3) .. (0,0) .. controls (3.31,0.3) and (6.95,1.4) .. (10.93,3.29)   ;
		\draw    (112,153.3) -- (161,191) ;
		\draw [shift={(141.26,175.81)}, rotate = 217.58] [color={rgb, 255:red, 0; green, 0; blue, 0 }  ][line width=0.75]    (10.93,-3.29) .. controls (6.95,-1.4) and (3.31,-0.3) .. (0,0) .. controls (3.31,0.3) and (6.95,1.4) .. (10.93,3.29)   ;
		\draw    (6.97,125.44) -- (59,152.37) ;
		\draw [shift={(38.31,141.67)}, rotate = 207.36] [color={rgb, 255:red, 0; green, 0; blue, 0 }  ][line width=0.75]    (10.93,-3.29) .. controls (6.95,-1.4) and (3.31,-0.3) .. (0,0) .. controls (3.31,0.3) and (6.95,1.4) .. (10.93,3.29)   ;
		\draw    (10.04,181.44) -- (59,152.37) ;
		\draw [shift={(39.68,163.84)}, rotate = 149.3] [color={rgb, 255:red, 0; green, 0; blue, 0 }  ][line width=0.75]    (10.93,-3.29) .. controls (6.95,-1.4) and (3.31,-0.3) .. (0,0) .. controls (3.31,0.3) and (6.95,1.4) .. (10.93,3.29)   ;
		\draw  [dash pattern={on 0.84pt off 2.51pt}]  (1,152.45) -- (59,152.37) ;
		\draw  [dash pattern={on 0.84pt off 2.51pt}]  (110,152.3) -- (211,150.16) ;
		\draw  [dash pattern={on 0.84pt off 2.51pt}]  (275,150.07) -- (375,147.93) ;
		\draw  [dash pattern={on 0.84pt off 2.51pt}]  (426,147.86) -- (526,145.72) ;
		\draw  [dash pattern={on 0.84pt off 2.51pt}]  (591,145.63) -- (649,145.55) ;
		\draw  [dash pattern={on 0.84pt off 2.51pt}]  (326,49) -- (326,111) ;
		\draw [shift={(326,47)}, rotate = 90] [color={rgb, 255:red, 0; green, 0; blue, 0 }  ][line width=0.75]    (10.93,-3.29) .. controls (6.95,-1.4) and (3.31,-0.3) .. (0,0) .. controls (3.31,0.3) and (6.95,1.4) .. (10.93,3.29)   ;
		\draw    (159,112) -- (160.5,151.23) ;
		\draw [shift={(159.98,137.61)}, rotate = 267.81] [color={rgb, 255:red, 0; green, 0; blue, 0 }  ][line width=0.75]    (10.93,-3.29) .. controls (6.95,-1.4) and (3.31,-0.3) .. (0,0) .. controls (3.31,0.3) and (6.95,1.4) .. (10.93,3.29)   ;
		\draw    (161,191) -- (160.5,151.23) ;
		\draw [shift={(160.68,165.12)}, rotate = 89.28] [color={rgb, 255:red, 0; green, 0; blue, 0 }  ][line width=0.75]    (10.93,-3.29) .. controls (6.95,-1.4) and (3.31,-0.3) .. (0,0) .. controls (3.31,0.3) and (6.95,1.4) .. (10.93,3.29)   ;
		\draw    (476,111) -- (474.97,145.62) ;
		\draw [shift={(475.3,134.31)}, rotate = 271.71] [color={rgb, 255:red, 0; green, 0; blue, 0 }  ][line width=0.75]    (10.93,-3.29) .. controls (6.95,-1.4) and (3.31,-0.3) .. (0,0) .. controls (3.31,0.3) and (6.95,1.4) .. (10.93,3.29)   ;
		\draw    (475,187) -- (474.97,145.62) ;
		\draw [shift={(474.98,160.31)}, rotate = 89.95] [color={rgb, 255:red, 0; green, 0; blue, 0 }  ][line width=0.75]    (10.93,-3.29) .. controls (6.95,-1.4) and (3.31,-0.3) .. (0,0) .. controls (3.31,0.3) and (6.95,1.4) .. (10.93,3.29)   ;
		\draw  [dash pattern={on 0.84pt off 2.51pt}]  (326,184) -- (327,273.29) ;
		\draw    (326,111) -- (325,149) ;
		\draw [shift={(325.34,136)}, rotate = 271.51] [color={rgb, 255:red, 0; green, 0; blue, 0 }  ][line width=0.75]    (10.93,-3.29) .. controls (6.95,-1.4) and (3.31,-0.3) .. (0,0) .. controls (3.31,0.3) and (6.95,1.4) .. (10.93,3.29)   ;
		\draw    (326,184) -- (325,149) ;
		\draw [shift={(325.33,160.5)}, rotate = 88.36] [color={rgb, 255:red, 0; green, 0; blue, 0 }  ][line width=0.75]    (10.93,-3.29) .. controls (6.95,-1.4) and (3.31,-0.3) .. (0,0) .. controls (3.31,0.3) and (6.95,1.4) .. (10.93,3.29)   ;
		\draw (329,35.4) node [anchor=north west][inner sep=0.75pt]  [font=\scriptsize]  {$\im z$};
		\draw (634,123.4) node [anchor=north west][inner sep=0.75pt]  [font=\tiny,color={rgb, 255:red, 74; green, 144; blue, 226 }  ,opacity=1 ]  {$\Omega _{1}$};
		\draw (477.48,131.71) node [anchor=north west][inner sep=0.75pt]  [font=\tiny,color={rgb, 255:red, 74; green, 144; blue, 226 }  ,opacity=1 ]  {$\Omega _{2}$};
		\draw (453,132.83) node [anchor=north west][inner sep=0.75pt]  [font=\tiny,color={rgb, 255:red, 74; green, 144; blue, 226 }  ,opacity=1 ]  {$\Omega _{3}$};
		\draw (327.5,133.4) node [anchor=north west][inner sep=0.75pt]  [font=\tiny,color={rgb, 255:red, 74; green, 144; blue, 226 }  ,opacity=1 ]  {$\Omega _{4}$};
		\draw (302.5,133.93) node [anchor=north west][inner sep=0.75pt]  [font=\tiny,color={rgb, 255:red, 74; green, 144; blue, 226 }  ,opacity=1 ]  {$\Omega _{5}$};
		\draw (161.75,135.01) node [anchor=north west][inner sep=0.75pt]  [font=\tiny,color={rgb, 255:red, 74; green, 144; blue, 226 }  ,opacity=1 ]  {$\Omega _{6}$};
		\draw (136.5,135.55) node [anchor=north west][inner sep=0.75pt]  [font=\tiny,color={rgb, 255:red, 74; green, 144; blue, 226 }  ,opacity=1 ]  {$\Omega _{7}$};
		\draw (3,131.4) node [anchor=north west][inner sep=0.75pt]  [font=\tiny,color={rgb, 255:red, 74; green, 144; blue, 226 }  ,opacity=1 ]  {$\Omega _{8}$};
		\draw (633,147.4) node [anchor=north west][inner sep=0.75pt]  [font=\tiny,color={rgb, 255:red, 74; green, 144; blue, 226 }  ,opacity=1 ]  {$\Omega _{1}^{*}$};
		\draw (476.97,149.02) node [anchor=north west][inner sep=0.75pt]  [font=\tiny,color={rgb, 255:red, 74; green, 144; blue, 226 }  ,opacity=1 ]  {$\Omega _{2}^{*}$};
		\draw (454,147.4) node [anchor=north west][inner sep=0.75pt]  [font=\tiny,color={rgb, 255:red, 74; green, 144; blue, 226 }  ,opacity=1 ]  {$\Omega _{3}^{*}$};
		\draw (329,147.4) node [anchor=north west][inner sep=0.75pt]  [font=\tiny,color={rgb, 255:red, 74; green, 144; blue, 226 }  ,opacity=1 ]  {$\Omega _{4}^{*}$};
		\draw (304,148.4) node [anchor=north west][inner sep=0.75pt]  [font=\tiny,color={rgb, 255:red, 74; green, 144; blue, 226 }  ,opacity=1 ]  {$\Omega _{5}^{*}$};
		\draw (162.5,154.63) node [anchor=north west][inner sep=0.75pt]  [font=\tiny,color={rgb, 255:red, 74; green, 144; blue, 226 }  ,opacity=1 ]  {$\Omega _{6}^{*}$};
		\draw (138,153.4) node [anchor=north west][inner sep=0.75pt]  [font=\tiny,color={rgb, 255:red, 74; green, 144; blue, 226 }  ,opacity=1 ]  {$\Omega _{7}^{*}$};
		\draw (3,155.85) node [anchor=north west][inner sep=0.75pt]  [font=\tiny,color={rgb, 255:red, 74; green, 144; blue, 226 }  ,opacity=1 ]  {$\Omega _{8}^{*}$};
		\draw (616,105.4) node [anchor=north west][inner sep=0.75pt]  [font=\tiny,color={rgb, 255:red, 208; green, 2; blue, 27 }  ,opacity=1 ]  {$\Sigma _{1}$};
		\draw (612.01,159.49) node [anchor=north west][inner sep=0.75pt]  [font=\tiny,color={rgb, 255:red, 208; green, 2; blue, 27 }  ,opacity=1 ]  {$\Sigma _{1}^{*}$};
		\draw (496,107.4) node [anchor=north west][inner sep=0.75pt]  [font=\tiny,color={rgb, 255:red, 208; green, 2; blue, 27 }  ,opacity=1 ]  {$\Sigma _{2}$};
		\draw (438,108.4) node [anchor=north west][inner sep=0.75pt]  [font=\tiny,color={rgb, 255:red, 208; green, 2; blue, 27 }  ,opacity=1 ]  {$\Sigma _{3}$};
		\draw (500,160.49) node [anchor=north west][inner sep=0.75pt]  [font=\tiny,color={rgb, 255:red, 208; green, 2; blue, 27 }  ,opacity=1 ]  {$\Sigma _{2}^{*}$};
		\draw (439,167.49) node [anchor=north west][inner sep=0.75pt]  [font=\tiny,color={rgb, 255:red, 208; green, 2; blue, 27 }  ,opacity=1 ]  {$\Sigma _{3}^{*}$};
		\draw (346,108.4) node [anchor=north west][inner sep=0.75pt]  [font=\tiny,color={rgb, 255:red, 208; green, 2; blue, 27 }  ,opacity=1 ]  {$\Sigma _{4}$};
		\draw (344.5,169.37) node [anchor=north west][inner sep=0.75pt]  [font=\tiny,color={rgb, 255:red, 208; green, 2; blue, 27 }  ,opacity=1 ]  {$\Sigma _{4}^{*}$};
		\draw (294,107.4) node [anchor=north west][inner sep=0.75pt]  [font=\tiny,color={rgb, 255:red, 208; green, 2; blue, 27 }  ,opacity=1 ]  {$\Sigma _{5}$};
		\draw (293,169.4) node [anchor=north west][inner sep=0.75pt]  [font=\tiny,color={rgb, 255:red, 208; green, 2; blue, 27 }  ,opacity=1 ]  {$\Sigma _{5}^{*}$};
		\draw (183,109.4) node [anchor=north west][inner sep=0.75pt]  [font=\tiny,color={rgb, 255:red, 208; green, 2; blue, 27 }  ,opacity=1 ]  {$\Sigma _{6}$};
		\draw (181,174.98) node [anchor=north west][inner sep=0.75pt]  [font=\tiny,color={rgb, 255:red, 208; green, 2; blue, 27 }  ,opacity=1 ]  {$\Sigma _{6}^{*}$};
		\draw (126,109.4) node [anchor=north west][inner sep=0.75pt]  [font=\tiny,color={rgb, 255:red, 208; green, 2; blue, 27 }  ,opacity=1 ]  {$\Sigma _{7}$};
		\draw (122,169.98) node [anchor=north west][inner sep=0.75pt]  [font=\tiny,color={rgb, 255:red, 208; green, 2; blue, 27 }  ,opacity=1 ]  {$\Sigma _{7}^{*}$};
		\draw (26,116.4) node [anchor=north west][inner sep=0.75pt]  [font=\tiny,color={rgb, 255:red, 208; green, 2; blue, 27 }  ,opacity=1 ]  {$\Sigma _{8}$};
		\draw (26.52,170.31) node [anchor=north west][inner sep=0.75pt]  [font=\tiny,color={rgb, 255:red, 208; green, 2; blue, 27 }  ,opacity=1 ]  {$\Sigma _{8}^{*}$};
		\draw (585,148.4) node [anchor=north west][inner sep=0.75pt]  [font=\tiny]  {$k_{1}$};
		\draw (522,149.12) node [anchor=north west][inner sep=0.75pt]  [font=\tiny]  {$k_{2}$};
		\draw (419,150.4) node [anchor=north west][inner sep=0.75pt]  [font=\tiny]  {$k_{3}$};
		\draw (370,150.33) node [anchor=north west][inner sep=0.75pt]  [font=\tiny]  {$k_{4}$};
		\draw (268,152.33) node [anchor=north west][inner sep=0.75pt]  [font=\tiny]  {$k_{5}$};
		\draw (206,152.4) node [anchor=north west][inner sep=0.75pt]  [font=\tiny]  {$k_{6}$};
		\draw (103,154.12) node [anchor=north west][inner sep=0.75pt]  [font=\tiny]  {$k_{7}$};
		\draw (53,154.4) node [anchor=north west][inner sep=0.75pt]  [font=\tiny]  {$k_{8}$};
		\draw (472,185.49) node [anchor=north west][inner sep=0.75pt]  [font=\tiny,color={rgb, 255:red, 208; green, 2; blue, 27 }  ,opacity=1 ]  {$\Sigma _{2,3}^{*}$};
		\draw (465,93.49) node [anchor=north west][inner sep=0.75pt]  [font=\tiny,color={rgb, 255:red, 208; green, 2; blue, 27 }  ,opacity=1 ]  {$\Sigma _{2,3}$};
		\draw (318,95.49) node [anchor=north west][inner sep=0.75pt]  [font=\tiny,color={rgb, 255:red, 208; green, 2; blue, 27 }  ,opacity=1 ]  {$\Sigma _{4,5}$};
		\draw (319,180.49) node [anchor=north west][inner sep=0.75pt]  [font=\tiny,color={rgb, 255:red, 208; green, 2; blue, 27 }  ,opacity=1 ]  {$\Sigma _{4,5}^{*}$};
		\draw (147,93.49) node [anchor=north west][inner sep=0.75pt]  [font=\tiny,color={rgb, 255:red, 208; green, 2; blue, 27 }  ,opacity=1 ]  {$\Sigma _{6,7}$};
		\draw (152,189.49) node [anchor=north west][inner sep=0.75pt]  [font=\tiny,color={rgb, 255:red, 208; green, 2; blue, 27 }  ,opacity=1 ]  {$\Sigma _{6,7}^{*}$};
		\end{tikzpicture}		
    \caption{ The jump contours of RH problem for $M^{(4)}$ when $0\leqslant(\hat{\xi}+1/4)t^{2/3}\leqslant C$. }\label{FigjumpofM4}
    \end{center}
\end{figure}
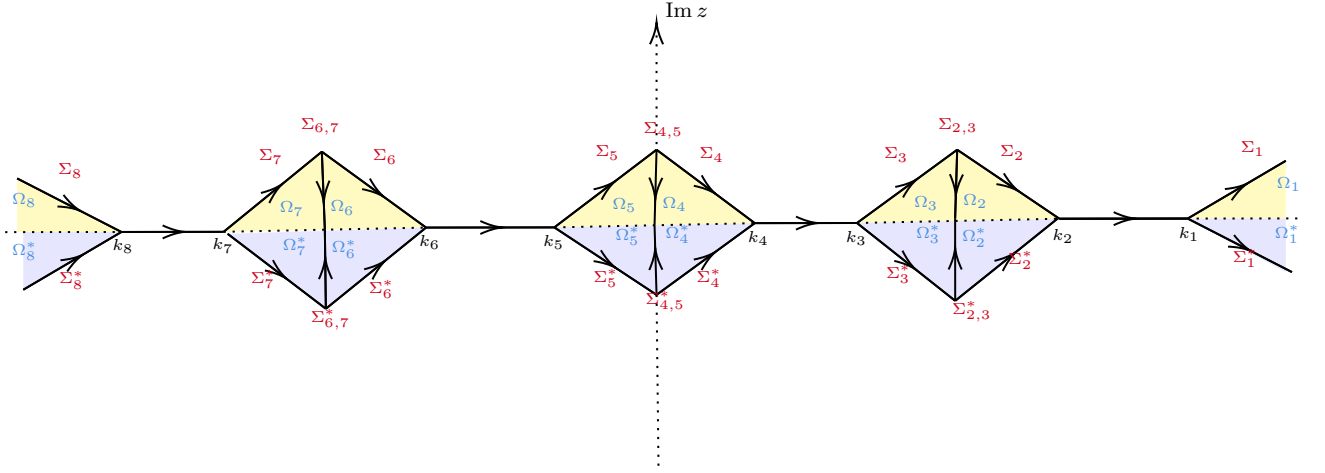

Similar to \eqref{transtoM4}, we now define
\begin{equation}\label{transform:M3toM4 2nd}
	M^{(4)}(z)=M^{(3)}(z)R^{(3)}(z),
\end{equation}
where
\begin{equation}\label{R(3)+2ndTranRegion}
	R^{(3)}(z):=R^{(3)}(z,\hat{\xi})=\left\{\begin{array}{lll}
		\left(\begin{array}{cc}
			1 & d_j(z)e^{2i\theta(z)}\\
			0 & 1
		\end{array}\right), & z\in \Omega_{j},\ \ j=1,\dots,8,\\
		[12pt]
		\left(\begin{array}{cc}
			1 & 0\\
			d_j^{*}(z)e^{-2i\theta(z)} & 1
		\end{array}\right),  &z\in \Omega_{j}^{*}, \ \ j=1,\dots,8,\\
		[12pt]
		I,  &\textrm{elsewhere.}\\
	\end{array}\right.
\end{equation}

Then, $M^{(4)}$ satisfies the following mixed $\bar{\partial}$-RH problem.
\begin{dbar-RHP}\label{RHPM^{(4)}}
  \hfill
\begin{itemize}
	\item[$\bullet$] $M^{(4)}(z)$ is continuous for $z\in\mathbb{C}\setminus\Sigma^{(4)}$, where
	\begin{equation}\label{def:Sigma4,2nd}
	\Sigma^{(4)}:=\bigcup_{j=1}^{8}(\Sigma_j\cup\Sigma_j^*)\cup\bigcup_{j=1,3,5,7}(k_{j+1},k_j)\cup\bigcup_{j=2,4,6}\left(\Sigma_{j, j+1}\cup\Sigma_{j, j+1}^*\right);
	\end{equation}
	see Figure \ref{FigjumpofM4} for an illustration.
	\item[$\bullet$] $M^{(4)}(z)=\sigma_1\overline{M^{(4)}(\bar{z})}\sigma_1=\sigma_2M^{(4)}(-z)\sigma_2$.
	\item[$\bullet$] For $z\in\Sigma^{(4)}$, we have
	\begin{equation}
		M^{(4)}_+(z)=M^{(4)}_-(z)V^{(4)}(z),
	\end{equation}
	where
	\begin{equation}\label{jumpofM4}
		V^{(4)}(z)=\left\{\begin{array}{ll}e^{i\theta(z)\hat{\sigma}_3}\left(\begin{array}{cc}
				1 & 0 \\
				-\overline{R(z)} & 1
			\end{array}\right)
			\left(\begin{array}{cc}
				1 & R(z)\\
				0 & 1
			\end{array}\right),   & z\in\underset{j=1,3,5,7}{\cup}(k_{j+1},k_j),\\[12pt]
			\begin{pmatrix}1 &  R(k_j)e^{2i\theta(z)}\\ 0 & 1 \end{pmatrix},  &z\in\Sigma_j,\ j=1,\dots,8,\\[8pt]
			\begin{pmatrix}1 & 0 \\ -\overline{R(k_j)}e^{-2i\theta(z)} & 1 \end{pmatrix},  &z\in\Sigma_j^*,\ j=1,\dots,8,\\[8pt]
			 \left(\begin{array}{cc}
			 	1 & \left(d_j(z)-d_{j+1}(z)\right)e^{2i\theta(z)}\\
			 	0 & 1
			 \end{array}\right),  & z\in \Sigma_{j,j+1}, \ j=2,4,6,\\
		 [12pt]
		 \left(\begin{array}{cc}
		 	1 & 0\\
		 	-(d^*_j(z)-d^*_{j+1}(z))e^{-2i\theta(z)} & 1
		 \end{array}\right),  &z\in \Sigma^*_{j,j+1}, \ j=2,4,6.\\
		\end{array}\right.
	\end{equation}
	\item[$\bullet$] As $z\rightarrow\infty$ in $\mathbb{C} \setminus \Sigma^{(4)}$,  we have $M^{(4)}(z) = I+\mathcal{O}(z^{-1})$.
	\item[$\bullet$] For $z\in\mathbb{C}$, we have the $\bar{\partial}$-derivative relation
	\begin{align}
		\bar{\partial}M^{(4)}=M^{(4)}\bar{\partial}R^{(3)},
	\end{align}
	where
	\begin{equation}
		\bar{\partial}R^{(3)}(z)=\left\{\begin{array}{lll}
			\left(\begin{array}{cc}
				0 & \bar{\partial}d_{j}(z)e^{2i\theta}\\
				0 & 0
			\end{array}\right), & z\in \Omega_{j},\ j=1,\dots,8,\\
			[12pt]
			\left(\begin{array}{cc}
				0 & 0\\
				\bar{\partial}d_j^*(z)e^{-2i\theta} & 0
			\end{array}\right),  &z\in \Omega_{j}^{*},\ j=1,\dots,8,\\
			[12pt]
			0,  & \mathrm{elsewhere}.\\
		\end{array}\right.
	\end{equation}
\end{itemize}
\end{dbar-RHP}
The above mixed $\bar{\partial}$-RH problem can again be decomposed into a pure RH problem under the condition $\bar\partial R^{(3)}\equiv0$ and
a pure $\bar{\partial}$-problem with $\bar\partial R^{(3)}\not=0$. The next two sections are then devoted to the asymptotic analysis of these two problems separately.

\subsection{Analysis of the pure RH problem}\label{subsec:pureN(z)2nd}
The pure RH problem is obtained from mixed $\bar{\partial}$-RH problem for $M^{(4)}$ via omitting the $\bar{\partial}$-derivative part, which reads as follows
\begin{RHP}\label{RHP: N(z) tran2}
	\hfill
	\begin{itemize}
		\item[$\bullet$]  $N(z):=N(z;\hat{\xi})$ is holomorphic for $z\in\mathbb{C}\setminus \Sigma^{(4)}$, where $\Sigma^{(4)}$ is defined in \eqref{def:Sigma4,2nd}.
		\item[$\bullet$]  $N(z)=\sigma_1\overline{N(\bar{z})}\sigma_1=\sigma_2N(-z)\sigma_2$.
		\item[$\bullet$]  For $z \in \Sigma^{(4)}$, we have
		\begin{equation}
			N_+(z)=N_-(z)V^{(4)}(z),
		\end{equation}
		where $V^{(4)}(z)$ is defined in \eqref{jumpofM4}.
		\item[$\bullet$]
		As $z\rightarrow\infty$ in $\mathbb{C}\setminus \Sigma^{(4)}$, we have $N(z)=I+\mathcal{O}(z^{-1})$.
	\end{itemize}
\end{RHP}
Note that the definitions of $d_4$ and $d_5$ imply that $V^{(4)}(z)=I$ near $z=0$ on $\Sigma_{4,5}$ and $\Sigma_{4,5}^*$, and $V^{(4)}(z) \to I$  as $t \to +\infty$ on the other parts of $\Sigma^{(4)}$ (see \eqref{jumpofM4} and Figure \ref{2ndTranRegionSign}),
it follows that $N$ is approximated, to the leading order, by the global parametrix $N^{(\infty)}(z)=I$. The sub-leading contribution stems from the local behaviors near the saddle points $k_j$, $j=1,\ldots,8$,
which is still well approximated by the Painlev\'{e} II parametrix.

\subsubsection*{Local parametrices near $z=2\pm\sqrt{3}$, $-2\pm\sqrt{3}$} \label{subsubsec:local para 2nd}
Let
\begin{align*}
	&U^{(a)}=\{z:|z-(2+\sqrt{3})|\leqslant c_0\}, \quad &&U^{(b)}=\{z:|z-(2-\sqrt{3})|\leqslant c_0\}, \\
	&U^{(c)}=\{z:|z-(-2+\sqrt{3})|\leqslant c_0\}, \quad  &&U^{(d)}=\{z:|z-(-2-\sqrt{3})|\leqslant c_0\},
\end{align*}
be four small disks around $z=2\pm\sqrt{3}$ and $z=-2\pm\sqrt{3}$ respectively, where
\begin{align}
	c_0:=\min\{1-\frac{\sqrt{3}}{2}, \ 2(k_1-2-\sqrt{3})t^{\delta_{2}}, \ 2(k_3-2+\sqrt{3})t^{\delta_{2}}\},\hspace{0.5cm}\delta_2\in(0,1/6),
\end{align}
is a constant depending on $t$. For $t$ large enough, we have $k_{1,2}\in U^{(a)}$, $k_{3,4}\in U^{(b)}$, $k_{5,6}\in U^{(c)}$ as well as $k_{7,8}\in U^{(d)}$.
Indeed, if $0\leqslant(\hat{\xi}+\frac{1}{4})t^{\frac{2}{3}}\leqslant C$, it follows from \eqref{equ:s+s-2nd} that $0<s_+\leqslant\frac{C}{2}t^{-2/3}$, $-\frac{C}{2}t^{-2/3}\leqslant s_-< 0$,
thus, by \eqref{equ:2ndtransaddlepoints-a} and \eqref{equ:2ndtransaddlepoints-b}
\begin{align*}
	&\left|k_j-(2+\sqrt{3})\right|\leqslant\sqrt{2C}t^{-1/3}, \ j=1,2, &&\left|k_j-(2-\sqrt{3})\right|\leqslant\sqrt{2C}t^{-1/3}, \ j=3,4,\\
   &\left|k_j-(-2+\sqrt{3})\right|\leqslant\sqrt{2C}t^{-1/3}, \ j=5,6, &&\left|k_j-(-2-\sqrt{3})\right|\leqslant\sqrt{2C}t^{-1/3}, \ j=7,8.
\end{align*}
This particularly implies that $c_0\lesssim t^{\delta_2-1/3}\rightarrow 0$ as $t\rightarrow\infty$, hence, $U^{(a)}$, $U^{(b)}$, $U^{(c)}$ and $U^{(d)}$ are
four shrinking disks with respect to $t$.

For $\ell\in\{a,b,c,d\}$, we intend to solve the following local RH problem for $N^{(\ell)}$.
\begin{RHP}\label{RHP:Nrell2nd}
    \hfill	
	\begin{itemize}
	\item[$\bullet$]  $N^{(\ell)}(z)$ is holomorphic for $z\in\mathbb{C}\setminus \Sigma^{(\ell)}$,
    where
      $$
      \Sigma^{(\ell)}:=U^{(\ell)}\cap \Sigma^{(4)}.
      $$
	\item[$\bullet$]  For $z \in \Sigma^{(\ell)}$, we have
		\begin{equation}
			N^{(\ell)}_+(z)=N^{(\ell)}_-(z)V^{(4)}(z),
		\end{equation}
        where $V^{(4)}(z)$ is defined in \eqref{jumpofM4}.
        \item[$\bullet$]
		As $z\rightarrow\infty$ in $\mathbb{C}\setminus \Sigma^{(\ell)}$, we have $N^{(\ell)}(z)=I+\mathcal{O}(z^{-1})$.
	\end{itemize}
\end{RHP}
To solve the RH problem for $N^{(a)}$, we observe that for $z\in U^{(a)}$ and $t$ large enough,
\begin{equation}\label{equ:appoftheta2+sqrt3}
	\theta(z)=\theta\left(2+\sqrt{3}\right)+\frac{4}{3}\hat{k}^3+\tilde{s}\hat{k}+\mathcal{O}\left(t^{-1/3}\hat{k}^4\right),
\end{equation}
where
\begin{equation}\label{equ:tilde{s}2nd}
	\tilde{s}=-\left(\frac{8}{9}\right)^{1/3}\left(\frac{y}{t}+\frac{1}{4}\right)t^{2/3},
\end{equation}
parametrizes the space-time region, and
\begin{equation}\label{equ:scaling of N_a local}
	\hat{k}=\left[\frac{9}{8}\left(26-15\sqrt{3}\right)t\right]^{1/3}\left(z-\left(2+\sqrt{3}\right)\right)
\end{equation}
is a scaled spectral parameter. The expansion of $\theta$ in \eqref{equ:appoftheta2+sqrt3} invokes us to work in the $\hat{k}$-plane,
and as we have done in Section \ref{subsec:pure RH N(z)}, the analysis is split into three steps.

\paragraph{Step 1: From the $z$-plane to the $\hat{k}$-plane}
Under the change of variable \eqref{equ:scaling of N_a local}, it is easily seen that $N^{(a)}(\hat{k})=N^{(a)}(z(\hat{k}))$ is holomorphic
for $\hat{k}\in\mathbb{C}\setminus\hat{\Sigma}^{(a)}$, where
\begin{equation}\label{def:hat{Sigma}^{(a)}}
	\hat{\Sigma}^{(a)}:=\cup_{j=1,2}\left(\hat{\Sigma}^{(a)}_{j}\cup\hat{\Sigma}^{(a)*}_j\right)\cup(\hat{k}_2, \hat{k}_1).
\end{equation}
Here, $\hat{k}_{j}=\left[\frac{9}{8}\left(26-15\sqrt{3}\right)t\right]^{1/3}\left(k_{j}-\left(2+\sqrt{3}\right)\right)$ and
\begin{align*}
	\hat{\Sigma}^{(a)}_1&=\left\lbrace \hat{k}:\ \hat{k}-\hat{k}_1=le^{(\varphi_0)i},\ 0\leqslant l\leqslant c_0\left[\frac{9}{8}\left(26-15\sqrt{3}\right)t\right]^{1/3}\right\rbrace,
	\\
	\hat{\Sigma}^{(a)}_2&=\left\lbrace \hat{k}:\ \hat{k}-\hat{k}_2=le^{(\pi-\varphi_0)i},\ 0\leqslant l\leqslant c_0\left[\frac{9}{8}\left(26-15\sqrt{3}\right)t\right]^{1/3}\right\rbrace.
\end{align*}
Moreover, the jump matrix for $N^{(a)}(\hat{k})$ reads
\begin{equation}
	V^{(a)}(\hat{k})=\left\{\begin{array}{lll}
		\left(\begin{array}{cc}
			1 & R(k_j)e^{2i\theta\left(2+\sqrt{3}+\left(\frac{9}{8}(26-15\sqrt{3})t\right)^{-1/3}\hat{k}\right)}\\
			0 & 1
		\end{array}\right), & \hat{k}\in\hat{\Sigma}^{(a)}_j, \\
		\\
		\left(\begin{array}{cc}
			1 & 0\\
			-\overline{R(k_j)}e^{-2i\theta\left(2+\sqrt{3}+\left(\frac{9}{8}(26-15\sqrt{3})t\right)^{-1/3}\hat{k}\right)} & 1
		\end{array}\right),  &\hat{k}\in\hat{\Sigma}^{(a)*}_j, \\
		\\
		e^{i\theta(z(\hat{k}))\hat{\sigma}_3}\begin{pmatrix}1 & 0\\ -\overline{R(z(\hat{k}))} & 1 \end{pmatrix}\begin{pmatrix} 1 & R(z(\hat{k}))\\ 0 & 1 \end{pmatrix}, & \hat{k}\in(\hat{k}_2,\hat{k}_1),
	\end{array}\right.
\end{equation}
for $j=1,2$.

\paragraph{Step 2: A model RH problem}
In the view of \eqref{equ:appoftheta2+sqrt3}, it is natural to expect that $N^{(a)}$ is well-approximated by the following model RH problem for $\hat{N}^{(a)}$
\begin{RHP}\label{RHP:hat{N}^{(a)}}
	\hfill
	\begin{itemize}
		\item[$\bullet$] $\hat{N}^{(a)}(\hat{k})$ is holomorphic for $\hat{k}\in\mathbb{C}\setminus\hat{\Sigma}^{(a)}$, where $\hat{\Sigma}^{(a)}$ is defined by \eqref{def:hat{Sigma}^{(a)}}.
		\item[$\bullet$] As $\hat{k}\rightarrow\infty$ in $\mathbb{C} \setminus \hat{\Sigma}^{(a)}$, $\hat{N}^{(a)}(\hat{k})=I+\mathcal{O}(\hat{k}^{-1})$.
		\item[$\bullet$] For $\hat{k}\in\hat{\Sigma}^{(a)}$, we have
		\begin{equation}
			\hat{N}^{(a)}_{+}(\hat{k})=\hat{N}^{(a)}_{-}(\hat{k})\hat{V}^{(a)}(\hat{k}),
		\end{equation}
	where
	\begin{align}\label{jump:V^{(a)}}
		\hat{V}^{(a)}(\hat{k})=\left\{\begin{array}{lll}
			e^{i\left(\frac{4}{3}\hat{k}^3+\tilde{s}\hat{k}\right)\hat{\sigma}_3}\left(\begin{array}{cc}
				1 & R_a\\
				0 & 1
			\end{array}\right), & \hat{k}\in\hat{\Sigma}^{(a)}_j, \ j=1, 2,\\
		[12pt]
			e^{i\left(\frac{4}{3}\hat{k}^3+\tilde{s}\hat{k}\right)\hat{\sigma}_3}\left(\begin{array}{cc}
				1 & 0\\
				-\overline{R_a} & 1
			\end{array}\right),  &\hat{k}\in\hat{\Sigma}^{(a)*}_j, \ j=1, 2,\\
			[12pt]
			e^{i\left(\frac{4}{3}\hat{k}^3+\tilde{s}\hat{k}\right)\hat{\sigma}_3}\begin{pmatrix}1 & 0\\ -\overline{R_a} & 1 \end{pmatrix}
			\begin{pmatrix} 1 & R_a\\ 0 & 1 \end{pmatrix}, & \hat{k}\in(\hat{k}_2,\hat{k}_1),
		\end{array}\right.
	\end{align}
		with
		\begin{multline}\label{def:originalR_a}
			R_a:=R(2+\sqrt{3})e^{2i\theta(2+\sqrt{3})}
			\\
			=r(2+\sqrt{3})\prod_{j=1}^{2\mathcal{N}}\left(\frac{2+\sqrt{3}-z_j}{2+\sqrt{3}-\bar{z}_j}\right)^2\exp\left\{\frac{1}{\pi i}\int_{-\infty}^{+\infty}\frac{\log\left(1-\vert r(\zeta) \vert^2\right)}{\zeta-(2+\sqrt{3})}\dif\zeta\right\}e^{2i\theta(2+\sqrt{3})}.
		\end{multline}
	\end{itemize}
\end{RHP}
By writing $R_a=|R_a|e^{i\phi_a}$, it's readily seen from \eqref{def:originalR_a} that
\begin{align}\label{def:Ra}
	|R_a|&=|r(2+\sqrt{3})|,\\
	\phi_a&=\arg r(2+\sqrt{3})+4\sum_{n=1}^{\mathcal{N}}\arg(2+\sqrt{3}-z_n)
\nonumber
\\
      &~~~ -\frac{1}{\pi}\int_{-\infty}^{+\infty}\frac{\log(1-|r(\zeta)|^2)}{\zeta-(2+\sqrt{3})}\dif\zeta  +2\theta(2+\sqrt{3}). \label{def:phia}
\end{align}
It is worthwhile to point out that $R_a$ is not real-valued and, due to the term $\theta(2+\sqrt{3})$, depends on $\hat{\xi}$ as well.

To proceed, following the proof of Proposition \ref{prop:Xi}, we have the following basic estimates as $t\rightarrow +\infty$.
\begin{Proposition}
	With $R_{a}$ defined in \eqref{jump:V^{(a)}}, we have, as $t\to +\infty$
	\begin{equation}
		\left\Vert R(k_j)e^{2i\theta\left(2+\sqrt{3}+\left(\frac{9}{8}(26-15\sqrt{3})t\right)^{-1/3}\hat{k}\right)}-R_{a}\right \Vert_{(L^1\cap L^2\cap L^{\infty})({\hat{\Sigma}}^{(a)})}\lesssim t^{-1/3+4\delta_2},\ j=1,2.
	\end{equation}
Moreover, one has,
	\begin{equation}\label{equ:expansion N^{(a)}hat{N}^{(a)}2nd}
		N^{(a)}(\hat{k})=I+\frac{N^{(a)}_1}{\hat k}+\mathcal{O}(\hat{k}^{-2}), \quad
	\hat{N}^{(a)}(\hat{k})=I+\frac{\hat N^{(a)}_1}{\hat{k}}+\mathcal{O}(\hat{k}^{-2}),\quad \hat{k} \to \infty,
	\end{equation}
	and
	\begin{equation}\label{equ:NahatNa ttoinfty}
		N^{(a)}_{1}(\hat{k})=\hat{N}^{(a)}_{1}(\hat{k})+\mathcal{O}(t^{-1/3+4\delta_2}), \qquad t\to +\infty.
	\end{equation}
\end{Proposition}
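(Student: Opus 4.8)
The plan is to mirror, step by step, the argument used for Proposition~\ref{prop:Xi} in the first transition zone, with the cubic phase $\tilde s\tilde k+\tfrac{4}{3}\tilde k^{3}$ there replaced by $\tfrac{4}{3}\hat k^{3}+\tilde s\hat k$ and the real constant $r(1)$ replaced by the ``unimodular times constant'' quantity $R_a$. The first and only substantial step is the quantitative bound on the jump discrepancy $R(k_j)e^{2i\theta(z(\hat k))}-R_a e^{2i(\frac{4}{3}\hat k^{3}+\tilde s\hat k)}$ in $(L^{1}\cap L^{2}\cap L^{\infty})(\hat\Sigma^{(a)})$. On the middle segment $(\hat k_2,\hat k_1)$ both exponentials are unimodular, the segment has length $\mathcal{O}(1)$ (since $|\hat k_j|\lesssim 1$), and \eqref{equ:appoftheta2+sqrt3} together with $|k_j-(2+\sqrt3)|\lesssim t^{-1/3}$ and $R'\in L^{\infty}$ (a consequence of the Sobolev regularity of $r$ and the product/Cauchy-integral structure of $R$ in \eqref{def:Rexpressintermsof}) gives a pointwise bound $\lesssim t^{-1/3}$ there. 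On the rays $\hat\Sigma^{(a)}_j$, $\hat\Sigma^{(a)*}_j$, on which $|\hat k|\lesssim t^{\delta_2}$, I would use the geometry of the contour near a cubic saddle — the angle $\varphi_0$ being chosen so that $\re[\pm 2i(\tfrac{4}{3}\hat k^{3}+\tilde s\hat k)]\leqslant -c|\hat k|^{3}$ — together with the observation that the quartic remainder obeys $t^{-1/3}|\hat k|^{4}\leqslant t^{\delta_2-1/3}|\hat k|^{3}=o(1)\,|\hat k|^{3}$ (valid since $\delta_2<1/6$), so that $|e^{2i\theta(z(\hat k))}|\lesssim e^{-\frac{c}{2}|\hat k|^{3}}$ and $\bigl|e^{2i\theta(z(\hat k))}-e^{2i(\frac{4}{3}\hat k^{3}+\tilde s\hat k)}\bigr|\lesssim t^{-1/3+4\delta_2}e^{-\frac{c}{2}|\hat k|^{3}}$; combined with $|R(k_j)-R(2+\sqrt3)|\lesssim t^{-1/3}$ and the fact that multiplying a Gaussian-type weight by polynomial factors keeps all three norms of order $t^{-1/3+4\delta_2}$, this yields the first assertion.

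Next I would carry out the triangular factorizations $V^{(a)}=(I-w^{(a)}_-)^{-1}(I+w^{(a)}_+)$ and $\hat V^{(a)}=(I-\hat w^{(a)}_-)^{-1}(I+\hat w^{(a)}_+)$ exactly as in \eqref{def:wr+}--\eqref{def:wr-tilde}, with the $e^{i\theta\hat\sigma_3}$-conjugated factor on $(\hat k_2,\hat k_1)$ split into its upper- and lower-triangular parts; the first step then gives $\|w^{(a)}_\pm-\hat w^{(a)}_\pm\|_{L^{1}\cap L^{2}\cap L^{\infty}(\hat\Sigma^{(a)})}\lesssim t^{-1/3+4\delta_2}$. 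The Beals--Coifman construction \cite{Beals1984ScatteringAI} represents $N^{(a)}$ and $\hat N^{(a)}$ as $I$ plus Cauchy integrals of $\mu^{(a)}(w^{(a)}_++w^{(a)}_-)$ and $\hat\mu^{(a)}(\hat w^{(a)}_++\hat w^{(a)}_-)$, where $\mu^{(a)},\hat\mu^{(a)}\in I+L^{2}$ solve the corresponding singular integral equations; expanding the Cauchy kernel for large $\hat k$ produces the expansions \eqref{equ:expansion N^{(a)}hat{N}^{(a)}2nd}, with $N^{(a)}_1=-\tfrac{1}{2\pi i}\int_{\hat\Sigma^{(a)}}\mu^{(a)}(w^{(a)}_++w^{(a)}_-)$ and likewise for $\hat N^{(a)}_1$. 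Here I would invoke that $\hat N^{(a)}$ exists and is bounded uniformly in $t$ — this follows from the explicit solution of RH problem~\ref{RHP:hat{N}^{(a)}} in terms of the Painlev\'e II parametrix (to be given in Step~3 below) after a constant diagonal conjugation absorbing $\arg R_a$, the relevant transcendent being bounded because $|R_a|=|r(2+\sqrt3)|<1$.

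Finally, for \eqref{equ:NahatNa ttoinfty} I would set $\Xi^{(a)}(\hat k):=N^{(a)}(\hat k)\hat N^{(a)}(\hat k)^{-1}$, which is holomorphic off $\hat\Sigma^{(a)}$ and satisfies $\Xi^{(a)}_+=\Xi^{(a)}_-J_{\Xi^{(a)}}$ with $J_{\Xi^{(a)}}=\hat N^{(a)}V^{(a)}(\hat V^{(a)})^{-1}(\hat N^{(a)})^{-1}$. Boundedness of $\hat N^{(a)},(\hat N^{(a)})^{-1}$ and the jump bound from the first step give $\|J_{\Xi^{(a)}}-I\|_{L^{1}\cap L^{2}\cap L^{\infty}}\lesssim t^{-1/3+4\delta_2}$, so small-norm theory \cite{Deift2002LongtimeAF} yields unique solvability of $N^{(a)}$ and $\Xi^{(a)}=I+\mathcal{O}(t^{-1/3+4\delta_2})$, and from the Cauchy-integral representation of $\Xi^{(a)}-I$ one reads off $N^{(a)}_1-\hat N^{(a)}_1=\mathcal{O}(t^{-1/3+4\delta_2})$. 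The main obstacle is the first step: one must track the quartic remainder $e^{\mathcal{O}(t^{-1/3}\hat k^{4})}-1$ on the \emph{unbounded} rays (whose length is of order $t^{\delta_2}$) and verify that the cubic decay of the model exponential genuinely dominates it for every admissible $\delta_2$, so that no loss beyond the advertised exponent $-1/3+4\delta_2$ occurs; the remainder is bookkeeping identical to the first transition zone. Note that $\theta(2+\sqrt3)$, although of size $\mathcal{O}(t)$, enters only through the unimodular factor $e^{2i\theta(2+\sqrt3)}$ and the $\hat k$-independent constant $R_a$, hence affects none of the estimates.
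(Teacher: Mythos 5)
Your proposal is correct and follows essentially the same route as the paper: the paper itself gives no separate proof but remarks only that the estimates follow ``following the proof of Proposition~\ref{prop:Xi},'' and you reproduce that parallel faithfully — the triangular factorization, the Beals--Coifman representation, the ratio $\Xi^{(a)}=N^{(a)}(\hat N^{(a)})^{-1}$ with small-norm theory, and explicit solvability of $\hat N^{(a)}$ via the Painlev\'e~II parametrix with the diagonal conjugation of \eqref{equ:mathcing local} — with the only genuine simplification relative to the first transition zone being that the zeroth-order extension in \eqref{equ: 2ndtran defofd_j} makes $R'(k_j)$-estimates unnecessary here. You also (correctly) read the displayed $L^1\cap L^2\cap L^\infty$ bound as comparing full jump entries, i.e.\ with $R_a$ carrying the model exponential $e^{2i(\frac{4}{3}\hat k^3+\tilde s\hat k)}$ from \eqref{jump:V^{(a)}}; as literally written, with the constant $R_a$ and no phase factor, the bound could not hold in $L^1$ or $L^2$ on the long rays of $\hat\Sigma^{(a)}$ whose length grows like $t^{\delta_2}$.
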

The above proposition particularly implies that $N^{(a)}(\hat{k})\hat{N}^{(a)}(\hat{k})^{-1} \to I$ as $t\to +\infty$, which guarantees the solvability of RH problem for $N^{(a)}$ for large positive $t$, provided $\hat{N}^{(a)}$ exists.

\paragraph{Step 3: Construction of $\hat{N}^{(a)}$}
The RH problem for $\hat{N}^{(a)}$ could be explicitly solved by using the Painlev\'{e} parametrix $M^{p}(z;s,\kappa)$ introduced in Appendix \ref{appendix: RHP for PII}. More precisely,
define
\begin{equation}\label{equ:mathcing local}
	\hat{N}^{(a)}(\hat{k})=e^{i\left(\frac{\phi_a}{2}-\frac{\pi}{4}\right)\sigma_3}\sigma_1M^{P}(\hat{k};\tilde{s}, |r(2+\sqrt{3})|)\hat{H}(\hat{k})\sigma_1e^{-i\left(\frac{\phi_a}{2}-\frac{\pi}{4}\right)\sigma_3},
\end{equation}
where $\phi_a$ is given in \eqref{def:phia},
\begin{align}
	\hat{H}(\hat{k})=\left\{\begin{array}{ll}
		e^{-i\left(\frac{4}{3}\hat{k}^3+\tilde{s}\hat{k}\right)\hat{\sigma}_3}\left(\begin{array}{cc}
				1 & 0\\
				i|r(2+\sqrt{3})| & 1
			\end{array}\right), & \hat{k}\in \hat{\Omega},\\[12pt]
		e^{-i\left(\frac{4}{3}\hat{k}^3+\tilde{s}\hat{k}\right)\hat{\sigma}_3}\left(\begin{array}{cc}
				1 & -i|r(2+\sqrt{3})| \\
				0 & 1
			\end{array}\right),  & \hat{k}\in \hat{\Omega}^*,\\[10pt]
		I, & \text{ elsewhere,}\\
		\end{array}\right.
   \end{align}
and where the region $\hat{\Omega}$ is illustrated in the left picture of Figure \ref{fig:jump of hat{N}_{a}2nd}.
We use the function $\hat{H}$ to transform the jump contours $\hat{\Sigma}^{(a)}$ defined in \eqref{def:hat{Sigma}^{(a)}} to those of the Painlev\'{e} II parametrix
for $\hat{k}\leqslant c_0\left[\frac{9}{8}\left(26-15\sqrt{3}\right)t\right]^{\frac{1}{3}}$; see Figure \ref{fig:jump of hat{N}_{a}2nd} for an illustration.

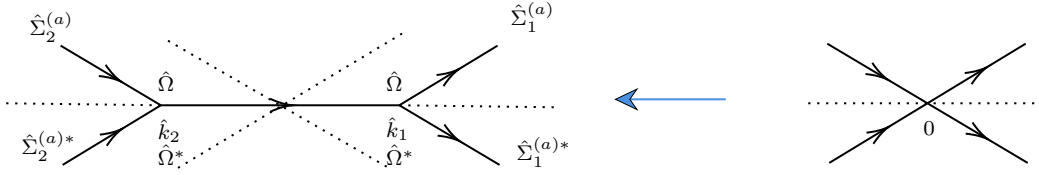
\begin{figure}[htbp]
	\begin{center}
		\tikzset{every picture/.style={line width=0.75pt}} 
		\begin{tikzpicture}[x=0.75pt,y=0.75pt,yscale=-1,xscale=1]
			\draw    (109,144) -- (228,144) ;
			\draw [shift={(174.5,144)}, rotate = 180] [color={rgb, 255:red, 0; green, 0; blue, 0 }  ][line width=0.75]    (10.93,-3.29) .. controls (6.95,-1.4) and (3.31,-0.3) .. (0,0) .. controls (3.31,0.3) and (6.95,1.4) .. (10.93,3.29)   ;
			\draw    (228,144) -- (277,114) ;
			\draw [shift={(257.62,125.87)}, rotate = 148.52] [color={rgb, 255:red, 0; green, 0; blue, 0 }  ][line width=0.75]    (10.93,-3.29) .. controls (6.95,-1.4) and (3.31,-0.3) .. (0,0) .. controls (3.31,0.3) and (6.95,1.4) .. (10.93,3.29)   ;
			\draw    (228,144) -- (278,174) ;
			\draw [shift={(258.14,162.09)}, rotate = 210.96] [color={rgb, 255:red, 0; green, 0; blue, 0 }  ][line width=0.75]    (10.93,-3.29) .. controls (6.95,-1.4) and (3.31,-0.3) .. (0,0) .. controls (3.31,0.3) and (6.95,1.4) .. (10.93,3.29)   ;
			\draw    (59,114) -- (109,144) ;
			\draw [shift={(89.14,132.09)}, rotate = 210.96] [color={rgb, 255:red, 0; green, 0; blue, 0 }  ][line width=0.75]    (10.93,-3.29) .. controls (6.95,-1.4) and (3.31,-0.3) .. (0,0) .. controls (3.31,0.3) and (6.95,1.4) .. (10.93,3.29)   ;
			\draw    (60,174) -- (109,144) ;
			\draw [shift={(89.62,155.87)}, rotate = 148.52] [color={rgb, 255:red, 0; green, 0; blue, 0 }  ][line width=0.75]    (10.93,-3.29) .. controls (6.95,-1.4) and (3.31,-0.3) .. (0,0) .. controls (3.31,0.3) and (6.95,1.4) .. (10.93,3.29)   ;
			\draw  [dash pattern={on 0.84pt off 2.51pt}]  (432,143) -- (551,143) ;
			\draw    (491.5,143) -- (540.5,113) ;
			\draw [shift={(521.12,124.87)}, rotate = 148.52] [color={rgb, 255:red, 0; green, 0; blue, 0 }  ][line width=0.75]    (10.93,-3.29) .. controls (6.95,-1.4) and (3.31,-0.3) .. (0,0) .. controls (3.31,0.3) and (6.95,1.4) .. (10.93,3.29)   ;
			\draw    (491.5,143) -- (541.5,173) ;
			\draw [shift={(521.64,161.09)}, rotate = 210.96] [color={rgb, 255:red, 0; green, 0; blue, 0 }  ][line width=0.75]    (10.93,-3.29) .. controls (6.95,-1.4) and (3.31,-0.3) .. (0,0) .. controls (3.31,0.3) and (6.95,1.4) .. (10.93,3.29)   ;
			\draw    (441.5,113) -- (491.5,143) ;
			\draw [shift={(471.64,131.09)}, rotate = 210.96] [color={rgb, 255:red, 0; green, 0; blue, 0 }  ][line width=0.75]    (10.93,-3.29) .. controls (6.95,-1.4) and (3.31,-0.3) .. (0,0) .. controls (3.31,0.3) and (6.95,1.4) .. (10.93,3.29)   ;
			\draw    (442.5,173) -- (491.5,143) ;
			\draw [shift={(472.12,154.87)}, rotate = 148.52] [color={rgb, 255:red, 0; green, 0; blue, 0 }  ][line width=0.75]    (10.93,-3.29) .. controls (6.95,-1.4) and (3.31,-0.3) .. (0,0) .. controls (3.31,0.3) and (6.95,1.4) .. (10.93,3.29)   ;
			\draw  [dash pattern={on 0.84pt off 2.51pt}]  (31.86,143) -- (109,144) ;
			\draw  [dash pattern={on 0.84pt off 2.51pt}]  (228,144) -- (305.14,145) ;
			\draw  [dash pattern={on 0.84pt off 2.51pt}]  (112.57,111.5) -- (224.43,176.5) ;
			\draw  [dash pattern={on 0.84pt off 2.51pt}]  (117.57,175.5) -- (230.86,107) ;
			\draw [color={rgb, 255:red, 74; green, 144; blue, 226 }  ,draw opacity=1 ]   (342,141) -- (390.86,141) ;
			\draw [shift={(335.86,141)}, rotate = 0] [fill={rgb, 255:red, 74; green, 144; blue, 226 }  ,fill opacity=1 ][line width=0.08]  [draw opacity=0] (10.72,-5.15) -- (0,0) -- (10.72,5.15) -- (7.12,0) -- cycle    ;
             \draw (220,125.4) node [anchor=north west][inner sep=0.75pt]  [font=\scriptsize]  {$ \hat{\Omega}$};
            \draw (220,162.4) node [anchor=north west][inner sep=0.75pt]  [font=\scriptsize]  {$ \hat{\Omega}^*$};
			\draw (220,148.4) node [anchor=north west][inner sep=0.75pt]  [font=\scriptsize]  {$\hat{k}_{1}$};
   		\draw (106,125.4) node [anchor=north west][inner sep=0.75pt]  [font=\scriptsize]      {$\hat{\Omega}$};
        	\draw (106,162.4) node [anchor=north west][inner sep=0.75pt]  [font=\scriptsize]  {$\hat{\Omega}^*$};
			\draw (106,149.4) node [anchor=north west][inner sep=0.75pt]  [font=\scriptsize]  {$\hat{k}_{2}$};
			\draw (282,90.4) node [anchor=north west][inner sep=0.75pt]  [font=\scriptsize]  {$\hat{\Sigma}_{1}^{(a)}$};
			\draw (285,158.4) node [anchor=north west][inner sep=0.75pt]  [font=\scriptsize]  {$\hat{\Sigma}_{1}^{(a)*}$};
			\draw (42,95.4) node [anchor=north west][inner sep=0.75pt]  [font=\scriptsize]  {$\hat{\Sigma}_{2}^{(a)}$};
			\draw (38,155.4) node [anchor=north west][inner sep=0.75pt]  [font=\scriptsize]  {$\hat{\Sigma }_{2}^{(a)*}$};
			\draw (488,150.4) node [anchor=north west][inner sep=0.75pt]  [font=\scriptsize]  {$0$};
		\end{tikzpicture}
		\caption{ The jump contours of the RH problems for $\hat{N}^{(a)}$ (left) and $M^{P}$ (right).} \label{fig:jump of hat{N}_{a}2nd}
	\end{center}
\end{figure}
In view of RH problem \ref{appendix: reduced PII RH}, it's readily seen that $\hat{N}^{(a)}$ in \eqref{equ:mathcing local} indeed solves RH problem \ref{RHP:hat{N}^{(a)}}.
Moreover, as $t\rightarrow+\infty$, it is observed from \eqref{MPfirstexpansion}, \eqref{vMPasy} and \eqref{equ:expansion N^{(a)}hat{N}^{(a)}2nd} that
\begin{equation}\label{equ:hat{N}^{(a)}_1}
	\hat{N}^{(a)}_1(\tilde{s})=\frac{i}{2}\begin{pmatrix}
		\int_{\tilde{s}}^{+\infty}v_{II}^2(\zeta)\dif\zeta   & -v_{II}(\tilde{s})e^{i\phi_a} \\
		v_{II}(\tilde{s})e^{-i\phi_a} & -\int_{\tilde{s}}^{+\infty}v_{II}^2(\zeta)\dif\zeta
	\end{pmatrix}+\mathcal{O}(e^{-ct^{3\delta_2}}),
\end{equation}
for some $c>0$, where $v_{II}(\tilde{s})$ is the unique solution of Painlev\'{e} equation \eqref{equ:standard PII equ}, fixed by the boundary condition
\begin{equation*}
	v_{II}(\tilde{s})\sim-\vert r(2+\sqrt{3})\vert \textnormal{Ai}(\tilde{s}), \qquad \tilde{s}\to +\infty,
\end{equation*}
with $|r(2+\sqrt{3})|<1$.

Finally, the RH problems for $N^{(b)}$, $N^{(c)}$ and $N^{(d)}$ can be solved in similar manners. Indeed, for $z\in U^{(b)}$ and $t$ large enough, we have
\begin{equation}
	\theta(z)=\theta\left(2-\sqrt{3}\right)+\frac{4}{3}\check{k}^3+\tilde{s}\check{k}+\mathcal{O}\left(t^{-1/3}\check{k}^4\right),
\end{equation}
where $\tilde{s}$ is defined by \eqref{equ:tilde{s}2nd} and
\begin{equation}
	\check{k}=\left[\frac{9}{8}\left(26+15\sqrt{3}\right)t\right]^{1/3}\left(z-\left(2-\sqrt{3}\right)\right).
\end{equation}
is the scaled spectral parameter in this case. Following the three steps we have just exhibited, we could approximate $N^{(b)}$ by the RH problem for $\check{N}^{(b)}$
on the $\check{k}$-plane, such that, as $\check{k}\rightarrow\infty$,
\begin{equation}
N^{(b)}(\check{k})=I+\frac{N^{(b)}_1}{\check k}+\mathcal{O}(\check{k}^{-2}), \qquad
\check{N}^{(b)}(\check{k})=I+\frac{\check{N}^{(b)}_1}{\check{k}}+\mathcal{O}(\check{k}^{-2}),
\end{equation}
where as $t\rightarrow+\infty$,
\begin{subequations}
\begin{align}
&{N}^{(b)}_{1}=\check{N}^{(b)}_{1}+\mathcal{O}(t^{-1/3+4\delta_2}), \label{equ:NbhatNb ttoinfty}\\
&\check{N}^{(b)}_{1}(\tilde{s})=\frac{i}{2}\begin{pmatrix}
	\int_{\tilde{s}}^{+\infty}v_{II}^2(\zeta)\dif\zeta   & -v_{II}(\tilde{s})e^{i\phi_b} \\
	v_{II}(\tilde{s})e^{-i\phi_b} & -\int_{\tilde{s}}^{+\infty}v_{II}^2(\zeta)\dif\zeta  \end{pmatrix}+\mathcal{O}(e^{-ct^{3\delta_2}}),
\end{align}
\end{subequations}
with the same $v_{II}(s)$ in \eqref{equ:hat{N}^{(a)}_1} and the argument
\begin{multline}\label{equ:argument phi_b}
	\phi_b(\tilde{s}, t)=\arg r(2-\sqrt{3})+4\sum_{n=1}^{\mathcal{N}}\arg(2-\sqrt{3}-z_n)
\\
-\frac{1}{\pi}\int_{-\infty}^{+\infty}\frac{\log(1-|r(\zeta)|^2)}{\zeta-(2-\sqrt{3})}\dif\zeta  +2\theta(2-\sqrt{3}).
\end{multline}
Here, we also  use the fact that $r(z)=\overline{r(z^{-1})}$, which particularly implies that $|r(2+\sqrt{3})|=|r(2-\sqrt{3})|$.

For $z\in U^{(c)}$ and $t$ large enough, we have
\begin{equation}
	\theta(z)=\theta\left(-2+\sqrt{3}\right)+\frac{4}{3}\tilde{k}^3+\tilde{s}\tilde{k}+\mathcal{O}\left(t^{-1/3}\tilde{k}^4\right),
\end{equation}
where $\tilde{s}$ is defined by \eqref{equ:tilde{s}2nd} and
\begin{equation}
	\tilde{k}=\left[\frac{9}{8}\left(26+15\sqrt{3}\right)t\right]^{1/3}\left(z-\left(-2+\sqrt{3}\right)\right)
\end{equation}
is the scaled spectral parameter in this case. We could approximate $N^{(c)}$ by the RH problem for $\tilde{N}^{(c)}$
on the $\tilde{k}$-plane, such that, as $\tilde{k}\rightarrow\infty$,
\begin{equation}
N^{(c)}(\tilde{k})=I+\frac{N^{(c)}_1}{\tilde k}+\mathcal{O}(\tilde{k}^{-2}), \qquad
\tilde{N}^{(c)}(\tilde{k})=I+\frac{\tilde{N}^{(c)}_1}{\tilde{k}}+\mathcal{O}(\tilde{k}^{-2}),
\end{equation}
where as $t\rightarrow+\infty$,
\begin{subequations}
\begin{align}
&{N}^{(c)}_{1}=\tilde{N}^{(c)}_{1}+\mathcal{O}(t^{-1/3+4\delta_2}), \label{equ:NchatNc ttoinfty}\\
&\tilde{N}^{(c)}_{1}(\tilde{s})=\frac{i}{2}\begin{pmatrix}
\int_{\tilde{s}}^{+\infty}v_{II}^2(\zeta)\dif\zeta   & v_{II}(\tilde{s})e^{-i\phi_b} \\
-v_{II}(\tilde{s})e^{i\phi_b} & -\int_{\tilde{s}}^{+\infty}v_{II}^2(\zeta)\dif\zeta  \end{pmatrix}+\mathcal{O}(e^{-ct^{3\delta_2}}),
\end{align}
\end{subequations}
with the same $v_{II}(s)$ in \eqref{equ:hat{N}^{(a)}_1}. Here, we also use the symmetry relation $\tilde{N}^{(c)}(-\cdot;\tilde{s})=-\sigma_2\check{N}^{(b)}(\cdot;\tilde{s})\sigma_2$.


For $z\in U^{(d)}$, and $t$ large enough, we have
\begin{equation}
	\theta(z)=\theta\left(-2-\sqrt{3}\right)+\frac{4}{3}\breve{k}^3+\tilde{s}\breve{k}+\mathcal{O}\left(t^{-1/3}\breve{k}^4\right),
\end{equation}
where $\tilde{s}$ is defined by \eqref{equ:tilde{s}2nd} and
\begin{equation}
	\breve{k}=\left[\frac{9}{8}\left(26-15\sqrt{3}\right)t\right]^{1/3}\left(z-\left(-2-\sqrt{3}\right)\right)
\end{equation}
is the scaled spectral parameter in this case. We then approximate $N^{(d)}$ by the RH problem for $\breve{N}^{(d)}$
on the $\breve{k}$-plane, such that, as $\breve{k}\rightarrow\infty$,
\begin{equation}
N^{(d)}(\breve{k})=I+\frac{N^{(d)}_1}{\breve k}+\mathcal{O}(\breve{k}^{-2}), \qquad
\breve{N}^{(c)}(\breve{k})=I+\frac{\breve{N}^{(d)}_1}{\breve{k}}+\mathcal{O}(\breve{k}^{-2}),
\end{equation}
where as $t\rightarrow+\infty$,
\begin{subequations}
\begin{align}
&{N}^{(d)}_{1}=\breve{N}^{(d)}_{1}+\mathcal{O}(t^{-1/3+4\delta_2}), \label{equ:NdhatNd ttoinfty}\\
&\breve{N}^{(d)}_{1}(\tilde{s})=\frac{i}{2}\begin{pmatrix}
			\int_{\tilde{s}}^{+\infty}v_{II}^2(\zeta)\dif\zeta   & v_{II}(\tilde{s})e^{-i\phi_a} \\
			-v_{II}(\tilde{s})e^{i\phi_a} & -\int_{\tilde{s}}^{+\infty}v_{II}^2(\zeta)\dif\zeta  \end{pmatrix}+\mathcal{O}(e^{-ct^{3\delta_2}}),
\end{align}
\end{subequations}
with the same $v_{II}(s)$ in \eqref{equ:hat{N}^{(a)}_1}. Here we use the symmetry $\breve{N}^{(d)}(-\cdot;\tilde{s})=-\sigma_2\hat{N}^{(a)}(\cdot;\tilde{s})\sigma_2$.


\begin{remark}
It is worth noting that the local RH problems for $\hat{N}^{(a)}$ and $\check{N}^{(b)}$ (or $\tilde{N}^{(c)}$ and $\breve{N}^{(d)}$) do not admit any symmetry relation because the definition \eqref{def:Rexpressintermsof} breaks up the
relation between $z$ and $1/z$.
\end{remark}



\subsubsection*{The small norm RH problem}
Let $N(z)$ and $N^{(\ell)}(z)$, $\ell\in\{a,b,c,d\}$ be the solutions of RH problems \ref{RHP: N(z) tran2} and \ref{RHP:Nrell2nd}, we define
 \begin{equation}\label{def:E(z)2ndtranregion}
 	E(z)=\left\{\begin{array}{ll}
 		N(z), & z\notin \cup_{\ell\in\{a,b,c,d\}}U^{(\ell)}(z),\\
 		N(z){N^{(a)}(z)}^{-1},  &z\in U^{(a)},\\
 		N(z){N^{(b)}(z)}^{-1},  &z\in U^{(b)},\\
		N(z){N^{(c)}(z)}^{-1},  &z\in U^{(c)},\\
		N(z){N^{(d)}(z)}^{-1},  &z\in U^{(d)}.
 	\end{array}\right.
 \end{equation}
It is then readily seen that the RH conditions for $E$ are listed as follows
\begin{RHP}\label{RHP:E(z)2ndtranregion}
	\hfill
	\begin{itemize}
	\item[$\bullet$]   $E(z)$ is holomorphic for $z\in\mathbb{C}\setminus\Sigma^{(E)}$, where
	\begin{equation*}
		\Sigma^{(E)}:= \cup_{\ell\in\{a,b,c,d\}}\partial U^{(\ell)}\cup(\Sigma^{(4)}\setminus \cup_{\ell\in\{a,b,c,d\}}U^{(\ell)});
	\end{equation*}
	see Figure \ref{fig:E2ndtran} for an illustration.
	\item[$\bullet$]   For $z\in\Sigma^{(E)}$, we have
	\begin{align}
	E_+(z)=E_-(z)V^{(E)}(z),
	\end{align}
	where
	\begin{equation}
		V^{(E)}(z)=\left\{\begin{array}{llll}
			V^{(4)}(z), & z\in \Sigma^{(4)}\setminus (\cup_{\ell\in\{a,b,c,d\}}U^{(\ell)}),\\[4pt]
			{N^{(a)}(z)}^{-1},  & z\in \partial U^{(a)}, \\
			{N^{(b)}(z)}^{-1},  & z\in \partial U^{(b)}, \\
			{N^{(c)}(z)}^{-1},  & z\in \partial U^{(c)}, \\
			{N^{(d)}(z)}^{-1},  & z\in \partial U^{(d)},
		\end{array}\right.
	\end{equation}
	and where $V^{(4)}(z)$ is defined in \eqref{jumpofM4}.
	\item[$\bullet$] As $z\rightarrow\infty$ in $\mathbb{C} \setminus \Sigma^{(E)}$, we have $E(z) = I+\mathcal{O}(z^{-1})$.
\end{itemize}
\end{RHP}

\begin{figure}[htbp]
	\centering
	\tikzset{every picture/.style={line width=0.75pt}} 
	\begin{tikzpicture}[x=0.75pt,y=0.75pt,yscale=-1,xscale=1]
	\draw    (20,80) -- (79,118) ;
	\draw [shift={(54.54,102.25)}, rotate = 212.78] [color={rgb, 255:red, 0; green, 0; blue, 0 }  ][line width=0.75]    (10.93,-3.29) .. controls (6.95,-1.4) and (3.31,-0.3) .. (0,0) .. controls (3.31,0.3) and (6.95,1.4) .. (10.93,3.29)   ;
	\draw    (20,185) -- (78,147) ;
	\draw [shift={(54.02,162.71)}, rotate = 146.77] [color={rgb, 255:red, 0; green, 0; blue, 0 }  ][line width=0.75]    (10.93,-3.29) .. controls (6.95,-1.4) and (3.31,-0.3) .. (0,0) .. controls (3.31,0.3) and (6.95,1.4) .. (10.93,3.29)   ;
	\draw  [color={rgb, 255:red, 208; green, 2; blue, 27 }  ,draw opacity=1 ] (74,134) .. controls (74,120.19) and (85.19,109) .. (99,109) .. controls (112.81,109) and (124,120.19) .. (124,134) .. controls (124,147.81) and (112.81,159) .. (99,159) .. controls (85.19,159) and (74,147.81) .. (74,134) -- cycle ;
	\draw    (175,78) -- (234,116) ;
	\draw [shift={(209.54,100.25)}, rotate = 212.78] [color={rgb, 255:red, 0; green, 0; blue, 0 }  ][line width=0.75]    (10.93,-3.29) .. controls (6.95,-1.4) and (3.31,-0.3) .. (0,0) .. controls (3.31,0.3) and (6.95,1.4) .. (10.93,3.29)   ;
	\draw    (175,183) -- (233,145) ;
	\draw [shift={(209.02,160.71)}, rotate = 146.77] [color={rgb, 255:red, 0; green, 0; blue, 0 }  ][line width=0.75]    (10.93,-3.29) .. controls (6.95,-1.4) and (3.31,-0.3) .. (0,0) .. controls (3.31,0.3) and (6.95,1.4) .. (10.93,3.29)   ;
	\draw  [color={rgb, 255:red, 208; green, 2; blue, 27 }  ,draw opacity=1 ] (229,132) .. controls (229,118.19) and (240.19,107) .. (254,107) .. controls (267.81,107) and (279,118.19) .. (279,132) .. controls (279,145.81) and (267.81,157) .. (254,157) .. controls (240.19,157) and (229,145.81) .. (229,132) -- cycle ;
	\draw    (117,116) -- (175,78) ;
	\draw [shift={(151.02,93.71)}, rotate = 146.77] [color={rgb, 255:red, 0; green, 0; blue, 0 }  ][line width=0.75]    (10.93,-3.29) .. controls (6.95,-1.4) and (3.31,-0.3) .. (0,0) .. controls (3.31,0.3) and (6.95,1.4) .. (10.93,3.29)   ;
	\draw    (120,148) -- (175,183) ;
	\draw [shift={(152.56,168.72)}, rotate = 212.47] [color={rgb, 255:red, 0; green, 0; blue, 0 }  ][line width=0.75]    (10.93,-3.29) .. controls (6.95,-1.4) and (3.31,-0.3) .. (0,0) .. controls (3.31,0.3) and (6.95,1.4) .. (10.93,3.29)   ;
	\draw    (272,113) -- (330,75) ;
	\draw [shift={(306.02,90.71)}, rotate = 146.77] [color={rgb, 255:red, 0; green, 0; blue, 0 }  ][line width=0.75]    (10.93,-3.29) .. controls (6.95,-1.4) and (3.31,-0.3) .. (0,0) .. controls (3.31,0.3) and (6.95,1.4) .. (10.93,3.29)   ;
	\draw    (273,150) -- (331,181) ;
	\draw [shift={(307.29,168.33)}, rotate = 208.12] [color={rgb, 255:red, 0; green, 0; blue, 0 }  ][line width=0.75]    (10.93,-3.29) .. controls (6.95,-1.4) and (3.31,-0.3) .. (0,0) .. controls (3.31,0.3) and (6.95,1.4) .. (10.93,3.29)   ;
	\draw    (330,75) -- (390,114) ;
	\draw [shift={(365.03,97.77)}, rotate = 213.02] [color={rgb, 255:red, 0; green, 0; blue, 0 }  ][line width=0.75]    (10.93,-3.29) .. controls (6.95,-1.4) and (3.31,-0.3) .. (0,0) .. controls (3.31,0.3) and (6.95,1.4) .. (10.93,3.29)   ;
	\draw    (331,181) -- (389,143) ;
	\draw [shift={(365.02,158.71)}, rotate = 146.77] [color={rgb, 255:red, 0; green, 0; blue, 0 }  ][line width=0.75]    (10.93,-3.29) .. controls (6.95,-1.4) and (3.31,-0.3) .. (0,0) .. controls (3.31,0.3) and (6.95,1.4) .. (10.93,3.29)   ;
	\draw  [color={rgb, 255:red, 208; green, 2; blue, 27 }  ,draw opacity=1 ] (385,130) .. controls (385,116.19) and (396.19,105) .. (410,105) .. controls (423.81,105) and (435,116.19) .. (435,130) .. controls (435,143.81) and (423.81,155) .. (410,155) .. controls (396.19,155) and (385,143.81) .. (385,130) -- cycle ;
	\draw    (486,74) -- (545,112) ;
	\draw [shift={(520.54,96.25)}, rotate = 212.78] [color={rgb, 255:red, 0; green, 0; blue, 0 }  ][line width=0.75]    (10.93,-3.29) .. controls (6.95,-1.4) and (3.31,-0.3) .. (0,0) .. controls (3.31,0.3) and (6.95,1.4) .. (10.93,3.29)   ;
	\draw    (486,179) -- (544,141) ;
	\draw [shift={(520.02,156.71)}, rotate = 146.77] [color={rgb, 255:red, 0; green, 0; blue, 0 }  ][line width=0.75]    (10.93,-3.29) .. controls (6.95,-1.4) and (3.31,-0.3) .. (0,0) .. controls (3.31,0.3) and (6.95,1.4) .. (10.93,3.29)   ;
	\draw  [color={rgb, 255:red, 208; green, 2; blue, 27 }  ,draw opacity=1 ] (540,128) .. controls (540,114.19) and (551.19,103) .. (565,103) .. controls (578.81,103) and (590,114.19) .. (590,128) .. controls (590,141.81) and (578.81,153) .. (565,153) .. controls (551.19,153) and (540,141.81) .. (540,128) -- cycle ;
	\draw    (428,112) -- (486,74) ;
	\draw [shift={(462.02,89.71)}, rotate = 146.77] [color={rgb, 255:red, 0; green, 0; blue, 0 }  ][line width=0.75]    (10.93,-3.29) .. controls (6.95,-1.4) and (3.31,-0.3) .. (0,0) .. controls (3.31,0.3) and (6.95,1.4) .. (10.93,3.29)   ;
	\draw    (431,144) -- (486,179) ;
	\draw [shift={(463.56,164.72)}, rotate = 212.47] [color={rgb, 255:red, 0; green, 0; blue, 0 }  ][line width=0.75]    (10.93,-3.29) .. controls (6.95,-1.4) and (3.31,-0.3) .. (0,0) .. controls (3.31,0.3) and (6.95,1.4) .. (10.93,3.29)   ;
	\draw    (583,109) -- (641,71) ;
	\draw [shift={(617.02,86.71)}, rotate = 146.77] [color={rgb, 255:red, 0; green, 0; blue, 0 }  ][line width=0.75]    (10.93,-3.29) .. controls (6.95,-1.4) and (3.31,-0.3) .. (0,0) .. controls (3.31,0.3) and (6.95,1.4) .. (10.93,3.29)   ;
	\draw    (584,146) -- (643.07,185.9) ;
	\draw [shift={(618.51,169.31)}, rotate = 214.04] [color={rgb, 255:red, 0; green, 0; blue, 0 }  ][line width=0.75]    (10.93,-3.29) .. controls (6.95,-1.4) and (3.31,-0.3) .. (0,0) .. controls (3.31,0.3) and (6.95,1.4) .. (10.93,3.29)   ;
	\draw  [dash pattern={on 0.84pt off 2.51pt}]  (9.5,133.5) -- (630.5,131.51) ;
	\draw [shift={(632.5,131.5)}, rotate = 179.82] [color={rgb, 255:red, 0; green, 0; blue, 0 }  ][line width=0.75]    (10.93,-3.29) .. controls (6.95,-1.4) and (3.31,-0.3) .. (0,0) .. controls (3.31,0.3) and (6.95,1.4) .. (10.93,3.29)   ;
	\draw    (175,80) -- (175,181) ;
	\draw [shift={(175,183)}, rotate = 270] [color={rgb, 255:red, 0; green, 0; blue, 0 }  ][line width=0.75]    (10.93,-3.29) .. controls (6.95,-1.4) and (3.31,-0.3) .. (0,0) .. controls (3.31,0.3) and (6.95,1.4) .. (10.93,3.29)   ;
	\draw [shift={(175,78)}, rotate = 90] [color={rgb, 255:red, 0; green, 0; blue, 0 }  ][line width=0.75]    (10.93,-3.29) .. controls (6.95,-1.4) and (3.31,-0.3) .. (0,0) .. controls (3.31,0.3) and (6.95,1.4) .. (10.93,3.29)   ;
	\draw    (330,77) -- (330,178) ;
	\draw [shift={(330,180)}, rotate = 270] [color={rgb, 255:red, 0; green, 0; blue, 0 }  ][line width=0.75]    (10.93,-3.29) .. controls (6.95,-1.4) and (3.31,-0.3) .. (0,0) .. controls (3.31,0.3) and (6.95,1.4) .. (10.93,3.29)   ;
	\draw [shift={(330,75)}, rotate = 90] [color={rgb, 255:red, 0; green, 0; blue, 0 }  ][line width=0.75]    (10.93,-3.29) .. controls (6.95,-1.4) and (3.31,-0.3) .. (0,0) .. controls (3.31,0.3) and (6.95,1.4) .. (10.93,3.29)   ;
	\draw    (486,76) -- (486,177) ;
	\draw [shift={(486,179)}, rotate = 270] [color={rgb, 255:red, 0; green, 0; blue, 0 }  ][line width=0.75]    (10.93,-3.29) .. controls (6.95,-1.4) and (3.31,-0.3) .. (0,0) .. controls (3.31,0.3) and (6.95,1.4) .. (10.93,3.29)   ;
	\draw [shift={(486,74)}, rotate = 90] [color={rgb, 255:red, 0; green, 0; blue, 0 }  ][line width=0.75]    (10.93,-3.29) .. controls (6.95,-1.4) and (3.31,-0.3) .. (0,0) .. controls (3.31,0.3) and (6.95,1.4) .. (10.93,3.29)   ;
	\draw  [color={rgb, 255:red, 208; green, 2; blue, 27 }  ,draw opacity=1 ] (573,108) -- (560.99,103.27) -- (573.2,99.09) ;
	\draw  [color={rgb, 255:red, 208; green, 2; blue, 27 }  ,draw opacity=1 ] (416,110) -- (403.99,105.27) -- (416.2,101.09) ;
	\draw  [color={rgb, 255:red, 208; green, 2; blue, 27 }  ,draw opacity=1 ] (258,112) -- (245.99,107.27) -- (258.2,103.09) ;
	\draw  [color={rgb, 255:red, 208; green, 2; blue, 27 }  ,draw opacity=1 ] (105,114) -- (92.99,109.27) -- (105.2,105.09) ;
	\draw (319,131.4) node [anchor=north west][inner sep=0.75pt]  [font=\scriptsize]  {$0$};
	\draw (572,129.4) node [anchor=north west][inner sep=0.75pt]  [font=\scriptsize]  {$k_{1}$};
	\draw (548,128.4) node [anchor=north west][inner sep=0.75pt]  [font=\scriptsize]  {$k_{2}$};
	\draw (417,129.4) node [anchor=north west][inner sep=0.75pt]  [font=\scriptsize]  {$k_{3}$};
	\draw (393,129.4) node [anchor=north west][inner sep=0.75pt]  [font=\scriptsize]  {$k_{4}$};
	\draw (261,132.4) node [anchor=north west][inner sep=0.75pt]  [font=\scriptsize]  {$k_{5}$};
	\draw (240,132.4) node [anchor=north west][inner sep=0.75pt]  [font=\scriptsize]  {$k_{6}$};
	\draw (106,135.4) node [anchor=north west][inner sep=0.75pt]  [font=\scriptsize]  {$k_{7}$};
	\draw (82,135.4) node [anchor=north west][inner sep=0.75pt]  [font=\scriptsize]  {$k_{8}$};
	\draw (620,135) node [anchor=north west][inner sep=0.75pt]  [font=\scriptsize] [align=left] {$\re z$};
	\end{tikzpicture}
	\caption{ The jump contour $\Sigma^{(E)}$ of the RH problem for $E$, where the four red circles, from the right to the left, are
	$\partial U^{(a)}$, $\partial U^{(b)}$, $\partial U^{(c)}$ and $\partial U^{(d)}$, respectively.}
	\label{fig:E2ndtran}
\end{figure}
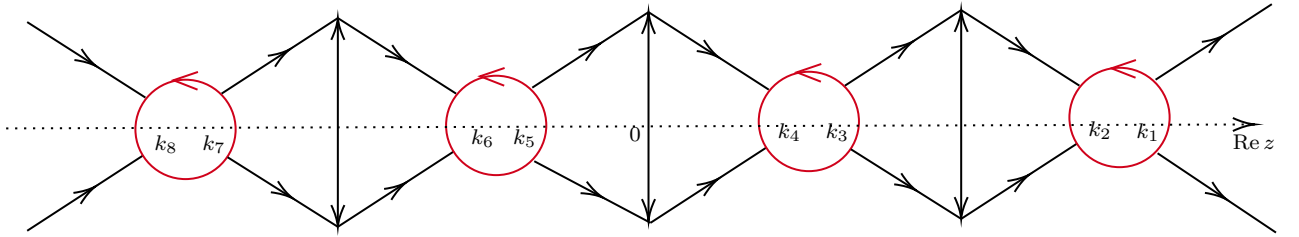

It is readily seen that
\begin{equation}\label{E(z):BCrep; 2ndtran}
	\parallel V^{(E)}(z)-I\parallel_{p}=\left\{\begin{array}{llll}
		\mathcal{O}(\exp\left\{-ct^{3\delta_2}\right\}),  & z\in  \Sigma^{(E)}\setminus  (U^{(a)}\cup U^{(b)}\cup U^{(c)}\cup U^{(d)}),\\[6pt]
		\mathcal{O}(t^{-\kappa_p}),   & z\in \partial U^{(a)}\cup\partial U^{(b)} \cup \partial U^{(c)}\cup \partial \partial U^{(d)},
	\end{array}\right.
\end{equation}
for some positive $c$ with $\kappa_\infty=\delta_2$ and $\kappa_2=1/6+\delta_2/2$.
It then follows from the small norm RH problem theory \cite{Deift2002LongtimeAF} that there exists a unique solution to RH problem \ref{RHP:E(z)2ndtranregion} for large positive $t$. Moreover, according to \cite{Beals1984ScatteringAI} again, we have
\begin{equation}\label{expression:E(z);2ndtran}
	E(z)=I+\frac{1}{2\pi i}\int_{\Sigma^{(E)}}\dfrac{\varpi(\zeta  ) (V^{(E)}(\zeta  )-I)}{\zeta  -z}\dif\zeta  ,
\end{equation}
where $\varpi\in I+ L^2(\Sigma^{(E)})$ is the unique solution of the Fredholm-type equation \eqref{equ: varpi}.

Analogous to the estimate \eqref{eq:estCE}, we have in this case that
\begin{align}
	\parallel \mathcal{C}_E\parallel\leqslant\parallel \mathcal{C}_-\parallel \parallel V^{(E)}(z)-I\parallel_\infty \lesssim t^{-\delta_2},
\end{align}
which implies that  $1-\mathcal{C}_E$ is invertible for sufficiently large $t$ in this case.
So  $\varpi$  exists  uniquely with
\begin{align*}
	\varpi=I+(1-\mathcal{C}_E)^{-1}(\mathcal{C}_EI).
\end{align*}
In addition, we have the estimates
\begin{align}
	\|\mathcal{C}_E^{j}I\|_2\lesssim t^{(-1/6+j\delta_2-\delta_2/2)},\quad \| \varpi-I-\sum_{j=1}^{4}C_E^{j}I\|\lesssim t^{-(1/6+9\delta_2)}.\label{normrho;2ndtran}
\end{align}

For later use, we conclude this section with local behaviors of $E(z)$ at $z=0$ and $z=i$ as $t\rightarrow+\infty$.
By the formulae \eqref{expression:E(z)}, it is readily seen that
\begin{equation}\label{equ:Eexpansion 2nd}
	E(z)=E(i)+E_1(z-i)+\mathcal{O}\left(\left(z-i\right)^2\right), \ z\rightarrow i,
\end{equation}
where
\begin{subequations}\label{equ:E(i)E_1 2nd}
	\begin{align}
		&E(i)=I+\frac{1}{2\pi i}\int_{\Sigma^{(E)}}\frac{\varpi(\zeta) (V^{(E)}(\zeta)-I)}{\zeta-i}\dif\zeta  , \\
		&E_1=\frac{1}{2\pi i}\int_{\Sigma^{(E)}}\frac{\varpi(\zeta) (V^{(E)}(\zeta)-I)}{(\zeta-i)^2}\dif\zeta  ,
	\end{align}
\end{subequations}
as well as
\begin{equation}\label{equ:E(0) 2nd}
	E(0)=I+\frac{1}{2\pi i}\int_{\Sigma^{(E)}}\frac{\varpi(\zeta) (V^{(E)}(\zeta)-I)}{\zeta}\dif\zeta  .
\end{equation}
\begin{Proposition}\label{Prop:E(0)Ei0Ei1}
	With $E(i)$, $E_1$ and $E(0)$ defined in \eqref{equ:E(i)E_1 2nd}--\eqref{equ:E(0) 2nd}, we have, as $t\rightarrow+\infty$,
	\begin{subequations}
		\begin{align}
			&E(0)=\begin{pmatrix}
				1 & i\left(\frac{9}{8}t\right)^{-1/3}\left(\cos\phi_a+\cos\phi_b\right)\\
				-i\left(\frac{9}{8}t\right)^{-1/3}\left(\cos\phi_a+\cos\phi_b\right) & 1
			\end{pmatrix}+\mathcal{O}\left(t^{\textnormal{max}\{{-2/3+4\delta_2, -1/3-5\delta_2}\}}\right), \label{E(0)ttoinfty}\\
			&E(i)=\begin{pmatrix}
				E_{11}(i)) & E_{12}(i)\\
				E_{21}(i)  & E_{22}(i)
			\end{pmatrix}+\mathcal{O}\left(t^{\textnormal{max}\{{-2/3+4\delta_2, -1/3-5\delta_2}\}}\right), \label{Ei0toinfty}
		\end{align}
	\end{subequations}
	where $\phi_a$ and $\phi_b$ are given in \eqref{def:phia} and \eqref{equ:argument phi_b},
		\begin{align*}
			&E_{11}(i)=1+\frac{1}{3^{2/3}}t^{-1/3}\int_{\tilde{s}}^{+\infty}v_{II}^2(\zeta)\dif\zeta,
\nonumber\\ &E_{12}(i)=-\frac{i}{3^{2/3}}t^{-1/3}v_{II}(\tilde{s})\left[\sqrt{2+\sqrt{3}}\sin\left(\phi_a-\gamma_a\right)+\sqrt{2-\sqrt{3}}\sin\left(\phi_b-\gamma_b\right)\right],
\nonumber\\ &E_{21}(i)=-\frac{i}{3^{2/3}}t^{-1/3}v_{II}(\tilde{s})\left[\sqrt{2+\sqrt{3}}\sin\left(\phi_a+\gamma_a\right)+\sqrt{2-\sqrt{3}}\sin\left(\phi_b+\gamma_b\right)\right],\nonumber\\
			&E_{22}(i)=1-\frac{1}{3^{2/3}}t^{-1/3}\int_{\tilde{s}}^{+\infty}v_{II}^2(\zeta)\dif\zeta,\nonumber\\
			&\gamma_a=\arctan(2+\sqrt{3}), \qquad \gamma_b=\arctan(2-\sqrt{3}),
		\end{align*}
		and
		\begin{align}
			E_1=&-\frac{t^{-1/3}}{2\cdot 3^{2/3}}v_{II}(\tilde{s})\begin{pmatrix}
				0 & \sin\left(\phi_a+\frac{\pi}{6}\right)-\sin\left(\phi_b-\frac{\pi}{6}\right)\\
				\sin\left(\phi_a-\frac{\pi}{6}\right)-\sin\left(\phi_b+\frac{\pi}{6}\right) & 0
			\end{pmatrix}\nonumber\\&+\mathcal{O}\left(t^{\textnormal{max}\{{-2/3+4\delta_2, -1/3-5\delta_2}\}}\right)\label{E1toinfty}.
		\end{align}
\end{Proposition}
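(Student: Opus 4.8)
The plan is to follow the blueprint of Proposition~\ref{asyE}, now with the four shrinking disks $U^{(a)},U^{(b)},U^{(c)},U^{(d)}$ in place of $U^{(r)},U^{(l)}$, starting from the integral representation \eqref{expression:E(z);2ndtran} together with the expansions \eqref{equ:E(i)E_1 2nd} and \eqref{equ:E(0) 2nd} of $E(i)$, $E_1$ and $E(0)$.

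\textbf{Localization.} First I would discard everything away from the disks: on $\Sigma^{(E)}\setminus\bigcup_{\ell}U^{(\ell)}$ the jump $V^{(E)}-I$ is exponentially small by \eqref{E(z):BCrep; 2ndtran}, contributing $\mathcal{O}(e^{-ct^{3\delta_2}})$. Writing $\varpi=I+(\varpi-I)$ and inserting the Neumann expansion $\varpi-I=\sum_{j\geqslant1}\mathcal{C}_E^jI$, the bounds \eqref{normrho;2ndtran} together with $\|V^{(E)}-I\|_{2}\lesssim t^{-(1/6+\delta_2/2)}$ on $\bigcup_{\ell}\partial U^{(\ell)}$ show, via Cauchy--Schwarz on the resulting double integrals, that the $\varpi-I$ part contributes only at the level of the stated error; I would retain enough iterates $\mathcal{C}_E^jI$ to reach the claimed $\mathcal{O}(t^{-1/3-5\delta_2})$ accuracy. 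Thus, to the required order, $E(i)$, $E_1$, $E(0)$ are obtained by replacing $\varpi$ by $I$ and integrating only over $\partial U^{(a)}\cup\partial U^{(b)}\cup\partial U^{(c)}\cup\partial U^{(d)}$.

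\textbf{Expansion on each disk and residues.} On $\partial U^{(\ell)}$ one has $V^{(E)}(z)=N^{(\ell)}(z)^{-1}$; using the large--$\hat k$ expansions \eqref{equ:expansion N^{(a)}hat{N}^{(a)}2nd} (and their $b,c,d$ analogues), the scaling \eqref{equ:scaling of N_a local}, and the identities $26\mp15\sqrt3=(2\mp\sqrt3)^3$, one gets $N^{(\ell)}(z)^{-1}-I=-(c_\ell t)^{-1/3}N^{(\ell)}_1/(z-z_\ell)+\mathcal{O}(t^{-2/3})$ on $\partial U^{(\ell)}$, with $z_a=2+\sqrt3$, $z_b=2-\sqrt3$, $z_c=-2+\sqrt3$, $z_d=-2-\sqrt3$ and $c_a=c_d=\tfrac98(2-\sqrt3)^3$, $c_b=c_c=\tfrac98(2+\sqrt3)^3$. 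Then I replace $N^{(\ell)}_1$ by the explicit Painlev\'e~II matrices $\hat N^{(a)}_1$, $\check N^{(b)}_1$, $\tilde N^{(c)}_1$, $\breve N^{(d)}_1$ of \eqref{equ:hat{N}^{(a)}_1} and its analogues, incurring the $\mathcal{O}(t^{-1/3+4\delta_2})$ error from \eqref{equ:NahatNa ttoinfty}, \eqref{equ:NbhatNb ttoinfty}, \eqref{equ:NchatNc ttoinfty}, \eqref{equ:NdhatNd ttoinfty}, which after the overall $t^{-1/3}$ prefactor produces the $\mathcal{O}(t^{-2/3+4\delta_2})$ branch of the final error (the $\mathcal{O}(t^{-1/3-5\delta_2})$ branch coming from the $\mathcal{O}(t^{-2/3})$ remainders here and the retained Neumann iterates). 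Since $0,i\notin U^{(\ell)}$, Cauchy's theorem evaluates each contour integral as the residue at $z=z_\ell$, yielding the weights $z_\ell^{-1}$ (for $E(0)$), $(z_\ell-i)^{-1}$ (for $E(i)$) and $(z_\ell-i)^{-2}$ (for $E_1$), each multiplied by $(c_\ell t)^{-1/3}$ and the matrix $\hat N^{(\ell)}_1$, with the sign fixed by the orientation inherited from $\Sigma^{(E)}$.

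\textbf{Algebraic reduction.} The final step is to add the four disk contributions and simplify; this is where $\gamma_a=\arctan(2+\sqrt3)$, $\gamma_b=\arctan(2-\sqrt3)$ and the factors $\sqrt{2\pm\sqrt3}$ emerge. The elementary inputs are: $(2+\sqrt3)(2-\sqrt3)=1$, so $z_a^{-1}=z_b$, $z_d^{-1}=z_c$, which collapses the weights together with the scalings into the uniform $(\tfrac98 t)^{-1/3}$ in \eqref{E(0)ttoinfty}; $|z_a-i|^2=(2+\sqrt3)^2+1=4(2+\sqrt3)$ and $\arg(z_a-i)=-\arctan(2-\sqrt3)=-\gamma_b$ (and the analogues at $z_b,z_c,z_d$), which produce the $\sqrt{2\pm\sqrt3}$ moduli in $E(i)$ and their cancellation in $E_1$; $\gamma_a+\gamma_b=\pi/2$ (since $\tan\gamma_a\tan\gamma_b=1$), which converts the phase $\gamma_b$ into $\gamma_a$ and $\cos$ into $\sin$ in $E(i)$, and makes $2\gamma_b=\pi/6$ appear in $E_1$; and the symmetries $\breve N^{(d)}(-\,\cdot\,;\tilde s)=-\sigma_2\hat N^{(a)}(\,\cdot\,;\tilde s)\sigma_2$, $\tilde N^{(c)}(-\,\cdot\,;\tilde s)=-\sigma_2\check N^{(b)}(\,\cdot\,;\tilde s)\sigma_2$, which pair the $U^{(a)}/U^{(d)}$ and $U^{(b)}/U^{(c)}$ terms into complex conjugate combinations so that the $e^{\pm i\phi_a},e^{\pm i\phi_b}$ assemble into real trigonometric expressions; in particular the $\int_{\tilde s}^{+\infty}v_{II}^2$ diagonal entries combine with the complex weights so that their imaginary parts cancel and only $\pm 3^{-2/3}t^{-1/3}\int_{\tilde s}^{+\infty}v_{II}^2$ survives in $E(i)$ (and nothing in $E(0)$). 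I expect this bookkeeping to be the main obstacle: keeping straight the four distinct scaling constants, the residue weights, the minus sign from inverting $N^{(\ell)}$, the contour orientations, and the competing phases $e^{\pm i\phi_a},e^{\pm i\phi_b}$, so that everything condenses into exactly the formulas \eqref{E(0)ttoinfty}, \eqref{Ei0toinfty} and \eqref{E1toinfty}.
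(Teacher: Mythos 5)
Your proposal follows essentially the same route as the paper's proof: localize \eqref{expression:E(z);2ndtran} to the four shrinking disks via \eqref{E(z):BCrep; 2ndtran} and \eqref{normrho;2ndtran}, replace $N^{(\ell)}$ by the Painlev\'e~II leading terms via \eqref{equ:NahatNa ttoinfty}--\eqref{equ:NdhatNd ttoinfty}, evaluate the contour integrals by residues, and then reduce algebraically using $26\mp 15\sqrt3=(2\mp\sqrt3)^3$, $(2+\sqrt3)(2-\sqrt3)=1$, $|2\pm\sqrt3-i|^2=4(2\pm\sqrt3)$, $\gamma_a+\gamma_b=\pi/2$, $\gamma_b=\pi/12$, and the $\sigma_2$-symmetries pairing $U^{(a)}/U^{(d)}$ and $U^{(b)}/U^{(c)}$. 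These are exactly the ingredients in the paper's computation of $E(0)$ (carried out in full) and the indicated ``similar manner'' for $E(i)$ and $E_1$, where the paper only adds the reminder to use $A\sin u+B\cos u=\sqrt{A^2+B^2}\sin(u+\gamma)$ for the $E(i)$ simplification, which is implicit in your $|z_\ell-i|$/$\arg(z_\ell-i)$ bookkeeping.
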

\begin{proof}
	From \eqref{equ:NahatNa ttoinfty}, \eqref{equ:NbhatNb ttoinfty}, \eqref{equ:NchatNc ttoinfty}, \eqref{equ:NdhatNd ttoinfty}
	\eqref{normrho;2ndtran} and \eqref{equ:E(0) 2nd}, it follows that
	\begin{align}
		E(0)&=I+\sum_{\ell\in\{a,b,c,d\}}\oint_{\partial U^{(\ell)}}\frac{{N^{(\ell)}(\zeta)}^{-1}-I}{\zeta}+\mathcal{O}(t^{-1/3-5\delta_2}) \nonumber\\
		&=I-\frac{1}{2\pi i}\oint_{\partial U^{(a)}}\frac{\left(\frac{9}{8}(26-15\sqrt{3})t\right)^{-1/3}\hat{N}^{(a)}_{1}(\tilde{s})}{\zeta\left(\zeta-(2+\sqrt{3})\right)}\dif\zeta
		-\frac{1}{2\pi i}\oint_{\partial U^{(b)}}\frac{\left(\frac{9}{8}(26+15\sqrt{3})t\right)^{-1/3}\check{N}^{(b)}_{1}(\tilde{s})}{\zeta\left(\zeta-(2-\sqrt{3})\right)}\dif\zeta  \nonumber\\
		&~~~-\frac{1}{2\pi i}\oint_{\partial U^{(c)}}\frac{\left(\frac{9}{8}(26+15\sqrt{3})t\right)^{-1/3}\tilde{N}^{(c)}_{1}(\tilde{s})}{\zeta\left(\zeta-(-2+\sqrt{3})\right)}\dif\zeta
		\nonumber\\
		&~~~ -\frac{1}{2\pi i}\oint_{\partial U^{(d)}}\frac{\left(\frac{9}{8}(26-15\sqrt{3})t\right)^{-1/3}\breve{N}^{(d)}_{1}(\tilde{s})}{\zeta\left(\zeta-(-2-\sqrt{3})\right)}\dif\zeta
   +\mathcal{O}(t^{-2/3+4\delta_2})+\mathcal{O}(t^{-1/3-5\delta_2})\nonumber\\
		&=I+\frac{\left(\frac{9}{8}(26-15\sqrt{3})\right)^{-1/3}}{2+\sqrt{3}}t^{-1/3}(-\hat{N}^{(a)}_{1}+\breve{N}^{(d)}_{1}) \nonumber\\	&~~~+\frac{\left(\frac{9}{8}(26+15\sqrt{3})\right)^{-1/3}}{2-\sqrt{3}}t^{-1/3}(-\check{N}^{(b)}_{1}+\tilde{N}^{(c)}_{1})+\mathcal{O}\left(t^{\textnormal{max}\{{-2/3+4\delta_2, -1/3-5\delta_2}\}}\right) \nonumber \\
		&=I+\left(\frac{9}{8}t\right)^{-1/3} \left(-\hat{N}^{(a)}_{1}-\check{N}^{(b)}_{1}+\tilde{N}^{(c)}_{1}+\breve{N}^{(d)}_{1}\right)+\mathcal{O}\left(t^{\textnormal{max}\{{-2/3+4\delta_2, -1/3-5\delta_2}\}}\right)=\eqref{E(0)ttoinfty},
	\end{align}
	where the residue theorem is applied for the third equality.

Both the estimates \eqref{Ei0toinfty} and \eqref{E1toinfty} can be obtained in similar manners, we omit the details but point out that, to show \eqref{Ei0toinfty}, one needs the fundamental formula
	\begin{align*}
		A\sin u+B\cos u=\sqrt{A^2+B^2}\sin(u+\gamma)
	\end{align*}
with $\gamma=\arctan{\frac{B}{A}}$.
\end{proof}

\subsection{Analysis of the pure $\bar{\partial}$-problem}\label{subsec:dbarRH2ndTran}
Besides the pure RH problem for $N$, the contribution to RH problem \ref{RHPM^{(4)}} for $M^{(4)}$ partially comes from the pure $\bar{\partial}$-problem $M^{(5)}$ defined as
\begin{align}\label{transform:shengchengdbarM5 2nd}
	M^{(5)}(z)=M^{(4)}(z){N(z)}^{-1}.
\end{align}
Then $M^{(5)}$ satisfies the following $\bar{\partial}$ problem.
\begin{Dbarproblem}\label{DbarM5 2nd}
\hfill
\begin{itemize}
	\item[$\bullet$] $M^{(5)}(z)$ is continuous  and has sectionally continuous first partial derivatives in $\mathbb{C}$.
	\item[$\bullet$] As $z\rightarrow\infty$ in $\mathbb{C}$, we have
\begin{align}
	M^{(5)}(z) = I+\mathcal{O}(z^{-1}). \label{asymbehv7 2nd}
\end{align}
\item[$\bullet$] The $\bar{\partial}$-derivative of $M^{(5)}$ satisfies $\bar{\partial}M^{(5)}(z)=M^{(5)}(z)W^{(3)}(z)$, $z\in \mathbb{C}$, with
\begin{equation}\label{equ: W^(3)expression 2nd}
	W^{(3)}(z)=N(z)\bar{\partial}R^{(3)}(z)N(z)^{-1},
\end{equation}
where $\bar{\partial}R^{(3)}(z)$ is defined in \eqref{R(3)+2ndTranRegion}.
\end{itemize}
\end{Dbarproblem}
Following from the proofs of Lemma \ref{lem:estofimtheta} and Proposition \ref{prop:Cz}, it is readily seen that the pure $\bar{\partial}$ problem \ref{DbarM5 2nd} for $M^{(5)}$ exists a unique solution for large positive $t$. Moreover, the following estimates can be viewed as
an analogue of Proposition \ref{prop:estdbar} in the present case.
\begin{Proposition}\label{prop:estdbar2ndtran}
Let $M^{(5)}(i)$, $M^{(5)}_{1}$ and $M^{(5)}(0)$ be defined through \eqref{def:M5i}--\eqref{def:M50} with $M^{(5)}$ and $W^{(3)}$ therein replaced by  \eqref{transform:shengchengdbarM5 2nd} and \eqref{equ: W^(3)expression 2nd},
, we have, as $t\rightarrow+\infty$,
	\begin{align}\label{equ:estdabar 2nd}
		| M^{(5)}(i)-I|,\ | M^{(5)}(0)-I|,\ |M^{(5)}_1|\lesssim t^{-2/3}.
	\end{align}
\end{Proposition}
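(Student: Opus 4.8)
The plan is to mimic the proof of Proposition~\ref{prop:estdbar} in Section~\ref{subsec:dbar problem}, adapting it to the present situation where there are eight saddle points $k_1,\dots,k_8$ (rather than four) all collapsing, as $t\to+\infty$, to the four fixed points $\pm 2\pm\sqrt{3}$, which lie on $\mathbb{R}$ and are bounded away from both $z=0$ and $z=i$. First I would recall that, by \eqref{transform:shengchengdbarM5 2nd} and the operator representation \eqref{equ:operator type of M^{(5)}}, $M^{(5)}$ solves an integral equation whose kernel involves $W^{(3)}(z)=N(z)\bar\partial R^{(3)}(z)N(z)^{-1}$, with $\bar\partial R^{(3)}$ supported on the lens regions $\Omega_j\cup\Omega_j^*$, $j=1,\dots,8$. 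Since $N(z)$, $N(z)^{-1}$ are uniformly bounded on these lens regions (the global parametrix being $I$ and the local Painlev\'e~II parametrices being bounded), it suffices to estimate $\frac1\pi\iint_{\Omega_j}\frac{|\bar\partial d_j(\zeta)\,e^{2i\theta(\zeta)}|}{|z-\zeta|}\,d\mu(\zeta)$ for $z\in\{0,i\}$, and the analogous integrals with $(\zeta-i)^{-1}$ replaced by $(\zeta-i)^{-2}$ for $M^{(5)}_1$; this is precisely the content that drives \eqref{m3i}, and the bookkeeping is symmetric among the eight regions, so I would present only the estimate over $\Omega_1$.

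The key analytic input is the analogue of Lemma~\ref{lem:estofimtheta} giving $\im\theta(z)\lesssim -t\,\im z\,(\re z-k_1)^2$ on the portion of $\Omega_1$ where $|\re z\,(1-|z|^{-2})|\le 2$, and $\im\theta(z)\lesssim -t\,\im z$ on the remaining portion; one verifies this from the same computation as in the proof of Lemma~\ref{lem:estofimtheta}, using that in $\mathcal R_{II}$ the relevant stationary-phase parameter $\hat\xi_1=k_1-1/k_1$ now tends to $2\sqrt{3}$ rather than to $2$, and that $k_1\to 2+\sqrt{3}$. Combining this with the $\bar\partial$-bounds $|\bar\partial d_1(\zeta)|\lesssim|\re\zeta-k_1|^{1/2}$ (valid since $R$ still lies in the appropriate weighted Sobolev space by the scattering map analysis, cf. the discussion after RH problem~\ref{RHP:M^{(1)}}), one is reduced, exactly as in \eqref{eq:sumIi}, to integrals of the form $I_1=\iint e^{-tu^2v}(u^2+v^2)^{1/4}/|z-\zeta|\,du\,dv$ and the $\Omega_B$-type tail integrals. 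The crucial point is that near $z=0$ and $z=i$ the Cauchy kernel $|z-\zeta|^{-1}$ is bounded on $\mathrm{supp}\,W^{(3)}$, so no local singularity is introduced; a change of variables $(u,v)\mapsto(t^{1/3}u,t^{1/3}v)$ then extracts the scaling factor $t^{-5/6}$ from $I_1$, and the tail integral over $\Omega_B$ is $\mathcal O(t^{-1})$. Here, however, $\re z-k_1 = \re z - (2+\sqrt{3})+\mathcal O(t^{-1/3})$, and the disk $U^{(a)}$ shrinks only like $t^{\delta_2-1/3}$ while the lens legs $\Sigma_1$ extend a distance $\mathcal O(1)$; the extra contribution from the part of $\Omega_1$ outside $U^{(a)}$, where $R(\re\zeta)-R(k_1)$ is no longer small, together with the weaker Gaussian decay there, degrades the bound from $t^{-5/6}$ to $t^{-2/3}$, which is why \eqref{equ:estdabar 2nd} only asserts $\lesssim t^{-2/3}$.

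Concretely, I would split $\Omega_1=\Omega_A\cup\Omega_B$ as in \eqref{def:OmegaAB}, estimate the $\Omega_A$ part by the polar-coordinate computation reproducing the $\Gamma(5/6)B(1,1/12)$-type bound and yielding $t^{-5/6}$, and estimate the $\Omega_B$ part (where $|\zeta-i|^{-1},|\zeta-i|^{-2}\lesssim|\zeta|^{-1}$ and $|\bar\partial d_1|\lesssim 1$) by the same argument as for $I_2$, $I_3$ in the proof of Proposition~\ref{prop:estdbar}, getting $\mathcal O(t^{-1})$; finally one tracks the $\mathcal O(t^{-2/3})$ contribution arising because $\theta$ in $\mathcal R_{II}$ has, near each saddle, a cubic expansion with coefficient of order one but the quadratic suppression region is now of width $\mathcal O(t^{-1/3})$ around $k_1$ rather than extending to a fixed point, forcing the crude Gaussian estimate $e^{-tu^2v}$ to be replaced over most of $\Omega_B$ by $e^{-t\,\im\zeta}$ with $\im\zeta$ possibly as small as $t^{-1/3}\tan\varphi_0$. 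The main obstacle is thus not any single estimate but making the $t$-power bookkeeping honest: one must check that the interplay between the shrinking disks $U^{(\ell)}$ (radius $\sim t^{\delta_2-1/3}$), the fixed-size lens legs, the order-one Sobolev norms of $R$, and the sign/size of $\im\theta$ in each sub-region indeed closes at the stated exponent $-2/3$ uniformly for $\hat\xi$ in the range $0\le(\hat\xi+1/4)t^{2/3}\le C$, and that the contributions from $\Omega_2,\dots,\Omega_8$ and their conjugates are all of the same or smaller order; the remaining computations are routine variants of those already carried out in Section~\ref{subsec:dbar problem}.
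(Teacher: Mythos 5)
There is a genuine gap: you invoke the estimate $|\bar\partial d_1(\zeta)|\lesssim|\re\zeta-k_1|^{1/2}$, but that bound is \emph{not} available in $\mathcal{R}_{II}$, and its failure is precisely the reason the exponent degrades from $-5/6$ to $-2/3$. In $\mathcal{R}_{I}$ the extension function is defined in \eqref{equ: 1sttran defofd_j} so that $d_j(z)=\bar R(k_j)+\bar R'(k_j)(z-k_j)$ on $\Sigma_j$; hence the $\bar\partial$-derivative in \eqref{afterdbarderR+1} produces a factor $\bar R'(\re z)-\bar R'(k_j)$, which by H\"older is $\lesssim\|\bar R''\|_2\,|\re z-k_j|^{1/2}$ and gives \eqref{dbarRjest}. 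In $\mathcal{R}_{II}$, by contrast, \eqref{equ: 2ndtran defofd_j} prescribes the \emph{constant} boundary value $d_j=-R(k_j)$ on $\Sigma_j$ (no linear correction), so from \eqref{equ: 2ndtran cons d_j} the leading term of $\bar\partial d_j$ is $-\tfrac12 R'(\re z)\cos(\cdots)$, and the best one gets is $|\bar\partial d_j|\lesssim 1$. Carrying this through the same $\Omega_A$ computation as in the proof of Proposition \ref{prop:estdbar} but \emph{without} the $(u^2+v^2)^{1/4}$ weight, the scaling $u\mapsto t^{-1/3}u,\,v\mapsto t^{-1/3}v$ produces $t^{-2/3}$ directly from the area element alone; together with the $\mathcal{O}(t^{-1})$ tail over $\Omega_B$ this yields \eqref{equ:estdabar 2nd}. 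This is exactly the mechanism identified in Remark \ref{RHP: N(z) tran2} (the remark immediately following the proposition): the loss comes from the different boundary conditions on $d_j$, not from the geometry of the shrinking disks.

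Your alternative explanation — that the degradation comes from the part of $\Omega_1$ outside $U^{(a)}$, together with a reduced width of the quadratic-suppression zone — does not hold up. The split $\Omega_1=\Omega_A\cup\Omega_B$ in \eqref{def:OmegaAB} is a static partition of the lens at $\re z=2,3$ and has nothing to do with $U^{(a)}$, which only appears in the small-norm RH problem for $E$, not in the pure $\bar\partial$-problem. Likewise the Gaussian suppression $\im\theta\lesssim -t\,\im z\,(\re z-k_1)^2$ near the saddle $k_1$ (the analogue of Lemma \ref{lem:estofimtheta}) holds throughout the near region $\Omega_A$ exactly as in $\mathcal{R}_I$ — the saddle is at $k_1$, not at $2+\sqrt3$, and the quadratic vanishing of $\theta'$ there is not weakened by the proximity of $2+\sqrt3$. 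If one actually had $|\bar\partial d_1|\lesssim|\re\zeta-k_1|^{1/2}$, your computation would give $t^{-5/6}$ and there would be no degradation to account for; the hand-wavy ``interplay'' argument is filling a hole created by an incorrect input, rather than tracing the true source.
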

\begin{remark}
The error bounds in \eqref{equ:estdabar 2nd} vary from those presented in Proposition \ref{prop:estdbar}. This follows from the fact that
the boundary conditions for the function $d_j(z)$ in $R^{(3)}(z,\hat{\xi})$ are different.
\end{remark}
We are now ready to prove the part (b) of Theorem \ref{mainthm}.

\subsection{Proof of part (b) of Theorem \ref{mainthm}}\label{subsec:recovering2ndtran}
By tracing back the transformations \eqref{transform:M1toM2}, \eqref{transform:M3toM4 2nd}, \eqref{def:E(z)2ndtranregion} and \eqref{transform:shengchengdbarM5 2nd}, we conclude that, as $t\rightarrow+\infty$,
\begin{align}\label{ope 2nd}
	M^{(1)}(z)=&M^{(5)}(z)E(z)R^{(3)}(z)^{-1}T(z)^{-\sigma_3}G(z)^{-1}+\textrm{exponentially small error in $t$},
\end{align}
where $E$, $R^{(3)}$, $T$ and $G$ are defined in \eqref{def:E(z)2ndtranregion}, \eqref{R(3)+2ndTranRegion}, \eqref{Tfunc} and \eqref{funcG}, respectively.

By \eqref{Tfunc}, it follows from  straightforward calculations that $T(0)=1$
and
\begin{equation}\label{equ:Texpansion 2nd}
	T(z)=T(i)+T_{1}(z-i)+\mathcal{O}\left(\left(z-i\right)^2\right), \quad z\rightarrow i,
\end{equation}
where
\begin{subequations}\label{coofTexpansionati}
\begin{align}
&T(i)=\prod_{n=1}^{2\mathcal{N}}\left(\frac{\bar{z}_n-i}{z_n-i}\right)\exp\left\{-\frac{1}{2\pi i}\int_{\mathbb{R}}\frac{\log(1-|r(\zeta)|^2)}{\zeta-i}\dif\zeta  \right\},\\
&T_1=-\prod_{n=1}^{2\mathcal{N}}\left(\frac{\bar{z}_n-i}{z_n-i}\right)\cdot\frac{1}{2\pi i}\int_{\mathbb{R}}\frac{\log(1-|r(\zeta)|^2)}{(\zeta-i)^2}\dif\zeta  \cdot\exp\left\{-\frac{1}{2\pi i}\int_{\mathbb{R}}\frac{\log(1-|r(\zeta)|^2)}{\zeta-i}\dif\zeta  \right\}\nonumber\\
&\hspace*{5em}+\sum_{n=1}^{2\mathcal{N}}\left(\frac{\bar{z}_n-i}{(z_n-i)^2}\prod_{l\neq n, l=1}^{2\mathcal{N}}\left(\frac{\bar{z}_l-i}{z_l-i}\right)\right)\exp\left\{-\frac{1}{2\pi i}\int_{\mathbb{R}}\frac{\log(1-|r(\zeta)|^2)}{\zeta-i}\dif\zeta  \right\}.
\end{align}
\end{subequations}

In view of the facts that $T(0)=1$, $G(0)=R^{(3)}(0)=I$, it follows from \eqref{equ:estdabar 2nd} that
\begin{equation}\label{Torevoverat0}
	M^{(1)}(0,t,y)=E(0)+\mathcal{O}(t^{-2/3}).
\end{equation}
Since $G(z)=R^{(3)}(z)=I$ in a neighborhood of $i$, as $z \to i$, it is readily seen
from \eqref{equ:Eexpansion 2nd} and \eqref{equ:Texpansion 2nd} that
\begin{align}\label{Torevoverati}	&M^{(1)}(z;t,y)
\nonumber
\\
&=\left(I+\mathcal{O}(t^{-2/3})\right)\left(E\left(i\right)+E_1\left(z-i\right)+\mathcal{O}\left(\left(z-i\right)^2\right)\right)\left(T\left(0\right)+T_1\left(z-i\right)+\mathcal{O}\left(\left(z-i\right)^2\right)\right)^{-\sigma_3}\nonumber\\
	&=\begin{pmatrix}
		\frac{E_{11}\left(i\right)}{T(i)} & E_{12}\left(i\right)T(i)\\
		\frac{E_{21}\left(i\right)}{T(i)} & E_{22}\left(i\right)T(i)
	\end{pmatrix}+\begin{pmatrix}
		\frac{(E_1)_{11}T(i)-E_{11}\left(i\right)T_1}{T^2(i)} &  E_{12}(i){T_1}+\left(E_1\right)_{12}T(i)\\
		\frac{(E_1)_{21}{T(i)}-E_{21}\left(i\right)T_1}{T^2{(i)}} &  E_{22}(i){T_1}+(E_1)_{22}T(i)
	\end{pmatrix}(z-i)
	\nonumber\\
	&~~~
+\mathcal{O}\left(\left(z-i\right)^2\right)+\mathcal{O}(t^{-2/3}),
\end{align}
where $E(i)$ and $E_1$ are given by \eqref{Ei0toinfty} and \eqref{E1toinfty}, respectively. Here the error term $\mathcal{O}(t^{-2/3})$ comes from the $\bar{\partial}$-problem.

Together with \eqref{Torevoverat0}, \eqref{Torevoverati}, Proposition \ref{Prop:E(0)Ei0Ei1}, \eqref{recovering u}, it follows that
\begin{subequations}\label{eq:u(y,t)RII}
	\begin{align}
		u(x(y,t),t)&=1+\frac{f_{II}(\tilde{s})}{3^{2/3}}t^{-1/3}v_{II}(\tilde{s})+\mathcal{O}\left(t^{\textnormal{max}\{{-2/3+4\delta_2, -1/3-5\delta_2}\}}\right),\label{equ:u(y,t)2ndtran}
\\
x(y,t)&=y-2\log\left(T(i)\right)
\nonumber
\\
&~~~+\frac{2}{3^{2/3}}t^{-1/3}\left(\int_{\tilde{s}}^{+\infty}v_{II}^2(\zeta)d\zeta-\sqrt{2+\sqrt{3}}\sin\left(\phi_a+\gamma_a\right)-
		\sqrt{2-\sqrt{3}}\sin\left(\phi_b+\gamma_b\right)\right)
\nonumber\\
		&~~~+\mathcal{O}\left(t^{\textnormal{max}\{{-2/3+4\delta_2, -1/3-5\delta_2}\}}\right), \label{equ:(x-y)relation2ndtran}
	\end{align}
	\end{subequations}
	where $\tilde{s}$ is given in \eqref{equ:tilde{s}2nd},
	\begin{align}
		f_{II}(\tilde{s})&=2\sqrt{2+\sqrt{3}}\left(\sin\phi_a\cos\gamma_a-\frac{iT_1}{T(i)}\cos\phi_a\sin\gamma_a\right)\nonumber\\
		&~~~+2\sqrt{2-\sqrt{3}}\left(\sin\phi_b\cos\gamma_b-\frac{iT_1}{T(i)}\cos\phi_b\sin\gamma_b\right)
		\nonumber
        \\
        &~~~+\sqrt{3}\cos\left(\frac{\phi_a+\phi_b}{2}\right)\sin\left(\frac{\phi_a+\phi_b}{2}\right),
	\end{align}
Since $|r(2+\sqrt{3})|<1$, the Painlev\'{e} transcendent $v_{II}(\tilde{s})$ in the formulas \eqref{equ:u(y,t)2ndtran} and \eqref{equ:(x-y)relation2ndtran} are bounded.
Taking into account the boundedness of $\log T(i)$ as well as the definition of $s$ (see \eqref{equ:s2ndtran}) and $\tilde{s}$ (see \eqref{equ:tilde{s}2nd}),
it follows from \eqref{equ:(x-y)relation2ndtran} that
\begin{equation}\label{equ:tildes-s2ndtran}
	\tilde{s}-s=\left(\frac{8}{9}\right)^{1/3}\left(x-y\right)t^{-1/3}
\end{equation}
and
\begin{equation}
	x-y=-2\log\left(T\left(i\right)\right)+\mathcal{O}(t^{-1/3}),
\end{equation}
which implies that
\begin{equation}\label{equ:tildes-s using}
	\tilde{s}-s=-\frac{4}{3^{2/3}}t^{-1/3}\log\left(T\left(i\right)\right)+\mathcal{O}\left(t^{-2/3}\right).
\end{equation}

We now estimate $\phi_a$ in \eqref{def:phia} with $\tilde{s}$ replaced by $s$. Note that
\begin{equation}
	2\theta(2+\sqrt{3})=-\sqrt{3}t\left(\frac{y}{t}-\frac{1}{2}\right)=\frac{3^{7/6}}{2}\tilde{s}t^{1/3}+\frac{3\sqrt{3}}{4}t,
\end{equation}
we obtain from \eqref{equ:tildes-s using} and \eqref{def:phia} that
\begin{equation}\label{equ:phiarelatepsia}
	\phi_a=\psi_a(s,t)+\mathcal{O}(t^{-1/3}),
\end{equation}
if $\tilde{s}$ is replaced by $s$, where $\psi_a(s,t)$ is defined through \eqref{equ:psi_a} and \eqref{equ:Lambda_a}. Similarly, since
\begin{equation}
	2\theta(2-\sqrt{3})=\sqrt{3}t\left(\frac{y}{t}-\frac{1}{2}\right)=-\frac{3^{7/6}}{2}\tilde{s}t^{1/3}-\frac{3\sqrt{3}}{4}t,
\end{equation}
we obtain from \eqref{equ:tildes-s using} and \eqref{equ:argument phi_b} that
\begin{equation}\label{equ:phibrelatepsib}
	\phi_b=\psi_b(s,t)+\mathcal{O}(t^{-1/3})
\end{equation}
with $\tilde{s}$ replaced by $s$, where $\psi_b(s,t)$ is defined through \eqref{equ:psi_a} and \eqref{equ:Lambda_b}.

After replacing $\tilde{s}$ by $s$ in \eqref{eq:u(y,t)RII}, we obtain \eqref{result:2ndTran} from \eqref{equ:tildes-s using}, \eqref{equ:phiarelatepsia} and \eqref{equ:phibrelatepsib}.
\qed


\section{Asymptotic analysis of the RH problem for $M^{(3)}$ in $\mathcal{R}_{III}$}
\label{sec: collisionless shock region}
A detailed analysis of the mCH equation in the 1st oscillatory region  can be found, for example,
in \cites{Monvel2020TheMC, Yang2022adv}, where a crucial step to establish the asymptotic behaviors is to reduce the oscillatory RH problem to several localized RH problems
near the four saddle points $k_j$, $j=1,\ldots,4$, given in \eqref{equ:1sttransaddlepoints-a} and \eqref{equ:1sttransaddlepoints-b}. It turns out the local RH problem near each $k_{j}$ is
controlled in the norm by $(1-|r(k_j)|^2)^{-1}$. In the generic case,  i.e., $|r(\pm 1)|=1$, these norms blow up as $k_{1,2}\rightarrow 1$, $k_{3,4}\rightarrow -1$. As a consequence, a new transition zone -- the collisionless shock
region $\mathcal{R}_{III}$ occurs generically between the first transition region $\mathcal{R}_I$ and the 1st oscillatory region. It is the aim of this section to carry out asymptotic analysis of the RH problem for $M^{(3)}$ for
\begin{equation}\label{def:RIIIhat}
2\cdot 3^{1/3}(\log t)^{2/3}<(2-\hat \xi)t^{2/3}< C(\log t)^{2/3}, \quad C>2\cdot 3^{1/3},
\end{equation}
which finally leads to the proof of part (c) of Theorem \ref{mainthm}. Due to a technical reason, it is also assumed that $r\in H^{s}$ with $s>5/2$ throughout this section.



\subsection{Preliminaries}\label{subsec:3rd appofthetafunc}
Since the function $R$ defined in \eqref{def:R=rT} is not an analytical
function, the idea now is still to introduce auxiliary functions $d_j(z)$ with
boundary conditions defined in \eqref{equ: 1sttran defofd_j}
to open $\bar{\partial}$ lenses. In the present case, the lenses start from the endpoints
of two intervals which contain the saddle points $k_{1,2}$ and $k_{3,4}$ given in \eqref{equ:1sttransaddlepoints-a} and \eqref{equ:1sttransaddlepoints-b}, respectively.
More precisely, let
\begin{equation}\label{def:Url}
	U^{(r)}=\left\{z: |z-1|\leqslant\left(\frac{\log t}{t}\right)^{1/3}\delta(t)\right\}, \ \
	U^{(l)}=\left\{z: |z+1|\leqslant\left(\frac{\log t}{t}\right)^{1/3}\delta(t)\right\}
\end{equation}
be two disks around $z=\pm 1$, respectively, where
\begin{align}\label{def:deltat}
	\delta(t)=\mathcal{O}\left(t^{1/6-\delta_3}(\log t)^{-2/3}\right), \qquad \delta_3\in (0,1/6),
\end{align}
is a constant depending on $t$. For $t$ large enough, under the condition \eqref{def:RIIIhat},
it is readily seen that
$k_{1,2}\in U^{(r)}$ and $k_{3,4}\in U^{(l)}$.

Note that $U^{(l)} \cap \mathbb{R}=(e_l,h_l)$, $U^{(r)} \cap \mathbb{R}=(e_r,h_r)$ (see Figure \ref{fig: jumpM^{(4)} 3rd}), we then 
open $\bar{\partial}$ lenses around the intervals $(-\infty,e_l)\cup(h_l,e_r)\cup(h_r,+\infty)$ and define
\begin{equation}\label{transform: M3toM4; 3rdtrans}
	M^{(4)}(z)=M^{(3)}(z)R^{(3)}(z),
\end{equation}
where $R^{(3)}(z)$ is defined in \eqref{defofR^{(3)}} with the region $\Omega_j$ therein modified accordingly. It then follows that $M^{(4)}$ satisfies a mixed $\bar{\partial}$-RH problem similar to $\bar{\partial}$-RH problem \ref{RHP:mixed RH problem} but with jump contour illustrated in Figure \ref{fig: jumpM^{(4)} 3rd}.
Moreover, we have from  \eqref{defofR^{(3)}} that
\begin{equation}\label{equ: M4=M3 at 0 and i}
	M^{(4)}(0)=M^{(3)}(0),\qquad   M^{(4)}(i)=M^{(3)}(i).
\end{equation}

\begin{figure}[htbp]
	\begin{center}
\tikzset{every picture/.style={line width=0.75pt}} 
\begin{tikzpicture}[x=0.75pt,y=0.75pt,yscale=-1,xscale=1]
\draw    (227.43,150.29) -- (288.43,150.29) ;
\draw [shift={(227.43,150.29)}, rotate = 0] [color={rgb, 255:red, 0; green, 0; blue, 0 }  ][fill={rgb, 255:red, 0; green, 0; blue, 0 }  ][line width=0.75]      (0, 0) circle [x radius= 3.35, y radius= 3.35]   ;
\draw    (355.43,150.29) -- (416.43,150.29) ;
\draw [shift={(416.43,150.29)}, rotate = 0] [color={rgb, 255:red, 0; green, 0; blue, 0 }  ][fill={rgb, 255:red, 0; green, 0; blue, 0 }  ][line width=0.75]      (0, 0) circle [x radius= 3.35, y radius= 3.35]   ;
\draw    (288.43,150.29) -- (355.43,150.29) ;
\draw [shift={(355.43,150.29)}, rotate = 0] [color={rgb, 255:red, 0; green, 0; blue, 0 }  ][fill={rgb, 255:red, 0; green, 0; blue, 0 }  ][line width=0.75]      (0, 0) circle [x radius= 3.35, y radius= 3.35]   ;
\draw [shift={(288.43,150.29)}, rotate = 0] [color={rgb, 255:red, 0; green, 0; blue, 0 }  ][fill={rgb, 255:red, 0; green, 0; blue, 0 }  ][line width=0.75]      (0, 0) circle [x radius= 3.35, y radius= 3.35]   ;
\draw  [dash pattern={on 0.84pt off 2.51pt}]  (416.43,150.29) -- (496.43,150.29) ;
\draw [shift={(498.43,150.29)}, rotate = 180] [color={rgb, 255:red, 0; green, 0; blue, 0 }  ][line width=0.75]    (10.93,-3.29) .. controls (6.95,-1.4) and (3.31,-0.3) .. (0,0) .. controls (3.31,0.3) and (6.95,1.4) .. (10.93,3.29)   ;
\draw  [dash pattern={on 0.84pt off 2.51pt}]  (145.43,150.29) -- (227.43,150.29) ;
\draw    (416.43,150.29) -- (477.43,119.29) ;
\draw [shift={(452.28,132.07)}, rotate = 153.06] [color={rgb, 255:red, 0; green, 0; blue, 0 }  ][line width=0.75]    (10.93,-3.29) .. controls (6.95,-1.4) and (3.31,-0.3) .. (0,0) .. controls (3.31,0.3) and (6.95,1.4) .. (10.93,3.29)   ;
\draw    (416.43,150.29) -- (479.43,179.29) ;
\draw [shift={(453.38,167.29)}, rotate = 204.72] [color={rgb, 255:red, 0; green, 0; blue, 0 }  ][line width=0.75]    (10.93,-3.29) .. controls (6.95,-1.4) and (3.31,-0.3) .. (0,0) .. controls (3.31,0.3) and (6.95,1.4) .. (10.93,3.29)   ;
\draw    (166.43,181.29) -- (227.43,150.29) ;
\draw [shift={(202.28,163.07)}, rotate = 153.06] [color={rgb, 255:red, 0; green, 0; blue, 0 }  ][line width=0.75]    (10.93,-3.29) .. controls (6.95,-1.4) and (3.31,-0.3) .. (0,0) .. controls (3.31,0.3) and (6.95,1.4) .. (10.93,3.29)   ;
\draw    (164.43,121.29) -- (227.43,150.29) ;
\draw [shift={(201.38,138.29)}, rotate = 204.72] [color={rgb, 255:red, 0; green, 0; blue, 0 }  ][line width=0.75]    (10.93,-3.29) .. controls (6.95,-1.4) and (3.31,-0.3) .. (0,0) .. controls (3.31,0.3) and (6.95,1.4) .. (10.93,3.29)   ;
\draw    (322,116) -- (288.43,150.29) ;
\draw [shift={(310.11,128.14)}, rotate = 134.4] [color={rgb, 255:red, 0; green, 0; blue, 0 }  ][line width=0.75]    (10.93,-3.29) .. controls (6.95,-1.4) and (3.31,-0.3) .. (0,0) .. controls (3.31,0.3) and (6.95,1.4) .. (10.93,3.29)   ;
\draw    (355.43,150.29) -- (323,186) ;
\draw [shift={(343.92,162.96)}, rotate = 132.24] [color={rgb, 255:red, 0; green, 0; blue, 0 }  ][line width=0.75]    (10.93,-3.29) .. controls (6.95,-1.4) and (3.31,-0.3) .. (0,0) .. controls (3.31,0.3) and (6.95,1.4) .. (10.93,3.29)   ;
\draw    (355.43,150.29) -- (322,116) ;
\draw [shift={(343.6,138.15)}, rotate = 225.73] [color={rgb, 255:red, 0; green, 0; blue, 0 }  ][line width=0.75]    (10.93,-3.29) .. controls (6.95,-1.4) and (3.31,-0.3) .. (0,0) .. controls (3.31,0.3) and (6.95,1.4) .. (10.93,3.29)   ;
\draw    (323,186) -- (288.43,150.29) ;
\draw [shift={(310.58,173.17)}, rotate = 225.93] [color={rgb, 255:red, 0; green, 0; blue, 0 }  ][line width=0.75]    (10.93,-3.29) .. controls (6.95,-1.4) and (3.31,-0.3) .. (0,0) .. controls (3.31,0.3) and (6.95,1.4) .. (10.93,3.29)   ;
\draw    (322,116) -- (321.93,150.29) ;
\draw [shift={(321.95,139.14)}, rotate = 270.12] [color={rgb, 255:red, 0; green, 0; blue, 0 }  ][line width=0.75]    (10.93,-3.29) .. controls (6.95,-1.4) and (3.31,-0.3) .. (0,0) .. controls (3.31,0.3) and (6.95,1.4) .. (10.93,3.29)   ;
\draw    (323,186) -- (321.93,150.29) ;
\draw [shift={(322.28,162.15)}, rotate = 88.28] [color={rgb, 255:red, 0; green, 0; blue, 0 }  ][line width=0.75]    (10.93,-3.29) .. controls (6.95,-1.4) and (3.31,-0.3) .. (0,0) .. controls (3.31,0.3) and (6.95,1.4) .. (10.93,3.29)   ;
\draw  [dash pattern={on 0.84pt off 2.51pt}]  (322.43,86.29) -- (322.43,122.29) ;
\draw [shift={(322.43,84.29)}, rotate = 90] [color={rgb, 255:red, 0; green, 0; blue, 0 }  ][line width=0.75]    (10.93,-3.29) .. controls (6.95,-1.4) and (3.31,-0.3) .. (0,0) .. controls (3.31,0.3) and (6.95,1.4) .. (10.93,3.29)   ;
\draw  [dash pattern={on 0.84pt off 2.51pt}]  (321.43,178.29) -- (321.43,213.29) ;
\draw  [dash pattern={on 0.84pt off 2.51pt}] (355.43,150.29) .. controls (355.43,133.44) and (369.08,119.79) .. (385.93,119.79) .. controls (402.77,119.79) and (416.43,133.44) .. (416.43,150.29) .. controls (416.43,167.13) and (402.77,180.79) .. (385.93,180.79) .. controls (369.08,180.79) and (355.43,167.13) .. (355.43,150.29) -- cycle ;
\draw  [dash pattern={on 0.84pt off 2.51pt}] (227.43,150.29) .. controls (227.43,133.44) and (241.08,119.79) .. (257.93,119.79) .. controls (274.77,119.79) and (288.43,133.44) .. (288.43,150.29) .. controls (288.43,167.13) and (274.77,180.79) .. (257.93,180.79) .. controls (241.08,180.79) and (227.43,167.13) .. (227.43,150.29) -- cycle ;
\draw (352.43,155.69) node [anchor=north west][inner sep=0.75pt]  [font=\tiny]  {$e_{r}$};
\draw (410.43,155.69) node [anchor=north west][inner sep=0.75pt]  [font=\tiny]  {$h_{r}$};
\draw (500.43,153.69) node [anchor=north west][inner sep=0.75pt]  [font=\scriptsize]  {$\re z$};
\draw (322.43,67.69) node [anchor=north west][inner sep=0.75pt]  [font=\scriptsize]  {$\im z$};
\draw (484,106.4) node [anchor=north west][inner sep=0.75pt]  [font=\scriptsize]  {$\Sigma _{1}$};
\draw (193,109.4) node [anchor=north west][inner sep=0.75pt]  [font=\scriptsize]  {$\Sigma {_{4}}$};
\draw (191.93,169.19) node [anchor=north west][inner sep=0.75pt]  [font=\scriptsize]  {$\Sigma _{4}^{*}$};
\draw (486,180.4) node [anchor=north west][inner sep=0.75pt]  [font=\scriptsize]  {$\Sigma _{1}^{*}$};
\draw (338,117.4) node [anchor=north west][inner sep=0.75pt]  [font=\scriptsize]  {$\Sigma _{2}$};
\draw (335.2,170.4) node [anchor=north west][inner sep=0.75pt]  [font=\scriptsize]  {$\Sigma _{2}^{*}$};
\draw (285,115.4) node [anchor=north west][inner sep=0.75pt]  [font=\scriptsize]  {$\Sigma _{3}$};
\draw (290,167.4) node [anchor=north west][inner sep=0.75pt]  [font=\scriptsize]  {$\Sigma _{3}^{*}$};
\draw (276.43,155.69) node [anchor=north west][inner sep=0.75pt]  [font=\tiny]  {$h_{l}$};
\draw (222.43,155.69) node [anchor=north west][inner sep=0.75pt]  [font=\tiny]  {$e_{l}$};
\draw (316,103.4) node [anchor=north west][inner sep=0.75pt]  [font=\scriptsize]  {$\Sigma _{23}$};
\draw (315,186.4) node [anchor=north west][inner sep=0.75pt]  [font=\scriptsize]  {$\Sigma _{23}^{*}$};
\draw (464,129.4) node [anchor=north west][inner sep=0.75pt]  [font=\scriptsize]  {$\Omega _{1}$};
\draw (465,152.4) node [anchor=north west][inner sep=0.75pt]  [font=\scriptsize]  {$\Omega _{1}^{*}$};
\draw (322.18,130) node [anchor=north west][inner sep=0.75pt]  [font=\scriptsize]  {$\Omega _{2}$};
\draw (323,150.4) node [anchor=north west][inner sep=0.75pt]  [font=\scriptsize]  {$\Omega _{2}^{*}$};
\draw (302.18,132.69) node [anchor=north west][inner sep=0.75pt]  [font=\scriptsize]  {$\Omega _{3}$};
\draw (303.18,150.69) node [anchor=north west][inner sep=0.75pt]  [font=\scriptsize]  {$\Omega _{3}^{*}$};
\draw (160.18,132.69) node [anchor=north west][inner sep=0.75pt]  [font=\scriptsize]  {$\Omega _{4}$};
\draw (159.18,151.69) node [anchor=north west][inner sep=0.75pt]  [font=\scriptsize]  {$\Omega _{4}^{*}$};
\end{tikzpicture}
\caption{ The jump contour of the $\bar{\partial}$-RH problem for $M^{(4)}$.}\label{fig: jumpM^{(4)} 3rd}
\end{center}
\end{figure}
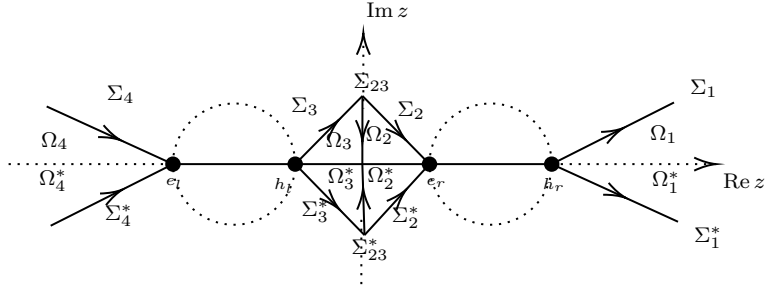

The above mixed $\bar{\partial}$-RH problem can again be decomposed into a pure RH problem for $N$  and a pure $\bar{\partial}$-problem. As $t \to +\infty$, the main contribution to $M^{(4)}$ comes from $N$. Moreover, the jump matrix of $N$ tends to the identity matrix exponentially fast expect for that on the intervals $(e_r,h_r)$ and $(e_l,h_l)$, which leads us to consider the following RH problem for  $N^{(j)}$, $j\in\{r,l\}$.

\begin{RHP}
	\hfill
	\begin{itemize}
		\item[$\bullet$] $N^{(j)}(z)$ is holomorphic for $z\in\mathbb{C}\setminus [e_j,h_j]$.
		\item[$\bullet$] For $z \in (e_j, h_j)$, we have
		\begin{equation}
			N^{(j)}_{+}(z)=N^{(j)}_{-}(z)V^{(j)}(z),
		\end{equation}	
		where
		\begin{equation*}
			V^{(j)}(z)=\left(\begin{array}{cc}
				1-|R(z)|^2 & R(z)e^{2i\theta(z)}\\
				-\bar{R}(z)e^{-2i\theta(z)} & 1
			\end{array}\right).
		\end{equation*}
		\item[$\bullet$]
        As $z\rightarrow\infty$ in
		$\mathbb{C}\setminus [e_j,h_j]$, we have
		$N^{(j)}(z)=I+\mathcal{O}(z^{-1})$.
	\end{itemize}
\end{RHP}
In conclusion, we have
\begin{align}\label{equ: M4}
	M^{(4)}(z)=\left(N^{(r)}(z)+N^{(l)}(z)\right)\left(I+o(1)\right)-I, \qquad t \to +\infty.
\end{align}
We will give a detailed asymptotic analysis of the RH problem for $N^{(r)}$ below, which also fits to $N^{(l)}$.
\begin{remark}
We simply use $o(1)$ as the error bound in \eqref{equ: M4}, which essentially consists of the $\bar{\partial}$ error from  the pure $\bar{\partial}$ problem as well as the exponential error by opening $\bar{\partial}$ lenses.  From now on, we ignore delicate error analysis and use $o(1)$ to represent the error term. The reason is that, on one hand, it limits the length of the paper; on the other hand, we are mainly concerned
with the subsequent term after the background wave in the asymptotic formula. Be that as it may, we still point out the sources of the error term whenever possible.
\end{remark}

\subsection{Asymptotic analysis of the RH problem for $N^{(r)}$}
To solve the RH problem for $N^{(r)}$ as $t\to +\infty$, we introduce the scaled spectral variable
\begin{align}\label{equ:3rdtranhat{k}scale}
	\hat{k}=\sqrt{\frac{12p}{q}}(2-\hat{\xi})^{-1/2}(z-1),
\end{align}
where $p$ and $q$ are any two fixed positive constants. It is then readily seen that if $2\cdot 3^{\frac{1}{3}}(\log t)^{2/3}<(2-\hat{\xi})t^{2/3}<C(\log t)^{2/3}$, $C>2\cdot 3^{\frac{1}{3}}$,
\begin{align}
	\theta(z(\hat{k}))=\tau\hat{\theta}(\hat{k})+\mathcal{O}\left( (\log t)^{4/3}t^{-1/3}\hat{k}^2\right), \qquad t\to +\infty,
 \label{equ:theta5.3}
\end{align}
where
\begin{align}\label{equ:tau}
	\tau:=t(2-\hat{\xi})^{3/2}\sqrt{\frac{q}{48p^3}}
\end{align}
parametrizes the space-time region and
\begin{align}\label{def:hattheta}
	 \hat{\theta}(\hat{k})=p\hat{k}-q\hat{k}^3
\end{align}

By \eqref{def:RIIIhat}, \eqref{def:Url}, \eqref{def:deltat}, \eqref{equ:3rdtranhat{k}scale} and  \eqref{equ:theta5.3}, we have
\begin{align}
	\theta(z(\hat{k}))=\tau\hat{\theta}(\hat{k})+\mathcal{O}\left( t^{-2\delta_3}\right),
\end{align}
and
\begin{align}
	\hat{e}_r:&=\sqrt{\frac{12p}{q}}(2-\hat{\xi})^{-1/2}(e_r-1)=\mathcal{O}\left(t^{1/6-\delta_3}(\log t)^{-2/3}\right),
\\
\hat{h}_r:&=\sqrt{\frac{12p}{q}}(2-\hat{\xi})^{-1/2}(h_r-1)=\mathcal{O}\left(t^{1/6-\delta_3}(\log t)^{-2/3}\right).
\end{align}
Thus, $N^{(r)}(\hat{k})=N^{(r)}(z(\hat{k}))$ is approximated by the
following RH problem for $N^{(r1)}(\hat{k})$.
\begin{RHP}\label{RHP:Nr1}
	\hfill
	\begin{itemize}
		\item[$\bullet$] $N^{(r1)}(\hat{k})$ is holomorphic for
		$\hat{k}\in\mathbb{C}\setminus \mathbb{R}$.
		\item[$\bullet$] For $\hat{k}\in \mathbb{R}$, we have
		\begin{equation}
		N^{(r1)}_{+}(\hat{k})=N^{(r1)}_{-}(\hat{k})\hat{V}^{(r1)}(\hat{k}),
		\end{equation}
		where
		\begin{align}
			\hat{V}^{(r1)}(\hat{k})=\left(\begin{array}{cc}
				1-|R(z(\hat{k}))|^2 & R(z(\hat{k}))e^{2i\tau\hat{\theta}(z(\hat{k}))}\\
				-\bar{R}(z(\hat{k}))e^{-2i\tau\hat{\theta}(z(\hat{k}))} & 1
			\end{array}\right).
		\end{align}
		\item[$\bullet$] As $\hat{k}\rightarrow\infty$ in $\mathbb{C}\setminus \mathbb{R}$, we have $N^{(r1)}(\hat{k})=I+\mathcal{O}(\hat{k}^{-1})$.
	\end{itemize}
\end{RHP}
Moreover, it is readily seen that
\begin{align}
	N^{(r)}(\hat{k})=N^{(r1)}(\hat{k})\left(I+o(1)\right), \qquad t\to +\infty.
\label{equ:trans Nr1}
\end{align}
We next introduce the so-called $g$-function to further analyze the RH problem for $N^{(r1)}$.

\subsubsection*{The $g$-function mechanism}\label{subsec: g-function}

We start with a hyperelliptic surface $\mathcal{M}$ defined as the set of all points $P:=(w,\hat{k})\in\mathbb{C}^2$ such that
\begin{align}\label{eq:algeb}
	w^2(\hat{k})=(\hat{k}^2-a^2)(\hat{k}^2-b^2),
\end{align}
where $0\leqslant a\leqslant b$. Together with two points $\infty^+$ and $\infty^-$ at infinity, $\mathcal{M}$ is a compact Riemann surface of genus 1. Its branch cuts and the canonical homology
basis $\{a_1, b_1\}$ ($a_1$-cycle and $b_1$-cycle) are illustrated in Figure \ref{fig: fig g}.
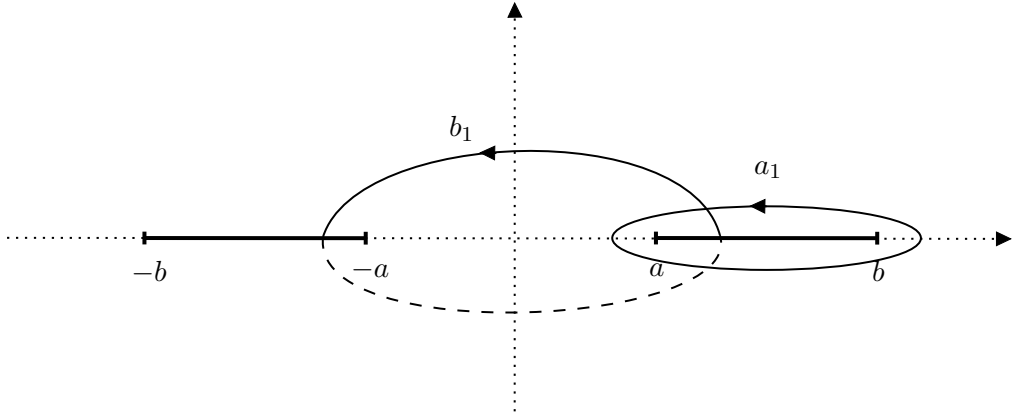
\begin{figure}[htbp]
\centering
\tikzset{every picture/.style={line width=0.75pt}} 
\begin{tikzpicture}[x=0.75pt,y=0.75pt,yscale=-0.8,xscale=0.8]
	\draw  [dash pattern={on 0.84pt off 2.51pt}]  (9.8,170.87) -- (632.8,171.53) ;
	\draw [shift={(635.8,171.53)}, rotate = 180.06] [fill={rgb, 255:red, 0; green, 0; blue, 0 }  ][line width=0.08]  [draw opacity=0] (8.93,-4.29) -- (0,0) -- (8.93,4.29) -- cycle    ;
	\draw  [dash pattern={on 0.84pt off 2.51pt}]  (326.4,280) -- (326.44,26) ;
	\draw [shift={(326.4,23.2)}, rotate = 89.17] [fill={rgb, 255:red, 0; green, 0; blue, 0 }  ][line width=0.08]  [draw opacity=0] (8.93,-4.29) -- (0,0) -- (8.93,4.29) -- cycle    ;
	\draw [line width=1.5]    (95.5,171) -- (233.5,171.1) ;
	\draw [shift={(233.5,171.1)}, rotate = 180.04] [color={rgb, 255:red, 0; green, 0; blue, 0 }  ][line width=1.5]    (0,4.02) -- (0,-4.02)   ;
	\draw [shift={(95.5,171)}, rotate = 180.04] [color={rgb, 255:red, 0; green, 0; blue, 0 }  ][line width=1.5]    (0,4.02) -- (0,-4.02)   ;
	\draw [line width=1.5]    (414.5,171) -- (552.5,171.1) ;
	\draw [shift={(552.5,171.1)}, rotate = 180.04] [color={rgb, 255:red, 0; green, 0; blue, 0 }  ][line width=1.5]    (0,4.02) -- (0,-4.02)   ;
	\draw [shift={(414.5,171)}, rotate = 180.04] [color={rgb, 255:red, 0; green, 0; blue, 0 }  ][line width=1.5]    (0,4.02) -- (0,-4.02)   ;
	\draw   (387.25,171.05) .. controls (387.25,160) and (430.34,151.05) .. (483.5,151.05) .. controls (536.66,151.05) and (579.75,160) .. (579.75,171.05) .. controls (579.75,182.1) and (536.66,191.05) .. (483.5,191.05) .. controls (430.34,191.05) and (387.25,182.1) .. (387.25,171.05) -- cycle ;
	\draw    (476.5,150.91) -- (483.5,151.05) ;
	\draw [shift={(473.5,150.85)}, rotate = 1.15] [fill={rgb, 255:red, 0; green, 0; blue, 0 }  ][line width=0.08]  [draw opacity=0] (8.93,-4.29) -- (0,0) -- (8.93,4.29) -- cycle    ;
	\draw    (206.9,170.2) .. controls (228.4,102.2) and (434.4,94.7) .. (454.4,169.2) ;
	\draw    (308,117.41) -- (315,117.55) ;
	\draw [shift={(305,117.35)}, rotate = 1.15] [fill={rgb, 255:red, 0; green, 0; blue, 0 }  ][line width=0.08]  [draw opacity=0] (8.93,-4.29) -- (0,0) -- (8.93,4.29) -- cycle    ;
	\draw  [dash pattern={on 4.5pt off 4.5pt}]  (206.9,170.2) .. controls (198.4,240.7) and (471.4,226.7) .. (454.4,169.2) ;
	\draw (85.5,183.5) node [anchor=north west][inner sep=0.75pt]    {$-b$};
	\draw (222.5,183) node [anchor=north west][inner sep=0.75pt]    {$-a$};
	\draw (408.4,185) node [anchor=north west][inner sep=0.75pt]    {$a$};
	\draw (547.4,183.5) node [anchor=north west][inner sep=0.75pt]    {$b$};
	\draw (473.9,120.9) node [anchor=north west][inner sep=0.75pt]    {$a_{1}$};
	\draw (283.6,92.4) node [anchor=north west][inner sep=0.75pt]    {$b_{1}$};
\end{tikzpicture}
	\caption{ The homology basis of the Riemann surface $\mathcal{M}$ associated with $w(\hat{k})=\sqrt{(\hat{k}^2-a^2)(\hat{k}^2-b^2)}$.}
	\label{fig: fig g}.
\end{figure}

The $g$-function is then defined by
\begin{align}\label{equ:g func}
	g(\hat{k}):=-3q\int_{b}^{\hat{k}} w(\zeta)\dif\zeta+B_1/4,
\end{align}
where
\begin{equation}\label{def:B1}
	  B_1=-3q\oint_{b_1}w(\zeta)\dif\zeta.
\end{equation}
It is easily seen that $g'(a)=g'(b)=0$. To determine the parameters $a$ and $b$, we require that $g(\hat{k}) \sim \theta(\hat{k})$ as $\hat{k}\rightarrow\infty$, which yields
\begin{equation}
	a^2+b^2=\frac{2p}{3q}.\label{equ: a,b con1}
\end{equation}
The other condition is given by the equation
\begin{align}
	\int_a^b\sqrt{(\zeta^2-a^2)(b^2-\zeta^2)}\dif\zeta=-\frac{2\sqrt{3}p^{3/2}\log (2-\hat{\xi})}{3q^{3/2}(2-\hat{\xi})^{3/2}t}. \label{equ:a,b con2}
\end{align}
By \eqref{def:RIIIhat}, it's readily seen that
\begin{equation}
	0<-\frac{2\sqrt{3}p^{3/2}\log (2-\hat{\xi})}{3q^{3/2}(2-\hat{\xi})^{3/2}t}<\sqrt{\frac{2p^3}{81q^3}}.
\end{equation}
With the aid of \eqref{equ: a,b con1},  $\int_a^b\sqrt{(\zeta^2-a^2)(b^2-\zeta^2)}\dif\zeta$ is a monotone function with
respect to $a$, which decreases from $\sqrt{\frac{2p^3}{81q^3}}$ to $0$ when $a$ increases from $0$ to $\sqrt{\frac{p}{3q}}$. Thus, we can indeed determine the non-negative constants $a$ and $b$ uniquely from \eqref{equ: a,b con1} and \eqref{equ:a,b con2}.
\begin{remark}\label{remark a,b}
It is clear that both $a$ and $b$ depend on the choice of $p$ and $q$. If $(\tilde{p},\tilde{q})$ is the other pair of positive constants, one can check that
	$$ \tilde{a}=\left( \frac{q\tilde{p}}{p\tilde{q}}\right)^{1/2} a, \qquad \tilde{b}=\left( \frac{q\tilde{p}}{p\tilde{q}}\right)^{1/2} b$$
satisfies equations \eqref{equ: a,b con1} and \eqref{equ:a,b con2} with $(p,q)$ therein replaced by $(\tilde{p},\tilde{q})$.
\end{remark}

From the definition of the $g$-function, the following proposition is immediate.
\begin{Proposition}\label{prop:gfunc}
The function $g$ defined in \eqref{equ:g func} satisfies the following properties.
	\begin{itemize}
		\item[$\bullet$] $g(\hat{k})$ is holomorphic for $\hat{k}\in\mathbb{C}\setminus\left[-b,b\right]$.
		\item[$\bullet$] $g(\hat{k})$ satisfies the jump relations
		\begin{subequations}
			\begin{align}	
				&g_+(\hat{k})+g_-(\hat{k})=B_1/2, &\hat{k}\in(a,b),\label{equ; jump g1}\\
				&g_+(\hat{k})+g_-(\hat{k})=-B_1/2, &\hat{k}\in(-b,-a),\label{equ; jump g2}\\
				&g_+(\hat{k})-g_-(\hat{k})=A_1, &\hat{k}\in(-a,a),\label{equ; jump g3}
			\end{align}
		\end{subequations}
where
\begin{equation}\label{def:A1}
A_1=-3q\oint_{a_1}w(\zeta)\dif\zeta
\end{equation}
and $B_1$ is given in \eqref{def:B1}.
		\item[$\bullet$] As $\hat{k}\rightarrow\infty$ in $\mathbb{C}\setminus [-b,b]$, we have $g(\hat{k})=\hat{\theta}(\hat{k})+\mathcal{O}( \hat{k}^{-1} )$,
where $\hat{\theta}(\hat{k})$ is defined in \eqref{def:hattheta}.
	\end{itemize}
\end{Proposition}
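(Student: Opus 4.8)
The plan is to verify each of the three bulleted properties directly from the definition \eqref{equ:g func}, treating $g$ as an explicit Abelian integral on the Riemann surface $\mathcal{M}$. First I would establish holomorphicity: away from $[-b,b]$ the integrand $w(\zeta)=\sqrt{(\zeta^2-a^2)(\zeta^2-b^2)}$ has a single-valued branch (the branch points are precisely $\pm a$ and $\pm b$, which all lie in $[-b,b]$), and since the closed contour integral of $w$ around any loop avoiding $[-b,b]$ is either $0$ (contractible loops) or a period ($a_1$- or $b_1$-cycle), the constant $B_1/4$ in \eqref{equ:g func} is chosen exactly so that $g$ is well defined on $\mathbb{C}\setminus[-b,b]$. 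Concretely, one checks that the path-independence ambiguity coming from encircling the cut $[a,b]$ is $-3q\oint_{a_1}w=A_1$, but this does not affect holomorphicity on the slit plane since $(-a,a)$ is itself part of the cut; the ambiguity coming from the $b_1$-cycle is $B_1$, which is killed modulo the jump structure. I would remark that $g'(\hat{k})=-3qw(\hat{k})$, so $g'(a)=g'(b)=0$ follows instantly, and this derivative form is the cleanest object to manipulate.

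Next I would derive the jump relations \eqref{equ; jump g1}--\eqref{equ; jump g3}. The key mechanism is the standard behavior of $w_\pm$ on the three subintervals of $[-b,b]$: on $(-a,a)$ one has $w_+(\hat{k})=-w_-(\hat{k})$ (the square root changes sign), whereas on $(a,b)$ and $(-b,-a)$ one has $w_+(\hat{k})=w_-(\hat{k})$ but $w$ is purely imaginary there. For $\hat{k}\in(-a,a)$: writing $g_+(\hat{k})-g_-(\hat{k})$ as $-3q$ times the integral of $w_+-w_-$ from $b$ to $\hat{k}$ along the two sides of the cut, the contributions over $(a,b)$ cancel (since $w_+=w_-$ there), leaving a contour that wraps the segment $(-a,a)$, which evaluates to $-3q\oint_{a_1}w=A_1$; hence \eqref{equ; jump g3}. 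For $\hat{k}\in(a,b)$: I would compute $g_+(\hat{k})+g_-(\hat{k})$ by noting that $g_+(\hat{k})+g_-(\hat{k}) = -3q\left(\int_{b}^{\hat{k}}w_+\,d\zeta + \int_b^{\hat{k}} w_-\,d\zeta\right)+B_1/2$, and since $w_+=-w_-$ on $(a,b)$ (here I use that on $(a,b)$ the two boundary values of $w$ from the slit plane differ by a sign — this is the subtlety that the cut structure of $\mathcal{M}$ consists of $[-b,-a]\cup[a,b]$ for the ``physical'' sheet, but the slit-plane branch used in \eqref{def:Ahatk} and here is cut along all of $[-b,b]$), the integral telescopes to the constant $B_1/2$; this gives \eqref{equ; jump g1}. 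The relation \eqref{equ; jump g2} follows from \eqref{equ; jump g1} together with the evenness/oddness symmetry $w(-\hat{k})=w(\hat{k})$, which forces $g(-\hat{k})=-g(\hat{k})+\text{const}$, the constant being pinned by the normalization; alternatively one repeats the same telescoping argument on $(-b,-a)$.

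Finally, the asymptotic statement $g(\hat{k})=\hat{\theta}(\hat{k})+\mathcal{O}(\hat{k}^{-1})$ as $\hat{k}\to\infty$ is where condition \eqref{equ: a,b con1} enters. I would expand $w(\hat{k})=\hat{k}^2\sqrt{(1-a^2\hat{k}^{-2})(1-b^2\hat{k}^{-2})}=\hat{k}^2-\tfrac{a^2+b^2}{2}+\mathcal{O}(\hat{k}^{-2})$, integrate term by term from $b$ to $\hat{k}$, multiply by $-3q$, and compare with $\hat{\theta}(\hat{k})=p\hat{k}-q\hat{k}^3$: the $\hat{k}^3$ coefficients match automatically ($-3q\cdot\tfrac13=-q$), and the $\hat{k}^1$ coefficients match precisely when $-3q\cdot\left(-\tfrac{a^2+b^2}{2}\right)=p$, i.e. $a^2+b^2=\tfrac{2p}{3q}$, which is \eqref{equ: a,b con1}; the constant term (the $\hat{k}^0$ part) is absorbed by the choice of additive constant $B_1/4$, and one should check it is consistent — this is guaranteed because $B_1$ is itself defined as a period of the same differential, so there is no overdetermination. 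The main obstacle I anticipate is purely bookkeeping: keeping the branch conventions for $w_\pm$ and the orientation of the $a_1$- and $b_1$-cycles mutually consistent so that the signs in \eqref{equ; jump g1}--\eqref{equ; jump g3} and the definitions \eqref{def:B1}, \eqref{def:A1} come out exactly as stated, rather than with a spurious sign or factor of $2\pi i$. Everything else is a routine residue/telescoping computation, and no genuinely hard analysis is needed beyond what is already set up.
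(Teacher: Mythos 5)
Your overall plan is the right one — the paper treats this proposition as ``immediate from the definition'' and supplies no proof, and your strategy of reading off the jump relations from the sign change of $w_\pm$ across the cuts, the $(-a,a)$ jump from a period of $-3qw\,d\zeta$, and the asymptotics from the expansion of $w$ together with \eqref{equ: a,b con1} is exactly the standard verification that such a proposition elides. However, you contradict yourself on the branch-cut structure and this actually breaks your $(-a,a)$ argument as written. In the $(a,b)$ paragraph you correctly use $w_+=-w_-$ on $(a,b)$, but one paragraph earlier, when handling $(-a,a)$, you assert that $w_+=w_-$ on $(a,b)$ and claim the cancellation happens there, leaving ``a contour that wraps the segment $(-a,a)$.'' This is backwards. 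The branch cuts of $w$ are $[-b,-a]$ and $[a,b]$ (see Figure~\ref{fig: fig g}); $w$ has \emph{no} cut on $(-a,a)$ and is continuous there, so $w_+=w_-$ holds on $(-a,a)$, not on $(a,b)$. For $\hat k\in(-a,a)$, the closed contour obtained by concatenating the path from $b$ to $\hat k$ above the axis with the reversed path below the axis encircles $[\hat k,b]$, but since $w$ is analytic on $(\hat k, a)$ this contour deforms onto a loop around the cut $[a,b]$ — i.e.\ onto $a_1$ — giving $g_+-g_-=-3q\oint_{a_1}w=A_1$. The loop does not wrap $(-a,a)$; that interval has no singularity of $w$.

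The asymptotic claim also needs a cleaner argument than ``guaranteed because $B_1$ is itself defined as a period of the same differential.'' That reasoning does not actually rule out a nonzero constant term in $g-\hat\theta$ at infinity. The efficient route is to first establish the symmetry $g(-\hat k)=-g(\hat k)$: using $w(-\zeta)=w(\zeta)$ one gets $g(-\hat k)=-g(\hat k)+\bigl(B_1/2+3q\int_{-b}^{b}w\,d\zeta\bigr)$, and the bracketed constant vanishes because $\oint_{b_1}w=2\int_{-a}^{a}w\,d\zeta$ while the cut contributions $\int_{-b}^{-a}w_+$ and $\int_{a}^{b}w_+$ cancel by $w_+(-\zeta)=-w_+(\zeta)$ for $\zeta\in(a,b)$. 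This oddness simultaneously gives \eqref{equ; jump g2} from \eqref{equ; jump g1} (as you suggested) and forces the constant term of $g-\hat\theta$ at $\infty$ to vanish, so that $g=\hat\theta+\mathcal O(\hat k^{-1})$ once $a^2+b^2=2p/(3q)$ kills the $\hat k^1$-coefficient mismatch. With these two corrections your proof is complete and correct.
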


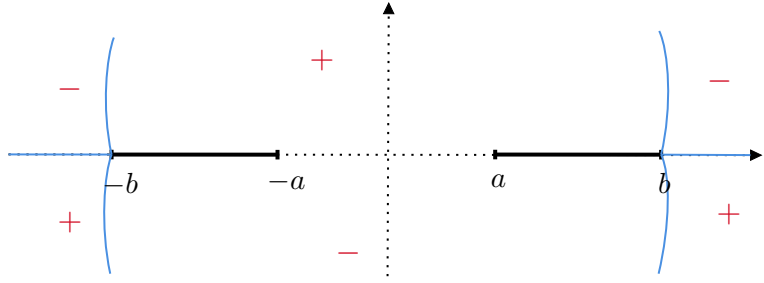
\begin{figure}[htbp]
\begin{center}
\tikzset{every picture/.style={line width=0.75pt}} 
\begin{tikzpicture}[x=0.75pt,y=0.75pt,yscale=-0.6,xscale=0.6]
	\draw  [dash pattern={on 0.84pt off 2.51pt}]  (9.8,170.87) -- (632.8,171.53) ;
	\draw [shift={(635.8,171.53)}, rotate = 180.06] [fill={rgb, 255:red, 0; green, 0; blue, 0 }  ][line width=0.08]  [draw opacity=0] (8.93,-4.29) -- (0,0) -- (8.93,4.29) -- cycle    ;
	\draw  [dash pattern={on 0.84pt off 2.51pt}]  (325,273) -- (325.99,47) ;
	\draw [shift={(326,44)}, rotate = 90.25] [fill={rgb, 255:red, 0; green, 0; blue, 0 }  ][line width=0.08]  [draw opacity=0] (8.93,-4.29) -- (0,0) -- (8.93,4.29) -- cycle    ;
	\draw [line width=1.5]    (95.5,171) -- (233.5,171.1) ;
	\draw [shift={(233.5,171.1)}, rotate = 180.04] [color={rgb, 255:red, 0; green, 0; blue, 0 }  ][line width=1.5]    (0,4.02) -- (0,-4.02)   ;
	\draw [shift={(95.5,171)}, rotate = 180.04] [color={rgb, 255:red, 0; green, 0; blue, 0 }  ][line width=1.5]    (0,4.02) -- (0,-4.02)   ;
	\draw [line width=1.5]    (414.5,171) -- (552.5,171.1) ;
	\draw [shift={(552.5,171.1)}, rotate = 180.04] [color={rgb, 255:red, 0; green, 0; blue, 0 }  ][line width=1.5]    (0,4.02) -- (0,-4.02)   ;
	\draw [shift={(414.5,171)}, rotate = 180.04] [color={rgb, 255:red, 0; green, 0; blue, 0 }  ][line width=1.5]    (0,4.02) -- (0,-4.02)   ;
	\draw [color={rgb, 255:red, 74; green, 144; blue, 226 }  ,draw opacity=1 ]   (97.4,72.9) .. controls (91.4,88.9) and (87,131.2) .. (95.5,171) ;
	\draw [color={rgb, 255:red, 74; green, 144; blue, 226 }  ,draw opacity=1 ]   (95.5,171) .. controls (89.5,187) and (85.9,231.1) .. (94.4,270.9) ;
	\draw [color={rgb, 255:red, 74; green, 144; blue, 226 }  ,draw opacity=1 ]   (9.8,170.87) -- (95.5,171) ;
	\draw [color={rgb, 255:red, 74; green, 144; blue, 226 }  ,draw opacity=1 ]   (552.5,171.1) -- (626.76,171.46) ;
	\draw [color={rgb, 255:red, 74; green, 144; blue, 226 }  ,draw opacity=1 ]   (550.9,67.4) .. controls (558.9,84.4) and (561.9,131) .. (552.5,171.1) ;
	\draw [color={rgb, 255:red, 74; green, 144; blue, 226 }  ,draw opacity=1 ]   (552.5,171.1) .. controls (560.5,188.1) and (559.8,230.8) .. (550.4,270.9) ;
	\draw (85.5,183.5) node [anchor=north west][inner sep=0.75pt]  [xscale=1,yscale=1]  {$-b$};
	\draw (222.5,183) node [anchor=north west][inner sep=0.75pt]  [xscale=1,yscale=1]  {$-a$};
	\draw (408.4,185) node [anchor=north west][inner sep=0.75pt]  [xscale=1,yscale=1]  {$a$};
	\draw (547.4,183.5) node [anchor=north west][inner sep=0.75pt]  [xscale=1,yscale=1]  {$b$};
	\draw (257.9,80.4) node [anchor=north west][inner sep=0.75pt]  [font=\large,xscale=1,yscale=1]  {$\textcolor[rgb]{0.82,0.01,0.11}{+}$};
	\draw (596.4,209.4) node [anchor=north west][inner sep=0.75pt]  [font=\large,xscale=1,yscale=1]  {$\textcolor[rgb]{0.82,0.01,0.11}{+}$};
	\draw (48.4,216.4) node [anchor=north west][inner sep=0.75pt]  [font=\large,xscale=1,yscale=1]  {$\textcolor[rgb]{0.82,0.01,0.11}{+}$};
	\draw (47.9,104.4) node [anchor=north west][inner sep=0.75pt]  [font=\large,xscale=1,yscale=1]  {$\textcolor[rgb]{0.82,0.01,0.11}{-}$};
	\draw (279.4,242.1) node [anchor=north west][inner sep=0.75pt]  [font=\large,xscale=1,yscale=1]  {$\textcolor[rgb]{0.82,0.01,0.11}{-}$};
	\draw (588.4,98.4) node [anchor=north west][inner sep=0.75pt]  [font=\large,xscale=1,yscale=1]  {$\textcolor[rgb]{0.82,0.01,0.11}{-}$};
\end{tikzpicture}
	\caption{ Signature table for $\im g(\hat{k})$. The ``$+$'' represents where $\im g(\hat{k})>0$ and ``$-$'' represents where $\im g(\hat{k})<0$. The blue curve
	is the critical line where $\im g(\hat{k})=0$.}\label{fig: sign g}
\end{center}
\end{figure}

With the help of the $g$-function, we now define
\begin{equation}\label{equ: transtoNr2}
	N^{(r2)}(\hat{k})=N^{(r1)}(\hat{k})e^{i\tau(\theta(\hat{k})-g(\hat{k}))\sigma_3}.
\end{equation}
From RH problem \ref{RHP:Nr1} for $N^{(r1)}$ and Proposition \ref{prop:gfunc}, it is readily seen that
$N^{(r2)}$ satisfies the following RH problem.
\begin{RHP}
	\hfill
	\begin{itemize}
		\item[$\bullet$] $N^{(r2)}(\hat{k})$ is holomorphic for $\hat{k}\in\mathbb{C}\setminus \mathbb{R}$.
		\item[$\bullet$] For $\hat{k}\in \mathbb{R}$, we have
		\begin{align}
			N^{(r2)}_{+}(\hat{k})=N^{(r2)}_{-}(\hat{k})\hat{V}^{(r2)}(\hat{k}),
		\end{align}
		where
		\begin{align}\label{equ: jumpV(r2)}
			\hat{V}^{(r2)}(\hat{k})
&=\begin{pmatrix}
				(1-|R(z(\hat{k}))|^2)e^{-i\tau\left(g_{+}(\hat{k})-g_{-}(\hat{k})\right)} & R(z(\hat{k}))e^{i\tau\left(g_{+}(\hat{k})+g_{-}(\hat{k})\right)} \\
				-\bar{R}(z(\hat{k}))e^{-i\tau \left(g_{+}(\hat{k})+g_{-}(\hat{k})\right)} & e^{i\tau\left(g_{+}(\hat{k})-g_{-}(\hat{k})\right)}
			\end{pmatrix}
\nonumber
\\
&=\left\{\begin{array}{llll}
			 \left(\begin{array}{cc}
			(1-|R(z(\hat{k}))|^2)e^{-i\tau(g_+(\hat{k})-g_-(\hat{k}))} & R(z(\hat{k}))e^{i\tau B_1/2}\\
			-\bar{R}(z(\hat{k}))e^{-i\tau B_1/2} & e^{i\tau(g_+(\hat{k})-g_-(\hat{k}))}
			\end{array}\right), & \hat{k}\in(a, b),\\[10pt]
			\left(\begin{array}{cc}
			(1-|R(z(\hat{k}))|^2)e^{-i\tau(g_+(\hat{k})-g_-(\hat{k}))} & R(z(\hat{k}))e^{-i\tau B_1/2}\\
			-\bar{R}\left(z(\hat{k})\right)e^{i\tau B_1/2} & e^{i\tau\left(g_{+}(\hat{k})-g_{-}(\hat{k})\right)}
			\end{array}\right), & \hat{k}\in (-b, -a),\\[10pt]
			\left(\begin{array}{cc}
			(1-|R(z(\hat{k}))|^2)e^{-i\tau A_1} & R(z(\hat{k}))e^{i\tau \left(g_{+}(\hat{k})+g_{-}(\hat{k})\right)}\\
			-\bar{R}(z(\hat{k}))e^{-i\tau \left(g_{+}(\hat{k})+g_{-}(\hat{k})\right)} & e^{i\tau A_1}
			\end{array}\right), & \hat{k}\in (-a, a), \\[10pt]
			\left(\begin{array}{cc}
			1-|R(z(\hat{k}))|^2 & R(z(\hat{k}))e^{2i\tau g(\hat{k})}\\
			-\bar{R}(z(\hat{k}))e^{-2i\tau g(\hat{k})} & 1
			\end{array}\right), & |\hat{k}|>b.
			\end{array}\right.
		\end{align}
		\item[$\bullet$]As $\hat{k}\rightarrow\infty$ in $\mathbb{C}\setminus \mathbb{R}$, we have $N^{(r2)}(\hat{k})=I+\mathcal{O}(\hat{k}^{-1})$.
	\end{itemize}
\end{RHP}
\begin{subequations}
The jump matrix $\hat{V}^{(r2)}(\hat{k})$ admits the
standard upper/lower triangular factorizations:
	\begin{align}
		&\left(\begin{array}{cc}
			(1-|R(\hat{k})|^2)e^{-i\tau(g_+(\hat{k})-g_-(\hat{k}))} & R(\hat{k})e^{i\tau(g_+(\hat{k})+g_-(\hat{k}))}\\
			-\bar{R}(\hat{k})e^{-i\tau(g_+(\hat{k})+g_-(\hat{k}))} & e^{i\tau(g_+(\hat{k})-g_-(\hat{k}))}
		\end{array}\right)\nonumber\\
	&=\left(\begin{array}{cc}
		1 & R(\hat{k})e^{2i\tau g_-(\hat{k})}\\
			0 & 1
		\end{array}\right)e^{-i\tau(g_+(\hat{k})-g_-(\hat{k}))\sigma_3}\left(\begin{array}{cc}
			1 & 0\\
			-\bar{R}(\hat{k})e^{-2i\tau g_+(\hat{k})} & 1
		\end{array}\right),
\\
	&=\left(\begin{array}{cc}
	1 & 0\\
		\frac{-\bar{R}(\hat{k})e^{-2i\tau g_-(\hat{k})} }{1-|R(\hat{k})|^2}& 1
	\end{array}\right)(	1-|R(\hat{k})|^2)^{\sigma_3}e^{-i\tau(g_+(\hat{k})-g_-(\hat{k}))\sigma_3}\left(\begin{array}{cc}
		1 &\frac{ R(\hat{k})e^{2i\tau g_+(\hat{k})}}{1-|R(\hat{k})|^2}\\
		0 & 1
	\end{array}\right).
	\end{align}
\end{subequations}
This, together with the signature table of $\im g(\hat{k})$ shown in Figure \ref{fig: sign g}, suggests us to open $\bar{\partial}$ lenses as illustrated in Figure \ref{fig: jumpN^{(r4)}}.  Following the same strategy used in Section \ref{subsec:openlensTran1}, we arrive at a new mixed $\bar{\partial}$-RH problem for $N^{(r3)}$, which is well approximated by a pure RH problem for $N^{(r4)}$, i.e.,
\begin{equation}\label{equ: transtoNr4}
	N^{(r3)}(\hat{k})=N^{(r4)}(\hat{k})\left(I+o(1)\right).
\end{equation}
The jump contours of the RH problem for $N^{(r4)}$ are shown in Figure \ref{fig: jumpN^{(r4)}}, and the associated jump matrix $\hat{V}^{(r4)}$ is given by
\begin{align}\label{equ: jumpV(r4)}
	\hat{V}^{(r4)}(\hat{k})=&\left\{\begin{array}{llll}
		(1-|R(\hat{k})|^2)^{\sigma_3}e^{-i\tau(g_+(\hat{k})-g_-(\hat{k}))\sigma_3}, & \hat{k}\in (-a, a), \\[10pt]
		\hat{V}^{(r2)}(\hat{k}),& \hat{k}\in(a, b)\cup (-b, -a),\\[8pt]
	\left(\begin{array}{cc}
		1 & 0\\
		-\bar{R}(\pm b)e^{-2i\tau g(\hat{k})} & 1
	\end{array}\right), & \hat{k}\in\Sigma_{\pm b},\\[12pt]
	\left(\begin{array}{cc}
		1 & R(\pm b)e^{2i\tau g(\hat{k})}\\
		0 & 1
	\end{array}\right),& \hat{k}\in\Sigma_{\pm b}^*,\\[12pt]
	\left(\begin{array}{cc}
		1 &\frac{ R(\pm a)e^{2i\tau g(\hat{k})}}{1-|R(\pm a)|^2}\\
		0 & 1
	\end{array}\right),& \hat{k}\in\Sigma_{\pm a},\\[12pt]
	\left(\begin{array}{cc}
		1 & 0\\
		\frac{-\bar{R}(\pm a)e^{-2i\tau g(\hat{k})} }{1-|R(\pm a)|^2}& 1
	\end{array}\right),& \hat{k}\in\Sigma_{\pm a}^*,\\[12pt]
	\left(\begin{array}{cc}
	1 & (d_{a}(\hat{k})-d_{-a}(\hat{k}))e^{2i\tau g(\hat{k})}\\
	0 & 1
	\end{array}\right), & \hat{k}\in\Sigma_{0},\\[12pt]
	\left(\begin{array}{cc}
	1 & 0\\
	(d_{a}^*(\hat{k})-d_{-a}^*(\hat{k}))e^{-2i\tau g(\hat{k})} & 1
	\end{array}\right),& \hat{k}\in\Sigma_{0}^*,
	\end{array}\right.
	\end{align}
	where the auxiliary function $d_{\pm a}(\hat{k})$ can be similarly defined by \eqref{equ: 2ndtran cons d_j} with the
	boundary condition
	\begin{equation*}
		d_{\pm a}(\hat{k})=\left\{\begin{array}{ll}
			-\frac{ R(\hat{k})}{1-|R(\hat{k})|^2}, & \hat{k}\in \mathbb{R},\\
			-\frac{ R(\pm a)}{1-|R(\pm a)|^2},  &\hat{k}\in \Sigma_{\pm a}.
		\end{array}\right.
	\end{equation*}

\begin{figure}[htbp]
\begin{center}
\tikzset{every picture/.style={line width=0.75pt}} 
\begin{tikzpicture}[x=0.75pt,y=0.75pt,yscale=-1,xscale=1]
\draw    (205.29,133.84) -- (279.58,133.84) ;
\draw [shift={(205.29,133.84)}, rotate = 0] [color={rgb, 255:red, 0; green, 0; blue, 0 }  ][fill={rgb, 255:red, 0; green, 0; blue, 0 }  ][line width=0.75]      (0, 0) circle [x radius= 3.35, y radius= 3.35]   ;
\draw    (361.18,133.84) -- (435.47,133.84) ;
\draw [shift={(435.47,133.84)}, rotate = 0] [color={rgb, 255:red, 0; green, 0; blue, 0 }  ][fill={rgb, 255:red, 0; green, 0; blue, 0 }  ][line width=0.75]      (0, 0) circle [x radius= 3.35, y radius= 3.35]   ;
\draw    (279.58,133.84) -- (361.18,133.84) ;
\draw [shift={(361.18,133.84)}, rotate = 0] [color={rgb, 255:red, 0; green, 0; blue, 0 }  ][fill={rgb, 255:red, 0; green, 0; blue, 0 }  ][line width=0.75]      (0, 0) circle [x radius= 3.35, y radius= 3.35]   ;
\draw [shift={(279.58,133.84)}, rotate = 0] [color={rgb, 255:red, 0; green, 0; blue, 0 }  ][fill={rgb, 255:red, 0; green, 0; blue, 0 }  ][line width=0.75]      (0, 0) circle [x radius= 3.35, y radius= 3.35]   ;
\draw  [dash pattern={on 0.84pt off 2.51pt}]  (435.47,133.84) -- (533.34,133.84) ;
\draw [shift={(535.34,133.84)}, rotate = 180] [color={rgb, 255:red, 0; green, 0; blue, 0 }  ][line width=0.75]    (10.93,-3.29) .. controls (6.95,-1.4) and (3.31,-0.3) .. (0,0) .. controls (3.31,0.3) and (6.95,1.4) .. (10.93,3.29)   ;
\draw  [dash pattern={on 0.84pt off 2.51pt}]  (105.43,133.84) -- (205.29,133.84) ;
\draw    (435.47,133.84) -- (509.76,102.28) ;
\draw [shift={(478.14,115.71)}, rotate = 156.98] [color={rgb, 255:red, 0; green, 0; blue, 0 }  ][line width=0.75]    (10.93,-3.29) .. controls (6.95,-1.4) and (3.31,-0.3) .. (0,0) .. controls (3.31,0.3) and (6.95,1.4) .. (10.93,3.29)   ;
\draw    (435.47,133.84) -- (512.2,163.38) ;
\draw [shift={(479.43,150.77)}, rotate = 201.05] [color={rgb, 255:red, 0; green, 0; blue, 0 }  ][line width=0.75]    (10.93,-3.29) .. controls (6.95,-1.4) and (3.31,-0.3) .. (0,0) .. controls (3.31,0.3) and (6.95,1.4) .. (10.93,3.29)   ;
\draw    (131,165.41) -- (205.29,133.84) ;
\draw [shift={(173.67,147.28)}, rotate = 156.98] [color={rgb, 255:red, 0; green, 0; blue, 0 }  ][line width=0.75]    (10.93,-3.29) .. controls (6.95,-1.4) and (3.31,-0.3) .. (0,0) .. controls (3.31,0.3) and (6.95,1.4) .. (10.93,3.29)   ;
\draw    (128.57,104.31) -- (205.29,133.84) ;
\draw [shift={(172.53,121.23)}, rotate = 201.05] [color={rgb, 255:red, 0; green, 0; blue, 0 }  ][line width=0.75]    (10.93,-3.29) .. controls (6.95,-1.4) and (3.31,-0.3) .. (0,0) .. controls (3.31,0.3) and (6.95,1.4) .. (10.93,3.29)   ;
\draw    (320.47,98.93) -- (279.58,133.84) ;
\draw [shift={(305.35,111.84)}, rotate = 139.5] [color={rgb, 255:red, 0; green, 0; blue, 0 }  ][line width=0.75]    (10.93,-3.29) .. controls (6.95,-1.4) and (3.31,-0.3) .. (0,0) .. controls (3.31,0.3) and (6.95,1.4) .. (10.93,3.29)   ;
\draw    (361.18,133.84) -- (320.47,98.93) ;
\draw [shift={(346.14,120.94)}, rotate = 220.62] [color={rgb, 255:red, 0; green, 0; blue, 0 }  ][line width=0.75]    (10.93,-3.29) .. controls (6.95,-1.4) and (3.31,-0.3) .. (0,0) .. controls (3.31,0.3) and (6.95,1.4) .. (10.93,3.29)   ;
\draw  [dash pattern={on 0.84pt off 2.51pt}]  (320.03,39) -- (320.99,105.33) ;
\draw [shift={(320,37)}, rotate = 89.17] [color={rgb, 255:red, 0; green, 0; blue, 0 }  ][line width=0.75]    (10.93,-3.29) .. controls (6.95,-1.4) and (3.31,-0.3) .. (0,0) .. controls (3.31,0.3) and (6.95,1.4) .. (10.93,3.29)   ;
\draw  [dash pattern={on 0.84pt off 2.51pt}]  (320.3,168.76) -- (320,216) ;
\draw    (361.18,133.84) -- (320.3,168.76) ;
\draw [shift={(346.06,146.76)}, rotate = 139.5] [color={rgb, 255:red, 0; green, 0; blue, 0 }  ][line width=0.75]    (10.93,-3.29) .. controls (6.95,-1.4) and (3.31,-0.3) .. (0,0) .. controls (3.31,0.3) and (6.95,1.4) .. (10.93,3.29)   ;
\draw    (320.47,98.93) -- (320.47,134.57) ;
\draw [shift={(320.47,122.75)}, rotate = 270] [color={rgb, 255:red, 0; green, 0; blue, 0 }  ][line width=0.75]    (10.93,-3.29) .. controls (6.95,-1.4) and (3.31,-0.3) .. (0,0) .. controls (3.31,0.3) and (6.95,1.4) .. (10.93,3.29)   ;
\draw    (320.3,168.76) -- (320.47,134.57) ;
\draw [shift={(320.41,145.67)}, rotate = 90.29] [color={rgb, 255:red, 0; green, 0; blue, 0 }  ][line width=0.75]    (10.93,-3.29) .. controls (6.95,-1.4) and (3.31,-0.3) .. (0,0) .. controls (3.31,0.3) and (6.95,1.4) .. (10.93,3.29)   ;
\draw    (279.58,133.84) -- (320.3,168.76) ;
\draw [shift={(304.49,155.21)}, rotate = 220.62] [color={rgb, 255:red, 0; green, 0; blue, 0 }  ][line width=0.75]    (10.93,-3.29) .. controls (6.95,-1.4) and (3.31,-0.3) .. (0,0) .. controls (3.31,0.3) and (6.95,1.4) .. (10.93,3.29)   ;
\draw (363.18,137.24) node [anchor=north west][inner sep=0.75pt]  [font=\tiny]  {$a$};
\draw (429.58,139.46) node [anchor=north west][inner sep=0.75pt]  [font=\tiny]  {$b$};
\draw (540.28,137.39) node [anchor=north west][inner sep=0.75pt]  [font=\scriptsize]  {$\re z$};
\draw (319.61,27.81) node [anchor=north west][inner sep=0.75pt]  [font=\scriptsize]  {$\im z$};
\draw (519.62,89.29) node [anchor=north west][inner sep=0.75pt]  [font=\scriptsize]  {$\Sigma _{b}$};
\draw (104.87,82.38) node [anchor=north west][inner sep=0.75pt]  [font=\scriptsize]  {$\Sigma {_{-b}}$};
\draw (103.91,159.29) node [anchor=north west][inner sep=0.75pt]  [font=\scriptsize]  {$\Sigma _{-b}^{*}$};
\draw (517.05,160.71) node [anchor=north west][inner sep=0.75pt]  [font=\scriptsize]  {$\Sigma _{b}^{*}$};
\draw (343.81,97.5) node [anchor=north west][inner sep=0.75pt]  [font=\scriptsize]  {$\Sigma _{a}$};
\draw (349.74,152.7) node [anchor=north west][inner sep=0.75pt]  [font=\scriptsize]  {$\Sigma _{a}^{*}$};
\draw (276.26,95.46) node [anchor=north west][inner sep=0.75pt]  [font=\scriptsize]  {$\Sigma _{-a}$};
\draw (277.35,152.44) node [anchor=north west][inner sep=0.75pt]  [font=\scriptsize]  {$\Sigma _{-a}^{*}$};
\draw (266.28,139.46) node [anchor=north west][inner sep=0.75pt]  [font=\tiny]  {$-a$};
\draw (200.51,139.46) node [anchor=north west][inner sep=0.75pt]  [font=\tiny]  {$-b$};
\draw (320.67,82.24) node [anchor=north west][inner sep=0.75pt]  [font=\scriptsize]  {$\Sigma _{0}$};
\draw (322.3,172.16) node [anchor=north west][inner sep=0.75pt]  [font=\scriptsize]  {$\Sigma _{0}^{*}$};
\draw (322.38,137.24) node [anchor=north west][inner sep=0.75pt]  [font=\scriptsize]  {$0$};
\end{tikzpicture}
\caption{The jump contours of the RH problems for $N^{(r4)}$.}\label{fig: jumpN^{(r4)}}
\end{center}
\end{figure}

\subsubsection*{Reduction to a model RH problem}\label{subsec:3rd transition transform to model RH problem}
As $t\to +\infty$, it follows from \eqref{equ:3rdtranhat{k}scale} and direct calculations
that
\begin{align}\label{equ:R1}
R(z(\hat{k}))\sim 1,  \qquad	1-|R(z(\hat{k}))|^2\sim C_R(2-\hat{\xi})\hat{k}^2,
\end{align}
where
$$
C_R=\frac{q}{12p}\re{r''}(1)>0.
$$
Here, we have made use of the assumption $r\in H^s(\mathbb{R})$ with $s>5/2$  in the second estimate.

On account of the signs of $\im g$ (see Figure \ref{fig: sign g}), it is readily seen from \eqref{equ: jumpV(r4)} that
$\hat{V}^{(r4)} \to I$ exponentially fast as $t \to +\infty$ expect on the interval $(-b,b)$. This, together with \eqref{equ:R1}, implies that $N^{(r4)}$ is approximated by the following RH problem for $N^{(r5)}$ for $t$ large enough.
\begin{RHP}
	\hfill
	\begin{itemize}
	\item[$\bullet$] $N^{(r5)}(\hat{k})$ is holomorphic for $\hat{k}\in\mathbb{C}\setminus [-b,b]$.
	\item[$\bullet$] For $\hat{k}\in (-b,b)$, we have
	\begin{equation}
	N_{+}^{(r5)}(\hat{k})=N_{-}^{(r5)}(\hat{k})\hat{V}^{(r5)}(\hat{k}),
	\end{equation}
	where
		\begin{align}\label{def:hatVr5}
			\hat{V}^{(r5)}(\hat{k})=\left\{\begin{array}{llll}
			 \begin{pmatrix}
				C_R(2-\hat{\xi})\hat{k}^2e^{-2i\tau g_{+}(\hat k)+i\tau B_1/2} & e^{i\tau B_1/2}\\
				-e^{-i\tau B_1/2} & e^{2i\tau g_{+}(\hat k)-i\tau B_1/2}
			\end{pmatrix}, & \hat{k}\in(a,b),\\ [10pt]
			\begin{pmatrix}
			C_R(2-\hat{\xi})\hat{k}^2e^{-2i\tau g_{+}(\hat k)-i\tau B_1/2} & e^{-i\tau B_1/2}\\
			-e^{i\tau B_1/2} & e^{2i\tau g_{+}(\hat k)+i\tau B_1/2}
			\end{pmatrix}, & \hat{k}\in (-b,-a), \\[10pt]
			\begin{pmatrix}
				C_R(2-\hat{\xi})\hat{k}^2e^{-i\tau A_1} & 0\\
				0 & \left(C_R(2-\hat{\xi})\hat{k}^2e^{-i\tau A_1}\right)^{-1}
			\end{pmatrix}, & \hat{k}\in (-a,a).
			\end{array}\right.
		\end{align}
	\item[$\bullet$] As $\hat{k}\to \infty$ in $\mathbb{C}\setminus [-b,b]$, we have
	$N^{(r5)}(\hat{k})=I+\mathcal{O}(\hat k^{-1})$.
	\end{itemize}
\end{RHP}
As $t\to +\infty$, we have  $2-\hat{\xi}\to0$, $\tau\to+\infty$ and
\begin{equation}\label{equ: asytoNr5}
	N^{(r4)}(\hat{k})=N^{(r5)}(\hat{k})\left(I+o(1)\right).
\end{equation}

A key observation here is, by \eqref{equ:a,b con2} and \eqref{def:A1},
\begin{align}\label{equ: int}
	(2-\hat{\xi})e^{-i\tau A_1}=(2-\hat{\xi})\exp\left\lbrace 6\tau q\int_b^a\sqrt{(\zeta^2-a^2)(b^2-\zeta^2)}\dif\zeta\right\rbrace \equiv 1.
\end{align}
Since the diagonal entries of $\hat{V}^{(r5)}(\hat k)$ in \eqref{def:hatVr5} tend to $0$ as $t \to +\infty$ for $\hat k \in (-b,-a)\cup (a,b)$, we are lead to consider the following RH problem,  of which the jump matrix
is the limiting form of $V^{(r5)}$.
\begin{RHP}\label{RHP:  Nr6}
	\hfill
	\begin{itemize}
		\item[$\bullet$] $N^{(r6)}(\hat{k})$ is holomorphic for $\hat{k}\in\mathbb{C}\setminus [-b,b]$.
		\item[$\bullet$] For $\hat{k}\in(-b,b)$, we have
		\begin{equation}
		N^{(r6)}_{+}(\hat{k})=N^{(r6)}_{-}(\hat{k})\hat{V}^{(r6)}(\hat{k}),
		\end{equation}
		where
		\begin{align}
			\hat{V}^{(r6)}(\hat{k})=\left\{\begin{array}{llll}
				\left(\begin{array}{cc}
				0 & e^{i\tau B_1/2}\\
				-e^{-i\tau B_1/2} & 0
			\end{array}\right), & \hat{k}\in (a,b),\\[10pt]
		\left(\begin{array}{cc}
				C_R\hat{k}^2 & 0\\
				0 & \frac{1}{C_R\hat{k}^2}
			\end{array}\right), & \hat{k}\in (-a,a),\\[10pt]
			\left(\begin{array}{cc}
			0 & e^{-i\tau B_1/2}\\
				-e^{i\tau B_1/2} & 0
			\end{array}\right), & \hat{k}\in (-b,-a).
		\end{array}\right.
		\end{align}
		\item[$\bullet$] As $\hat{k}\rightarrow\infty$ in $\mathbb{C}\setminus [-b,b]$, $N^{(r6)}(\hat{k})=I+\mathcal{O}(\hat{k}^{-1})$.
	\end{itemize}
\end{RHP}
In what follows, we solve the above RH problem explicitly by using the Jacobi theta function.

\subsubsection*{Construction of $N^{(r6)}$}
To proceed, we introduce a scalar function
\begin{align}\label{equ:h func}
	h(\hat{k}):=\frac{w(\hat{k})}{2\pi i}\left[ \int_{-b}^{-a}\frac{-\Delta_0}{(\zeta-\hat{k})w_+(\zeta)}\dif\zeta+\int_{-a}^a\frac{i\log (C_R\zeta^2)}{(\zeta-\hat{k})w(\zeta)}\dif\zeta +\int_{a}^{b}\frac{\Delta_0}{(\zeta-\hat{k})w_+(\zeta)}\dif\zeta\right],
\end{align}
where $w$ is defined through \eqref{eq:algeb} and
\begin{align}\label{def:Dleta0}
\Delta_0:=\left(\int_b^a\frac{1}{w_+(\zeta)}\dif\zeta\right) ^{-1}\int_0^a\frac{2i\log (C_R\zeta^2)}{w(\zeta)}\dif\zeta
\end{align}
is a real number. By using the Sokhotski-Plemelj formula and a direct caculation, the following proposition for $h$ is immediate.
\begin{Proposition}\label{prop:h}
The scalar function $h$ defined in \eqref{equ:h func} satisfies the following properties.
\begin{itemize}
	\item[$\bullet$] $h(\hat{k})$ is holomorphic for $\hat{k}\in\mathbb{C} \setminus [-b,b]$.
	\item[$\bullet$] $h(\hat{k})$ satisfies the jump relations
	\begin{subequations}
		\begin{align}
			&h_+(\hat k)-h_-(\hat k)=i\log (C_{R}\hat{k}^2), & \hat{k}\in(-a,a),\\
			&h_+(\hat k)+h_-(\hat k)=\Delta_0,  & \hat{k}\in(a,b),\\
			&h_+(\hat k)+h_-(\hat k)=-\Delta_0, & \hat{k}\in(-b,-a),
		\end{align}
	\end{subequations}
where $\Delta_0$ is given in \eqref{def:Dleta0}.
	\item[$\bullet$] As $\hat{k}\to\infty$ in $\mathbb{C}\setminus [-b,b]$, we have $h(\hat{k})=\mathcal{O}(\hat{k}^{-1})$.
\end{itemize}
\end{Proposition}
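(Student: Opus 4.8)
The plan is to verify the three bullet points of Proposition~\ref{prop:h} directly from the integral representation \eqref{equ:h func}, exactly as was done for the $g$-function in Proposition~\ref{prop:gfunc}: the Sokhotski--Plemelj formula supplies the jump relations essentially for free, and the only genuinely computational ingredient is the expansion of $h$ at $\hat k=\infty$, where the choice of $\Delta_0$ in \eqref{def:Dleta0} is what makes the growing terms cancel.

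\textbf{Holomorphicity.} The three integrals in \eqref{equ:h func} together form a Cauchy-type transform whose density is supported on $[-b,b]$, hence they define a function holomorphic on $\mathbb{C}\setminus[-b,b]$; since $w$ (defined by \eqref{eq:algeb}) is holomorphic on $\mathbb{C}\setminus([-b,-a]\cup[a,b])$ and in particular across $(-a,a)$, the product $h$ is holomorphic on $\mathbb{C}\setminus[-b,b]$. I would also record here that all three densities are integrable: $1/w_+(\zeta)$ has only square-root endpoint singularities at $\pm a,\pm b$, while $\log(C_R\zeta^2)$ has an integrable logarithmic singularity at $\zeta=0$ (this is where one uses $C_R>0$, i.e.\ the hypothesis $r\in H^{s}$ with $s>5/2$ entering through $\re r''(1)$, so that $\log(C_R\zeta^2)$ is real).

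\textbf{Jump relations.} I would split according to whether $w$ itself is continuous. On $(-a,a)$ only the Cauchy kernel is discontinuous, so Sokhotski--Plemelj gives $h_+(\hat k)-h_-(\hat k)=w(\hat k)\cdot\dfrac{i\log(C_R\hat k^2)}{w(\hat k)}=i\log(C_R\hat k^2)$, the prefactor $w(\hat k)$ cancelling the $w(\zeta)$ in the middle density. On $(a,b)$ one has $w_+(\hat k)=-w_-(\hat k)$; adding the two boundary values of \eqref{equ:h func} and applying Plemelj to the one piece of the density supported on that interval yields $h_+(\hat k)+h_-(\hat k)=w_+(\hat k)\cdot\dfrac{\Delta_0}{w_+(\hat k)}=\Delta_0$, the contributions of the other two pieces being continuous there and cancelling in $h_++h_-$ because $w_+=-w_-$. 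The interval $(-b,-a)$ is treated identically and produces the sign $-\Delta_0$ because that density carries $-\Delta_0$. This is the content of \eqref{equ; jump g1}--\eqref{equ; jump g3}'s analogue for $h$.

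\textbf{Behaviour at infinity and the main obstacle.} Expanding $\dfrac{1}{\zeta-\hat k}=-\sum_{j\geqslant0}\zeta^{j}\hat k^{-j-1}$ together with $w(\hat k)=\hat k^{2}-\tfrac12(a^{2}+b^{2})+\mathcal O(\hat k^{-2})$ (no odd powers, since $w^{2}$ is even) gives $h(\hat k)=\dfrac{1}{2\pi i}\bigl(-I_0\,\hat k-I_1+\mathcal O(\hat k^{-1})\bigr)$, where $I_0$ and $I_1$ are the total mass and first moment of the piecewise density of \eqref{equ:h func}. The first moment $I_1$ vanishes by the parity $\zeta\mapsto-\zeta$: the $\log(C_R\zeta^2)$-piece is integrated against the odd weight $\zeta/w(\zeta)$ over the symmetric interval $(-a,a)$, and the two $\pm\Delta_0$-pieces are exchanged by $\zeta\mapsto-\zeta$ with opposite sign. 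The total mass $I_0$ vanishes precisely because of the normalization \eqref{def:Dleta0}: performing the substitution $\zeta\mapsto-\zeta$ in the outer two integrals and using the even parity of $w$ on $(-a,a)$, the equation $I_0=0$ reduces to exactly the relation that defines $\Delta_0$. Hence the $\hat k$- and $\hat k^{0}$-terms drop out and $h(\hat k)=\mathcal O(\hat k^{-1})$, which completes the proof. The only delicate point is this last paragraph: one must be scrupulous about the orientation of the three subintervals of $[-b,b]$ and about the branch conventions for $w$ --- in particular the rule $w_+=-w_-$ across each cut and the parity of $w$ on $(-a,a)$ --- because it is exactly this bookkeeping (and the corresponding normalization of the $a_1$-period hidden in \eqref{def:Dleta0}) that converts the vanishing of the linearly growing term into the defining identity for $\Delta_0$; everything else is routine.
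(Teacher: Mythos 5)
Your proof is correct and takes the same route as the paper: the paper condenses its own argument into the single sentence preceding the proposition, namely ``By using the Sokhotski--Plemelj formula and a direct calculation, the following proposition is immediate,'' and you have unpacked exactly what that means. Your decomposition into (i) holomorphicity of $h=w\cdot(\text{Cauchy transform})$ away from the support, (ii) jump computation distinguished by whether $w$ is continuous across the piece of the cut in question, and (iii) cancellation of the $\hat k^{1}$ and $\hat k^{0}$ coefficients by normalization and by parity, is the intended ``direct calculation.'' Your emphasis on the sign rule $w_+=-w_-$ across each branch cut, the identity $w(-\hat k)=w(\hat k)$ forced by $w\sim\hat k^2$ at infinity, and the resulting anti-symmetry $w_+(-\zeta)=-w_+(\zeta)$ between the two outer cuts, is exactly the structure that makes $I_1$ vanish.

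One bookkeeping item is worth surfacing, precisely because it lies in the spot your last paragraph warns about. When you substitute $\zeta\mapsto-\zeta$ in the integral over $(-b,-a)$ for the zeroth moment $I_0$, the two outer pieces do not cancel --- they are equal, so they \emph{double}: $\int_{-b}^{-a}\frac{-\Delta_0}{w_+(\zeta)}\,\dif\zeta=\int_a^b\frac{\Delta_0}{w_+(\zeta)}\,\dif\zeta$. Combining this with the even-parity folding of the middle integral, the condition $I_0=0$ becomes
\begin{equation*}
\Delta_0\int_b^a\frac{\dif\zeta}{w_+(\zeta)}=\int_0^a\frac{i\log(C_R\zeta^2)}{w(\zeta)}\,\dif\zeta,
\end{equation*}
which differs from the displayed formula \eqref{def:Dleta0} by a factor of $2$ in the numerator. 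This is most plausibly a typographical slip in \eqref{def:Dleta0} rather than a flaw in the argument (since $\Delta_0$ is, by design, whatever constant makes $I_0$ vanish, and the paper folds its verification into the unwritten ``direct calculation''), but it is precisely the kind of orientation-and-sign issue you flagged as the only delicate point, and it is worth computing out rather than asserting that $I_0=0$ ``reduces to exactly the relation that defines $\Delta_0$.''
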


We then eliminate the jump of $N^{(r6)}$ on $(-a,a)$ by defining
\begin{align}
		N^{(r7)}(\hat{k})=N^{(r6)}(\hat{k})e^{-ih(\hat{k})\sigma_3}.\label{equ:trans Nr7}
\end{align}
By RH problem \ref{RHP:  Nr6} and Proposition \ref{prop:h}, $N^{(r7)}$ satisfies the following RH problem.
\begin{RHP}\label{RHP:  model Nr7}
	\hfill
	\begin{itemize}
		\item[$\bullet$] $N^{(r7)}(\hat{k})$ is holomorphic for $\hat{k}\in\mathbb{C}\setminus([-b,-a]\cup [a,b])$.
		\item[$\bullet$] For $\hat{k}\in(-b,-a)\cup(a,b)$, we have
		\begin{equation}
		N^{(r7)}_{+}(\hat{k})=N^{(r7)}_{-}(\hat{k})\hat{V}^{(r7)}(\hat{k}),
		\end{equation}
		where
		\begin{align}
			\hat{V}^{(r7)}(\hat{k})=\left\{\begin{array}{llll}
				\left(\begin{array}{cc}
					0 & e^{i\tau B_1/2+\Delta_0}\\
					-e^{-i\tau B_1/2-\Delta_0} & 0
				\end{array}\right), & \hat{k}\in (a,b),\\[10pt]
				\left(\begin{array}{cc}
					0 & e^{-i\tau B_1/2-\Delta_0}\\
					-e^{i\tau B_1/2+\Delta_0} & 0
				\end{array}\right), & \hat{k}\in (-b,-a).
			\end{array}\right.
		\end{align}
		\item[$\bullet$]
As $\hat{k}\rightarrow\infty$ in $\mathbb{C}\setminus([-b,-a]\cup[a,b])$, we have $N^{(r7)}(\hat{k})=I+\mathcal{O}(\hat{k}^{-1})$.
	\end{itemize}
\end{RHP}
The above RH problem $N^{(r7)}$ can be solved explicitly by using the Jacobi theta function
\begin{align}\label{equ:Jacobi Theta func}
	\Theta(s):=\sum_{n\in\mathbb{Z}}e^{2\pi ins+\varkappa\pi i n^2},
\end{align}
where
\begin{align}\label{equ: varkappa expression}
	\varkappa=\left(\int_b^a\frac{1}{w_+(\zeta)}\dif\zeta\right) ^{-1}\int_a^{-a}\frac{1}{w(\zeta)}\dif\zeta.
\end{align}
It is easily seen that $\Theta(s)$ is an even function and satisfies
\begin{align*}
		\Theta(s+1)=\Theta(s), \qquad	\Theta(s+\varkappa)=e^{-2\pi is-\pi i\varkappa}	\Theta(s).
\end{align*}
Moreover, $\Theta(s)$ vanishes at the lattice of half periods, i.e.,
\begin{align*}
	\Theta(s)=0, \qquad  s=\frac{1}{2}+\frac{\varkappa}{2}+\mathbb{Z}+\varkappa\mathbb{Z}.
\end{align*}

It then follows from a straightforward calculation (cf. \cite{Deift1994TheCS}) that
\begin{align}
	N^{(r7)}(\hat{k}):=\left(\begin{array}{cc}
		\frac{\nu+\frac{1}{\nu}}{2}\frac{\Theta(A(\hat{k})-\frac{\varkappa}{4}+\frac{\phi}{\pi})\Theta(A(\infty)-\frac{\varkappa}{4})}{\Theta(A(\hat{k})-\frac{\varkappa}{4})\Theta(A(\infty)-\frac{\varkappa}{4}+\frac{\phi}{\pi})} & e^{-i\phi}\frac{\nu-\frac{1}{\nu}}{2i}\frac{\Theta(-A(\hat{k})-\frac{\varkappa}{4}+\frac{\phi}{\pi})\Theta(A(\infty)-\frac{\varkappa}{4})}{\Theta(-A(\hat{k})-\frac{\varkappa}{4})\Theta(A(\infty)-\frac{\varkappa}{4}+\frac{\phi}{\pi})}\\
		e^{i\phi}\frac{\nu-\frac{1}{\nu}}{-2i}\frac{\Theta(A(\hat{k})+\frac{\varkappa}{4}+\frac{\phi}{\pi})\Theta(-A(\infty)+\frac{\varkappa}{4})}{\Theta(A(\hat{k})+\frac{\varkappa}{4})\Theta(-A(\infty)+\frac{\varkappa}{4}+\frac{\phi}{\pi})} & \frac{\nu+\frac{1}{\nu}}{2}\frac{\Theta(-A(\hat{k})+\frac{\varkappa}{4}+\frac{\phi}{\pi})\Theta(-A(\infty)+\frac{\varkappa}{4})}{\Theta(-A(\hat{k})+\frac{\varkappa}{4})\Theta(-A(\infty)+\frac{\varkappa}{4}+\frac{\phi}{\pi})}
	\end{array}\right)
\end{align}
solves RH problem \ref{RHP:  model Nr7}. Here, $A(\hat{k})$ defined in \eqref{def:Ahatk}
is the Abelian integral with the base point $b$ and the contour of integration lies on the first sheet of the Riemann surface $\mathcal{M}$,
\begin{align}
	\nu(\hat{k})=\left[ \frac{(\hat{k}-a)(\hat{k}+b)}{(\hat{k}+a)(\hat{k}-b)}\right]^{1/4}, \qquad \hat{k}\in\mathbb{C}\setminus ([-b,-a]\cup[a,b]),
\end{align}
such that $\nu(\infty)=1$, and
\begin{align}\label{equ: phi expression}
	\phi=\frac{\tau B_1}{2}-i\Delta_0.
\end{align}

%
%

\begin{remark}\label{RK:indeppq}
By Remark \ref{remark a,b}, one can check that the quantities $A(\infty)$, $\varkappa$ and $\phi$ are independent of the choice of $p$ and $q$.
\end{remark}
For later use, we note that
\begin{align}\label{eq:expNr7}
	N^{(r7)}(\hat{k})=I+\frac{	N^{(r7)}_1}{\hat{k}}+\frac{	N^{(r7)}_2}{\hat{k}^2}+\mathcal{O}\left( \frac{1}{\hat{k}^3}\right), \qquad \hat{k} \to \infty
\end{align}
where
\begin{align}
N^{(r7)}_1=\begin{pmatrix} 0 & \left(N^{(r7)}_1\right)_{12} \\ \left(N^{(r7)}_1\right)_{21} & 0\end{pmatrix},\qquad N^{(r7)}_2=\begin{pmatrix} \left(N^{(r7)}_2\right)_{11} & \left(N^{(r7)}_2\right)_{12} \\ \left(N^{(r7)}_2\right)_{21} & \left(N^{(r7)}_2\right)_{22}\end{pmatrix}
\end{align}
with
\begin{subequations}\label{equ: (N^(r7)_1)_{12&21}}
	\begin{align} &\left(N^{(r7)}_1\right)_{12}=-\left(N^{(r7)}_1\right)_{21}=\frac{b-a}{2i}\cdot\frac{\Theta\left(A(\infty)-\frac{\varkappa}{4}\right)\Theta\left(-A(\infty)-\frac{\varkappa}{4}+\frac{\phi}{\pi}\right)}{\Theta\left(-A(\infty)-\frac{\varkappa}{4}\right)\Theta\left(A(\infty)-\frac{\varkappa}{4}+\frac{\phi}{\pi}\right)}e^{-i\phi},\\ &\left(N^{(r7)}_2\right)_{12}=\frac{b-a}{2i}\cdot\frac{\Theta\left(A(\infty)-\frac{\varkappa}{4}\right)}{\Theta\left(A(\infty)-\frac{\varkappa}{4}+\frac{\phi}{\pi}\right)}\cdot\left( \frac{\Theta\left(-A(\hat{k})-\frac{\varkappa}{4}+\frac{\phi}{\pi}\right)}{\Theta\left(-A(\hat{k})-\frac{\varkappa}{4}\right)}\right)'\Big|_{\hat{k}=\infty} e^{i\phi}.
	\end{align}
\end{subequations}

\subsubsection*{Local analysis near $\hat k= \pm a, \pm b$ and the small norm RH problem}
In each neighborhood of $\hat k= \pm a, \pm b$, we still need to construct a local parametrix $N^{(r4, j)}$, $j\in\{\pm a, \pm b\}$.
These parametrices can be built with the aid of the Airy functions; cf. \cite{FLQDNLS2021}. Since they only contribute to a higher order correction compared with that of $N^{(r7)}$, we omit the details here.

Finally, we define
\begin{equation}\label{def:E3rdtranregion}
	E(\hat{k})=\left\{\begin{array}{ll}
		N^{(r4)}(\hat{k}){N^{(r7)}(\hat{k})}^{-1}, & \hat{k}\notin \underset{{j\in\{\pm a, \pm b\}}}\bigcup U^{(r4,j)}, \\
		N^{(r4)}(\hat{k}){N^{(r4, j)}(\hat{k})}^{-1},  &\hat{k}\in U^{(r4, j)},\\
	\end{array}\right.
\end{equation}
where $j\in\{\pm a, \pm b\}$ and $U^{(r4,j)}$ is a small neighborhood of $j$. As in Section \ref{subsec:pure RH N(z)}, it is then readily seen that $E$ satisfies a small norm RH problem and $E(\hat{k})=I+o(1)$ for large positive $t$.

\subsection{Proof of part (c) of Theorem \ref{mainthm}}\label{subsec:recovering3sttran}
By tracing back the transformations \eqref{equ:trans Nr1}, \eqref{equ: transtoNr2}, \eqref{equ: transtoNr4},
\eqref{equ: asytoNr5}, \eqref{equ:trans Nr7} and \eqref{def:E3rdtranregion}, we conclude that, as $t\to +\infty$,
\begin{align}\label{equ:recover N^{(r)}}
N^{(r)}(\hat{k})=N^{(r7)}(\hat{k})e^{ih(\hat{k})\sigma_3}e^{-i\tau(\hat{\theta}(\hat{k})-g(\hat{k}))\sigma_3}\left(I+o(1)\right),
\end{align}
where $\hat \theta$, $g$ and $h$ are defined in \eqref{def:hattheta}, \eqref{equ:g func} and \eqref{equ:h func}, respectively.

As $\hat{k}\to\infty$,  $N^{(r)}$ admits the expansion
\begin{align*}
N^{(r)}(\hat{k})=I+\frac{N^{(r)}_1}{\hat{k}}+\frac{N^{(r)}_2}{\hat{k}^2}+\mathcal{O}\left( \frac{1}{\hat{k}^3}\right)
\end{align*}
with
\begin{align*}
	N^{(r)}_1=\left(\begin{array}{cc}	
	 \left(N^{(r)}_1\right)_{11} & \left(N^{(r)}_1\right)_{12} 
 \\[8pt]
\overline{\left(N^{(r)}_1\right)_{12}} & \overline{\left(N^{(r)}_1\right)_{11}} \end{array}\right)
	\quad \textrm{and} \quad 
	N^{(r)}_2=\left(\begin{array}{cc}	\left(N^{(r)}_2\right)_{11} & \left(N^{(r)}_2\right)_{12} \\[8pt] \overline{\left(N^{(r)}_2\right)_{12}} & \overline{\left(N^{(r)}_2\right)_{11}} \end{array}\right),
\end{align*}
where the symmetry condition $M^{(4)}(z)=\sigma_1\overline{M^{(4)}(\bar{z})}\sigma_1$ is additionally used.

In view of \eqref{equ: M4}, we still need analogous results for $N^{(l)}$. To that end, note that

\begin{align}\label{equ:checktheta}
	\theta(z(\check{k}))=\tau\check{\theta}(\check{k})+\mathcal{O}\left( (\log t)^{4/3}t^{-1/3}\check{k}^2\right), \qquad t\to +\infty,
\end{align}
where $\tau$ is defined in \eqref{equ:tau} and
\begin{equation}
	\check{k}=\sqrt{\frac{12p}{q}}(2-\hat{\xi})^{-1/2}(z+1)
\end{equation}
is the scaled spectral variable in this case. Following asymptotic analysis of the RH problem for
$N^{(r)}$, it follows that
\begin{equation}
N^{(l)}(\check{k})=I+\frac{N^{(l)}_1}{\check{k}}+\frac{N^{(l)}_2}{\check{k}^2}+\mathcal{O}\left(\frac{1}{\check{k}^3}\right), \quad \check{k}\rightarrow\infty
\end{equation}
with 
\begin{align*}
	N^{(l)}_1=\begin{pmatrix} -\overline{\left(N^{(r)}_1\right)_{11}} & \overline{\left(N^{(r)}_1\right)_{12}} \\ \left(N^{(r)}_1\right)_{12} & -\left(N^{(r)}_1\right)_{11} \end{pmatrix},\qquad	N^{(l)}_2=\begin{pmatrix} \overline{\left(N^{(r)}_2\right)_{11}} & -\overline{\left(N^{(r)}_2\right)_{12}}\\ -\left(N^{(r)}_2\right)_{12}  & \left(N^{(r)}_2\right)_{11} \end{pmatrix},
\end{align*}
where the other symmetry relation $M^{(4)}(z)=\sigma_2M^{(4)}(-z)\sigma_2$ is applied.

To proceed, observe that
\begin{align}
	&\frac{1}{\hat{k}}=-\sqrt{\frac{q(2-\hat{\xi})}{12p}}+\mathcal{O}(z),&z\to 0, \label{equ:1/hat{k}zto0}\\
	&\frac{1}{\hat{k}}=\sqrt{\frac{q(2-\hat{\xi})}{12p}}\frac{1+i}{2}-\sqrt{\frac{q(2-\hat{\xi})}{12p}}\frac{i}{2}(z-i)+\mathcal{O}\left((z-i)^2\right), &z\to i,\label{equ:1/hat{k}ztoi}
\end{align}
we then obtain from \eqref{transform:M1toM2}, \eqref{transform: M3toM4; 3rdtrans},
\eqref{equ: M4=M3 at 0 and i} and \eqref{equ: M4} that as $t \to +\infty$,
\begin{subequations}
\begin{align}\label{equ: M^{(1)}(0); 3rdtran}
	M^{(1)}(0)=\left( I+\sqrt{\frac{2-\hat{\xi}}{12}}(q/p)^{1/2}\left(N^{(l)}_1-N^{(r)}_1\right)+\frac{q(2-\hat{\xi})}{12p}(N^{(l)}_2+N^{(r)}_2)\right)  \left(I+o(1)\right),
\end{align}
and
\begin{align}\label{equ: M^{(1)}(i); 3rdtran}
	M^{(1)}(z)&=I+\sqrt{\frac{2-\hat{\xi}}{12}}(q/p)^{1/2}\left(\frac{1+i}{2}N^{(l)}_1-\frac{1-i}{2}N^{(r)}_1\right)+\frac{q(2-\hat{\xi})i}{24p}(N^{(l)}_2-N^{(r)}_2)\nonumber\\
	\hspace*{2em}&+\left( \sqrt{\frac{2-\hat{\xi}}{12}}(q/p)^{1/2}\frac{i}{2}\left(N^{(l)}_1-N^{(r)}_1\right)+\frac{q(2-\hat{\xi})}{12p}\left(\frac{1+i}{2i}N^{(l)}_2+\frac{1-i}{2i}N^{(r)}_2\right)\right) (z-i)\nonumber\\
	\hspace*{2em}&+\mathcal{O}\left((z-i)^2\right), \qquad z\to i.
\end{align}
\end{subequations}
This, together with the reconstruction formula in Proposition \ref{prop: M^{(1)}}, implies that
\begin{align} u(x(y,t),t)&=1-\frac{(2-\hat{\xi})q}{6p}\left(\im\left(N^{(r)}_1\right)_{11}\im\left(N^{(r)}_1\right)_{12}+\re\left(N^{(r)}_2\right)_{12}\right)+o\left(2-\hat{\xi}\right),
\label{eq:UxyRIII}
\\
	x(y,t)&=y+\sqrt{\frac{2-\hat{\xi}}{3}}(q/p)^{1/2}\left(\frac{i-1}{2}\left(N^{(r)}_1\right)_{11}+\frac{i+1}{2}\overline{\left(N^{(r)}_1\right)_{11}}-2\im\left(N^{(r)}_1\right)_{12}\right)\nonumber\\ &~~~+o\left(\sqrt{2-\hat{\xi}}\right).\label{equ:x=y+}
\end{align}

By \eqref{equ:recover N^{(r)}} and \eqref{eq:expNr7}, we obtain
\begin{align}
	\left(N^{(r)}_1\right)_{11}=(ih_1+i\tau g_0)\left(1+o(1)\right),\qquad\left(N^{(r)}_1\right)_{12}=\left(N^{(r7)}_1\right)_{12}\left(1+o(1)\right),
\end{align}
with $\left(N^{(r7)}_1\right)_{12}$ given in \eqref{equ: (N^(r7)_1)_{12&21}} and
\begin{align}\label{def:h1g0}
	h_1:=\lim_{\hat{k}\to\infty}\left(\hat{k}h(\hat{k}) \right), \qquad
	g_0:=\lim_{\hat{k}\to\infty}\left(\hat{k}(g(\hat{k})-\hat{\theta}(\hat{k}))\right).
\end{align}

Inserting \eqref{equ: (N^(r7)_1)_{12&21}} into \eqref{eq:UxyRIII}, it is accomplished that
\begin{align} u(x(y,t),t)&=1-\frac{(2-\hat{\xi})(a-b)q}{12p}\left(-i\cdot\frac{\Theta\left(A(\infty)-\frac{\varkappa}{4}\right)}{\Theta\left(A(\infty)-\frac{\varkappa}{4}+\frac{\phi}{\pi}\right)}\cdot\left( \frac{\Theta\left(-A(\hat{k})-\frac{\varkappa}{4}+\frac{\phi}{\pi}\right)}{\Theta\left(-A(\hat{k})-\frac{\varkappa}{4}\right)}\right)'\Big|_{\hat{k}=\infty} e^{i\phi}\right.\nonumber
	\\	&~~~\left. +\frac{\Theta\left(A(\infty)-\frac{\varkappa}{4}\right)\Theta\left(-A(\infty)-\frac{\varkappa}{4}+\frac{\phi}{\pi}\right)}{\Theta\left(-A(\infty)-\frac{\varkappa}{4}\right)\Theta\left(A(\infty)-\frac{\varkappa}{4}+\frac{\phi}{\pi}\right)}e^{i\phi}\right)\left(1+o(1)\right).
\end{align}
Finally, it can be checked directly that one can replace $x(y,t)$
by $x$ in the above formula, which leads to the asymptotic formula \eqref{result: u(x,t); part (c)}.
\qed


\appendix
\section{The Painlev\'e II parametrix} \label{appendix: RHP for PII}
A result due to Hastings and McLeod \cite{Hastings1980ABV} asserts that, for any $\kappa \in \mathbb{R}$, there exists a unique solution to the homogeneous Painlev\'e II equation \eqref{equ:standard PII equ} which behaves like $\kappa \textnormal{Ai}(s)$ for large positive $s$. This one-parameter family of solutions are characterized by the following RH problem; cf. \cite{FIKN}.

\begin{RHP}\label{appendix: reduced PII RH}
\hfill
\begin{itemize}
  \item $M^{P}(z; s,\kappa)$ is holomorphic for $z\in \mathbb{C}\setminus \Gamma$, where $\Gamma:=\cup_{i=1,3,4,6}\Gamma_i$ with
  \begin{equation}
		\Gamma_i:=\left\{z\in\mathbb{C}: \arg z=\frac{\pi}{6}+\frac{\pi}{3}(i-1)\right\};
	\end{equation}
see Figure \ref{fig:jump of reduced M^p} for an illustration.
  \item $M^{P}$ satisfies the jump condition
  $$
  M^{P}_{+}(z; s,\kappa)=M^{P}_{-}(z; s,\kappa)e^{-i\left(\frac{4}{3}z^3+sz\right)\hat{\sigma}_3}S_i, \quad z \in \Gamma_i\backslash\{0\}, \quad i=1,3,4,6,
  $$
where the matrix $S_i$ depending on $\kappa$ for each ray $\Gamma_i$ is shown in Figure \ref{fig:jump of reduced M^p}
  \item As $z\to \infty$ in $\mathbb{C} \setminus \Gamma$, we have $M^{P}(z;s,\kappa)=I+\mathcal{O}(z^{-1})$.
   \item As $z\to 0 $, we have $M^{P}(z;s,\kappa)=\mathcal{O}(1)$.
\end{itemize}
\end{RHP}
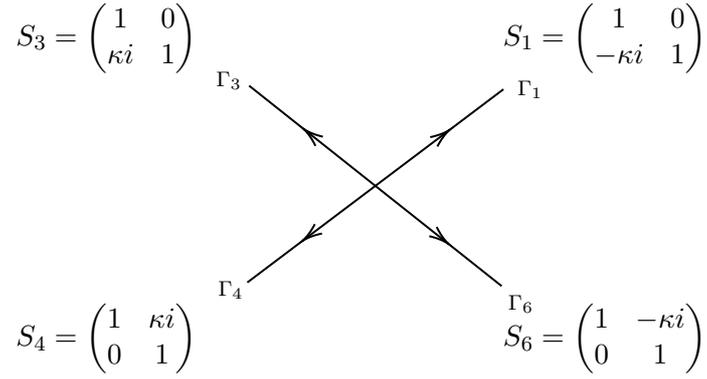
\begin{figure}[htbp]
	\begin{center}
\tikzset{every picture/.style={line width=0.75pt}} 
\begin{tikzpicture}[x=0.75pt,y=0.75pt,yscale=-0.9,xscale=0.9]
\draw    (361.86,77) -- (290.86,131) ;
\draw [shift={(331.93,99.76)}, rotate = 142.74] [color={rgb, 255:red, 0; green, 0; blue, 0 }  ][line width=0.75]    (10.93,-3.29) .. controls (6.95,-1.4) and (3.31,-0.3) .. (0,0) .. controls (3.31,0.3) and (6.95,1.4) .. (10.93,3.29)   ;
\draw    (290.86,131) -- (219.86,185) ;
\draw [shift={(250.58,161.63)}, rotate = 322.74] [color={rgb, 255:red, 0; green, 0; blue, 0 }  ][line width=0.75]    (10.93,-3.29) .. controls (6.95,-1.4) and (3.31,-0.3) .. (0,0) .. controls (3.31,0.3) and (6.95,1.4) .. (10.93,3.29)   ;
\draw    (290.86,131) -- (220.86,75) ;
\draw [shift={(251.17,99.25)}, rotate = 38.66] [color={rgb, 255:red, 0; green, 0; blue, 0 }  ][line width=0.75]    (10.93,-3.29) .. controls (6.95,-1.4) and (3.31,-0.3) .. (0,0) .. controls (3.31,0.3) and (6.95,1.4) .. (10.93,3.29)   ;
\draw    (360.86,187) -- (290.86,131) ;
\draw [shift={(331.32,163.37)}, rotate = 218.66] [color={rgb, 255:red, 0; green, 0; blue, 0 }  ][line width=0.75]    (10.93,-3.29) .. controls (6.95,-1.4) and (3.31,-0.3) .. (0,0) .. controls (3.31,0.3) and (6.95,1.4) .. (10.93,3.29)   ;
\draw (368,70.4) node [anchor=north west][inner sep=0.75pt]  [font=\scriptsize]  {$\Gamma_{1}$};
\draw (201,65.4) node [anchor=north west][inner sep=0.75pt]  [font=\scriptsize]  {$\Gamma_{3}$};
\draw (202,182.4) node [anchor=north west][inner sep=0.75pt]  [font=\scriptsize]  {$\Gamma_{4}$};
\draw (362.86,190.4) node [anchor=north west][inner sep=0.75pt]  [font=\scriptsize]  {$\Gamma_{6}$};
\draw (360,26.4) node [anchor=north west][inner sep=0.75pt]  {$S_{1} =\begin{pmatrix}
1 & 0\\
-\kappa i & 1
\end{pmatrix}$};
\draw (90,194.4) node [anchor=north west][inner sep=0.75pt]    {$S_{4} =\begin{pmatrix}
1 & \kappa i\\
0 & 1
\end{pmatrix}$};
\draw (90,26.4) node [anchor=north west][inner sep=0.75pt]    {$S_{3} =\begin{pmatrix}
1 & 0\\
\kappa i & 1
\end{pmatrix}$};
\draw (360,194.4) node [anchor=north west][inner sep=0.75pt]    {$S_{6} =\begin{pmatrix}
1 & -\kappa i \\
0 & 1
\end{pmatrix}$};
\end{tikzpicture}	
	\caption{ The jump contours $\Gamma_i$ and the corresponding matrices $S_i$, $i=1,3,4,6$, in the RH problem for $M^{P}$.} \label{fig:jump of reduced M^p}
	\end{center}
\end{figure}

The above RH problem admits a unique solution. Moreover, there exist smooth functions $\{M^{p}_j(s)\}_{j=1}^{\infty}$ such that, for each integer $N \geqslant 0$, we have
\begin{equation}
	M^{P}(s,z)=I+\sum_{j=1}^{N}\frac{M^{P}_j(s)}{z^j}+\mathcal{O}(z^{-N-1}),\quad z\rightarrow\infty,
\end{equation}
where
\begin{align}\label{MPfirstexpansion}
	&M^{P}_1(s)=\frac{1}{2}\begin{pmatrix}
		-i\int_{s}^{+\infty}v^2(\zeta)\dif\zeta   & v(s) \\
		v(s) & i\int_{s}^{+\infty}v^2(\zeta)\dif\zeta
	\end{pmatrix},
\\
\label{MPsecondexpansion}
	&M^{P}_2(s)=\frac{1}{8}\begin{pmatrix}
		-\left(\int_{s}^{+\infty}v^2(\zeta)\dif\zeta\right)^2+v^2(s)   &  2i\left(v(s)\int_{s}^{+\infty}v^2(\zeta)\dif\zeta+v'(s)\right)  \\
		-2i\left(v(s)\int_{s}^{+\infty}v^2(\zeta)\dif\zeta+v'(s)\right) & -\left(\int_{s}^{+\infty}v^2(\zeta)\dif\zeta\right)^2+v^2(s)
	\end{pmatrix}.
\end{align}
The function $v$ in \eqref{MPfirstexpansion} then solves the Painlev\'e II equation \eqref{equ:standard PII equ} with the boundary condition
\begin{equation}\label{vMPasy}
	v(s)\sim \kappa\textnormal{Ai}(s), \qquad s\to +\infty.
\end{equation}
We call RH problem \ref{appendix: reduced PII RH} the Painlev\'e II parametrix.

\section*{Acknowledgements}
We are grateful to Engui Fan and Jian Xu for useful discussions and comments. Yiling Yang is partially supported by National Science Foundation of China udner grant number 12247182 and China Post-Doctoral Science Foundation. Lun Zhang is partially supported by National Natural Science Foundation of China under grant numbers 12271105, 11822104, and ``Shuguang Program'' supported by Shanghai Education Development Foundation and Shanghai Municipal Education Commission.

\begin{bibdiv}
\begin{biblist}

\bib{MSasycollision}{article}{
	title={Asymptotic solutions of the Korteweg-de Vries equation},
	author={Ablowitz, Mark J.},
	author={Segur, Harvey},
	journal={Stud. Appl. Math.},
	year={1977},
	volume={57},
	pages={13-44}
}

\bib{Beals1984ScatteringAI}{article}{
	title={Scattering and inverse scattering for first order systems},
	author={Beals, Richard},
	author={Coifman, Ronald R.},
	journal={Comm. Pure Appl. Math.},
	year={1984},
	volume={37},
	pages={39-90}
  }

\bib{Bertola2009UniversalityIT}{article}{
	title={Universality in the profile of the semiclassical limit solutions to the focusing nonlinear Schr{\"o}dinger equation at the first breaking curve},
	author={Bertola, Marco},
	author={Tovbis, Alexander},
	journal={Int. Math. Res. Not.},
	year={2010},
	volume={2010},
	pages={2119-2167}
}

\bib{Bertola13}{article}{
	title={Universality for the focusing nonlinear Schr\"{o}dinger equation at the gradient catastrophe point: rational breathers and poles of the tritronqu\'{e}e solution to Painlev\'e I},
	author={Bertola, Marco},
	author={Tovbis, Alexander},
	journal={Comm. Pure Appl. Math.},
	year={2013},
	volume={66},
	pages={678-752}
	}

\bib{Monvel2010PainlevTypeAF}{article}{
	title={Painlev{\'e}-type asymptotics for the Camassa-Holm equation},
	author={Boutet de Monvel, Anne},
	author={Its, Alexander},
	author={Shepelsky, Dmitry},
	journal={SIAM J. Math. Anal.},
	year={2010},
	volume={42},
	pages={1854-1873},
}

\bib{Monvel2020ARA}{article}{
	title={A Riemann-Hilbert approach to the modified Camassa-Holm equation with nonzero boundary conditions},
	author={Boutet de Monvel, Anne},
	author={Karpenko, Iryna},
	author={Shepelsky, Dmitry},
	journal={J. Math. Phys.},
	year={2020},
	volume={61},
	pages={031504}
}

\bib{Monvel2020TheMC}{article}{,
	title={The modified Camassa-Holm equation on a nonzero background: large-time asymptotics for the Cauchy problem},
	author={Boutet de Monvel, Anne},
	author={Karpenko, Iryna},
	author={Shepelsky, Dmitry},
	volume = {7},
	pages = {887-914},
	journal={Pure Appl. Funct. Anal. },
	year={2022},
}	

\bib{Monvel2009LongtimeAF}{article}{
	title={Long-time asymptotics for the Camassa-Holm equation},
	author={Boutet de Monvel, Anne},
	author={Kostenko, Aleksey},
	author={Shepelsky, Dmitry},
	author={Teschl, Gerald},
	journal={SIAM J. Math. Anal.},
	year={2009},
	volume={41},
	pages={1559-1588},
}

\bib{Monvel2009LongTA}{article}{
  title={Long time asymptotics of the Camassa-Holm equation on the half-line},
  author={Boutet de Monvel, Anne},
  author={Shepelsky, Dmitry},
  journal={Ann. Inst. Fourier (Grenoble)},
  year={2009},
  volume={59},
  pages={3015-3056}
}

\bib{Borghese2018LongTA}{article}{
title={Long time asymptotic behavior of the focusing nonlinear Schr{\"o}dinger equation},
author={Borghese, Michael},
author={Jenkins, Robert},
author={McLaughlin, K.T.-R.},
journal={Ann. Inst. H. Poincar\'{e} C Anal. Non Lin\'{e}aire},
year={2018},
volume={35},
pages={887-920},
}

\bib{Camassa19931661}{article}{
	author = {Camassa, Roberto},
	author = {Holm, Darryl D.},
	title = {An integrable shallow water equation with peaked solitons},
	year = {1993},
	journal = {Phys. Rev. Lett.},
	volume = {71},
	pages = {1661-1664},
}

\bib{CHH94}{article}{
	author = {Camassa, Roberto},
	author = {Holm, Darryl D.},
	author = {Hyman, Y. Y.},
	title = {A new integrable shallow water equation},
	year = {1994},
	journal = {Adv. Appl. Mech.},
	volume = {31},
	pages = {1-33},
}

\bib{CharlierLenellsJLMS}{article}{
	author = {Charlier, Christophe},
	author= {Lenells, Jonatan},
	title = {Airy and Painlev\'e asymptotics for the mKdV equation},
	journal = {J. Lond. Math. Soc.},
	volume = {101},
	pages = {194-225},
	year = {2020}
 }

\bib{Chen2022TheSM}{article}{
	title={The shallow-water models with cubic nonlinearity},
	author={Chen, Robin Ming},
	author={Hu, Tianqiao},
	author={Liu, Yue},
	journal={J. Math Fluid. Mech.},
	year={2022},
	volume={24},
pages={49}
}

\bib{Claeys2008UniversalityOT}{article}{
	title={Universality of the break-up profile for the KdV equation in the small dispersion limit using the Riemann-Hilbert approach},
	author={Claeys, Tom},
	author={Grava, Tamara},
	journal={Comm. Math. Phys.},
	year={2009},
	volume={286},
	pages={979-1009}
}
		
\bib{Claeys2008PainlevIA}{article}{
	title={Painlev{\'e} II asymptotics near the leading edge of the oscillatory zone for the Korteweg-de Vries equation in the small dispersion limit},
	author={Claeys, Tom},
	author={Grava, Tamara},
	journal={Comm. Pure Appl. Math.},
	year={2010},
	volume={63},
	pages={203-232}
}

\bib{Constantin2009TheHR}{article}{
	title={The hydrodynamical relevance of the Camassa-Holm and Degasperis-Procesi equations},
	author={Constantin, Adrian},
	author={Lannes, David},
	journal={Arch. Ration. Mech. Anal.},
	year={2009},
	volume={192},
	pages={165-186}
}


\bib{Cuccagna2016OnTA}{article}{
  title={On the asymptotic stability of $N$-soliton solutions of the defocusing nonlinear Schr{\"o}dinger equation},
  author={Cuccagna, Scipio},
  author={Jenkins, Robert},
  journal={Comm. Math. Phys.},
  year={2016},
  volume={343},
  pages={921-969}
}

\bib{Deift1994TheCS}{article}{,
  title={The collisionless shock region for the long-time behavior of solutions of the KdV equation},
  author={Deift, Percy},
  author= {Venakides, Stephanos},
  author={Zhou, Xin},
  journal={Comm. Pure Appl. Math.},
  year={1994},
  volume={47},
  pages={199-206},
}

\bib{Deift1992ASD}{article}{
	title={A steepest descent method for oscillatory Riemann-Hilbert problems. Asymptotics for the MKdV equation},
	author={Deift, Percy},
	author= {Zhou, Xin},
	journal={Ann. Math.},
	year={1993},
	volume={137},
	pages={295-368},
}

\bib{Deift2002LongtimeAF}{article}{
  title={Long-time asymptotics for solutions of the NLS equation with initial data in a weighted Sobolev space},
  author={Deift, Percy},
  author= {Zhou, Xin},
  journal={Comm. Pure Appl. Math.},
  year={2002},
  volume={56},
  pages={1029-1077},
}

\bib{Dieng2019}{book}{
	author={Dieng, Momar},
	author={McLaughlin, K.T.-R.},
	author={ Miller, Peter D.},
	title={Dispersive asymptotics for linear and integrable equations by the $\bar{\partial}$ steepest descent method},
	bookTitle={Nonlinear Dispersive Partial Differential Equations and Inverse Scattering},
	year={2019},
	publisher={Springer New York},
	address={NY},
	pages={253-291},
	doi={10.1007/978-1-4939-9806-7_5},
}

\bib{Fok95}{article}{,
	author = {Fokas, A.S.},
	title = {On a class of physically important integrable equations},
	year = {1995},
	journal = {Phys D.},
	volume = {87},
	pages = {145-150},
}

\bib{FIKN}{book}{
    AUTHOR = {Fokas, Athanassios S.},
    AUTHOR = {Its, Alexander R.},
    AUTHOR = {Kapaev, Andrei A. },
    AUTHOR = {Novokshenov, Victor Yu.},
     TITLE = {Painlev\'{e} transcendents. The Riemann-Hilbert approach},
    SERIES = {Mathematical Surveys and Monographs},
    VOLUME = {128},
 PUBLISHER = {American Mathematical Society, Providence, RI},
      YEAR = {2006},
     PAGES = {xii+553},
}
		
\bib{FuchPhyD}{article}{
	author = {Fuchssteiner, Benno},
	title = {Some tricks from the symmetry-toolbox for nonlinear equations: Generalizations of the Camassa-Holm equation},
	year = {1996},
	journal = {Phys. D},
	volume = {95},
	pages = {229-243},
}

\bib{Fuchssteiner198147}{article}{,
	author = {Fuchssteiner, B.},
	author = {Fokas, A.S.},
	title = {Symplectic structures, their B\"{a}cklund transformations and hereditary symmetries},
	year = {1981},
	journal = {Phys. D},
	volume = {4},
	pages = {47-66},
}	

\bib{FLQDNLS2021}{article}{
	author = {Fromm, Samuel},
	author = {Lenells, Jonatan},
	author = {Quirchmayr, Ronald},
	title = {The defocusing nonlinear Schr\"odinger equation with step-like oscillatory initial data},
	journal={arXiv:2104.03714.},
}

\bib{Gui2013WaveBreakingAP}{article}{
	title={Wave-Breaking and Peakons for a Modified Camassa-Holm Equation},
	author={Gui, Guilong},
	author={Liu, Yue},
	author={Olver, Peter J.},
	author={Qu, Changzheng},
	journal={Comm. Math. Phys.},
	year={2013},
	volume={319},
	pages={731-759},
}

\bib{Gurevich1973NonstationarySO}{article}{
  title={Nonstationary structure of a collisionless shock wave},
  author={Gurevich, Aleksandr V. },
  author={Pitaevski, Lev P.},
  journal={Sov. Phys. JETP.},
  year={1974},
  volume={38},
  pages={291-297},
}

\bib{Hastings1980ABV}{article}{
  title={A boundary value problem associated with the second Painlev{\'e} transcendent and the Korteweg-de Vries equation},
  author={Hastings, Stuart},
  author={McLeod, John Bryce},
  journal={Arch. Ration. Mech. Anal.},
  year={1980},
  volume={73},
  pages={31-51}
}

\bib{Hou2017TheAS}{article}{
	title={The algebro-geometric solutions for the Fokas-Olver-Rosenau-Qiao (FORQ) hierarchy},
	author={Hou, Yu},
	author={Fan, Engui},
	author={Qiao, Zhijun},
	journal={J. Geom. Phys.},
	year={2017},
	volume={117},
	pages={105-133},
}

\bib{Huang20207480}{article}{
	author = {Huang, Lin},
	author= {Lenells, Jonatan},
	title = {Asymptotics for the Sasa-Satsuma equation in terms of a modified Painlev\'e II transcendent},
	year = {2020},
	journal = {J. Differential Equations.},
	volume = {268},
	pages = {7480-7504},
}

\bib{Huang20225291}{article}{
	author = {Huang, Lin},
	author= {Zhang, Lun},
	title = {Higher order Airy and Painlev\'{e} asymptotics for the mKdV hierarchy},
	year = {2022},
	journal = {SIAM J. Math. Anal.},
	volume = {54},
	pages = {5291-5334}
}

\bib{KLOQ16}{article}{
	author = {Kang, J.},
	author = {Liu, X.},
    author = {Olver, P. J.},
    author= {Qu, C.},
	title = {Liouville correspondence between the modified KdV hierarchy and its dual integrable hierarchy},
	year = {2016},
	journal = {J. Nonlinear Sci.},
	volume = {26},
	pages = {141-170}
}

\bib{Karpenko2022ARA}{article}{
	title={A Riemann-Hilbert approach to the modified Camassa-Holm equation with step-like boundary conditions},
	author={Karpenko, Iryna},
	author={Shepelsky, Dmitry},
	author= {Teschl, Gerald},
	journal={Monatsh. Math.},
	year={2023},
	volume = {201},
	pages={127-172}
}

\bib{Lu22}{article}{
	title={Universality near the gradient catastrophe point in the semiclassical sine-Gordon equation},
	author={Lu, BingYing},
	author={ Miller, Peter},
	journal={ Comm. Pure Appl. Math.},
	year={2022},
    volume={75},
    pages={1517-1641}
}


\bib{McLaughlin2006}{article}{
	author = {McLaughlin, K.T.-R.},
	author= {Miller, P.D.},
	title = {The $\bar{\partial}$ steepest descent method and the asymptotic behavior of polynomials orthogonal on the unit circle with fixed and exponentially varying nonanalytic weights},
	year = {2006},
	journal = {Int. Math. Res. Not.},
	volume = {2006},
	pages= {48673},
}

\bib{Nov09}{article}{
	title={Generalizations of the Camassa-Holm equation},
	author={Novikov, V.},
	journal={J. Phys. A-Math. Theor.},
	year={2009},
	volume={42},
	pages={342002}
}

\bib{POlverPhyRE}{article}{
	title = {Tri-Hamiltonian duality between solitons and solitary-wave solutions having compact support},
	author = {Olver, Peter J.},
	author={Rosenau, Philip},
	journal = {Phys. Rev. E.},
	volume = {53},
	pages = {1900-1906},
	year = {1996},
	publisher = {American Physical Society},
}

\bib{Qiao2006ANI}{article}{,
	title={A new integrable equation with cuspons and W/M-shape-peaks solitons},
	author={Qiao, Zhijun},
	journal={J. Math. Phys.},
	year={2006},
	volume={47},
	pages={112701}
}

\bib{Schfer2004PropagationOU}{article}{
	title={Propagation of ultra-short optical pulses in cubic nonlinear media},
	author={Sch{\"a}fer, Tobias},
	author={Wayne, C. E.},
	journal={Phys D.},
	year={2004},
	volume={196},
	pages={90-105}
}

\bib{Schiff1996}{article}{
	title={Zero curvature formulations of dual hierarchies},
	author={Schiff, J.},
	journal={J. Math. Phys.},
	year={1996},
	volume={37},
	pages={1928-1938},
	year={1996}
}

\bib{Segur1981AsymptoticSO}{article}{
	title={Asymptotic solutions of nonlinear evolution equations and a Painlev{\'e} transcendent},
	author={Segur, Harvey},
	author={Ablowitz, Mark J. },
	journal={Phys D.},
	year={1981},
	volume={3},
	pages={165-184}
}

\bib{Wang2020TheMC}{article}{
	title={The modified Camassa-Holm equation: B{\"a}cklund transformation and nonlinear superposition formula},
	author={Wang, Gaihua},
	author={Liu, Qingping},
	author={Mao, Hui},
	journal={J. Phys. A-Math. Theor.},
	year={2020},
	volume={53},
	pages={294003}
}

\bib{wang2023defocusing}{article}{
	title={The defocusing NLS equation with nonzero background: Painlev\'e asymptotics in two transition regions},
	author={Wang, Zhaoyu},
	author={Fan, Engui},
	year={2023},
	volume={402},
	pages={2879-2930},
	journal={Comm. Math. Phys.},
}

\bib{Yang2022adv}{article}{
	author = {Yang, Yiling},
	author ={Fan, Engui},
	title = {On the long-time asymptotics of the modified Camassa-Holm equation in space-time solitonic regions},
	year = {2022},
	journal = {Adv. Math.},
	volume = {402},
	pages={108340},
}

\bib{YFLGlobalmCH}{article}{
	author = {Yang, Yiling},
	author = {Fan, Engui},
	author = {Liu, Yue},
	title = {On the global well-posedness for the modified Camassa-Holm equation with a nonzero background},
	journal={arXiv:2207.12711.},
}

\bib{Yang2022OnTL}{article}{
	title={On the long-time asymptotics of the modified Camassa-Holm equation with step-like initial data},
	author={Yang, Yiling},
	author={Li, Gaozhan},
	author= {Fan, Engui},
	journal={arXiv: 2203.10573.},
}

\end{biblist}
\end{bibdiv}
\end{document}